\tikzset{cross/.style={cross out, draw=black, minimum size=2*(#1-\pgflinewidth), inner sep=0pt, outer sep=0pt},
cross/.default={1pt}}
\renewcommand{\phi}{\varphi}
\def\ds1{\mathds{1}}
\renewcommand{\epsilon}{\varepsilon}
\newlength{\minipagewidth}
\newcommand{\beq}{\begin{equation}}
\newcommand{\eeq}{\end{equation}}
\newcommand{\beqa}{\begin{eqnarray}}
\newcommand{\eeqa}{\end{eqnarray}}
\newcommand{\beqan}{\begin{eqnarray*}}
\newcommand{\eeqan}{\end{eqnarray*}}
\def\ba#1\ea{\begin{align*}#1\end{align*}} 
\def\banum#1\eanum{\begin{align}#1\end{align}} 
\title{Introduction to probability and statistics: a computational framework of randomness}
\author{Lakshman Mahto \\
Assistant Professor (Mathematics), IIIT Dharwad, India\\
lakshman@iiitdwd.ac.in}
\begin{document}

\frontmatter

\maketitle

\tableofcontents

\mainmatter

\begin{abstract}
This text presents an unified approach of probability and statistics in the pursuit of understanding and computation of randomness in engineering or physical or social system with prediction with generalizability. Starting from elementary probability and theory of distributions, the material progresses towards conceptual and advances in prediction and generalization in statistical models and large sample theory. We also pay special attention to unified derivation approach and one-shot proof of each and every probabilistic concept. Our presentation of intuitive and computation framework of conditional distribution and  probability are strongly influenced by unified patterns of linear models for regression and for classification. The text ends with a future note on the unified approximation of the linear models, the generalized linear models and the discovery models to neural networks and a summarized ML system.  
\end{abstract}

\chapter*{preface}
\parbox{\textwidth}{
\begin{align*}
\mbox{pothee padhi padhi jag mua, pandit bhaya na koy |} & \\
\mbox{dhaee aakhar prem ka, padhe so pandit hoy  |} & \\
                                                    & \mbox{Kabir}.
\end{align*} 
Here, prem may refer, devotion to observation, understanding, \& applicability.} 

\vspace*{0.5cm}

\noindent The central objects of our studies are probability as a measure of randomness and statistics as a computation of a sample statistic (with goodness of fit) from given data-sets.

\section*{Objective and approach}
This text assumes that you know approximately nothing about probability and statistics. Its goal is to provide you with observation, understanding and applications of each every concepts that ultimately leads to solving real world problems.

\noindent We cover a large number of techniques, from the simplest and most commonly used (such as conditioning) to some of the advance techniques (e.g. estimation and generalization of a statistical model).

\noindent Computational framework (in the intersection of probability \& statistics, optimization \& numerical methods, algorithm \& programming, and computer) means a practical understanding of a concept and its applicability in solving a related problem in computers effectively with the help of some numerical method. The text favours a computational approach of each and every concept with a better observation, understanding and applicability through concrete working examples and their simulation in computer. Furthermore, the approach in order to solve a real world problem characterizes with an accuracy, (i.e. approximately very close to the correct/optimal result), an efficiency (i.e. a polynomial time computational and iteration complexities),a robustness (i.e. work for every possible realization of a random sample of any size) and an stability (i.e. not sensitive to small perturbations in the input).
\begin{center}
\begin{figure}[h]
    \centering
    \includegraphics[width=1.0\textwidth]{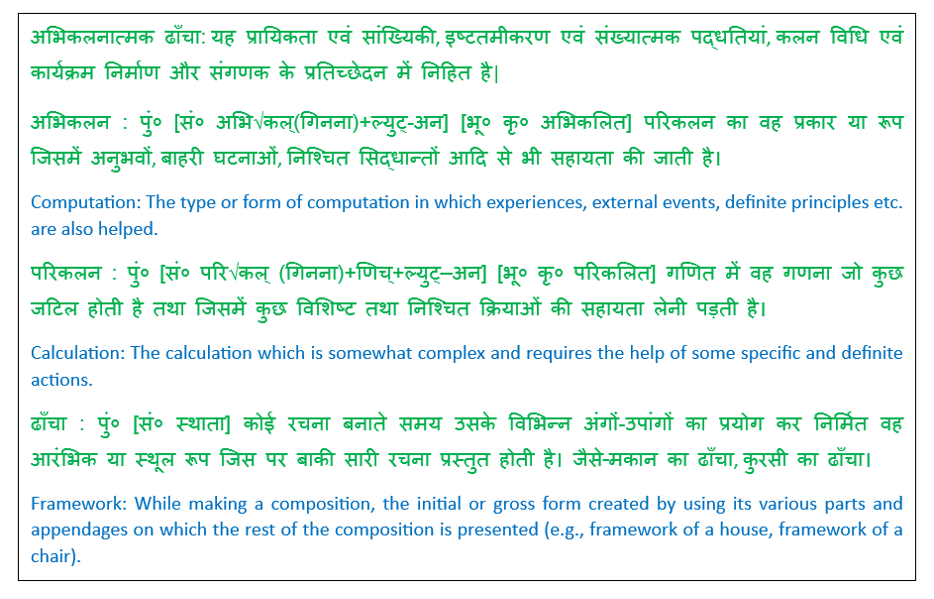}
    \caption{A self-explanatory meaning of computational framework in Hindi with corresponding translation in English.}
    \label{fig:comp-fram-01}
\end{figure}
\end{center}

\section*{Prerequisite}
This text assumes that you have some basic understanding of linear algebra, calculus, programming and algorithm.

\section*{Roadmap} \label{sec:rmap}
The central objects of our studies are probability as a measure of randomness and statistics as a computation of a sample statistic (with goodness of fit) from given data-sets. So, the text is decomposed into two parts. 

\noindent Part I, probability, covers the following topics:
\begin{enumerate}
\item[i.] What is probability? What is probabilistic modelling of a random experiment?  What are the types of probabilistic modelling? How a prior information or conditioning affects a probabilistic modelling?
\item[ii.] The central concept of probability, probability distribution, as a computational measure of randomness in a random experiment.
\item[iii.] The most common probability distributions and their derived distributions.
\item[iv.] Joint probability distributions and their derived distributions, convolutions.
\item[v.] Central representation of a probability distribution: expectation, variance, co-variance.
\item[vi.] Moment generating function and its applications
\end{enumerate}

\noindent Part II, statistics, covers the following topics:
\begin{enumerate}
\item[i.] What is sample? What is data from a sample? What is sample statistic? What is stochastic convergence?
\item[ii.] Probabilistic inequalities and non-asymptotic large sample theory.
\item[iii.] Limit theorems and asymptotic large sample theory.
\item[iv.] Methods of point estimation and goodness of fit (e.g. bias, variance, efficiency, sufficiency).
\item[v.] Interval estimation and limit theorems.
\item[vi.] Hypothesis testing and statistical inferences
\item[vii.] Linear models and prediction (e.g. simple univariate linear models, simple multivariate linear models, embedded linear models, multiple multivariate linear models).
\item[viii.] Simulation with Python.
\end{enumerate}

\noindent Note: don't jump into solving a noisy observation based problem in a computer too hastily: while solving the problem in computer is no doubt one of the most exciting areas in computational mathematics, you should master the fundamentals first.

\section*{Silent features of the book}
\begin{enumerate}
\item[a.] Derivation of various concepts such as:
\begin{enumerate}
\item[i.] computation of probability of an subinterval $[a,b]$ in the sample space $\Omega$ of a continuous probability model 
\begin{align*}
P([a,b])=\frac{\mbox{length of} \ [a,b]}{\mbox{length of} \ \Omega}=\frac{b-a}{\mbox{length of} \ \Omega}.
\end{align*} 
\item[ii.] conditional probability as a technique to understand, compute and apply to central concepts such as concepts joint probability, total probability, Bayes rule, conditional distribution, joint distribution and conditional expectation.
\end{enumerate}
\item[b.] presentation of each and every example in an ordered 3-tuple of backdrop, question and answer.
\item[c.] One-shot derivation of various proofs and solutions such as derived distribution, limit theorem, point estimation, interval estimation and computation $\&$ estimation of parameters for a linear model.
\item[d.] A unified approach to compute and estimate parameters of simple univariate, simple multivariate, embedded and multiple multivariate linear models.
\item[e.] Goodness of fit for every estimate of a parameter or a parametric model.
\item[f.] Python demonstration of almost every probability distribution, large sample theory and statistical methods.
\item[g.] An effort to utilise self-explanatory capability of a word in Indian languages to understand, observe and apply at the best (i.e. upto optimality) in-line with new education policy 2023.
\begin{center}
\begin{figure}[h]
    \centering
    \includegraphics[width=1.0\textwidth]{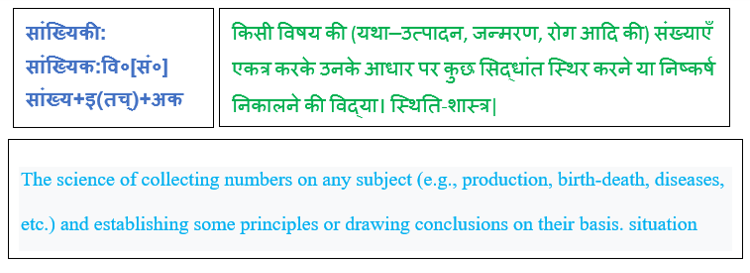}
    \caption{A self-explanatory meaning of statistics in Hindi with corresponding translation in English.}
    \label{fig:artime-01}
\end{figure}
\end{center}
\item[h.] This is a text, not a reference book, for probability and statistics of undergraduate studies. I have tried to present the fundamental ideas of the subject in simple and intuitive contexts in computational frameworks.
\end{enumerate}

\chapter{Introduction}
\label{intro}
The purpose of the text is to present the computational framework of probability and statistics as a mathematical discipline that observe, understand and applicability of a problem based on probabilistic phenomenon or a mathematical model based on noisy measurements.

\section{Perceiving randomness and probabilistic problem}
Natural sciences (e.g. theory of errors in measurements, problems of the theory of ballistics, problems of statistics) made it necessary to develop the theoretical probability with advanced analytical tools.

\noindent The modern development of probability is characterized by the notions of measure theory and, subsequently, functional analysis that lead to an extended content of theoretical probability (e.g. large sample theory, stochastic processes, stochastic modelling).

\noindent In the middle of 20th century, applicability of probability developed in modern science immeasurably (e.g. molecular orbital theory, statistical physics, microscopic understanding of diffusion and transport of particles through interaction, intra-action and characterization of motion).

\noindent At microscopic level, particle theory establish a fact that every substance is composed of very many particles i.e. in a homogeneous body their properties are very similar. Under these circumstances standard mathematical research methods are useless in the investigation of physical theories. For example, the techniques of differential equations are incapable of yielding any serious results under these conditions.

\begin{example}
Consider a diffusion model 
\begin{align}\label{eq:int_de-01}
\frac{\partial u}{\partial t} = D \frac{\partial^2}{\partial t^2}\left(u \right).
\end{align}
Then, 
\begin{enumerate}
\item[a.] At macroscopic level, the equation \eqref{eq:int_de-01} refers a deterministic system.
\item[b.]While, at microscopic level, the motion of each individual molecule/particle is having a Brownian motion, which is random in nature.
\end{enumerate}
\end{example}

\noindent Schematic a phenomenon or selection of mathematical tools to investigate the phenomenon. Goodness of fit or quality can be judged by how well the theory agrees with experiment and with practice. 

\section{Statistics and randomness}
Consider a statement, "$80\%$ chance of rain on this Friday". So, we perceive the following:
\begin{enumerate}
\item[a.] the chance $80\%$ is estimated from data statistically (i.e. a statistical estimate).
\item[b.] an estimation is performed in a deterministic way, because it involves a computation technique.
\item[c.] so, a statistical estimation is carried through measurement (or observation) using a computation technique.
\item[d.] furthermore, a measurement involve measurement randomness due to computation approximation and measuring unit. 
\end{enumerate} 

\section{Strategies for solving probabilistic problems} \label{sec:rmap}
The central objects of our studies are probability as a measure of randomness and statistics as a computation of a sample statistic (with goodness of fit) from given data-sets. So, the text is decomposed into two parts. 

\noindent Part I, probability, covers the following topics:
\begin{enumerate}
\item[i.] What is probability? What is probabilistic modelling of a random experiment?  What are the types of probabilistic modelling? How a prior information or conditioning affects a probabilistic modelling?
\item[ii.] The central concept of probability, probability distribution, as a computational measure of randomness in a random experiment.
\item[iii.] The most common probability distributions and their derived distributions.
\item[iv.] Joint probability distributions and their derived distributions, convolutions.
\item[v.] Central representation of a probability distribution: expectation, variance, co-variance.
\item[vi.] Moment generating function and its applications
\end{enumerate}

\noindent Part II, statistics, covers the following topics:
\begin{enumerate}
\item[i.] What is sample? What is data from a sample? What is sample statistic? What is stochastic convergence?
\item[ii.] Probabilistic inequalities and non-asymptotic large sample theory.
\item[iii.] Limit theorems and asymptotic large sample theory.
\item[iv.] Methods of point estimation and goodness of fit (e.g. bias, variance, efficiency, sufficiency).
\item[v.] Interval estimation and limit theorems.
\item[vi.] Hypothesis testing and statistical inferences
\item[vii.] Linear models and prediction (e.g. simple univariate linear models, simple multivariate linear models, embedded linear models, multiple multivariate linear models).
\item[viii.] Simulation with Python.
\end{enumerate}

\noindent Note: don't jump into solving a noisy observation based problem in a computer too hastily: while solving the problem in computer is no doubt one of the most exciting areas in computational mathematics, you should master the fundamentals first.

\section{Overview}
\begin{enumerate}
\item[a.] Derivation of various concepts such as:
\begin{enumerate}
\item[i.] computation of probability of an subinterval $[a,b]$ in the sample space $\Omega$ of a continuous probability model 
\begin{align*}
P([a,b])=\frac{\mbox{length of} \ [a,b]}{\mbox{length of} \ \Omega}=\frac{b-a}{\mbox{length of} \ \Omega}.
\end{align*} 
\item[ii.] conditional probability as a technique to understand, compute and apply to central concepts such as concepts joint probability, total probability, Bayes rule, conditional distribution, joint distribution and conditional expectation.
\end{enumerate}
\item[b.] presentation of each and every example in an ordered 3-tuple of backdrop, question and answer.
\item[c.] One-shot derivation of various proofs and solutions such as derived distribution, limit theorem, point estimation, interval estimation and computation $\&$ estimation of parameters for a linear model.
\item[d.] A unified approach to compute and estimate parameters of simple univariate, simple multivariate, embedded and multiple multivariate linear models.
\item[e.] Goodness of fit for every estimate of a parameter or a parametric model.
\item[f.] Python demonstration of almost every probability distribution, large sample theory and statistical methods.
\item[g.] An effort to utilise self-explanatory capability of a word in Indian languages to understand, observe and apply at the best (i.e. upto optimality) in-line with new education policy 2023.
\begin{center}
\begin{figure}[h]
    \centering
    \includegraphics[width=1.0\textwidth]{Statistics-meaning-01}
    \caption{A self-explanatory meaning of statistics in Hindi with corresponding translation in English.}
    \label{fig:artime-01}
\end{figure}
\end{center}
\item[h.] This is a text, not a reference book, for probability and statistics of undergraduate studies. I have tried to present the fundamental ideas of the subject in simple and intuitive contexts in computational frameworks.
\end{enumerate}

\chapter{Elementary probability}
\label{elmprob}
\section{What is probability?}\label{sec:what_prob}
Probability is a mathematical discipline that concerning with numerical descriptions of how likely an event is to occur or a degree of belief of the occurrence of an event, or how likely it is that a proposition is true. The probability of an event is a number between 0 and 1. In simple, probability is the measure or quantification of randomness that falls in the intersection of mathematical thinking and intuitive observation/experience of patterns. It finds applications in . It diffuses through almost every discipline of sciences \& mathematics, engineering \& technology and social \& life sciences. And it finds applications in inference, decision making, experimental design, reliability theory, statistical theory of information, communication, control, and many more except motion of planets. A few example inspired from \citep{PS73} and \citep{RV06}:
\begin{enumerate}
\item[a.] Physics: a few quantities (e.g. temperature and pressure) arise as a direct consequence of the random motion of atoms and molecules. Heisenberg uncertainty principle in quantum mechanics.
\item[b.] Biology and medicine: in human evaluation, random mutations lead to the amazing diversity of life. Stochastic models are essential in understanding the spread of disease, both in a population (e. g. epidemics) or in the human body (e.g. cancer).
\item[c.] Chemistry: a chemical reaction takes place when molecules randomly meet. Stochastic models of chemical kinetics are particularly important in systems with very low concentrations (e.g. biochemical reactions in a single cell).
\item[d.] Electronics and communication engineering: the effect of noise to design a reliable communication protocol that we use on a daily basis in our cell phones. Observation or measurement of a signal using a suitable measuring unit happens to be noisy in nature (i.e. in practice, measurement of a signal contains a signal along-with an additive noise).
\item[e.] Computer science: randomness in an algorithm (for some best known method) brings robustness and stability in order to compute a solution of a complex problem.
\item[f.] Finance and economics: stock and bond prices are inherently unpredictable, so these lead to some kind of stochastic models. The modelling of randomly occurring rare events forms the basis for all insurance policies, and for risk management in banks.
\item[g.] Sociology: stochastic models provide basic understanding of the formation of social networks and of the nature of voting \& polling schemes.
\item[h.] Statistics and machine learning: statistical methods lead to stochastic models that form the foundation for almost all of data science (e.g. tabular data science, image data science, text data science or NLP). 
\item[i.] Optimization: in order to come out of stuck region of local in minimization a function, there is a need noise in an optimization algorithm through noisy or perturbed gradient (e.g. noisy gradient descent, stochastic gradient descent).
\end{enumerate}
\begin{center}
\begin{figure}[htb]
    \centering
    \includegraphics[width=1.0\textwidth]{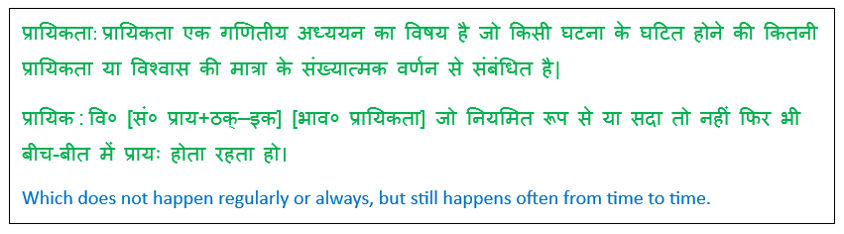}
    \caption{A self-explanatory meaning of probability in Hindi with corresponding translation in English.}
    \label{fig:sl-01}
\end{figure}
\end{center}

\begin{example}[A measurement error due to limitation of a scale]
Consider a measurement scale \eqref{fig:sl-01} that can measure 0 to 100 cm with 1cm accuracy. Now if we have to measure $99.73$cm with respect to the given scale, we find that $99.73$ cannot be measured deterministically on it. With respect to the scale,  $99.73$ has three significant figures, so it is $99.73 \approx 99.7$, where first two numbers i.e. $99$ are deterministic significant figures and the third one $.7$ is a uncertain or estimated or random significant figure. So, we can a measurement includes uncertainty due to limitation of the corresponding measuring instrument in use.
\begin{center}
\begin{figure}[h]
    \centering
    \includegraphics[width=1.0\textwidth]{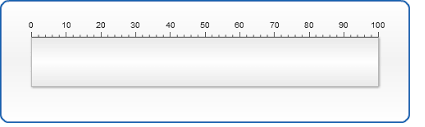}
    \caption{a measurement scale that can measure 0 to 100 cm with 1cm accuracy.}
    \label{fig:sl-01}
\end{figure}
\end{center}
\end{example}

\begin{example}[the toss of a fair coin] 
You know intuitively that there is a 50 percent chance of getting heads, and 50 percent chance of getting tails. If you want to do the math to calculate the probability of a head, here's the basic formula that count the number of times that the event will happen (e.g. in this case, there's just one chance of a head appearing, so it's 1). Divide this by the total number of possible outcomes (e.g. with a coin, it's either heads or tails, which is 2 possible outcomes). So the probability of getting a head is $0.5$ or 50 percent.
\end{example}

\begin{example}[Estimating a number or a function]
Consider an estimation of a number from its noisy measurement. For example, $\bar{x}=\mu +\epsilon$, where $\mu$ is the true mean, and$\bar{x}$ is the estimated mean with a noise $\epsilon$ as the error containing some randomness. Furthermore, consider an estimation of a deterministic function of $f(x)=x^2$ with its noisy measurement $x^2 +\epsilon$ in figure \eqref{fig:dnx2}.
\begin{center}
\begin{figure}[h]
    \centering
    \includegraphics[width=1.0\textwidth]{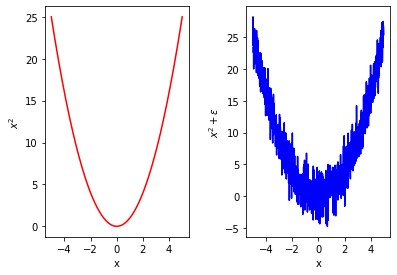}
    \caption{In left there a deterministic function $x^2$ to be estimated from its noisy version in right.}
    \label{fig:dnx2}
\end{figure}
\end{center}
\end{example}

\begin{example}
Backdrop: three men are on a trip. One of the three will volunteer to cook breakfast. A is twice as likely to volunteer as B, and B is three times as likely to volunteer as C. 

\noindent Question: compute the probability of volunteering for each man.

\noindent Answer: suppose that one and only one will volunteer among A, B, and C. So, we can assume that $\{A,B,C\}$ are mutually exclusive and totally exhaustive, i.e. $P(A) + P(B) + P(C)=1$. 

\noindent Now, from the question, we have the following assumptions:
\begin{align*}
& (a.) P(A)=2P(B), \ (b.) P(B)=3P(C), \ (c.) P(A) + P(B) + P(C)=1 \\
& \implies 2 \cdot 3P(C) + 3P(C) + P(C)=1 \implies P(C)=\frac{1}{10}.
\end{align*}
And hence, we get
\begin{align*}
P(A)=\frac{6}{10}, \ P(B)=\frac{3}{10}, \ \mbox{and} \ P(C)=\frac{1}{10}.
\end{align*}
\end{example}

\begin{remark}
In summary, we may perceive following at first instant observation from probability:
\begin{enumerate}
\item[a.] chance/unpredictable/random/stochastic,
\item[b.] possible outcomes in a random experiment (an experiment with unpredictable possible outcome in each of its trials). 
\end{enumerate}
So, generally we come across a random experiment with a list of possible outcomes, but a possible outcome in each trial of the experiment is unpredictable (i.e. random)
\end{remark}

\begin{definition}[Random experiment]
A random experiment is an experiment whose outcome is not predictable before the experiment is performed, however in advance we know all possible outcomes in the experiment. That is, an outcome of a random experiment is uncertain and a probabilistic modelling is providing an approach to the quantify the uncertainty.

\noindent Furthermore, there is no restriction on what constitutes an experiment (e.g. it could be a single toss of a coin, or three tosses, or an infinite sequence of tosses). So, it is important to note that in formulation of a probabilistic model, there is only one experiment (e.g. three tosses of a coin constitute a single experiment, rather than three experiments).
\end{definition}

\begin{example}[Tossing a coin] 
If we toss a coin, then either a head or tail comes up with some degree of belief, but we don't know in advance which one will turn up.
\end{example}

\begin{example}[Tossing a coin] 
In weather prediction, a meteorologist just know either it will rain or not rain with degree of belief, but we doesn't know in advance which one will turn up.
\end{example}

\section{Basic concepts of probabilistic modelling} \label{sec:bcpm}
Consider a few situation--(a.) the probability that it will rain the next day or (b.) the probability that you will win the lottery \citep{K06}. Here, we cannot be certain because, there are many factors that affect the weather and those leads to uncertainty that whether it will or not rain the next day. So, as a predictive tool we usually assign a number between 0 and 1 as our degree of certainty or belief that the event, rain, will occur. If we say that there is a 30\% chance of rain, means we believe that if identical conditions prevail, then 3 times out of 10, rain will occur the next day. To predict at the nature's standard, it is essential to come up with an accurate model by knowing everything of science of meteorology that gives the probability is either 0 or 1. Unfortunately, it is not possible at practice, i.e. uncertainty of an outcome is not because of the inaccuracy of our model, but because the experiment has been designed to produce uncertain results.

\noindent In the last section \eqref{sec:what_prob}, we see a list of common items e.g. a random experiment, a set of possible outcomes, and the probabilities assigned to these outcomes. These attributes are common to all probabilistic descriptions. And hence, it is very much essential to come up with a systematic approach as designed by Kolmogrow to compute randomness or uncertainty through three basic concepts of probabilistic modelling of random experiment:
\begin{enumerate}
\item[(a.)] a sample space, 
\item[(b.)] an event of interest, and 
\item[(c.)] probability measure of an event as an assignment or degree of belief between 0 and 1. 
\end{enumerate}

\subsection{A sample space i.e. specification of all possible outcomes}
Once a random experiment is being performed, we need to specify all possible outcomes of the random experiment as:
\begin{enumerate}
\item[(a.)] First we identify an outcome of the random experiment,
\item[(b.)] Secondly, we list out all possible outcomes of the experiment as a sample space.
\end{enumerate}

\begin{example}[Throwing two dice together]
Question: Throw two dice together and find its sample space.

\noindent Answer: Consider a throw of two dice together, then
\begin{enumerate}
\item[a.] we identify an outcome as an ordered pair $(i,j)$, where the first die comes up i and the second die comes up j. 
\item[b.] we list out all possible outcomes as $\{(i,j)| 1\leq i\leq 6, 1\leq j \leq 6\}$.
\end{enumerate}
So, we have the sample space,
\begin{align*}
\Omega = \{(i,j)| 1\leq i\leq 6, 1\leq j \leq 6\} \ \mbox{with cardinality} \ |\Omega|=36.
\end{align*}
\end{example}

\begin{example}[Waiting time for a bus at a bus stop]
Question: suppose you are waiting for a bus at a bus stop, then compute its sample space.

\noindent Answer: If you are waiting for a bus at a bus stop, then 
\begin{enumerate}
\item[a.] a real number $t\geq0$ is identified as a possible outcome of the experiment. 
\item[b.] $t=0h$ refers an outcome that on your arrival at the bus stop, immediately you get a bus. $t = 1.5h$ refers an outcome that on your arrival at the bus, you waited for $1.5h$ to get a bus. In this process, listing out all possible outcomes as $\{t\in \mathbb{R}|t\geq 0\}$.
\end{enumerate}
So, the sample space is given by
\begin{align*}
\Omega = \{t\in \mathbb{R}|t\geq 0\}=[0,\infty).
\end{align*}
\end{example}

\begin{example}[Path of a bee for 5 seconds]
Question: consider a random experiment of observing path of a bee for 5 seconds. Compute its sample space.

\noindent Answer: If we observe path of a bee for 5 seconds, then
\begin{enumerate}
\item[a.] a possible outcome is identified as a continuous map $\omega: [0,5] \to \mathbb{R}^3$ in \eqref{fig:pbee-04}:
\begin{center}
\begin{figure}[h]
    \centering
    \includegraphics[width=0.80\textwidth]{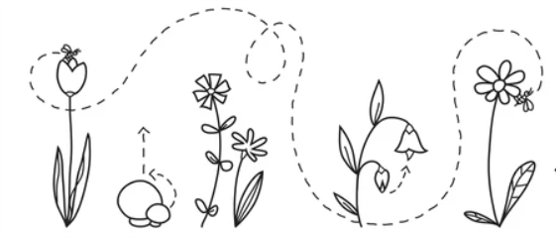}
    \caption{Path of a bee as continuous map $\omega: [0,5] \to \mathbb{R}^3$.}
    \label{fig:pbee-04}
\end{figure}
\end{center}
\item[b.] listing all possible outcomes as 
\begin{align*}
\Omega = \{\omega: [0,5] \to \mathbb{R}^3 | \omega \ \mbox{is a continuous map}\}.
\end{align*}
\end{enumerate}    
So, the sample space is given by
\begin{align*}
\Omega = \{\omega: [0,5] \to \mathbb{R}^3 | \omega \ \mbox{is a continuous map}\}.
\end{align*}
\end{example}

\subsection{The event (of interest)}
\begin{definition}[Informal] 
Once we have identified a sample space for a given random experiment, then we can come up with a statement in order to construct (or define) an event as a collection of some of outcomes (or favourable outcomes) satisfying the statement.
\end{definition}

\begin{definition}[Formal]
A subset $A$ of the sample space $\Omega$ which satisfies a given statement i.e. an element of a $\sigma-$ algebra of the sample space $\Omega$ (i.e. $A\in \sigma(\Omega)$). An $\sigma-$algebra of $\Omega$ is collection of subsets in $\Omega$ and closed with respect to complementation and countable union of its members. 

\noindent An event of interest can be computed as follows:
\begin{enumerate}
\item[a.] First, list out all possible outcomes as a sample space of the given random experiment.
\item[b.] Secondly, collect some possible outcomes satisfying a statement under interest/consideration.
\end{enumerate}
\end{definition}

\begin{example}
Question: consider an experiment of throwing a die and getting a number exactly divisible by 3. 

\noindent Answer: 
\begin{enumerate}
\item[a.] Listing out all possible outcomes as
\begin{align*}
\Omega =\{1,2,3,4,5,6\}.
\end{align*}
\item[b.] $A \to$ collection of some possible outcomes satisfying a statement that a number is exactly divisible by 3. So, we have
\begin{align*}
A=\{3,6\}.
\end{align*}
\end{enumerate}
\end{example}

\begin{example}[A throw of two dice]
Question: consider a throw of two dice together. Getting an outcome with a sum of is $7$.

\noindent Answer: 
\begin{enumerate}
\item[a.] When we toss two coins together, the sample space is given by
\begin{align*}
\Omega = \{(i,j)| i, j =1,2,3,4,5,6\}.
\end{align*}
\item[b.] $A \to$ collection of some possible outcomes satisfying a statement that sum is 7. So, we have
\begin{align*}
A=\{(1,6),(2,5),(3,4),(4,3),(5,2),(6,1)\}.
\end{align*} 
\end{enumerate}
\end{example}

\begin{example}[Waiting time for a bus at a bus stop]
Question: Suppose you are waiting for bus at a bus stop. Getting a bus in the first hour.

\noindent Answer: 
\begin{enumerate}
\item[a.] As you are waiting for a bus at a bus stop, so, the sample space is given by
\begin{align*}
\Omega=\{t \in \mathbb{R} | t \geq 0\}=[0,\infty).
\end{align*} 
\item[b.] $A \to$ collection of some possible outcomes satisfying a statement that a bus is in first. So, we have
\begin{align*}
A=\{t \in \mathbb{R} | 0 \leq t \leq 1 \} =[0,1] \subset \Omega.
\end{align*}
\end{enumerate}
\end{example}

\begin{example}[Path of a bee for 5 seconds]
Question: In a random experiment of observing path of a bee for 5 seconds. Observing path of bee stuck in a region C for first 1 second.

\noindent Answer: 
\begin{enumerate}
\item[a.] If we observe a bee for 5 second, so, the sample space is given by
\begin{align*}
\Omega =\{ \omega | \omega:[0,5] \to \mathbb{R}^3 \  \mbox{is a continuous map} \}.
\end{align*}
\item[b.] $A \to$ collection of some outcomes satisfying a statement that a bee got stuck in a region C in first second. So, we have 
\begin{align*}
A=\{\omega:[0,5] \to \mathbb{R}^3| \omega (t) \in C \ \mbox{for} \ t \in [0,1]\} \subset \Omega.
\end{align*}
\end{enumerate} 
\end{example}

\begin{remark}
\begin{enumerate}
\item[a.] If $\omega \in \Omega$ is an outcome of a random experiment, then an event $A$ occurs if $\omega \in A$.
\item[b.] $A^c$ occurs if $A$ does not occur.
\item[c.] $A \cup B$ occurs if A occurs or B occurs (or both).
\item[d.] $A\cap B$ occurs if A occurs and B occurs.
\item[e.] $A \textbackslash B =A -B=A\cap B^c$ occurs if $A$ occurs and B does not occur.
\item[f.] $A$ and $B$ are mutually disjoint if $A\cap B =\phi$ i.e. $A$ occurs  and B does not occurs
\end{enumerate}
\end{remark}

\subsection{Probability Measure}

\noindent Till now, we have seen the following:
\begin{enumerate}
\item[(a.)] Sample space ($\Omega \to$) as a collection of all possible outcomes from the random experiment.
\item[(b.)] An event ($A$) defined by a statement as a collection of some possible outcomes from the experiment. 
\end{enumerate}

\begin{definition}[Probability measure of an event]\label{def:pm-05}
Now, we define probability measure of an event A as a degree of confidence or a map $P:\sigma(\Omega)\to [0,1]$ satisfies the following properties or axioms:
\begin{enumerate}
\item[(i.)] $P(\Omega)=1, P(\phi)=0,$
\item[(ii.)] $P(A^c)=1-P(A) \in [0,1]$ for any event $A\in \sigma(\Omega)$,
\item[(iii.)] $P(A\cup B)=P(A)+P(B)$ for any two disjoint events A and B i.e. $A\cap B=\phi$,
\end{enumerate}
The third axiom or property can be generalized to ($\sigma-$ additive of) any countable collection of mutually disjoint events ${A_i}$ as follows:
\begin{align*}
P(\cup_{i=1}^{\infty}A_i) & =\sum_{i=1}^{\infty}P(A_i)  \ \mbox{for any mutually disjoint events i.e.} \\ \ A_i \cap A_j & =\phi \ \mbox{for} \ i\neq j. 
\end{align*}
\end{definition}

\noindent A probability measure of an event A can be computed using a suitable probability law (e.g. a uniform law or symmetrical outcomes) in the following two steps:
\begin{enumerate}
\item[(01)] first come up with an assumption or information from the random experiment, where first instant assumption happens to be uniform assumption (or equally likely outcomes or symmetrical outcomes). For example, in a single toss of a coin, the chances of getting both head and tail are equal due to symmetrical coin (with respect to head and tail).
\item[(02)] then computing the probability measure of an event using the assumption observed from the random experiment in step-01 and the axioms \eqref{def:pm-05} of the probability measure of the event.
\end{enumerate}

\begin{example}[A roll of a die]
Question: In a roll of a fair die, compute probability of occurrence of each and every outcomes.

\noindent Answer: In a roll of a fair die, the sample space is given by
\begin{align*}
\Omega=\{1,2,3,4,5,6\}.
\end{align*}

\noindent  As the die happens to be fair, so each outcome is equally likely in the roll of the fair die i.e. there is a uniform law of distribution of outcomes. So, we have
\begin{align}\label{as:eq-01}
P(1) = P(2) = P(3) = P(4) = P(5) = P(6)
\end{align}

\noindent Now, using axioms of the probability measure for an event, we have
\begin{align}\label{as:eq-02}
&P(\Omega)=1, \Omega = \{1\}\cup \{2\} \cup \{3\} \cup \{4\} \cup \{5\} \cup \{6\} \implies \nonumber \\
&P(1) + P(2) + P(3) + P(4) + P(5) + P(6)=1
\end{align}

Solving for equations \eqref{as:eq-01} and \eqref{as:eq-02}, we get
\begin{align*}
P(1) = P(2) = P(3) = P(4) = P(5) = P(6)=1/6.
\end{align*}
\end{example}

\begin{example}[Swimming in leisure time per week a day]
Ram likes swimming in leisure time per week on a day. An outcome is a day that is 
\begin{align*}
\Omega=\{Monday, Tuesday, Wednesday, Thursday, Friday, Saturday, Sunday\}.
\end{align*} 
The first instant assumption is the uniform as each day is equally likely, so we have
\begin{align*}
P(Monday)&=P(Tuesday)=P(Wednesday)=P(Thursday) \\
&=P(Friday)=P(Saturday)=P(Sunday).
\end{align*}
Now, using axioms of probability measure of an event, we have
\begin{align*}
P(Monday)&+P(Tuesday)+P(Wednesday)+P(Thursday) \\
&+P(Friday)+P(Saturday)+P(Sunday)=1.
\end{align*}
By solving above two equations, we get
\begin{align*}
P(Monday)&=P(Tuesday)=P(Wednesday)=P(Thursday)\\
&=P(Friday)=P(Saturday)=P(Sunday)=1/6.
\end{align*}
\end{example}

\begin{example}[swimming on weekday as often as weekend]
Ram goes for swimming on weekday as often as weekend. The first instant assumption says that weekdays and weekends are equally likely. So
\begin{align*}
P(Weekdays)=P(Weekends)=1/2.
\end{align*} 
Further, there are 5 days in weekdays, and each of these 5 days are equally likely, so, we have
\begin{align*}
P(Monday)&=P(Tuesday)=P(Wednesday)=P(Thursday) \\
&=P(Friday)=1/2 \cdot 1/5 =1/10.
\end{align*}
And there are two days in weekends, and each of these 2 days are equally likely, so, we have
\begin{align*}
P(Saturday)=P(Sunday)=1/2 \cdot 1/2 =1/4
\end{align*}
\end{example}

\begin{example}[Continuation of information from the previous two examples]
He goes on Friday equals to Sum of Wednesday and Thursday. Let $P(Wednesday) = P(Thursday) = p$. Then $P(Friday)=2p$ and $P(Monday) =  P(Tuesday) =\frac{1}{2} \cdot \left(\frac{1}{2}-4p\right)= \frac{1}{4} - 2p >0 \implies 0<p<1/8$.
\end{example}

\noindent Note: probability measure is not assigned to an individual outcome, it is assigned to an event containing the outcome.

\begin{example}
Five balls numbered $1,2,3,4,5$ are drawn from an urn without replacement. Then compute the probability that they will be drawn in the same order as their number.

\noindent Each outcome is represented by the 5-tuple $(z_1,z_2,z_3,z_4,z_5)$, so total possible outcomes are $|\Omega|=5!=120$. The only outcome in the event $A$ is $(1,2,3,4,5)$. Hence, the desired probability is
\begin{align*}
P(A)=\frac{1}{120}.
\end{align*}
\end{example}

\begin{example}[Monte Hall problem]
Background: suppose you're on a game show, and you're given the choice of three doors: behind one door there is a car, behind the others, there are goats. You pick a door (say No. 1), and the host, who knows what's behind the doors, opens another door (say No. 3), which has a goat. He then says to you, 'do you want to pick door No. 2?'

\noindent Question: Compute the probability with which the player wins by switching.

\noindent Answer: first we need to all possible outcomes followed by an event of interest (i.e. the player wins by switching) and probability measure of the event.
\begin{enumerate}
\item[a.] In order to identify an outcome, we recall a randomly determined quantities that may help to determine the outcome as:
\begin{enumerate}
\item[i.] the door concealing the car (as a first layer of information),
\item[ii.] the door initially chosen by the player (as a second layer of information),
\item[iii.] the door that the host opens to reveal a goat (as a third layer of information).
\begin{center}
\begin{figure}[h]
    \centering
    \includegraphics[width=1.0\textwidth]{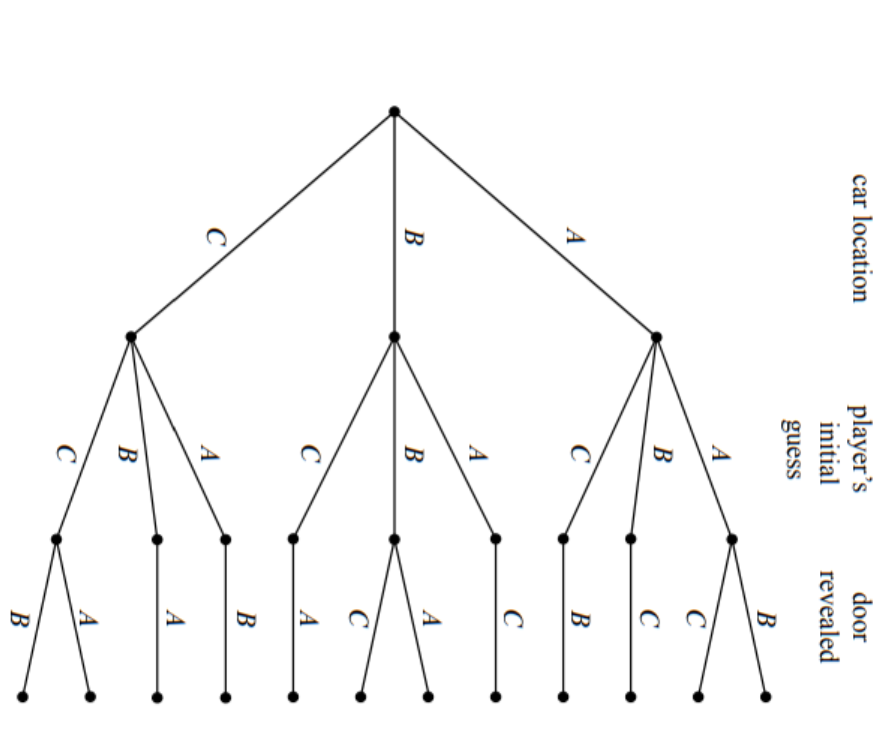}
    \caption{Three randomly determined quantities are defining three layers of information upside down in the tree plot.}
    \label{fig:3Layinf-01}
\end{figure}
\end{center}
\end{enumerate}
Consider the doors are A, B and C instead of 1, 2, and 3. Now as the first layer of the tree (see in figure \eqref{fig:3Layinf-01}) is the door concealing the prize. So, the prize may be behind any of the three doors A,B and C in equally likely manner.

\noindent The player could initially choose any of the three doors is the second layer of the tree (see in figure \eqref{fig:3Layinf-01}).

\noindent Then a third layer of the tree (see in figure \eqref{fig:3Layinf-01}) is that the host opens a door to reveal a goat.

\noindent If the prize is behind door A and the player picks door A, then the host could open either door B or door C. So, the possible outcomes are AAB or AAC.

\noindent If the prize is behind door A and the player picks door B, then the host must open door C. So, a possible outcomes is ABC. 

\noindent Repeating all the possibility of above three randomly-determined quantities, then the leafs of the tree (see in figure \eqref{fig:3Layinf-01}) determine all the 12 possible outcomes of the random experiment. So, sample space is given by
\begin{align*}
\Omega &=\left\lbrace AAB,AAC,ABC,ACB,BAC,BBA,BBC,BCA,CAB, \right. \\
       & \left. CBA,CCA,CCB\right\rbrace.
\end{align*}
\item[b.] The event of interest is determined by the statement that the player wins by switching. So, the desired event is given by
\begin{align*}
E=\{ABC,ACB,BAC,BCA,CAB,CBA\}.
\end{align*} 
\item[c.] Computing probability measure of the desired event E. Consider the topmost event of the tree (see in figure \eqref{fig:3Layinf-01}) i.e. the outcome $AAB$, then, the probability of $AAB$ computed as:
\begin{align*}
P(AAB)=P(A)P(A|A)P(B|A,A)=\frac{1}{3}\cdot \frac{1}{3}\cdot \frac{1}{2}=\frac{1}{18}.
\end{align*}
Similarly, we can compute probability of every possible outcome as a leaf of the tree (see in figure \eqref{fig:3Layinf-01}) in the following probability tree diagram:
 \begin{center}
\begin{figure}[h]
    \centering
    \includegraphics[width=1.0\textwidth]{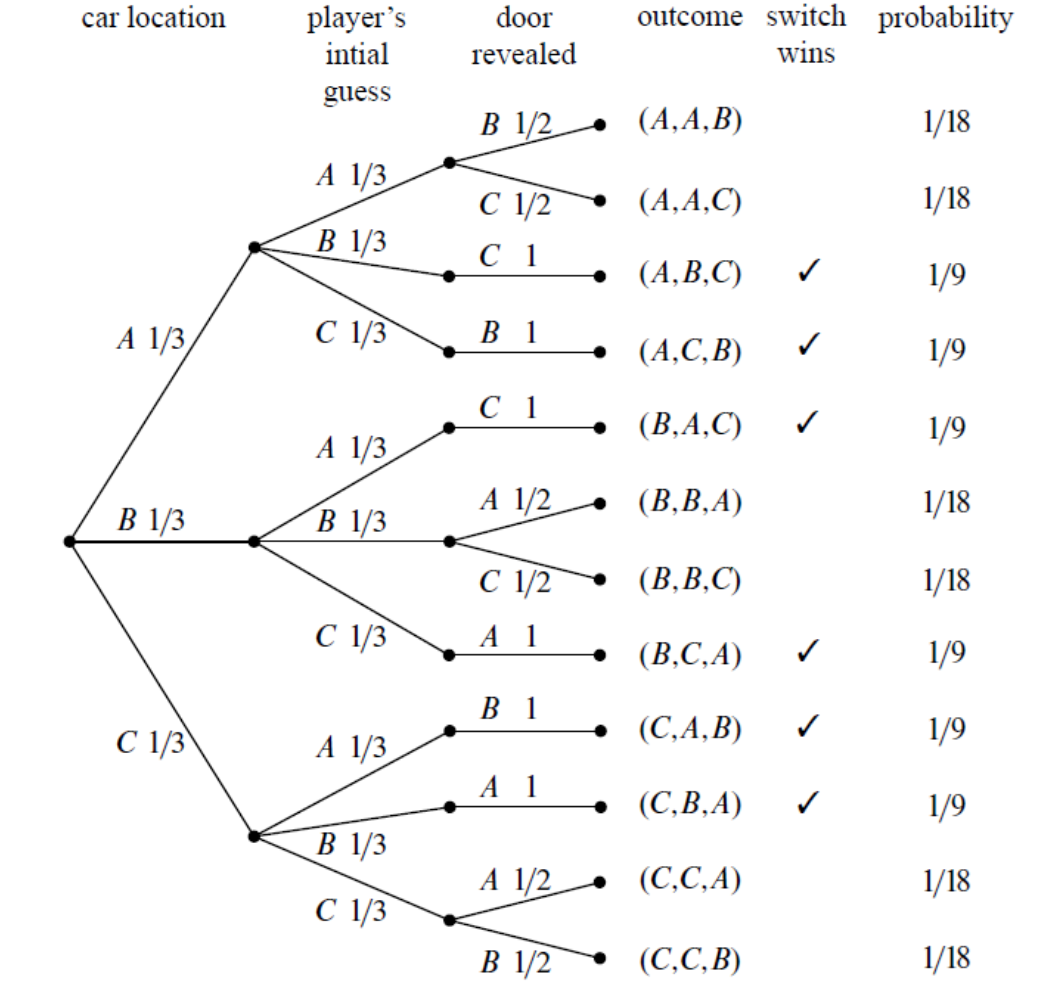}
    \caption{A plot of probability three is containing probability of every possible outcome of the Monte-Hall problem.}
    \label{fig:prob-tree-mhp-01}
\end{figure}
\end{center}
So, the desired probability is given by 
\begin{align*}
P(E)=P\left(\{ABC,ACB,BAC,BCA,CAB,CBA\}\right)=6\cdot \frac{1}{9}=\frac{2}{3}.
\end{align*}
\end{enumerate}
\end{example}

\begin{example}(sampling with order and replacement]
Compute number of distinct non-negative integer-valued solutions of the equation
\begin{align}
x_1+x_2+\ldots+x_k=n.
\label{eq:comb-01}
\end{align}
\noindent Consider a sequence of n $*$'s and m-1 $\vert$'s. There is a bijection between such sequences and non-negative integer-valued solutions to the equation. For example, if $m= 4$ and $n=3$, i.e.a sequence of $n=3 \ *$'s and $m-1=4-1=3 \ \vert$'s
\begin{align*}
\underbrace{\star \ \star}_{x_1=2} \ | \ \underbrace{ \ \ }_{x_2=0} | \underbrace{ \ \star \ }_{x_3=1} | \underbrace{ \ \ }_{x_4=0}.
\end{align*}
\noindent There are $\left(\begin{array}{cc} n+m-1 \\ n \end{array} \right)$ sequences of n $\*$'s and m-1 $\vert$'s and, hence, the same number of solutions to the equation \eqref{eq:comb-01}.
\end{example}

\begin{example}
Question: consider a urn with 15 identical (in shapes), but different (in colour) balls (i.e. an instant of seven white, two green, and six red). If you draw a ball from the urn, then compute the probability of drawing the ball from the urn.

\noindent Solution: Drawing a from the urn means either drawing a white as a type A, or drawing a red ball as a type B or drawing a green ball as a type C. So, out of the fifteen identical balls, A contains seven equally likely favourable outcomes, B contains six equally likely favourable outcomes and C contains two equally likely favourable outcomes. And hence, we have the following probabilities:
\begin{align*}
P(A)=\frac{7}{15}, \ P(B)=\frac{6}{15}, \ \mbox{and} \ P(C)=\frac{2}{15}.
\end{align*}
\end{example}
%

\begin{remark}
\begin{enumerate}
\item[a.] Two events $A$ and $B$ are mutually exclusive if $P(A\cap B)=0$.
\item[b.]
\begin{theorem}
Any two mutually disjoint events are mutually exclusive, but converse is not true.
\end{theorem}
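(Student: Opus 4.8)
The plan is to read the statement as two independent claims and dispatch them separately: the forward implication (mutually disjoint $\implies$ mutually exclusive) follows directly from the axioms, while the non-implication of the converse requires producing a single counterexample. So I would structure the proof as ``($\Rightarrow$) axiomatic; (converse) by counterexample.''

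For the forward direction, suppose $A$ and $B$ are mutually disjoint, i.e. $A\cap B=\phi$. Since $A\cap B$ is itself an event (it is the empty event, which belongs to $\sigma(\Omega)$), I would simply invoke axiom (i) of the probability measure in Definition \ref{def:pm-05}, which asserts $P(\phi)=0$. Hence $P(A\cap B)=P(\phi)=0$, and by the definition of mutually exclusive events in remark (a) above, $A$ and $B$ are mutually exclusive. No computation is needed beyond citing the axiom.

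For the converse, the idea is that a set can be nonempty yet carry zero probability, which forces the use of a continuous (non-atomic) model rather than one of the finite equally-likely spaces seen earlier. I would take $\Omega=[0,1]$ equipped with the continuous uniform law from silent feature (a)(i), so that $P([a,b])=b-a$. Then set $A=[0,\tfrac{1}{2}]$ and $B=[\tfrac{1}{2},1]$. Here $A\cap B=\{\tfrac{1}{2}\}\neq\phi$, so $A$ and $B$ are \emph{not} mutually disjoint; yet the length of a single point is zero, giving $P(A\cap B)=P(\{\tfrac{1}{2}\})=0$, so $A$ and $B$ \emph{are} mutually exclusive. This exhibits mutually exclusive events that are not mutually disjoint, establishing that the converse fails.

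The main obstacle is not difficulty but choosing the counterexample in the right kind of model: in any discrete equally-likely space every singleton has strictly positive probability, so there a nonempty intersection can never have probability zero and the two notions collapse into one. The substantive point of the theorem is therefore precisely that ``nonempty'' and ``positive probability'' coincide only in the atomic setting, and the continuous uniform model is exactly what separates them. I would close by remarking that any non-atomic probability law supplies such a counterexample, so the choice $\Omega=[0,1]$ is merely the simplest convenient one.
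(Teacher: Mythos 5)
Your proof is correct and follows essentially the same route as the paper: the forward direction is the same one-line appeal to $A\cap B=\phi \implies P(A\cap B)=P(\phi)=0$, and the converse is refuted by a nonempty intersection of probability zero under a continuous uniform law (your adjacent intervals $[0,\tfrac{1}{2}]$, $[\tfrac{1}{2},1]$ on $\Omega=[0,1]$ versus the paper's axes of the unit square meeting at $\{(0,0)\}$ — the same idea in one dimension instead of two).

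One correction to your closing remark, though: a non-atomic model is \emph{not} forced, and the two notions do not ``coincide in the atomic setting'' in general. The paper's second counterexample is fully discrete: $\Omega=\{x,y,z\}$ with $P(\{x\})=0$, $P(\{y\})=P(\{z\})=\tfrac{1}{2}$, $A=\{x,y\}$, $B=\{x,z\}$, so $A\cap B=\{x\}\neq\phi$ yet $P(A\cap B)=0$. What the converse really requires is merely the existence of a nonempty null event; non-atomicity is a sufficient but not necessary way to obtain one. Your narrower claim is true only for discrete spaces in which every singleton carries strictly positive probability (e.g.\ the equally-likely models), so you should weaken ``forces a continuous model'' to ``requires a model with a nonempty event of probability zero.'' This does not affect the validity of your proof, only the scope of the concluding commentary.
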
 
\begin{proof}
Consider $A$ and $B$ are two mutually disjoint events. Then, we have
\begin{align*}
A \cap B =\phi \implies P(A\cap B)=P(\phi)=0.
\end{align*}
So, $A$ and $B$ are two mutually exclusive.
\end{proof}
\end{enumerate}
\end{remark}

\section{Types of probabilistic models}
\noindent Consider the number of persons at a business location that are talking on their respective phones any-time between $9:00$ AM and $9:10$ AM. Clearly, the possible outcomes are $0,1,2,\ldots,n$, where $n$ is the number of persons in the office. On the other hand, if we are interested in the length of time a particular caller is on the phone during that time period, then the outcomes may be anywhere from $0$ to $T$ minutes, where $T= 10$. Now the outcomes are infinite in number since they lie within the interval $[0,T]$. In the first case, since the outcomes are discrete (and finite), we can assign probabilities to the outcomes $\{0,1,2,\ldots,n\}$. An equi-probable assignment would be to assign each outcome a probability of $1/(n+1)$. In the second case, the outcomes are continuous (and therefore infinite) and so it is not possible to assign a non-zero probability to each outcome. In the continuous case, it is not appropriate to ask for the probability that $T$ will be exactly, e.g. 5 minutes, because this probability will be zero. Instead, we inquire as to the probability that $T$ will be between 5 and 6 minutes.

\noindent So, based on discrete and continuum patterns of outcomes in a sample space of a random experiment, there are two types of probabilistic models:
\begin{enumerate}
\item[a.] Discrete probabilistic model
\item[b.] Continuous probabilistic model
\end{enumerate}
\subsection{Discrete probabilistic model}
A probabilistic model of a random experiment is discrete if the sample space of the random experiment is finite or at most countable (i.e. there are finitely many outcomes or infinitely many outcomes which can be arranged into a simple sequence) i.e. $\Omega=\{\omega\}_{k\in \mathbb{N}}$. And a probability measure of an event A can be computed in the following two steps:
\begin{enumerate}
\item[a.] Come up with an assumption or information from the random experiment, where first instant assumption happens to be uniform assumption (or equally likely outcomes).
\item[b.] Computing the probability measure of an event using the assumption observed from the random experiment in step-01 and the axioms of the probability measure of the event.
\end{enumerate}

\begin{theorem}[Discrete probabilistic modelling]
A probabilistic modelling of a random experiment is discrete if 
\begin{enumerate}
\item[a.] the sample space, $\Omega$, of the experiment is discrete i.e. $\Omega$ is at most countable i.e. $\Omega$ contains at most a countable collection of possible outcomes of the experiment i.e. $\Omega=\{\omega_n | \omega_n \ \mbox{is a possible outcome} \}$.
\item[b.] an even, $A$,  is at most countable as a sub-sequence of some possible outcomes i.e. $\{\omega_{n_k} | \omega_{n_k} \ \mbox{satisfies a given statement} \}$. 
\item[c.] Probability measure of an event (from a random experiment with a finite sample space) is defined as the ratio of number outcomes occur in the event $A$ to number outcomes occur in the sample space $\Omega$.
\end{enumerate}
\end{theorem}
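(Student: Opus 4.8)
The statement is really a characterization: it asserts that the three itemized conditions together furnish a legitimate probabilistic model, so the plan is to exhibit the probability measure they describe and verify that it obeys the axioms of Definition~\ref{def:pm-05}. First I would fix the enumeration $\Omega = \{\omega_n\}_{n}$ guaranteed by condition~(a) and assign to each outcome a weight $p_n := P(\{\omega_n\}) \geq 0$ subject to the single normalisation $\sum_n p_n = 1$. For an arbitrary event $A \in \sigma(\Omega)$, which by condition~(b) is itself an at most countable subfamily $\{\omega_{n_k}\}_k$, I would then \emph{define}
\begin{align*}
P(A) = \sum_{\omega_n \in A} p_n,
\end{align*}
and the whole task reduces to checking that this assignment is a well-defined map $P : \sigma(\Omega) \to [0,1]$ satisfying the three axioms.

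Next I would dispatch the elementary axioms. Taking $A = \Omega$ gives $P(\Omega) = \sum_n p_n = 1$ by normalisation, and $A = \phi$ gives an empty sum, so $P(\phi) = 0$. For a single event $A$, splitting the index set of $\Omega$ into those $\omega_n \in A$ and those $\omega_n \in A^c$ yields $P(A) + P(A^c) = \sum_n p_n = 1$, which is axiom~(ii); the same splitting applied to two disjoint events $A, B$ gives finite additivity $P(A \cup B) = P(A) + P(B)$.

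The one genuinely analytic point — and the step I expect to be the main obstacle — is the $\sigma$-additivity for a countable disjoint family $\{A_i\}_{i \geq 1}$. Here I must justify the interchange
\begin{align*}
P\left(\bigcup_{i=1}^{\infty} A_i\right) = \sum_{\omega_n \in \cup_i A_i} p_n = \sum_{i=1}^{\infty} \sum_{\omega_n \in A_i} p_n = \sum_{i=1}^{\infty} P(A_i),
\end{align*}
where the middle equality regroups the terms of a single nonnegative series according to the disjoint blocks $A_i$. Since all $p_n \geq 0$, the series converges absolutely, so any reordering or regrouping of its terms leaves the sum unchanged (the unconditional convergence of nonnegative series, i.e.\ Tonelli's theorem for sums); disjointness of the $A_i$ guarantees that each $\omega_n$ contributes to exactly one inner sum, so no term is counted twice or omitted. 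This settles all three axioms. Finally, to recover part~(c) I would specialise to a finite $\Omega$ under the uniform (equally likely) law, where the normalisation forces $p_n = 1/|\Omega|$ for every outcome; then $P(A) = \sum_{\omega_n \in A} p_n = |A|/|\Omega|$, which is exactly the ratio of favourable outcomes in $A$ to total outcomes in $\Omega$ asserted in the theorem.
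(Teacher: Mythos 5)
Your verification is mathematically sound, but you should know that the paper offers no proof of this statement at all: despite being labelled a theorem, it functions as a definition, and is followed only by the note that $P(A)$ equals the ratio of favourable to total outcomes for finite $\Omega$, plus worked examples (a die throw, two coin tosses). So what you have written is the justification the paper leaves implicit, and your route is the standard one: postulate weights $p_n \geq 0$ with $\sum_n p_n = 1$, define $P(A) = \sum_{\omega_n \in A} p_n$, and check the axioms of Definition~\eqref{def:pm-05}, the only analytic content being the regrouping of a nonnegative series to obtain $\sigma$-additivity --- which is correctly justified, since disjointness of the $A_i$ ensures each $\omega_n$ is counted exactly once and unconditional convergence of nonnegative series licenses the interchange. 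You also correctly surface the hidden hypothesis in part (c): the ratio formula $P(A) = |A|/|\Omega|$ follows only under the equally-likely assumption, which the theorem statement omits but which the paper invokes in its die example and derives for the continuous analogue in Remark~\eqref{rem:unlawcpm-01}; your derivation of $p_n = 1/|\Omega|$ from uniformity plus normalisation is precisely the discrete counterpart of that remark. One wrinkle worth fixing: you set $p_n := P(\{\omega_n\})$ before $P$ has been defined, which is circular as phrased --- instead take the $p_n$ as given nonnegative data summing to one and then define $P$; with that rewording the construction is clean. In short, the paper buys brevity by treating the statement as a definition illustrated by examples, while your proposal buys rigour by showing that the definition is actually coherent as a probability measure.
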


\noindent Note: If the sample space of a random experiment is a finite i.e. $\Omega=\{x_1, x_2, \ldots, x_n\}$, then computation of probability measure of an event A is given by 
\begin{align*}
P(A)=\frac{\mbox{number of outcomes occur in A}}{\mbox{number of outcomes occur in} \ \Omega}.
\end{align*}

\begin{example}[Throwing a die]
Consider a throw of a die, then $\Omega = \{1,2,3,4,5,6\}$. Define an event $A\to$ even face or outcomes in the experiment, then $A=\{2,4,6\}$. So, $P(A)=3/6=1/2$. 
\end{example}
\begin{example}[Tossing a coin twice]
Consider two tosses of a coin, then $\Omega = \{HH, HT, TH, TT\}$. Define an event $A\to$ at least one head, then $A=\{HH, HT, TH\}$. So, $P(A)=3/4$. 
\end{example}

\begin{example}[Drawing ball from an urn \citep{F68}]
An urn has k red balls and $n-k$ black balls. If two balls are chosen in succession and at random with replacement, then compute the probability of a red ball followed by a black ball. 

\noindent To solve the problem, we first label the k red balls with $1,2,\ldots,k$ and the black balls with $k+1,k+2,\ldots,n$. In doing so the possible outcomes of the experiment can be represented by a 2-tuple $(z_1,z_2) : z_1=1,2,\ldots,n, z_2=1,2,\ldots,n$. A successful outcome is a red ball followed by a black one so that the successful event is $A=\{(z_1,z_2) : z_1=1,2,\ldots,k, z_2=k+1,k+2,\ldots,n\}$. Then the desired probability is given by
\begin{align*}
&a. \ P(A)=\frac{k(n-k)}{n^2}=\frac{k}{n}\left(1-\frac{1}{n} \right) \ \mbox{with replacement},\\
&b. \ P(A)=\frac{k(n-k)}{n(n-1)}=\frac{k}{n}\left(1-\frac{1}{n} \right)\frac{n}{n-1} \ \mbox{without replacement}.
\end{align*}
\end{example}

\begin{example}
Question: if two fair dice are tossed, find the probability that the same number will be observed on each one. Next, find the probability that different numbers will be observed.

\noindent Solution: if we toss two dice together than the sample is given by
\begin{align*}
\Omega = \left\lbrace(i,j)| 1\leq i \leq 6, 1\leq j \leq 6 \right\rbrace \implies |\Omega|=36.
\end{align*}
Now, probability that the same number will be observed on each one is given by
\begin{align*}
P\left(\{(1,1),(2,2),(3,3),(4,4),(5,5),(6,6)\}\right)=6\cdot \frac{1}{36}=\frac{1}{6}.
\end{align*}
And, the probability that different numbers will be observed is given by 
\begin{align*}
& P(\mbox{different numbers will be observed}) \\
&=1-P\left(\{(1,1),(2,2),(3,3),(4,4),(5,5),(6,6)\}\right)\\
&=1- 6\cdot \frac{1}{36}=1- \frac{1}{6}=\frac{5}{6}.
\end{align*}
\end{example}

\begin{example}[Birthday problem \citep{K06}]
A probability class has $n$ students enrolled. Then compute the probability that at least two of the students will have the same birthday. 

\noindent We first assume that each student in the class is equally likely to be born on any day of the year. To solve this problem consider a birthday urn that contains 365 balls. Each ball is labelled with a different day of the year. Now allow each student to select a ball at random, note its date, and return it to the urn. The day of the year on the ball becomes his/her birthday. The probability desired is of the event that two or more students choose the same ball. It is more convenient to determine the probability of the complement event or that no two students have the same birthday. Then, the desired probability is given by
\begin{align*}
& P(\mbox{at least 2 students have same birthday}) \\
& = 1-P(\mbox{no students have same birthday})\\
&=1-\frac{\left(\begin{array}{cc}
365 \\
n
\end{array} \right)}{365^n}.
\end{align*}
\begin{center}
\begin{figure}[h]
    \centering
    \includegraphics[width=1.0\textwidth]{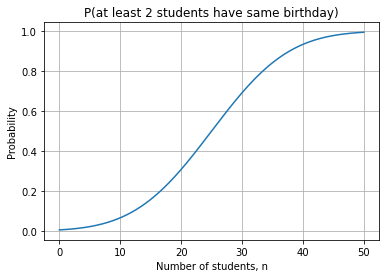}
    \caption{Probability of at least two students having the same birthday.}
    \label{fig:bdp-02}
\end{figure}
\end{center}
\end{example}

\subsection{Continuous probabilistic model}
A probabilistic model of a random experiment is continuous if the sample space of the random experiment is at least continuum in nature i.e. $\Omega\neq\{x_k\}_{k\in \mathbb{N}}$, but $\Omega$ is an interval or union of intervals. In this case, a probability measure of an event $A$ can be computed in the following two steps:
\begin{enumerate}
\item[a.] Come up with an assumption or information from the random experiment, where first instant assumption happens to be uniform assumption (or equally likely outcomes).
\item[b.] Computing the probability measure of an event using the assumption observed from the random experiment in step-01 and the axioms of the probability measure of the event.
\end{enumerate}

\begin{remark}[Uniform law]\label{rem:unlawcpm-01}
Consider a sample space $\Omega =[a,b]$ with a uniform partition $\{x_0=a,x_1,x_2,\ldots,x_n=b\}$ of width $\delta = x_i-x_{i-1}=\frac{b-a}{n}$ i.e. $[a,b]=[x_0,x_1]\cup [x_1,x_2]\cup \ldots \cup [x_{n-1},x_n]$ and the n sub-intervals are mutually disjoint except the common terminal points. 

\noindent As per uniform law or equally likely assumption we have
\begin{align*}
& P([x_0,x_1])=P([x_1,x_2])=\ldots=P([x_{n-1},x_n]) \ \mbox{and} \ P([x_{i-1},x_i]) \propto \delta \\
& \implies P([x_{i-1},x_i])=c_i\delta \ \mbox{for} \ i=1,2,\ldots,n \implies c_1\delta=c_2\delta=\ldots=c_n\delta \\
& \implies c_1=c_2=\ldots=c_n=c \ \mbox{so} \ P([x_{i-1},x_i])=c\delta \ \mbox{for} \ i=1,2,\ldots,n. 
\end{align*}
\noindent Now, using property of probability, we have
\begin{align*}
& P([x_0,x_1])+P([x_1,x_2])+\ldots+P([x_{n-1},x_n])=P(\Omega)=1 \implies nc\delta=1 \\
& \implies c=\frac{1}{n\delta}=\frac{1}{b-a}\implies P([x_{i-1},x_i])=\frac{\delta}{b-a} \ \mbox{for} \ i=1,2,\ldots,n.
\end{align*}
\noindent So, an equally likely or uniform law assumption for a continuous sample space is a valid one and produces a probability equal to the ratio of the length of the interval and the length of the sample space as follows:
\begin{align*}
P([a,b])=\frac{\ \mbox{length of} \ [a,b]}{\ \mbox{length of} \ \Omega}=\frac{b-a}{\ \mbox{length of} \ \Omega}.
\end{align*}
\noindent Since the probability of a point event occurring is zero, the probability of any interval 
\begin{align*}
P([a,b])=P((a,b])=P([a,b))=P((a,b))=\frac{b-a}{\ \mbox{length of} \ \Omega}.
\end{align*}
\end{remark}

\begin{remark}
In continuous probabilistic modelling, we define probability of a subset of $[0,1]$ as the length of the subset. It is easy to compute of a subset in an interval or union of intervals. In real analysis, the concept of length of a set is generalized to a measure of the set and it is always not possible to come up with an explicit probability measure of every subset of a continuum sample space. That is, we need to introduce some kind of finiteness pattern in the continuum set, so that, a concrete probability measure can be defined similar to computation of probabilities in the remark \eqref{rem:unlawcpm-01}. For the sack of simplicity, the finite pattern approach may lead to a well defined result that every measurable set in a continuum sample space is having a probability measure.
\end{remark}

\begin{example}[Linear dartboard]
Suppose one throws a dart at a linear dartboard as shown in Figure \eqref{fig:ldb-01} and measures the horizontal distance from the bullseye or center at $x=0$. We will then have a sample space $\Omega=\{x | -\frac{1}{2}\leq x \leq \frac{1}{2}\}$ which is continuum in nature i.e. not countable. A possible approach is to assign probabilities to intervals (i.e. events in $\Omega$) as opposed to sample points. If the dart is equally likely to land anywhere, then we could assign the interval $[a,b]$ a probability equal to the
length of the interval or
\begin{align*}
P([a,b])=\frac{b-a}{\frac{1}{2}+\frac{1}{2}}=b-a  \ \mbox{for} \ -\frac{1}{2} \leq a\leq b\leq \frac{1}{2}.
\end{align*}
\begin{center}
\begin{figure}[h]
    \centering
    \includegraphics[width=1.0\textwidth]{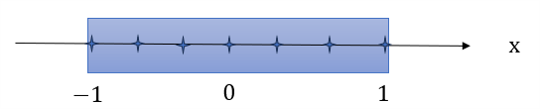}
    \caption{A linear dartboard.}
    \label{fig:ldb-01}
\end{figure}
\end{center}
\end{example}

\begin{example}
Question: a person always arrives at his job between 8:00 AM and 8:20 AM. He is equally likely to arrive any time within that period. What is the probability that he will arrive at 8:10 AM? What is the probability that he will arrive between 8:05 and 8:10 AM?

\noindent Solution: since arrival time between 08:00 AM to 08:20 AM. And the person is arriving in equally likely manner in the interval. So, we have
\begin{align*}
(a.) \ P(\mbox{he will arriave at 8:10 AM})=0.
\end{align*}
And
\begin{align*}
(b.) \ P(\mbox{he will arriave between 8:05 AM and 08:10 AM})&=\frac{08:10-08:05}{08:20-08:00} \\
                                                           &=\frac{5}{20}=\frac{1}{4}.
\end{align*}
\end{example}

\begin{example}[Arrival time]
A person always arrives at his job between 8:00 AM and 8:20 AM. He is equally likely to arrive any time within that period. What is the probability that he will arrive at 8:10 AM? What is the probability that he will arrive between 8:05 and 8:10 AM?

\noindent Solution: since arrival time between 08:00 AM to 08:20 AM. And the person is arriving in equally likely manner in the interval. So, we have
\begin{align*}
(a.) \ P(\mbox{he will arriave at 8:10 AM})=0.
\end{align*}
And
\begin{align*}
(b.) \ P(\mbox{he will arriave between 8:05 AM and 08:10 AM})&=\frac{08:10-08:05}{08:20-08:00} \\
                                                           &=\frac{5}{20}=\frac{1}{4}.
\end{align*}
\end{example}

\begin{example}[Romeo and Juliet meeting \citep{BT08}]
Romeo and Juliet have a date on given time and each will arrive at the meeting with a delay between 0 and 1 hour with all the pair of delays are equally likely. The first to arrive wait 15 minutes and will leave the place if the other has not arrived. Compute the probability of meeting.

\noindent The sample space of the experiment is as follows: $\Omega=\{(x,y)|0\leq x \leq 1, 0\leq y \leq 1\}$. Event of interest of the problem is $A=\{(x,y)||x-y|\leq \frac{1}{4}, 0\leq x \leq 1, 0\leq y \leq 1\}$ and is shaded in the figure \eqref{fig:rjcpm-01}.
\begin{center}
\begin{figure}[h]
    \centering
    \includegraphics[width=1.0\textwidth]{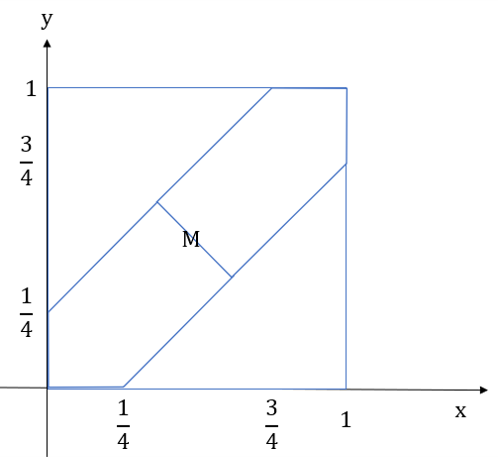}
    \caption{The event $A$ that Romeo and Juliet will arrive within 15 minutes of each other.}
    \label{fig:rjcpm-01}
\end{figure}
\end{center}
\noindent Thus, the desired probability is given by
\begin{align*}
P(A)=1-\frac{3}{4} \cdot \frac{3}{4}=1-\frac{9}{16}=\frac{7}{16}.
\end{align*}
\end{example}

\section{Conditional probability}
\noindent In many real-world experiments, an outcome may not be completely random since we have some prior knowledge. For instance, (a.) knowing that it has rained the previous 2 days might influence our assignment of the probability of sunshine for the following day, (b.) knowing that from a population, a person's height exceeds 6 ft might influence assignment of probability that he weighs more than $100$kg. It is the interaction between the original probabilities and the probabilities in light of prior knowledge that we wish to describe and quantify, leading to the concept of a conditional probability. So, it provides us with a way to reason about the outcome of an experiment, based on partial information.
\begin{definition}[Conditional probability]\label{def:conprob-01}
Suppose that we know outcome of an event $A$ is within some given event $B$, then conditional probability of $A$ given $B$ with $P(B)>0$ is defined as below:
\begin{align*}
P(A|B)=\frac{P(A\cap B)}{P(B)}.
\end{align*} 
\begin{center}
\begin{figure}[h]
    \centering
    \includegraphics[width=1.0\textwidth]{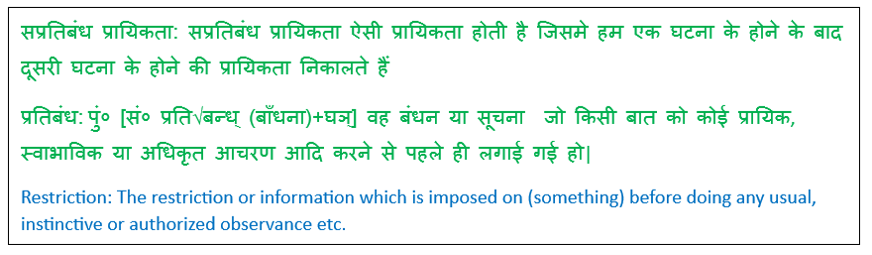}
    \caption{A self-explanatory meaning of conditional probability in Hindi with corresponding translation in English.}
    \label{fig:cond-prob-01}
\end{figure}
\end{center}
\end{definition}

\begin{example}
Question: In three successive tosses of a fair coin, compute the conditional probability $P(A|B)$ when $A$ and $B$ are the events 
\begin{align*}
A = \{\mbox{more heads than tails come up}\},\ B= \{\mbox{1st toss is a head}\}.
\end{align*} 

\noindent Answer: We have the sample space as a collection of all possible outcomes 
\begin{align*}
\Omega=\{HHH, HTH, HHT, THH, TTH, THT, HTT, TTT\}
\end{align*} 
and the events:
\begin{align*}
A = \{\mbox{more heads than tails come up}\},\ B= \{\mbox{1st toss is a head}\}.
\end{align*} 
Then
\begin{align*}
P(A|B)=\frac{P(A\cap B)}{P(B)}=\frac{3}{4}.
\end{align*}
\end{example}

\begin{example}[\citep{BT08}]
Question: a conservative design team, call it C, and an innovative design team, call it N, are asked to separately design a new product within a month. From past experience we know that:
\begin{enumerate}
\item[(a)] The probability that team C is successful is $2/3$. 
\item[(b)] The probability that team N is successful is $1/2$. 
\item[(c)] The probability that at least one team is successful is $3/4$.
\end{enumerate}
If both teams are successful, the design of team N is adopted. Assuming that exactly one successful design is produced, then compute the probability that it was designed by team N.

\noindent Answer: There are four possible outcomes here, corresponding to the four combinations of success and failure of the two teams as follows:
\begin{align*}
\Omega=\{SS,SF,FS,FF\} \ \mbox{where} \ SF \equiv C \ \mbox{succeeds and }\ N \mbox{fails}.
\end{align*}
\noindent 02: Events of interests and corresponding probabilities. 
\begin{enumerate}
\item[a.] $A_1 \to C$ is successful i.e.$A_1=\{SS, SF\}$, so $P(A_1)=2/3$.
\item[b.] $A_1 \to N$ is successful i.e. $A_2=\{FS, FF\}$, so $P(A_2)=1/2$.
\item[c.] $A_3 \to $ at least one team is successful i.e. $A_3=\{SS,SF,FS\}$, so $P(A_3)=3/4$.
\item[d.] From normalizing property of $\Omega$, we have $P(\{SS.SF,FS,FF\})=P(\Omega)=1$.
\end{enumerate}
So, we have
\begin{align*}
P(SS)=5/12,P(SF)=1/4,P(FS)=1/12,P(FF)=1/4.
\end{align*}
The the desired probability is given by
\begin{align*}
P(A)=P(\{SF\}|\{SF,FS\})=\frac{P(SF)}{P(SF)+P(FS)}=\frac{1}{4}.
\end{align*}
\end{example}

\begin{example}[bus arrival time]
Question: this morning, I caught the bus whose arrival time is based on an uniform assumption (i.e. it is equally likely to come at any time) that bus always comes within at most one hour. Compute the probability that whether bus came within the first 5 minutes in the following two situation (a.) not any further information, (b.) the bus actually came in the first 10 minutes.

\noindent Solution: In the absence of any further information, you would say this is quite unlikely as
\begin{align*}
P\left(t\in \left[0,\frac{1}{12} \right] \right)=\frac{1}{12}\approx 0.083.
\end{align*}
I give you a hint: I tell you that the bus actually came in the first 10 minutes. given this additional information, it seems much more likely that the bus came in the first 5 minutes than without this information in figure \eqref{fig:artime-01}.
\begin{align*}
P\left(t\in \left. \left[0,\frac{1}{12} \right] \right| \left[0,\frac{1}{6} \right] \right)=\frac{1}{2}=0.5.
\end{align*}
\begin{center}
\begin{figure}[h]
    \centering
    \includegraphics[width=1.0\textwidth]{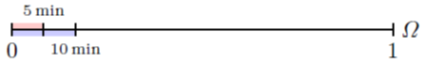}
    \caption{Bus arrival time conditioned on the bus actually came in the first 10 minutes.}
    \label{fig:artime-01}
\end{figure}
\end{center}
\end{example}

\subsection{Joint probability}
\begin{definition}[Joint probability \citep{BT08}]\label{def:jp-01}
In order to compute joint probability of two events A and B is a restatement of the definition of conditional probability \eqref{def:conprob-01} leads to a multiplication rule as:
\begin{align*}
P(A\cap B)=\left\lbrace\begin{array}{cc}
P(A)P(B|A) & \ \mbox{if A occurs first} \\
P(B)P(A|B) & \ \mbox{if B occurs first}.
\end{array} \right.
\end{align*}
Furthermore, an event A which occurs if and only if each one of several events $A_1,\ldots,A_n$ has occurred, i.e.
\begin{align*}
A=A_1\cap \ldots \cap A_n \to P(A)=P(A_1)P(A_2|A_1)\cdots P(A_n|A_1,\ldots,A_{n-1}).
\end{align*}
The occurrence of A can be viewed as an occurrence of $A_1$, followed by the occurrence of $A_2$, then of $A_3$, etc, and it is visualized as a path on the tree with n branches, corresponding to the events $A_1,\ldots,A_n$ in \eqref{fig:jpcr-02}.
\begin{center}
\begin{figure}[h]
    \centering
    \includegraphics[width=1.0\textwidth]{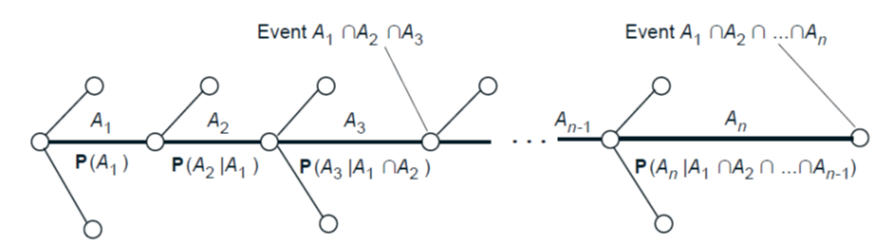}
    \caption{Visualizing occurrence of A through joint occurrence of $A_1,\ldots,A_n$ sequentially.}
    \label{fig:jpcr-02}
\end{figure}
\end{center}
\end{definition}

\begin{example}
Question: In a factory there are 100 units of a certain product, 5 of which are defective. When we pick three units from the 100 units at random, compute the probability that none of them are defective.

\noindent Solution: let $A_i$ is the event that the ith chosen unit is not defective, for $i=1,2,3$. Then, we need to compute $P(A_1\cap A_2 \cap A_3)$ as the desired probability of interest.

\noindent Now, we have 
\begin{align*}
P(A_1)=\frac{95}{100} \ \mbox{from uniform assumption}.
\end{align*}
Given that the first chosen item was good, the second item will be chosen from 94 good units and 5 defective units, thus
\begin{align*}
P(A_2|A_1)=\frac{94}{99} \ \mbox{from uniform assumtion conditioned on} \ A_1.
\end{align*} 
Given that the first and second chosen items were okay, the third item will be chosen from 93 good units and 5 defective units, thus
\begin{align*}
P(A_3|A_1,A_2)=\frac{93}{98} \ \mbox{from uniform assumtion conditioned on} \ A_1,A_2. 
\end{align*}
So, the desired probability is given by
\begin{align*}
P(A)&=P(A_1\cap A_2 \cap A_3)=P(A_1)P(A_2|A_1)P(A_3|A_1,A_2)\\
    &=\frac{95}{100} \cdot \frac{94}{99} \cdot \frac{93}{98} =0.8560.
\end{align*}
\end{example}

\begin{definition}[Independent events]
Consider two events A and B, if chance of occurrence of one first does effect the chance of occurrence of the other i.e. conditioned on the outcome of one event does not change the probability of the other event, then these two events are independent to each other. Mathematically,
\begin{align*}
P(B|A)=P(B) \ \mbox{or} \ P(A|B)=P(A) \implies P(A\cap B)=P(A)P(B).
\end{align*}
It can generalized to a sequence of events namely $A_1,A_2,\ldots,A_n$: the events $A_1,\ldots,A_n$ are independent if 
\begin{align*}
P(A_i|A_{j_1},\ldots,A_{j_k})=P(A_i) \ \mbox{for} \ i\neq j_1 \neq \ldots \neq j_k, 1\leq k\leq n.
\end{align*}
Furthermore, conditioned on an event C, the events A and B are called conditionally independent if
\begin{align*}
& P(B|A,C)=P(B|C) \ \mbox{or} \ P(A|B,C)=P(A|C) \implies  \\
& P(A\cap B|C)=P(A|C)P(B|C).
\end{align*}
\end{definition}

\begin{example}
Backdrop: Consider two independent fair coin tosses, in which all f a our possible outcomes are equally likely. Let 
\begin{align*}
& A_1= \{1st \ \mbox{toss is a head.}\}, A_2 =\{2nd \ \mbox{toss is head.}\},\\
& B=\{\mbox{the two tosses have different results.}\}
\end{align*}
Question: show that (a.) $A_1$ and $A_2$ are independent, (b.) but conditioned on B, $A_1$ and $A_2$ are dependent.

\noindent Answer: we have
\begin{align*}
& A_1=\{HT,HH\}, A_2=\{TH,HH\}, B=\{HT,TH\} \implies P(A_1)=\frac{1}{2}, \\
& P(A_2)=\frac{1}{2}, P(B)=\frac{1}{2}, P(A_1\cap A_2)=\frac{1}{4} \\
& P(A_1|B)=\frac{1}{1}, P(A_2|B)=\frac{1}{2}, P(A_1\cap A_2 |B)=0.
\end{align*}
Then
\begin{align*}
& P(A_1\cap A_2)=\frac{1}{4}=\frac{1}{2}\cdot \frac{1}{2}=P(A_1)P(A_2) \implies \\
& \ \mbox{$A_1$ and $A_2$ are indepedent}.
\end{align*}
Again
\begin{align*}
& P(A_1\cap A_2 |B)=0 \neq P(A_1|B)P(A_2|B)=\frac{1}{2}\cdot \frac{1}{2}=\frac{1}{4} \implies \\
& \ \mbox{$A_1|B$ and $A_2|B$ are depedent}.
\end{align*}
\end{example}

\subsection{Total probability}
\begin{definition}[Mutually disjoint events]
Consider any two sets $A$ and $B$. Then A and B are mutually disjoint if
\begin{align*}
A\cap B=\phi.
\end{align*}
Moreover, a list of events namely $E_1,E_2,E_3,\ldots$ are mutually disjoint if the intersection of any pair of events is empty, that is,
\begin{align*}
E_i\cap E_j =\phi \ \mbox{for} \ i\neq j, \ i,j=1,2,3,\ldots.
\end{align*}
\end{definition}

\begin{definition}[Mutually exclusive events]
Consider two events $A$ and $B$ in a probabilistic modelling of a random experiment. Then A and B are mutually exclusive if
\begin{align*}
P(A\cap B)=0 \ \mbox{i.e. joint occurance of A and B is impossible}.
\end{align*}
That is, happening of one in a single trial, kills the happening of the other (or others) in the same trial. For example, a coin cannot land with head and tail (or a die cannot land with several faces up), so they are exclusive outcomes.

\noindent Moreover, a list of events namely $E_1,E_2,E_3,\ldots$ are mutually exclusive if the joint occurrence of any pair of events is impossible, that is,
\begin{align*}
P(E_i\cap E_j) =0 \ \mbox{for} \ i\neq j, \ i,j=1,2,3,\ldots.
\end{align*}
And, for mutually exclusive events $E_1,E_2,E_3,\ldots$, we have
\begin{align*}
P(E_1)+P(E_2)+P(E_3)+\ldots  \leq 1=P(\Omega). 
\end{align*}
\end{definition}

\begin{definition}[collectively exhaustive]
A list of events $E_1,E_2,E_3,\ldots$ in a probabilistic modelling of a random experiment is collectively exhaustive if
\begin{align*}
E_1\cup E_2\cup E_3\cup \ldots = \Omega \implies P(E_1\cup E_2\cup E_3\cup \ldots)=1=P(\Omega).
\end{align*}
Moreover, if a list of events $E_1,E_2,E_3,\ldots$ are  mutually exclusive and collectively exhaustive, then
\begin{align*}
P(E_1) + P(E_2) + P(E_3)+ \ldots=1=P(\Omega).
\end{align*}
\end{definition}

\begin{example}
Consider a random experiment with the sample space: $\Omega=\{1,2,3,4\}$, and define a few events as:
\begin{align*}
A_1=\{1,2\}, \ A_2=\{3\}, \ B_1=\{1,2,3\}, \ B_2 =\{3,4\}.
\end{align*}
Then
\begin{enumerate}
\item[a.] $A_1=\{1,2\}$ and $A_2=\{3\}$ are mutually exclusive but not collectively exhaustive.
\item[b.] $B_1=\{1,2,3\}$ and $B_2=\{3,4\}$ are collectively exhaustive but not mutually exclusive.
\item[c.]$\{A_1,B_2\}=\{1,2,3,4\}$ is a collection of mutually exclusive and collectively exhaustive events, and is thus it forms a partition of the sample space $\Omega$.
\end{enumerate}
\end{example}

\begin{theorem}[Mutually disjoint vs mutually exclusive]
In a probabilistic modelling of a random experiments, mutually disjoint events are also mutually exclusive, but converse is not true.
\end{theorem}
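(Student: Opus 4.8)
The plan is to prove the statement in two halves: first the implication that mutually disjoint events are mutually exclusive, and then a counterexample establishing that the converse fails. These correspond exactly to the two assertions in the theorem, and the second is where essentially all of the content lies.

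For the forward direction I would start from the definition of mutually disjoint events, namely $A \cap B = \phi$, and simply apply the probability measure to both sides. Invoking the axiom $P(\phi) = 0$ from Definition \ref{def:pm-05} yields $P(A \cap B) = P(\phi) = 0$, which is precisely the defining condition for $A$ and $B$ to be mutually exclusive. This step is routine and reproduces the short argument already given in the earlier remark-level theorem; no subtlety arises because the empty-set axiom is built directly into the probability measure.

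The substance is the converse, where I must exhibit two events that are mutually exclusive (their intersection has probability zero) yet \emph{not} mutually disjoint (their intersection is non-empty). A discrete equi-probable model cannot supply such a pair, since there every non-empty event receives strictly positive probability; I would therefore pass to a continuous probabilistic model, in which single points carry zero probability, as established in Remark \ref{rem:unlawcpm-01}. Concretely, take $\Omega = [0,1]$ equipped with the uniform law and set $A = [0, \tfrac{1}{2}]$ and $B = [\tfrac{1}{2}, 1]$. Then $A \cap B = \{\tfrac{1}{2}\} \neq \phi$, so the two events are manifestly not disjoint, while the point-event property from Remark \ref{rem:unlawcpm-01} gives $P(A \cap B) = P(\{\tfrac{1}{2}\}) = 0$, so they are mutually exclusive. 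This single example simultaneously confirms that mutual exclusivity does not force disjointness.

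The main conceptual point, rather than a computational obstacle, is recognizing that the converse can fail \emph{only} in a continuous (non-atomic) model and then choosing a counterexample whose intersection is a non-empty set of measure zero. The only thing requiring justification is the legitimacy of that counterexample, i.e. the equality $P(\{\tfrac{1}{2}\}) = 0$; but this is exactly the probability-of-a-point property already derived in Remark \ref{rem:unlawcpm-01}, so no new machinery is needed. I expect the write-up to be short, with the continuous counterexample being the crux.
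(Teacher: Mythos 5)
Your proof is correct and takes essentially the same route as the paper: the forward direction applies $P(\phi)=0$ to $A\cap B=\phi$, and the converse is refuted by a non-empty intersection carrying probability zero under a continuous uniform law (your $A=[0,\tfrac{1}{2}]$, $B=[\tfrac{1}{2},1]$ on $\Omega=[0,1]$ parallels the paper's unit-square example with $A\cap B=\{(0,0)\}$). One caveat: your side claim that the converse can fail \emph{only} in a continuous (non-atomic) model is false --- it fails in any model with a zero-probability atom, and indeed the paper's second counterexample is discrete, namely $\Omega=\{x,y,z\}$ with $P(\{x\})=0$, $P(\{y\})=P(\{z\})=\tfrac{1}{2}$, $A=\{x,y\}$, $B=\{x,z\}$, so that $A\cap B=\{x\}\neq\phi$ yet $P(A\cap B)=0$; only your narrower observation about \emph{equi-probable} discrete models is accurate.
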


\begin{proof}
Consider two mutually disjoint events A and B in a probabilistic modelling of a random experiment. Then
\begin{align*}
A \cap B = \phi \implies P(A\cap B)=P(\phi)=0.
\end{align*}
Since $P(A\cap B)$, so the mutually disjoint events A and B are also mutually exclusive.

\noindent A counter example of the converse: consider points in the square with each coordinate uniformly distributed from 0 to 1. Let A
 be the event where the x-coordinate is 0, and B be the event that the y-coordinate is 0. Then
 \begin{align*}
 A\cap B =\{(0,0)\}\neq \phi, \ \mbox{but} \ P(A\cap B)=0.
 \end{align*}
So A and B are not mutually disjoint, but they are mutually exclusive.

\noindent Another counter example: consider a sample space be $\Omega=\{x,y,z \}$ with probabilities $P(\{x \})=0, P(\{y \})=1/2$, and $P(\{z \})=1/2$. If $A=\{x,y \}$ and $B=\{x,z \}$, then $A\cap B=\{x \}$, but $P(A\cap B)=P(\{x \})=0$. That is, they are mutually exclusive but not disjoint.
\end{proof}

\begin{definition}[Total probability]\label{def:tp-03}
Consider a sample space $\Omega$ with a partition $\{B_1,\ldots,B_n\}$ i.e. 
\begin{enumerate}
\item[a.] mutually exclusive:
\begin{align*}
B_i \cap B_j =\phi \ \mbox{for any pair of} \ B_i,B_j, i\neq j. 
\end{align*}
\item[b.] collectively exhaustive:
\begin{align*}
B_1\cup B_2 \cup \ldots \cup B_n =\Omega.
\end{align*}
\end{enumerate}
Suppose $A\subset \Omega$ be an event in the experiment, then $\{B_1\cap A, \ldots,B_n\cap A\}$ introduces a partition of A as (in figure \eqref{fig:tpp-03}):
\begin{enumerate}
\item[a.] mutually exclusive:
\begin{align*}
(B_i\cap A) \cap B_j\cap A =\phi \ \mbox{for any pair of} \ B_i,B_j, i\neq j. 
\end{align*}
\item[b.] collectively exhaustive:
\begin{align*}
B_1\cap A \cup B_2\cap A \cup \ldots \cup B_n\cap A =A.
\end{align*}
\begin{center}
\begin{figure}[h]
    \centering
    \includegraphics[width=1.0\textwidth]{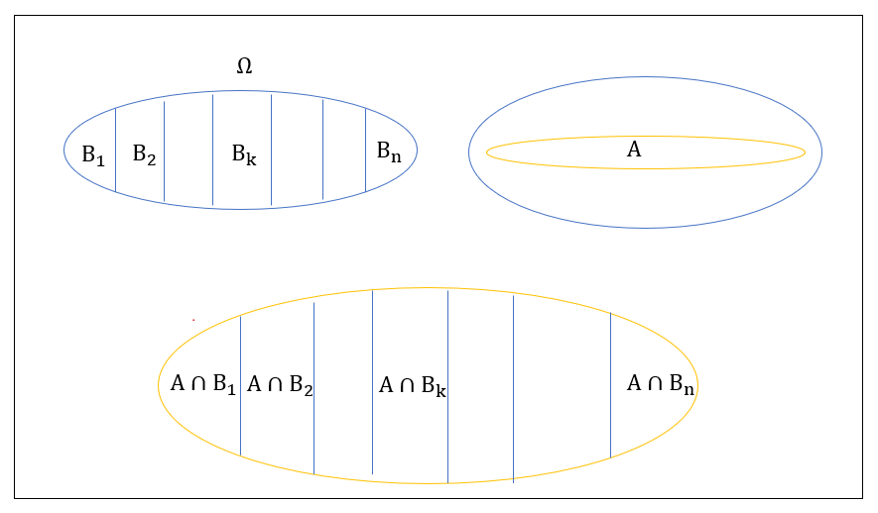}
    \caption{Visualizing a partition of event A through partition of the sample space $\Omega$.}
    \label{fig:tpp-03}
\end{figure}
\end{center}
Then,
\begin{align*}
P(A)&=P(A\cap B_1 \cup \ldots \cup A\cap B_n)=\sum_{i=1}^n P(A\cap B_i) \\
    &=\sum_{i=1}^n P(B_i)P(A|B_i).
\end{align*}
\end{enumerate}
\end{definition}

\begin{example}
Question: Three machines make parts at a factory. Suppose, we know the following about the manufacturing process that Machine 1 ($M_1$) makes $60\%$ of the parts, Machine 2 ($M_2$) makes $30\%$ of the parts, Machine 3 ($M_3$) makes $10\%$ of the parts. Of the parts $M_1$ makes $7\%$ are defective, of the parts $M_2$ makes $15\%$ are defective, of the parts $M_3$ makes $30\%$ are defective. Compute the probability of defective products produced by the factory.

\noindent Answer: the desired probability of defective product produced by the factory is given by
\begin{align*}
P(D)&=P(M_1)P(D|M_1)+P(M_2)P(D|M_2)+P(M_3)P(D|M_3) \\
    &=0.6\cdot 0.07 +0.3\cdot 0.15 + 0.1\cdot 0.3=0.117.
\end{align*}
\end{example} 

\subsection{Bayes rule}
\begin{definition}[Bayes rule]\label{def:brule-04}
Consider two events A and B with $P(A)>0, P(B)>0$, then Bayes rule is a restatement of the definition of conditional probability \eqref{def:conprob-01}  together with total probability \eqref{def:tp-03} as:
\begin{align*}
& P(A\cap B)=\left\lbrace\begin{array}{cc}
P(A)P(B|A) & \ \mbox{if A occurs first} \\
P(B)P(A|B) & \ \mbox{if B occurs first}.
\end{array} \right.\\
& \implies P(B|A)=\frac{P(A\cap B)}{P(A)}=\frac{P(A\cap B)}{\sum_{i=1}^nP(B_i)P(A|B_i)}
\end{align*}
where, events $\{B_1,\ldots,B_n\}$ together is a partition of the sample space $\Omega$, and hence it brings a partition of the event A as $\{A\cap B_1,\ldots,A\cap B_n\}$.

\noindent Furthermore, the probabilities of effect causes $\{B_1,\ldots,B_n\}$ conditioned on effect A, can be reverse calculated as
\begin{align*}
P(B_k|A)=\frac{P(A\cap B_k)}{P(A)}=\frac{P(B_k)P(A|B_k)}{\sum_{k=1}^nP(B_i)P(A|B_i)}.
\end{align*}
\end{definition}

\begin{example}[01: Medical diagnostics for a rare disease i.e. a false positive]
Le A be the event that the test comes back positive, and B be the event that you have the disease. Our modelling assumptions are: 1. $95\%$ of patients with the disease test positive i.e. $P(A|B)=0.95$, 2. $2\%$ of patients without the disease test positive i.e. $P(A|B^c)=0.02$, 3. one in a thousand people have the disease i.e. $P(B)=0.001$. Compute the probability $P(B|A)\to$ you have the disease given that the test comes back positive.

\noindent Answer: we have the following probabilistic data:
\begin{align*}
P(A|B)=0.95, P(A|B^c)=0.02, P(B)=0.001.
\end{align*}
Now, in order to compute the desired probability $P(B|A)$, we apply Bayes rule as :
\begin{align*}
P(B|A)&=\frac{P(B)P(A|B)}{P(B)P(A|B)+P(B^c)P(A|B^c)}=\frac{0.001\cdot 0.95}{0.001\cdot 0.95 + 0.999\cdot 0.02} \\
      &\approx 0.045.
\end{align*}
\end{example}

\begin{example}[Airport security]
Question: on an airport all passengers are checked carefully. Let T be the random variable indicating whether somebody is a terrorist (i.e.$t=1$) or not (i.e. $t=0$) and A be the variable indicating arrest (i.e. $A=a\in \{0,1\}$. A terrorist shall be arrested with probability $P(A= 1|T=1)=0.98$, a non-terrorist with probability $P(A=1|T=0)=0.001$. One in hundred thousand passengers is a terrorist, (i.e. $P(T=1)=0.00001$. Compute the probability that an arrested person actually is a terrorist.

\noindent Answer: using Bayes theorem, we have
\begin{align*}
P(T=1|A=1)&=\frac{P(T=1)P(A=1|T=1)}{P(A=1)} \\
          &=\frac{P(T=1)P(A=1|T=1)}{P(T=0)P(A=1|T=0)+P(T=1)P(A=1|T=1)} \\
          &=\frac{0.00001\cdot 0.98}{(1-0.00001)\cdot 0.001 + 0.00001\cdot 0.98} \approx 0.01
\end{align*}
\end{example}

\begin{remark}[Screening test for a disease]
One thousand in a 100 thousand people have the disease i.e. disease rate is $1\%$. $99\%$ accurate doesn't really give us information about the disease. So, we use the following terms:
\begin{enumerate}
\item[a.] Sensitivity - the odds that the test will be positive if you have the disease. A $98\%$ sensitive means, it will correctly identify 980 of 1000 people with the disease i.e. it also incorrectly claim $20=1000-980$ people with the disease don't have it,
\item[b.] Specificity - the odds that the test will be negative if you lack the disease. A $99\%$ specific means, it correctly identify 98,010 of 99,000 without the disease i.e. it also incorrectly claim $990=99000-98010$ people without the disease have it. 
\item[c.] Positive predictive value - the odds that the test will correctly predict you have the disease, if you test positive. So, out of $1970=980+990$ people (true positive and false positive) who test positive, 980 have the disease. Thus, our positive predictive value is $1000/1970=50.76\%$.
\item[d.] Negative predictive value - the odds that the test will correctly predict you lack the disease, if you test negative.So, Out of $98030=98010+20$ who test negative (false negative and true negative), 99000 do not have the disease. Thus, our negative predictive value is $98030/99000=99.02\%$.
\end{enumerate}
In this case, this test is first-rate for determining who lacks the disease. The 1970 (true positive and false positive), who test positive can be tested to confirm they do have the disease, whereas those who tested negative (false negative and true negative) need no further tests.
\end{remark}

\begin{remark}
The accuracy of a DNA test would be in its ability to predict a match vs non-match between two samples of blood. For example, I draw blood from one person into two tubes, then perform the test and see if it matches. Repeat a bunch and divide the positives by the total, that's the sensitivity. Then draw blood from two random donors, repeat, and divide the number of negatives over the total. That's the specificity. They are testing whether the machine can tell $A=A$ and $\neq B$, not whether a random sample of all the humans on the planet will match your blood specifically.
\end{remark}

\begin{example}[Drug test]
Question: a drug test (say T) has 1\% false positives (i.e. 1\% of those not taking drugs show positive in the test), and 5\% false negatives (i.e. 5\% of those taking drugs test negative). Suppose that 2\% of those tested are taking drugs. Compute the probability that somebody who tests positive is actually taking drugs (say D).

\noindent  Answer: we define here
\begin{enumerate}
\item[a.] $T=p \to $ Test positive,
\item[b.] $T=n \to $ Test negative,
\item[c.] $D=p \to $ person takes drug,
\item[d.] $D=n \to $ person does not take drugs
\end{enumerate}
Now, we have
\begin{enumerate}
\item[i.] false positive i.e. test positive given that person does not take drug: $P(T=p| D=n)=0.01$
\item[ii.] false negatives i.e. test negative given that person takes drug: $P(T=n|D=p)=0.05 \implies P(T=p| D=p)=0.95$ : true positives i.e. test positive given that person takes drug.
\item[iii.] drug consuming rate: $P(D=p)=0.02 \implies P(D=n)=0.98$. 
\end{enumerate}
Then, using Bayes theorem, the probability that somebody who tests positive is actually taking drugs computed as:
\begin{align*}
P(D=p| T=p)&=\frac{P(D=p)P(T=p| D=p)}{P(T=p)} \\
           &=\frac{P(D=p)P(T=p| D=p)}{P(D=n)P(T=p| D=n) + P(D=p)P(T=p| D=p)} \\
           &=\frac{0.02\cdot 0.95}{0.98\cdot 0.01 + 0.02\cdot 0.95} \approx 0.66.
\end{align*}
An alternative way to solve this exercise is using decision trees. Let's assume there are 1000 people tested. Then we have following decision tree:
\begin{center}
\begin{figure}[htb]
    \centering
    \includegraphics[width=1.0\textwidth]{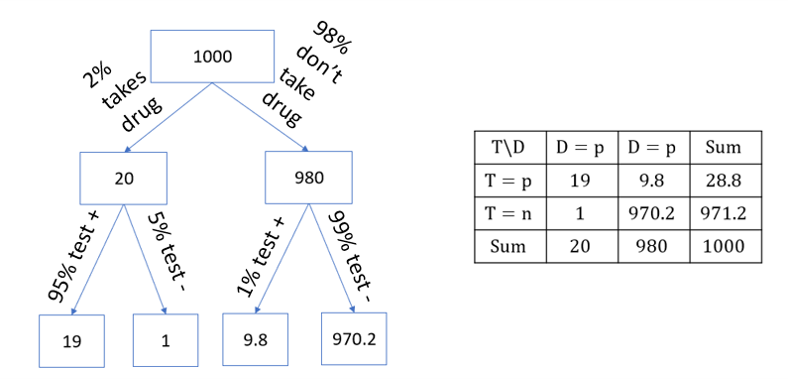}
    \caption{In left plot, there is a decision tree of 100 tested people and in the right plot there is a contingency table for tested people.}
    \label{fig:drug-test-bayes-01}
\end{figure}
\end{center}

So, from the contingency table \eqref{fig:drug-test-bayes-01}, we have
\begin{align*}
\frac{\mbox{taking drug and tests positive}}{\mbox{all positive tests}}=\frac{19}{28.8} \approx 0.66.
\end{align*}
There is a chance of only two thirds that someone with a positive test is actually taking drugs.
\end{example}

\begin{example}[Oral exam]
In an oral exam you have to solve exactly one problem, which might be one of three types, A, B, or C, which will come up with probabilities 30\%, 20\%, and 50\%, respectively. During your preparation you have solved 9 of 10 problems of type A, 2 of 10 problems of type B, and 6 of 10 problems of type C. Then, (a) compute the probability that you will solve the problem of the exam, (b.) given you have solved the problem, compute the probability that it was of type A.

\noindent Answer: the probability to solve the problem of the exam is the probability of getting a problem of a certain type times the probability of solving such a problem, summed over all types. This is known as the total probability.
\begin{align*}
P(solved) & = P(solved| A)P(A) + P(solved| B)P(B) + P(solved| C)P(C)  \\
          & = \frac{9}{10}\cdot 30\% + \frac{2}{10}\cdot 20\% + \frac{6}{10} \cdot 50\% = \frac{27}{100} + \frac{4}{100} + \frac{30}{100} = 0.61.
\end{align*}
Furthermore, using Bayes theorem, we have
\begin{align*}
P(A|solved)= \frac{P(A)P(solved| A)}{P(solved)}=\frac{30\% \cdot \frac{9}{10}}{0.61}= 0.442.
\end{align*}
\end{example}

\chapter{Probability distribution}
\label{dist}
Till now, we have seen in section \eqref{sec:bcpm} that in principle of probabilistic modelling of a random experiment in terms of outcomes and event and carried out through the following three basic concepts: 
\begin{enumerate}
\item[a.] Sample space $\to$ as a collection of all possible outcomes of a random experiment. 
\item[b.] Event $to$ as a subset of the sample (i.e. a collection of some possible outcomes) defined by an statement.
\item[c.] Probability measure $\to$ as a map $P:\Sigma \to [0,1]$ such that a. $P(\Phi)=0, P(\Omega)=1$, b. $0 \leq P(A) \leq 1$ for any event $A\in \Sigma$, c. $P(A\cup B)=P(A)+P(B)$ provided A and B are mutually disjoint (i.e. $A\cap B =\Phi$). In summary, compute the
probability of an event by breaking it into disjoint pieces whose probabilities are summed.
\end{enumerate} 
But in practice, everything we measure in real life is random in the sense (i.e. if we perform the experiment again we will not get the same measured value). So, every measurement is a random variable ($X:\Omega \to \Omega_X \subset \mathbb{R}, X^{-1}(B) \in \Sigma_{\Omega}$ for every $B\in \mathcal{B}$) that provides a concrete/computational representation $\Omega_X$ of the sample space $\Omega$ in a numeric/quantified platform.

\noindent We have already observed that an interesting and generic mathematical principle: to study, analyse and understand a space (with some structure) is extremely advantageous via study of functions from the space to a target space such that functions respect the structure of the space. For example:
\begin{enumerate}
\item An $m\times n$ matrix as a linear transformation from $\mathbb{R}^n$ to $\mathbb{R}^m$.
\item A linear representation of a group to a general linear group of matrices.
\item A rational function from a variety to an algebraic variety.
\item A random variable $X:\Omega \to \Omega_X \subset \mathbb{R}, X^{-1}(B) \in \Sigma_{\Omega}$ for every $B\in \mathcal{B}$.
\end{enumerate} 
This observation makes random variable much important for the computation of randomness in a real word situation.
  
\section{Random variable}
In Bernoulli trials, we are interested only in number of successes rather than successes or failure. In practice, everything in applied/computational probability is about random variables and their probability distributions.

\begin{definition}[Informal]\label{def:rv-01}
A random variable translates an outcome of nature in a given random experiment to some random number ( i.e. a number with a probability of occurrence). For example, distance is a function of a pair of points, the perimeter of a triangle is a function of triangles. In summary, a random variable translates non-numerical sample space $\Omega$ of a random experiment to numerical sample space $\Omega_X$. 
\end{definition}

\begin{definition}[Formal]\label{def:rv-02}
A random variable $X$ is a $(\mathbb{R},\mathcal{B})-$ valued map $X:\Omega \to \mathbb{R}_+$ such that $X^{-1}(B)=\{ \omega | X(\omega) \in B \} \subset \Sigma$ for every $B \in \mathcal{B}$ (i.e. $X^{-1}(B)$ is a measurable set w.r.t probability measure P), where $\Omega$ is a sample space of a given random experiment and $\mathbb{R}$ endures a Borel $\sigma -$ algebra $\mathcal{B}$ on it generated by open intervals. Alternatively, a random variable is a measurable function $X:(\Omega,\Sigma,P)\to (\mathbb{R},\mathcal{B})$.
\end{definition}

\begin{definition}[Probability distribution]\label{def:probdis-03}
Probability distribution of a random variable \eqref{def:rv-02} $X$ is defined as a probability measure $p_X=P\circ X^{-1}:\left(\mathbb{R},\mathcal{B}\right) \to [0,1]$ such that
\begin{align*}
P\circ X^{-1}(A)=P(X^{-1}(A))=P(X\in A)=P\left( \left\lbrace \omega | X(\omega)\in A \right\rbrace\right).
\end{align*}
It describes the likelihood of occurrence of the possible value of a random variable.
\begin{center}
\begin{figure}[h]
    \centering
    \includegraphics[width=1.0\textwidth]{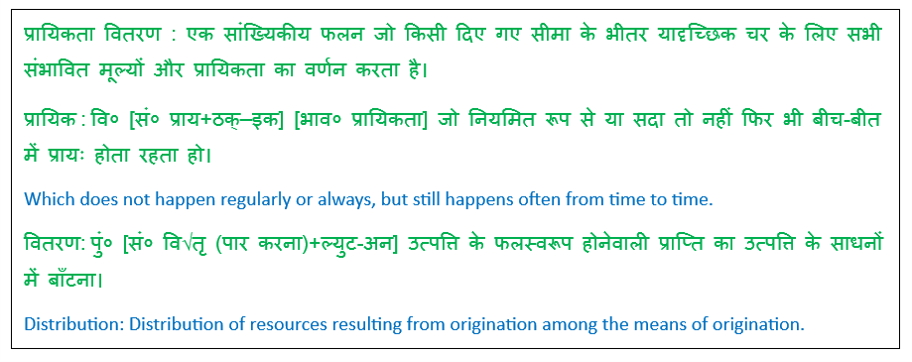}
    \caption{A self-explanatory meaning of probability distribution in Hindi with corresponding translation in English.}
    \label{fig:prob-hdef-01}
\end{figure}
\end{center}
\end{definition}

\begin{remark}
Once we get to more complicated examples, it can quickly become impractical to specify  explicitly. Although the concept of a probability space $(\Omega, \Sigma, P)$ underlies everything, in practice it will be rare that we think about itself, instead we will talk directly about events and their probabilities, and random variables and their distributions (and we can do that without assuming any particular structure for $\Omega$).
\end{remark}

\section{Discrete distribution} \label{sec:disdis-04}
\begin{definition}[Discrete random variable]\label{def:disrv-05}
A measurable function $X:\Omega =\{\omega_1, \omega_2, \ldots, \} \to \Omega_X=\{x_1,x_2,\ldots \} \subset \mathbb{R}$ with $X^{-1}(B)\in \Sigma_{\Omega} =\{X(\omega_1), X(\omega_2), \ldots, X(\omega_n)\} \ \forall \ B\in \mathcal{B}$ such that
\begin{enumerate}
\item[a.] $X^{-1}(x)=\{\omega | X(\omega)=x\} \in \Sigma_{\Omega}$,
\item[b.] $\Omega_X =\{x_k\}_{k\in\mathbb{N}}$ i.e. $\Omega_X$ is at most countable.
\item[a'.] $\Omega_X =\{x_k\}_{k\in\mathbb{N}} \implies B=\{x\} \implies X^{-1}(B)=X^{-1}(x) = \{\omega | X(\omega)=x\} \equiv \{X=x\}$.
\end{enumerate}
is called a discrete random variable and its probability distribution is known as probability mass function. An event of a discrete random variable $X$ is described as below:
\begin{align*}
X\in B = \{x\} \equiv \{X=x\} \equiv \{\omega | X(\omega)=x\}.
\end{align*}
Here, $\{X=x\} \to $ the event that the random variable $X$ takes the given value x.
\end{definition}

\begin{center}
\begin{figure}[h]
    \centering
    \includegraphics[width=1.0\textwidth]{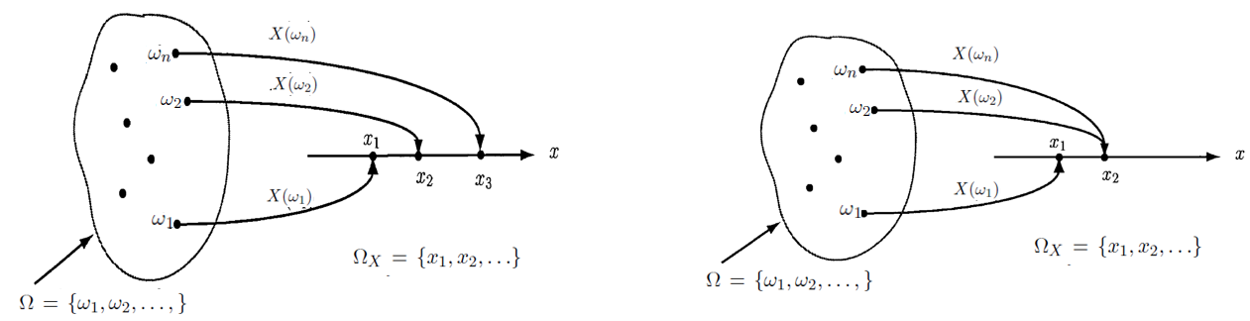}
    \caption{Discrete random variable as a mapping of at most countably sample space into a sequence of real numbers.}
    \label{fig:drv-01}
\end{figure}
\end{center}

\begin{definition}[Probability mass function]
Probability distribution of a discrete random variable (DRV) $X$ is known as a probability mass function (PMF) of the DRV and defined as follows:
\begin{align*}
p_X(x)=P(X=x)=P(\{\omega | X(\omega)=x\}).
\end{align*}
The PMF $p_X$ satisfies the following properties:
\begin{enumerate}
\item[a.] $p_X(x) \in [0,1]$,
\item[b.] $\sum_x P_X(x) =1$,
\item[c.] $P(X\in B) = \sum_{x\in B} p_X(x)$.
\end{enumerate}
\end{definition}

\subsection{Various discrete distribution}
\begin{example}[Bernoulli distribution]
A random variable $X$ (with an event $\{X=x\} \equiv \{\omega = H \ \mbox{or} \ T | X(\omega) =x\}$) has a Bernoulli distribution with a parameter $p\in [0,1]$ if
\begin{align*}
p_X(x) = P(X=x) = \left\lbrace \begin{array}{cc}
1-p & \ \mbox{if} \ x=0 \\
p  & \ \mbox{if} \ x=1
\end{array} \right.
= p^x(1-p)^{1-x} \ \mbox{for} \ x\in \{0,1\}.
\end{align*}
So, $X$ is a Bernoulli random variable and we write $X\sim Ber(p)$ (see in \eqref{fig:brpmf-01}). e.g. a. $X=I_A \to$ an indicator function for an event $A$, b. $X \to \#$ successes in a Bernoulli trial. 

\noindent The Bernoulli distribution is used to model, for example, the outcome of a Bernoulli trial with $1$ representing successes and $0$ representing failure. It is also a basic building block for other classical discrete distributions.

\noindent In the experiment of a single fair coin toss with a probability of getting head is $p$, if we are interested only in whether the outcome is head or tail, then we define
a Bernoulli random variable as follows:
\begin{align*}
X(\omega) =\left\lbrace\begin{array}{cc}
0 & \ \mbox{if} \ \omega=T \\
1 & \ \mbox{if} \ \omega=H.
\end{array} \right.
\end{align*}
With the following Bernoulli distribution as a PMF of the $X$:
\begin{align*}
p_X(0) & = p(X=0) =  \sum_{X^{-1}(0) \in \Sigma} P(\{\omega | X(\omega)=0\})=P(T)= 1-p, \\
p_X(1) & = p(X=1) = \sum_{X^{-1}(1) \in \Sigma} P(\{\omega | X(\omega)=1\})=P(H)= p.
\end{align*}

\begin{center}
\begin{figure}[h]
    \centering
    \includegraphics[width=0.7\textwidth]{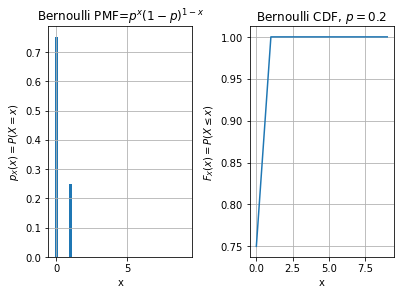}
    \caption{Bernoulli probability mass function for $p = 0.25$.}
    \label{fig:brpmf-01}
\end{figure}
\end{center}

\noindent In the experiment of a single fair die toss if we are interested only in whether the outcome is even or odd, then we define
a Bernoulli random variable as follows:
\begin{align*}
X(\omega) =\left\lbrace\begin{array}{cc}
0 & \ \mbox{if} \ \omega=1,3,5 \\
1 & \ \mbox{if} \ \omega=2,4,6.
\end{array} \right.
\end{align*}
With the following Bernoulli distribution as a PMF of the $X$:
\begin{align*}
p_X(0) & = p(X=0) = \sum_{X^{-1}(0) \in \Sigma} P(\{\omega | X(\omega)=0\})=3/6=1/2, \\
p_X(1) & = p(X=1) = \sum_{X^{-1}(1) \in \Sigma} P(\{\omega | X(\omega)=1\})=3/6=1/2.
\end{align*}
\end{example}

\noindent Here, we can draw a remark that once the PMF has been specified all subsequent probability calculations can be based on it, without referring back to the original sample space S. Specifically, for an event A defined on Sx the probability is given by
\begin{align*}
P(X\in A)=\sum_{i: x_i \in A} p_X(x_i).
\end{align*}
Consider a die whose sides have been labelled with two sides having 1 dot, two sides having 2 dots, and two sides having 3 dots. Hence, $\Omega=\{\omega_1=1,\omega_2=1,\omega_3=2,\omega_4=2,\omega_5=3,\omega_6=3\}$. Then if we are interested in the probabilities of the outcomes displaying either 1, 2, or 3 dots, we would define a random variable as

\begin{example}[Binomial distribution]
A random variable $X$ (with an event $\{X=x\} \equiv \{\omega \in \{H,T\}^n | X(\omega) =x = \# H\}$) has a Binomial distribution with a parameter $p\in [0,1]$ and $n \in \mathbb{N}$ if
\begin{align*}
p_X(x) = P(X=x) = \binom{n}{x} p^x(1-p)^{n-x} \ \mbox{for} \ x\in \{0,1,2,\ldots,n\}.
\end{align*}
So, $X$ is a Binomial random variable and we write $X\sim Bin(n,p)$ (see in \eqref{fig:bnpmf-02}). e.g. $X \to \#$ successes in n Bernoulli trials with $p$ as a probability of a success. Probability of a particular sequence of n Bernoulli trials with x successes is $p^x(1-p)^{n-x}$ and there are exactly $\left( \begin{array}{cc}
n \\
x
\end{array} \right)$ such sequences. 
\begin{center}
\begin{figure}[h]
    \centering
    \includegraphics[width=0.7\textwidth]{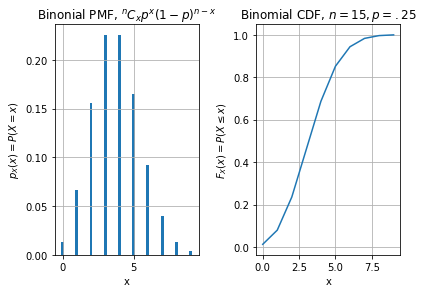}
    \caption{Binomial probability mass function with its cumulative distribution for $n=15,p = 0.25$.}
    \label{fig:bnpmf-02}
\end{figure}
\end{center} 
The location of the maximum of the PMF can be shown to be given by $\arg\max_{x}\left(\left( \begin{array}{cc}
n \\
x
\end{array} \right)
p^x(1-p)^{n-x} \right)=[(n+1)p]$ (see \citep{F68}), where $[x]$ denotes the largest integer less than or equal to x.
\end{example}

\begin{example}[Geometric distribution]
A random variable $X$ (with an event $\{X=x\} \equiv \{\omega \in \{H,T\}^{\infty} | X(\omega) =x = \# \ \mbox{Bernoulli trials till first} \ H\}$) has a Geometric distribution with a parameter $p\in [0,1]$ if
\begin{align*}
p_X(x) = P(X=x) = p(1-p)^{x-1} \ \mbox{for} \ x=1,2,\ldots.
\end{align*}
So, $X$ is a geometric random variable and we write $X\sim Geo(p)$ (see in \eqref{fig:gpmf-01}). e.g. $X \to \#$ Bernoulli trials till first success. 
\begin{center}
\begin{figure}[h]
    \centering
    \includegraphics[width=0.7\textwidth]{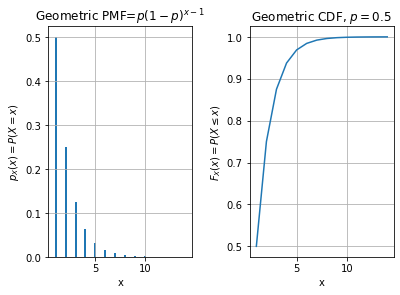}
    \caption{Geometric probability mass function for $p = 0.25$.}
    \label{fig:gpmf-01}
\end{figure}
\end{center}
\end{example}

\begin{example}[Poisson distribution]
A random variable $X$ has a Poisson distribution with a parameter $\lambda =np$, where n is possibly large and p is possibly very small, if
\begin{align*}
p_X(x) =P(X=x)& = \lim_{n \to \infty} \left( \left( \begin{array}{cc}
n \\
x
\end{array} \right)
p^x(1-p)^{n-x} \right) \\
& = \frac{\lambda^x e^{-\lambda}}{x!} \ \mbox{for} \ x=0,1,2,\ldots .
\end{align*}
So, $X$ is a Poisson random variable and we write $X\sim Pois(\lambda)$ (see in \eqref{fig:ppmf-02}). e.g. $X \to \#$ successes in Bernoulli trials per unit interval  (or within a given interval of time or space). 
\begin{center}
\begin{figure}[h]
    \centering
    \includegraphics[width=1.0\textwidth]{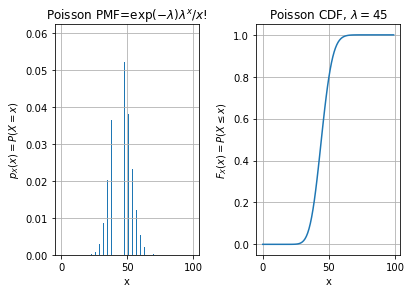}
    \caption{The Poisson probability mass function with its cumulative distribution for $\lambda=75$.}
    \label{fig:ppmf-02}
\end{figure}
\end{center}
\end{example}

\begin{remark}[Poisson approximation of binomial distribution]
\noindent If in a binomial PMF, we let $n \to \infty$ as $p \to 0$ such that the product $\lambda=np$ remains constant, then $Bin(n,p) \to Pois(\lambda)$. Note that $\lambda=np$ represents the expected or average number of successes in n Bernoulli trials, hence, by keeping the average number of successes fixed but assuming more and more trials with smaller and smaller probabilities of success on each trial, we are led to a Poisson PMF (see in \eqref{fig:bppmf-01}).
\begin{center}
\begin{figure}[h]
    \centering
    \includegraphics[width=1.0\textwidth]{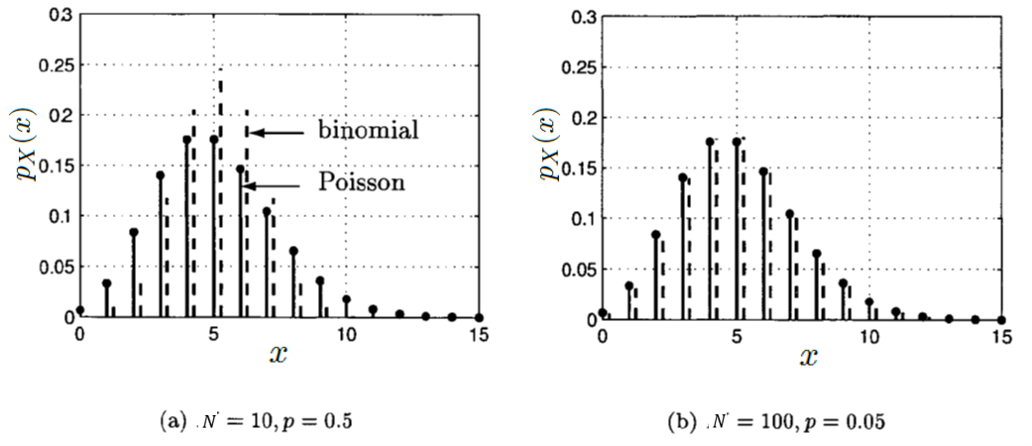}
    \caption{The Poisson approximation to the binomial probability mass function.}
    \label{fig:bppmf-01}
\end{figure}
\end{center}
\end{remark}

\begin{example}[Hyper-geometric distribution \citep{F68}]
In a population of n elements $n_1$ are red and $n_2$ are black. A group of r elements is chosen at random. We seek the probability $p_X(k)$ that the group so chosen will contain exactly k red elements. Here k can be any integer between zero and $n_1$ or r, whichever is smaller. We note that the chosen group contains k red and $r-k$ black elements. The red ones can be chosen in $\binom{n_1}{k}$ different ways and the black ones in $\binom{n-n_1}{r-k}$ ways. Since any choice of k red elements may be combined with any choice of black ones, so we have
\begin{align*}
p_X(k)=P(X=k;n,n_1,r)=\frac{\left( \begin{array}{cc} n_1 \\ k \end{array}\right) \left( \begin{array}{cc} n-n_1 \\ r-k \end{array}\right)}{\left( \begin{array}{cc} n \\ r \end{array}\right)}
\end{align*}
The hyper-geometric distribution can easily be generalized to the case where the original population of size n contains several classes of elements. For example, let the population contain three classes of sizes $n_1,n_2,$ and $n-n_1-n_2$ respectively. If a sample of size r is taken, the probability that it contains $k_1$ elements of the first, $k_2$ elements of the second, and $r-k_1-k_2$ elements of the last class is given by
\begin{align*}
p_X(k)=P(X=k;n,n_1,n_2,r)=\frac{\left( \begin{array}{cc} n_1 \\ k_1 \end{array}\right) \left( \begin{array}{cc} n_2 \\ k_2 \end{array}\right) \left( \begin{array}{cc} n-n_1-n_2 \\ r-k_1-k_2 \end{array}\right)}{\left( \begin{array}{cc} n \\ r \end{array}\right)}
\end{align*}
In Bridge game: the population of 52 cards consists of four classes, each of thirteen elements. The probability that a hand of thirteen
cards consists of five spades, four hearts, three diamonds, and one club is
\begin{align*}
p_X(k)=P(X=(5,4,3,1);52,13,13)=\frac{\left( \begin{array}{cc} 13 \\ 5 \end{array}\right)\left( \begin{array}{cc} 13 \\ 4 \end{array}\right)\left( \begin{array}{cc} 13 \\ 3 \end{array}\right) \left( \begin{array}{cc} 13 \\ 1 \end{array}\right)}{\left( \begin{array}{cc} 52 \\ 13 \end{array}\right)}
\end{align*}
\end{example}

\begin{example}[Pascal or negative binomial distribution \citep{F68}]
In a sequence of independent Bernoulli trials, let the random variable $X$ be the trial at which the $rth$ success occurs, where r is a fixed integer. Then X has a Pascal distribution as follows:
\begin{align*}
p_X(x)=\binom{x}{r-1}p(1-p)^{x-r}.
\end{align*}
Consider a random variable $Y \to $ number of failures before $rth$ success. This formulation is statistically equivalent to the above $X \to $ trial at which the $rth$ success occurs, since $Y=X-r$. And the alternative form of the Pascal distribution is
\begin{align*}
p_Y(y)= P(Y=y) & = \binom{r+y-1}{y} p^r(1-p)^y \\
& = \binom{-r}{y} p^r(p-1)^y.
\end{align*}
As we know negative binomial expansion follows:
\begin{align*}
(1+t)^{-r} = \sum_{k} \binom{-r}{k} t^k = \sum_{k}(-1)^k\binom{r+k-1}{k} t^k.
\end{align*}
The problem of Banach's match boxes \citep{F68}. A certain mathematician always carries one match box in his right pocket and one in
his left. When he wants a match, he selects a pocket at random, the successive choices thus constituting Bernoulli trials with $p=1/2$. Suppose that initially each box contained exactly N matches and consider the moment when, for the first time, our mathematician discovers that a box is empty. At that moment the other box may contain $0,1,2,\ldots, N$ matches, and we denote the corresponding probabilities by $u_r$. Let us identify success with choice of the left pocket. The left pocket will be found empty at a moment when the right pocket contains exactly r matches if, and only if, exactly $N-r$ failures precede the $(N+1)$st success. Therefore the required probability is
\begin{align*}
u_r = \binom{2N-r}{N} \left(\frac{1}{2} \right)^N \left(\frac{1}{2} \right)^{N-r}.
\end{align*}
Generalization of table tennis [Feller]. Suppose that Peter and Paul play a game which may be treated as a sequence of Bernoulli trials in which the probabilities p and q serve as measures for the players' skill. In ordinary table tennis the player who first accumulates
21 individual victories wins the whole game. For comparison with the preceding example we consider the general situation where $2\nu +1$ individual successes are required. The game lasts at least $2\nu +1$ and at most $4\nu +1$ trials. Denote by $a_r$ the probability that Peter wins at the trial number $4\nu +1-r$. This event occurs if, and only if, in the first $4\nu -r$ trials Peter has scored $2\nu$ successes and thereafter wins the $(2\nu +1)$st trial. So, the required probability is
\begin{align*}
a_r = \binom{4\nu -r}{2\nu} p^{2\nu +1}q^{2\nu - r}.
\end{align*}
\end{example}

\begin{example}[Multinomial distribution]
The binomial distribution can easily be generalized to the case of n repeated independent trials where each trial can have one of several outcomes. Denote the possible outcomes of each trial by $E_1,E_2,\ldots,E_r$, and suppose that the probability of the realization of $E_i$ in each trial is $p_i$ subject to the condition
\begin{align*}
p_1 + p_2 + \ldots + p_r =1.
\end{align*}
The result of n trials is a succession like $E_3E_1E_2\ldots$. The probability that in n trials $E_1$ occurs $k_1$ times, $E_2$ occurs $k_2$ times, etc., is
\begin{align*}
p_X(k)=P(X=k=(k_1,k_2,\ldots,k_r))=\frac{n!}{k_1! k_2! \ldots k_r!}p_1^{k_1}p_2^{k_2}\ldots p_r^{k_r},
\end{align*}
where, $k_1+k_2+\ldots+k_r=n$. If $r=2$, then the above multinomial distribution reduces to the binomial distribution with $p_1=p, p_2=1-p, k_1=k, k_2=n-k$.

\noindent For example: in rolling twelve dice, find the probability of getting each face twice. Here $E_1,E_2,\ldots, E_6$ represent the six faces, all $k_i$ equal 2, and all $p_i$ equal $1/6$. Therefore, the probability is $\frac{12!}{2^6}(1/6)^{12}=0.0034$.
\end{example}

\begin{example}[Uniform discrete distribution]
A random variable $X \in \{a,a+1,\ldots,b\}$ has a uniform discrete distribution if
\begin{align*}
p_X(x)=\left\lbrace\begin{array}{cc}
\frac{1}{b-a+1} & \ \mbox{if} \ x\in \{a,a+1,\ldots,b\}, \\
0 & \ \mbox{if} \ x\notin \{a,a+1,\ldots,b\}.
\end{array} \right.
\end{align*}
\end{example}

\subsection{Derived distribution of a discrete random variable}
\noindent Consider a die whose faces are labelled with the numbers $0,0,1,1,2,2$. We wish to find the PMF of the number observed when the die is tossed, assuming all sides are equally likely to occur. If the original sample space is composed of the possible cube sides that can occur, so that $\Omega_X=\{1,2,3,4,5,6\}$, then the transformation appears as follows:
\begin{align*}
Y= y = \left\lbrace\begin{array}{ccc} 
0 & \ \mbox{if} \ x= 1 \ \mbox{or} \ 2 \\
1 & \ \mbox{if} \ x= 3 \ \mbox{or} \ 4 \\
2 & \ \mbox{if} \ x= 5 \ \mbox{or} \ 6. 
\end{array} \right.
\end{align*}
So, PMF of Y is given by
\begin{align*}
p_Y(y)=\left\lbrace\begin{array}{ccc}
p_X(1)+p_X(2)=1/3 \ \mbox{if} \ y=0, \\
p_X(3)+p_X(4)=1/3 \ \mbox{if} \ y=1, \\
p_X(5)+p_X(6)=1/3 \ \mbox{if} \ y=2. \\
\end{array} \right.
\end{align*}
\begin{center}
\begin{figure}[h]
    \centering
    \includegraphics[width=1.0\textwidth]{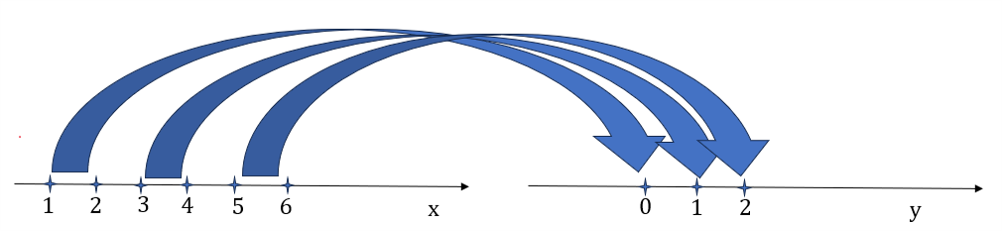}
    \caption{Transformation or function of a discrete random variable.}
    \label{fig:dddrv-01}
\end{figure}
\end{center}
If $Y=g(X)$ is a transformed or derived random variable of a given DRV $X$, then the derived distribution $p_Y(y)$ is given by
\begin{align*}
p_Y(y)=P(Y=y)=P(g(X)=y)=P(X=g^{-1}(y))=\sum_{\{x: g(x)=y\}} p_X(x),
\end{align*}
where $g^{-1}(y)$ is an inverse image of the observed value $Y=y$ in $\Omega_X$. 

\noindent So, computation of derived distribution $p_Y(y) $of a discrete random variable $X$ follows two steps:
\begin{enumerate}
\item Determining all possible observed value of the derived random variable $Y$ i.e. $\Omega_Y$,
\item Computing the derived distribution as follows:
\begin{align*}
p_Y(y)=\sum_{\{x: g(x)=y\}} p_X(x).
\end{align*}
\end{enumerate}

\begin{example}
If $X\sim Ber(p)$ and $Y=2X-1$, then derived distribution $p_Y(y)$ can be computed as follows:
\begin{enumerate}
\item As $X\sim Ber(p)$ and $Y=2X-1$, so $\Omega_X =\{0,1\}$ transformed to $\Omega_Y=\{-1,1\}$.
\item Now, we have
\begin{align*}
p_Y(-1) & =P(Y=-1)=P(2X-1=-1)=P(2X=0) \\
        & =P(X=0)=1-p, \\
p_Y(1) & =P(Y=1)=P(2X-1=1)=P(2X=2) \\
       & =P(X=1)=p.
\end{align*}
\end{enumerate} 
\end{example}

\begin{example}
If $X\sim Unif([-4,4]\cap \mathbb{Z})$ and $Y=|X|$, then derived distribution $p_Y(y)$ can be computed as follows:
\begin{enumerate}
\item As $X\sim Unif([-4,4]\cap \mathbb{Z})$ and $Y=|X|$, so $\Omega_X =\{-4,-3,-2,-1,0,1,2,3,4\}$ transformed to $\Omega_Y=\{0,1,2,3,4\}$.
\item Now, we have
\begin{align*}
p_Y(0) & =P(Y=0)=P(|X|=0)=P(X=0)=1/9, \\
p_Y(1) & =P(Y=1)=P(|X|=1)=P(X\in \{-1,1\})=2/9, \\
p_Y(2) & =P(Y=2)=P(|X|=2)=P(X\in \{-2,2\})=2/9 \\
p_Y(3) & =P(Y=3)=P(|X|=3)=P(X\in \{-3,3\})=2/9 \\
p_Y(4) & =P(Y=4)=P(|X|=4)=P(X\in \{-4,4\})=2/9.
\end{align*}
\end{enumerate} 
\begin{center}
\begin{figure}[h]
    \centering
    \includegraphics[width=1.0\textwidth]{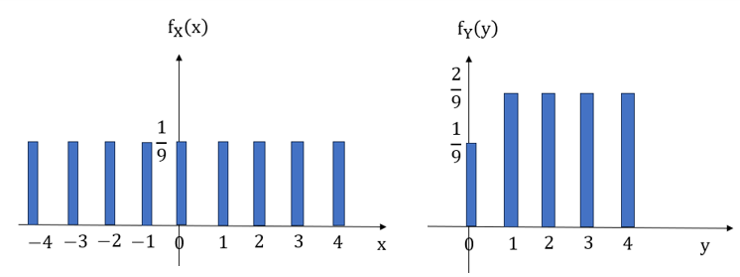}
    \caption{Transformation of $X\sim Unif([-4,4]\cap \mathbb{Z})$ to $Y=|X|$.}
    \label{fig:dddrv-02}
\end{figure}
\end{center}
\end{example}

\begin{example}
Let $Y=|X|$ and $X\sim Unif([-3,5]\cap \mathbb{Z})$ is a uniform random variable, then find PMF $p_Y$ and expectation $E(Y)$ of Y.

\noindent Solution: if $X\sim Unif([-4,4]\cap \mathbb{Z})$ and $Y=|X|$, then derived distribution $p_Y(y)$ can be computed as follows:
\begin{enumerate}
\item As $X\sim Unif([-3,5]\cap \mathbb{Z})$ and $Y=|X|$, so $\Omega_X =\{-3,-2,-1,0,1,2,3,4,5\}$ transformed to $\Omega_Y=\{0,1,2,3,4,5\}$.
\item Now, we have
\begin{align*}
p_Y(0) & =P(Y=0)=P(|X|=0)=P(X=0)=1/9, \\
p_Y(1) & =P(Y=1)=P(|X|=1)=P(X\in \{-1,1\})=2/9, \\
p_Y(2) & =P(Y=2)=P(|X|=2)=P(X\in \{-2,2\})=2/9 \\
p_Y(3) & =P(Y=3)=P(|X|=3)=P(X\in \{-3,3\})=2/9 \\
p_Y(4) & =P(Y=4)=P(|X|=4)=P(X\in \{4\})=1/9 \\
p_Y(5) & =P(Y=5)=P(|X|=5)=P(X\in \{5\})=1/9.
\end{align*}
\end{enumerate} 
\begin{center}
\begin{figure}[htb]
    \centering
    \includegraphics[width=1.0\textwidth]{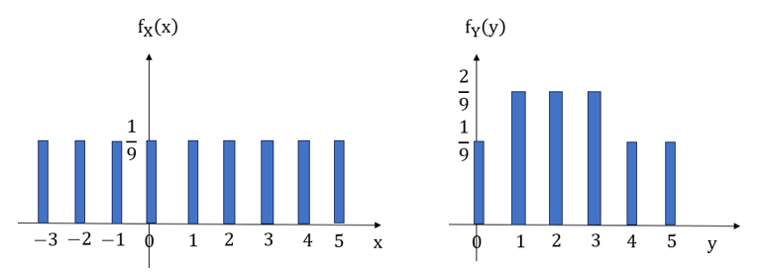}
    \caption{Transformation of $X\sim Unif([-3,5]\cap \mathbb{Z})$ to $Y=|X|$.}
    \label{fig:dddrv-03}
\end{figure}
\end{center}
\end{example}

\begin{example}
If $X\sim Unif([-2,2]\cap \mathbb{Z})$ and $Y=X^2$, then derived distribution $p_Y(y)$ can be computed as follows:
\begin{enumerate}
\item As $X\sim Unif([-2,2]\cap \mathbb{Z})$ and $Y=X^2$, so $\Omega_X =\{-2,-1,0,1,2\}$ transformed to $\Omega_Y=\{0,1,4\}$.
\item Now, we have
\begin{align*}
p_Y(0) & =P(Y=0)=P(X^2=0)=P(X=0)=1/5, \\
p_Y(1) & =P(Y=1)=P(X^2=1)=P(X\in \{-1,1\})=2/5, \\
p_Y(4) & =P(Y=4)=P(X^2=4)=P(X\in \{-2,2\})=2/5.
\end{align*}
\end{enumerate} 
\end{example}

\begin{example}
If $X\sim Pois(\lambda))$ and $Y=g(X)=\left\lbrace\begin{array}{cc} 1 & \ \mbox{if} \ x \mbox{is even} \\ -1 & \ \mbox{if} \ x \mbox{is odd} \end{array} \right.$, then derived distribution $p_Y(y)$ can be computed as follows:
\begin{enumerate}
\item As $X\sim Pois(\lambda))$ and $Y=g(X)$, so $\Omega_X =\{0,1,2,\ldots\}$ transformed to $\Omega_Y=\{-1,1\}$.
\item Now, we have
\begin{align*}
p_Y(1) & =P(Y=1)=P(X \ \mbox{is even})=\sum_{\{x \ \mbox{is even}\}} \frac{\lambda^x}{x!}e^{-\lambda}\\
       & =\frac{1}{2}\left(1+e^{2\lambda}\right), \\
p_Y(-1) & =P(Y=-1)=P(X \ \mbox{is odd})=\sum_{\{x \ \mbox{is odd}\}} \frac{\lambda^x}{x!}e^{-\lambda} \\
        & =\frac{1}{2}\left(1-e^{-2\lambda}\right).
\end{align*}
\end{enumerate} 
\end{example}

\newpage 
 
\subsection{Jointly discrete distribution}
In science and in real life, we are often interested in two (or more) random variables at the same time in a random experiment. In such situations the random variables have a joint distribution that allows us to compute probabilities of events involving both variables and understand the relationship between the variables. This is simplest when the variables are independent. When they are not, we use
covariance and correlation as measures of the nature of the dependence between them.

\begin{definition}[Joint probability mass function]
Suppose X and Y are two discrete random variables and that X takes values $\{x_1, x_2,\ldots , x_m\}$ and Y takes values $\{y_1, y_2,\ldots , y_n\}$. Then, the ordered pair $(X,Y)$ take values in the product $\{(x_1,y_1),(x_1, y_2), \ldots(x_m,y_n)\}$. So, the joint probability mass function (joint pmf) of X and Y can be defined as the function $p_{(X,Y)}(x_i,y_j)=P(X=x_i,Y=y_j)$ as the joint probability of the joint outcome $(X=x_i,Y=y_j)$ and satisfies the following:
\begin{enumerate}
\item $p_{(X,Y)}(x,y) \in [0,1]$ for any $(x,y)\in \Omega_X \times \Omega_Y$,
\item $\sum_{\{(x,y) \in \Omega_X \times \Omega_Y\}}p_{(X,Y)}(x,y) =1$,
\item $P((X,Y)\in B)=\sum_{\{(x,y)\in B\}}p_{(X,Y)}(x,y)$.
\end{enumerate}
More, explicitly, we can visualize the joint probability mass function for each joint point $\{(x_i,y_j)| 1\leq i \leq m, 1\leq j \leq n\}$ in the table \eqref{tab:jpmfdef-01}:
\begin{table}[htb]

    \centering 
    \caption{Joint probability table for joint probability mass function $p_{(X,Y)}$ of $X\in \Omega_X=\{x_1,x_2,\ldots,x_n\}$ and $Y\in \Omega_Y=\{y_1,y_2,\ldots, y_m\}$.}
    \scalebox{0.85}{ 
    \begin{tabular}{ | c | c | c | c | c | c | c | c | c | c |}
        \hline
        $X \backslash Y$ & $y_1$ & $y_2$ & $\ldots$ & $y_j$ & $\ldots$ & $y_n$  \\ 
        \hline
        $x_1$ & $P_{X,Y}(x_1,y_1)$ & $P_{X,Y}(x_1,y_2)$ & $\ldots$ & $P_{X,Y}(x_1,y_j)$ & $\ldots$& $P_{X,Y}(x_1,y_n)$ \\ 
         \hline
        $x_2$ & $P_{X,Y}(x_2,y_1)$ & $P_{X,Y}(x_2,y_2)$ & $\ldots$ & $P_{X,Y}(x_2,y_j)$ & $\ldots$& $P_{X,Y}(x_2,y_n)$ \\ 
         \hline
        $\vdots$ & $\vdots$ & $\vdots$ & $\ldots$ & $\vdots$ & $\ldots$& $\vdots$ \\ 
         \hline 
        $x_i$ & $P_{X,Y}(x_i,y_1)$ & $P_{X,Y}(x_i,y_2)$ & $\ldots$ & $P_{X,Y}(x_i,y_j)$ & $\ldots$& $P_{X,Y}(x_i,y_n)$ \\ 
         \hline
        $\vdots$ & $\vdots$ & $\vdots$ & $\ldots$ & $\vdots$ & $\ldots$& $\vdots$ \\  
         \hline 
        $x_m$ & $P_{X,Y}(x_m,y_1)$ & $P_{X,Y}(x_m,y_2)$ & $\ldots$ & $P_{X,Y}(x_m,y_j)$ & $\ldots$& $P_{X,Y}(x_m,y_n)$ \\ 
         \hline  
    \end{tabular}\label{tab:jpmfdef-01}}

\end{table}
\end{definition}

\begin{example}
Question: consider a roll two dice in together. Let X be the value on the first die and let Y be the value on the second die. Then both X and Y take values 1 to 6 and the joint pmf is $p(i,j)=1/36$ for all i and j between 1 and 6. Draw the probability table for the joint PMF for each observation of the jointly discrete random variable $(X,Y)$. Further, consider an event $B=Y-X\geq 2$, then compute its probability. 

\noindent Answer: It is given that X be the value on the first die and let Y be the value on the second die. So, both X and Y take values 1 to 6 with joint pmf is $p(i,j)=1/36$ for all i and j between 1 and 6. Then, the probability table of the joint PMF is given in the table \eqref{tab:jpmfdice-01}:
\begin{table}[htb]

    \centering 
    \caption{Joint probability table for joint probability mass function $p_{(X,Y)}$ of $X\in \Omega_X=\{1,2,\ldots, 6\}$ and $Y\in \Omega_Y=\{1,2,\ldots,6\}$.}
    \scalebox{1.5}{ 
    \begin{tabular}{ | c | c | c | c | c | c | c | c | c | c |}
        \hline
        $X \backslash Y$ & 1 & 2 & 3 & 4 & 5 & 6  \\ 
        \hline
        1 & $\frac{1}{36}$ & $\frac{1}{36}$ & $\frac{1}{36}$ & $\frac{1}{36}$ & $\frac{1}{36}$ & $\frac{1}{36}$ \\ 
         \hline
        2 & $\frac{1}{36}$ & $\frac{1}{36}$ & $\frac{1}{36}$ & $\frac{1}{36}$ & $\frac{1}{36}$ & $\frac{1}{36}$ \\
         \hline
        3 & $\frac{1}{36}$ & $\frac{1}{36}$ & $\frac{1}{36}$ & $\frac{1}{36}$ & $\frac{1}{36}$ & $\frac{1}{36}$ \\
         \hline 
        4 & $\frac{1}{36}$ & $\frac{1}{36}$ & $\frac{1}{36}$ & $\frac{1}{36}$ & $\frac{1}{36}$ & $\frac{1}{36}$ \\
         \hline
        5 & $\frac{1}{36}$ & $\frac{1}{36}$ & $\frac{1}{36}$ & $\frac{1}{36}$ & $\frac{1}{36}$ & $\frac{1}{36}$ \\ 
         \hline 
        6 & $\frac{1}{36}$ & $\frac{1}{36}$ & $\frac{1}{36}$ & $\frac{1}{36}$ & $\frac{1}{36}$ & $\frac{1}{36}$ \\ 
         \hline  
    \end{tabular}\label{tab:jpmfdice-01}}

\end{table}

\noindent Further, we have 
\begin{align*}
B & =Y-X\geq 2  \\
 &=\{(1,3),(1,4),(1,5),(1,6),(2,4),(2,5),(2,6),(3,5),(3,6),(4,6)\}. 
\end{align*}
So, the desired probability of the event B is given by
\begin{align*}
P(B)=P((X,Y)|Y-X\geq 2)=10\cdot \frac{1}{36}=\frac{5}{18}.
\end{align*}
\end{example}

\begin{example}
Question: consider a roll two dice in together. Let X be the value on the first die and let T be the total on both dice. Then X takes values 1 to 6, Y takes values 2 to 12 and the joint pmf is $p_{X,T}(i,t)=p_{X,Y}(i,t-i)$ for all $i\in \{1,2,3,4,5,6\}$ and $t\in \{2,3,4,5,6,7,8,9,10,11,12\}$. Draw a probability table for each possible observation of the jointly discrete random variable $(X,T)$.

\noindent Answer: It is given that X be the value on the first die and let T be the total on both dice. Then X takes values 1 to 6, Y takes values 2 to 12 and the joint pmf is $p_{X,T}(i,t)=p_{X,Y}(i,t-i)$ for all $i\in \{1,2,3,4,5,6\}$ and $t\in \{2,3,4,5,6,7,8,9,10,11,12\}$. Then, the probability table of the joint PMF is given in the table \eqref{tab:jpmfdice-02}:
\begin{table}[htb]

    \centering 
    \caption{Joint probability table for joint probability mass function $p_{(X,T)}$ of $X\in \Omega_X=\{1,2,\ldots, 6\}$ and $T\in \Omega_T=\{2,3,\ldots,12\}$.}
    \scalebox{1.0}{ 
    \begin{tabular}{ | c | c | c | c | c | c | c | c | c | c | c | c |}
        \hline
        $X \backslash T$ & 2 & 3 & 4 & 5 & 6 & 7 & 8 & 9 & 10 & 11 & 12  \\ 
        \hline
        1 & $\frac{1}{36}$ & $\frac{1}{36}$ & $\frac{1}{36}$ & $\frac{1}{36}$ & $\frac{1}{36}$ & $\frac{1}{36}$ & 0 & 0 & 0 & 0 & 0 \\ 
         \hline
        2 & 0 & $\frac{1}{36}$ & $\frac{1}{36}$ & $\frac{1}{36}$ & $\frac{1}{36}$ & $\frac{1}{36}$ & $\frac{1}{36}$ & 0 & 0 & 0 & 0 \\
         \hline
        3 & 0 & 0 & $\frac{1}{36}$ & $\frac{1}{36}$ & $\frac{1}{36}$ & $\frac{1}{36}$ & $\frac{1}{36}$ & $\frac{1}{36}$ & 0 & 0 & 0 \\
         \hline 
        4 & 0 & 0 & 0 & $\frac{1}{36}$ & $\frac{1}{36}$ & $\frac{1}{36}$ & $\frac{1}{36}$ & $\frac{1}{36}$ & $\frac{1}{36}$ & 0 & 0 \\
         \hline
        5 & 0 & 0 & 0 & 0 & $\frac{1}{36}$ & $\frac{1}{36}$ & $\frac{1}{36}$ & $\frac{1}{36}$ & $\frac{1}{36}$ & $\frac{1}{36}$ & 0 \\ 
         \hline 
        6 & 0 & 0 & 0 & 0 & 0 & $\frac{1}{36}$ & $\frac{1}{36}$ & $\frac{1}{36}$ & $\frac{1}{36}$ & $\frac{1}{36}$ & $\frac{1}{36}$ \\ 
         \hline  
    \end{tabular}\label{tab:jpmfdice-02}}

\end{table}
\end{example}


\begin{definition}[Marginal PMFs]
Marginalization of joint probability mass function: when X and Y are jointly-distributed random variables, we may want to consider only one of them, say X. In that case we need to find the pmf of X without Y. This is called a marginal pmf of X. For each observation/event $X=x$, the $p_X$ is computed by summing probability of points on $x --$ strip i.e.
\begin{align*}
p_X(x) = \sum_{y\in \Omega_Y} p_{X,Y}(x,y). \ \mbox{Similarly} \ p_Y(y) = \sum_{x\in \Omega_X} p_{X,Y}(x,y).
\end{align*} 
\begin{center}
\begin{figure}[h]
    \centering
    \includegraphics[width=1.0\textwidth]{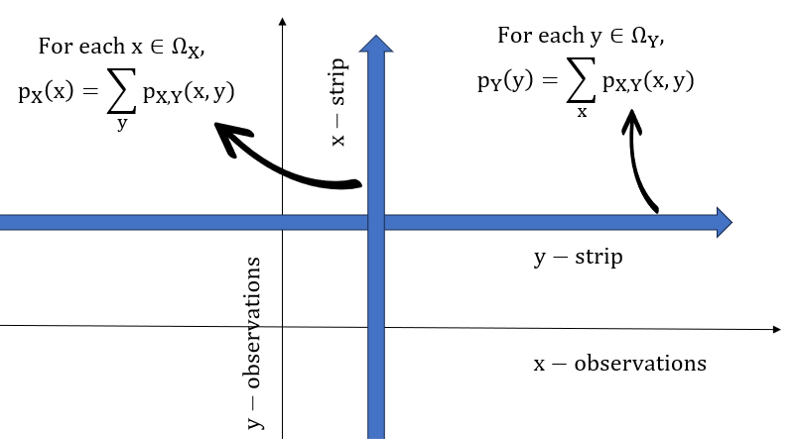}
    \caption{Visualizing marginal PMFs $p_X(x)$ and $p_Y(y)$ respectively.}
    \label{fig:mpmf-01}
\end{figure}
\end{center}
\end{definition}

\begin{example}
Question: consider a roll two dice in together. Let X be the value on the first die and let T be the total on both dice. Then X takes values 1 to 6, Y takes values 2 to 12 and the joint pmf is $p_{X,T}(i,t)=p_{X,Y}(i,t-i)$ for all $i\in \{1,2,3,4,5,6\}$ and $t\in \{2,3,4,5,6,7,8,9,10,11,12\}$. In the table each row represents a single value of X. So the event $X=3$ is the third row of the table. To find $P(X=3)$ we simply have to sum up the probabilities in this row. We put the sum in the right-hand margin of the table. Likewise $P(T=5)$ is just the sum of the column with $T=5$. We put the sum in the bottom margin of the table. Draw a probability table for each possible observation of the jointly discrete random variable $(X,T)$ along with marginal PMFs.

\noindent Answer: It is given that X be the value on the first die and let T be the total on both dice. Then X takes values 1 to 6, Y takes values 2 to 12 and the joint pmf is $p_{X,T}(i,t)=p_{X,Y}(i,t-i)$ for all $i\in \{1,2,3,4,5,6\}$ and $t\in \{2,3,4,5,6,7,8,9,10,11,12\}$. Then, the probability table of the joint PMF is given in the table \eqref{tab:jpmfdice-03}:
\begin{table}[htb]

    \centering 
    \caption{Joint probability table for joint probability mass function $p_{(X,T)}$ of $X\in \Omega_X=\{1,2,\ldots, 6\}$ and $T\in \Omega_T=\{2,3,\ldots,12\}$ along with marginal PMFs $p_X(x)$ and $p_T(t)$.}
    \scalebox{1.0}{ 
    \begin{tabular}{ | c | c | c | c | c | c | c | c | c | c | c | c | c |}
        \hline
        $X \backslash T$ & 2 & 3 & 4 & 5 & 6 & 7 & 8 & 9 & 10 & 11 & 12 & $p_X(x)$ \\ 
        \hline
        1 & $\frac{1}{36}$ & $\frac{1}{36}$ & $\frac{1}{36}$ & $\frac{1}{36}$ & $\frac{1}{36}$ & $\frac{1}{36}$ & 0 & 0 & 0 & 0 & 0 & $\frac{1}{6}$ \\ 
         \hline
        2 & 0 & $\frac{1}{36}$ & $\frac{1}{36}$ & $\frac{1}{36}$ & $\frac{1}{36}$ & $\frac{1}{36}$ & $\frac{1}{36}$ & 0 & 0 & 0 & 0 & $\frac{1}{6}$ \\
         \hline
        3 & 0 & 0 & $\frac{1}{36}$ & $\frac{1}{36}$ & $\frac{1}{36}$ & $\frac{1}{36}$ & $\frac{1}{36}$ & $\frac{1}{36}$ & 0 & 0 & 0 & $\frac{1}{6}$ \\
         \hline 
        4 & 0 & 0 & 0 & $\frac{1}{36}$ & $\frac{1}{36}$ & $\frac{1}{36}$ & $\frac{1}{36}$ & $\frac{1}{36}$ & $\frac{1}{36}$ & 0 & 0 & $\frac{1}{6}$ \\
         \hline
        5 & 0 & 0 & 0 & 0 & $\frac{1}{36}$ & $\frac{1}{36}$ & $\frac{1}{36}$ & $\frac{1}{36}$ & $\frac{1}{36}$ & $\frac{1}{36}$ & 0 & $\frac{1}{6}$ \\ 
         \hline 
        6 & 0 & 0 & 0 & 0 & 0 & $\frac{1}{36}$ & $\frac{1}{36}$ & $\frac{1}{36}$ & $\frac{1}{36}$ & $\frac{1}{36}$ & $\frac{1}{36}$ & $\frac{1}{6}$ \\ 
         \hline 
        $p_T(t)$ & $\frac{1}{36}$ & $\frac{2}{36}$ & $\frac{3}{36}$ & $\frac{4}{36}$ & $\frac{5}{36}$ & $\frac{6}{36}$ & $\frac{5}{36}$ & $\frac{4}{36}$ & $\frac{3}{36}$ & $\frac{2}{36}$ & $\frac{1}{36}$ & 1 \\ 
         \hline  
    \end{tabular}\label{tab:jpmfdice-03}}

\end{table}
\end{example}


\noindent As motivated by this example, marginal pmf's are obtained from the joint pmf by summing:
\begin{align*}
p_X(x)=\sum_{\{y\in \Omega_Y\}} p_{(X,Y)}(x,y) \ p_Y(y)=\sum_{\{x\in \Omega_X\}} p_{(X,Y)}(x,y).
\end{align*}
The term marginal refers to the fact that the values are written in the margins of the table.

\begin{definition}[Conditional probability mass function of a DRV conditioned on an event]
In a probabilistic model if a certain event A has already occurred, then we define conditional PMF of the possible observations of a random variable X conditioned on the event A as follows:
\begin{align*}
p_{X|A}(x|A)=P(X=x,A|A)=\frac{P(X=x,A}{P(A)}=\left\lbrace\begin{array}{cc}
\frac{p_X(x)}{P(A)} & \ \mbox{if} \ x\in A, \\
0 & \ \mbox{if} \ x\notin A. 
\end{array} \right.
\end{align*}
The conditional PMF is a legitimate discrete distribution and satisfies the following:
\begin{enumerate}
\item $p_{X|A}(x|A)\geq 0$ as $p_X(x)\geq 0$,
\item We have the following normalization property: 
\begin{align*} 
\sum_{\{x \in A\}} p_{X|A}(x|A) & =\frac{1}{P(A)}\sum_{\{x\in A\}}p_{X}(x)=\frac{1}{P(A)}\sum_{\{x\in A\}}P(X=x,A)\\
                                & =\frac{1}{P(A)} P(A)=1.
\end{align*}
Since for any discrete random variable $X$, we have 
\begin{align*}
& a. \ A_i \cap A_j =X^{-1}(x_i)\cap X^{-1}(x_j)=\{\omega|X(\omega)=x_i\} \cap \{\omega|X(\omega)=x_j\} \\
 & \quad \quad \quad \quad \quad  =\phi \ \mbox{for} \ i\neq j, \\
& b. \ \cup_{\{x_k \in \Omega_X\}} A_k =\cup_{\{x_k \in \Omega_X\}} X^{-1}(x_k) =\cup_{\{x_k \in \Omega_X\}} \{\omega|X(\omega)=x_k\} \\
& \quad \quad \quad \quad \quad \quad \quad = \Omega,  \\
& c. \ A_k = X^{-1}(x_k) = \{\omega| X(\omega)=x_k\}\neq \phi \ \mbox{for any} \ x_k \in \Omega_X.
\end{align*}
\end{enumerate}
\begin{center}
\begin{figure}[h]
    \centering
    \includegraphics[width=1.0\textwidth]{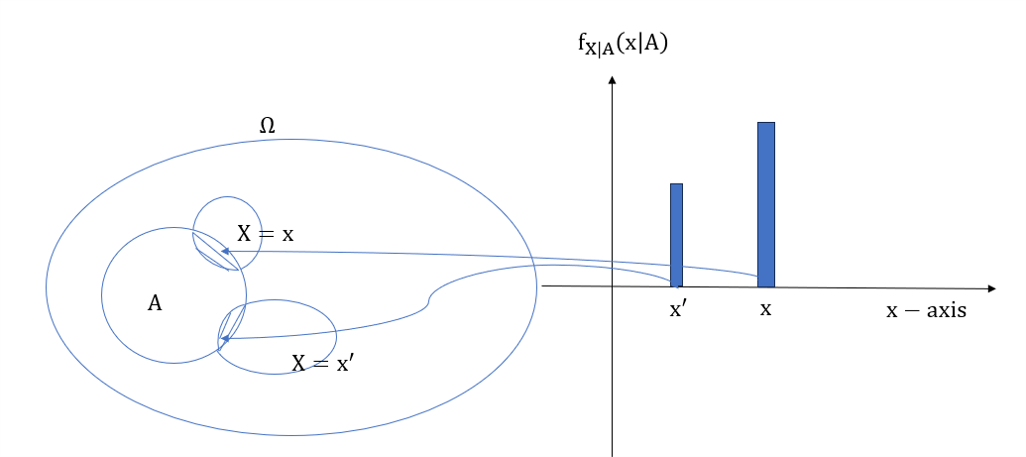}
    \caption{Conditional probability mass function of $X$ conditioned on an event $A$.}
    \label{fig:cpmf-01}
\end{figure}
\end{center}
\end{definition}

\begin{example}
Consider an experiment of rolling a dice, A be the event that roll is an even number. Let $X$ be the roll of a die, then conditional PMF of X conditioned A is given by
\begin{align*}
p_{X|A}(x|A) & =P(X=x|A)=\frac{P(X=x,A}{P(A)}=\left\lbrace\begin{array}{cc}
\frac{p_X(x)}{P(A)} & \ \mbox{if} \ x\in A \\
0  & | \mbox{if} \ x\notin A
\end{array} \right. \\
 & =\left\lbrace\begin{array}{cc}
1/3 & \ \mbox{if} \ x\in A=\{2,4,6\} \\
0  & | \mbox{if} \ x\notin A.
\end{array} \right.
\end{align*}
\end{example}

\subsection{Conditional PMF}
\begin{definition}[Conditional probability mass function]
When an experiment is a compound one, in which the second part of the experiment (described by a discrete random variable $Y$) depends upon the outcome of the first part (described another discrete random variable $X$), conditional probability mass function, conditioned on the event $Y=y$, is coming into picture as follows:
\begin{align*}
p_{X|Y}(x|y) & =P(X=x,Y=y|Y=y)=\frac{P(X=x,Y=y}{P(Y=y)}=\frac{p_{(X,Y)(x,y)}}{p_Y(y)} \\
             & =\frac{p_{X,Y}(x,y)}{\sum_{x}p_{X,Y}(x,y)} \\
             & \ \mbox{for a fixed observation} \ Y=y.
\end{align*}
\end{definition}

\begin{definition}[Conditional PMFs]
Conditioning of joint probability mass function: when X and Y are jointly-distributed random variables, we may want to consider only one of them conditioned on one observation the other, say $X|Y=y$. In that case we need to find the pmf of X conditioned on $Y=y$. This is called a conditional pmf of X given $Y=y$. For each observation/event $X=x$, the $p_{X|Y}$ is computed by a normalized probability of a point on $x \to$ strip, where the normalization is provided by the value of PMF $p_Y$ at $Y=y$ i.e. $p_Y(y)=P(Y=y)=\sum_{x}p_{X,Y}(x,y)$ as a total probability of the event $Y=y$ on $y-$ axis. So, we can have a concrete definition of the conditional PMF as follows:
\begin{align*}
p_{X|Y}(x|y) & =P(X=x|Y=y)=\frac{P(X=x,Y=y}{P(Y=y)}=\frac{p_{(X,Y)(x,y)}}{p_Y(y)} \\
             & =\frac{p_{X,Y}(x,y)}{\sum_{x}p_{X,Y}(x,y)} \\
             & \ \mbox{for a fixed observation} \ Y=y.
\end{align*} 
Similarly,
\begin{align*}
p_{Y|X}(y|x) & =P(Y=y|X=x)=\frac{P(X=x,Y=y}{P(X=x)}=\frac{p_{(X,Y)(x,y)}}{p_X(x)} \\
             & =\frac{p_{X,Y}(x,y)}{\sum_{y}p_{X,Y}(x,y)} \\
             & \ \mbox{for a fixed observation} \ X=x.
\end{align*}
\begin{center}
\begin{figure}[h]
    \centering
    \includegraphics[width=1.0\textwidth]{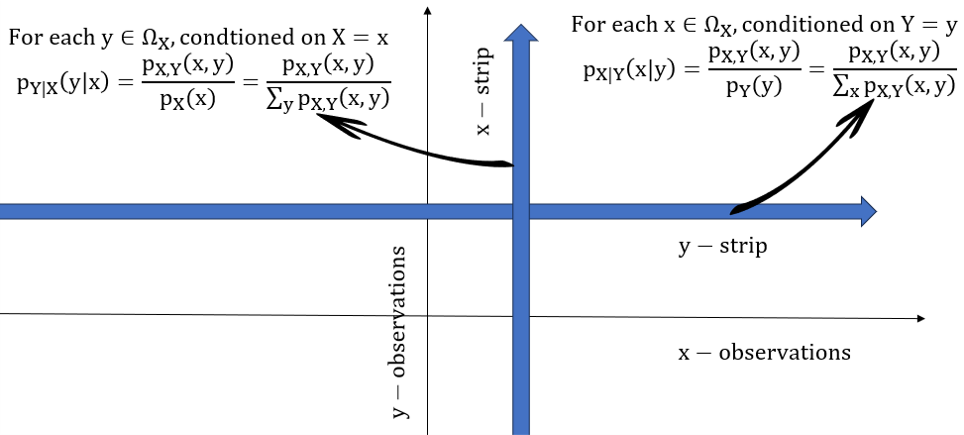}
    \caption{Visualizing conditional PMFs $p_{Y|X}(y|x)$ and $p_{X|Y}(x|y)$ respectively.}
    \label{fig:cpmf-03}
\end{figure}
\end{center}
\end{definition}

So, by using the definition of conditional probability for events we have
\begin{align*}
p_{(X,Y)}((x,y) & =P(X=x,Y=y)=P(Y=y)P(X=x|Y=y) \\
                & =p_Y(y)p_{X|Y}(x|y).
\end{align*}
It provides a computational framework to determine joint PMF of the joint discrete random variable $(X,Y)$.

\begin{example}
Two dice are tossed with all outcomes assumed to be equally likely. The number of dots observed on each die are added together. Then compute the conditional PMF of the sum if it is known that the sum is even.

\noindent Let Y be the sum and define $X=1$ if the sum is even and $X=0$ if the sum is odd. Thus, we wish to determine $p_{Y|X}(y|0)$ and $p_{Y|X}(y|1)$ for all $y\in \Omega_Y=\{2,3,\ldots,12\}$. Then  
\begin{align*}
p_{Y|X}(y|1)=\frac{p_{(Y,X)}(y,1)}{p_X(1)}=\left\lbrace\begin{array}{ccc}
2\cdot \frac{1}{36} & \ \mbox{if} \ y=2,12 \\
2\cdot \frac{3}{36} & \ \mbox{if} \ y=4,10 \\
2\cdot \frac{5}{36} & \ \mbox{if} \ y=6,8
\end{array} \right.
=\left\lbrace\begin{array}{ccc}
\frac{1}{18} & \ \mbox{if} \ y=2,12 \\
\frac{3}{18} & \ \mbox{if} \ y=4,10 \\
\frac{5}{18} & \ \mbox{if} \ y=6,8.
\end{array} \right.
\end{align*}
And
\begin{align*}
p_{Y|X}(y|0)=\frac{p_{(Y,X)}(y,0)}{p_X(0)}=\left\lbrace\begin{array}{ccc}
2\cdot \frac{2}{36} & \ \mbox{if} \ y=3,11 \\
2\cdot \frac{4}{36} & \ \mbox{if} \ y=5,9 \\
2\cdot \frac{6}{36} & \ \mbox{if} \ y=7
\end{array} \right.
=\left\lbrace\begin{array}{ccc}
\frac{2}{18} & \ \mbox{if} \ y=3,11 \\
\frac{4}{18} & \ \mbox{if} \ y=5,9 \\
\frac{6}{18} & \ \mbox{if} \ y=7.
\end{array} \right.
\end{align*}
\begin{center}
\begin{figure}[h]
    \centering
    \includegraphics[width=1.0\textwidth]{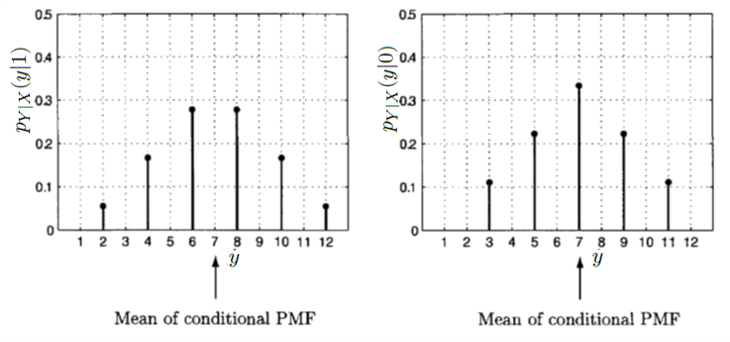}
    \caption{Conditional PMFs $p_{Y|X}(y|1)$ and $p_{Y|X}(y|0)$.}
    \label{fig:cpmf-02}
\end{figure}
\end{center}
\end{example}

\begin{theorem}[Bayes rule for probability mass function]
Definition of conditional probability mass function leads to computation of posterior distribution as follows:
\begin{align*}
p_{X|Y}(x|y) = \frac{p_{X,Y}(x,y)}{p_Y(y)}=\frac{p_X(x)p_{Y|X}(y|x)}{\sum_x p_X(x)p_{Y|X}(y|x)}.
\end{align*}
where $p_X(x) \to $ prior distribution of $X$, $p_{Y|X}(y|x)=f(x) \to $ is the data generating process as a likelihood of $Y=y$ conditioned on $X=x$ and $p_Y(y)=\sum_x p_X(x)p_{Y|X}(y|x) \to $ probability of observing $Y=y$ or evidence.
\begin{center}
\begin{figure}[h]
    \centering
    \includegraphics[width=1.0\textwidth]{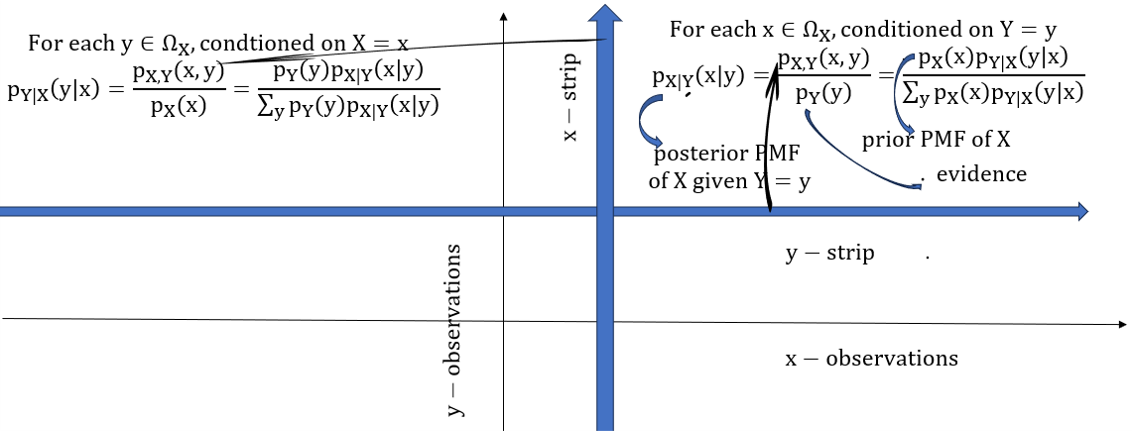}
    \caption{Bayes theorem for posterior PMF of Y conditioned on $X=x$ and of X conditioned on $Y=y$ respectively.}
    \label{fig:btpmf-01}
\end{figure}
\end{center}
\end{theorem}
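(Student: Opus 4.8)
The plan is to derive the claimed identity purely from the definitions already in place, since the statement is a bookkeeping consequence of the conditional PMF definition, the multiplication rule, and marginalization. First I would invoke the definition of the conditional probability mass function, which gives immediately the left equality $p_{X|Y}(x|y) = \frac{p_{X,Y}(x,y)}{p_Y(y)}$, valid whenever $p_Y(y) = P(Y=y) > 0$ so that the conditioning event has positive probability.

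Next I would rewrite numerator and denominator separately. For the numerator, the symmetric form of the multiplication rule (the same restatement of conditional probability that was used to define the joint PMF, $p_{(X,Y)}(x,y) = p_Y(y)\,p_{X|Y}(x|y) = p_X(x)\,p_{Y|X}(y|x)$) supplies $p_{X,Y}(x,y) = p_X(x)\, p_{Y|X}(y|x)$, expressing the joint mass as the product of the prior $p_X(x)$ and the likelihood $p_{Y|X}(y|x)$. For the denominator, I would apply marginalization, $p_Y(y) = \sum_{x} p_{X,Y}(x,y)$, and then substitute the same product form inside the sum to obtain $p_Y(y) = \sum_{x} p_X(x)\, p_{Y|X}(y|x)$; this is precisely total probability computed over the partition of the sample space induced by the values of $X$ (the events $X^{-1}(x)$, which are disjoint, nonempty, and cover $\Omega$, exactly the facts recorded in the normalization argument for the conditional PMF).

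Substituting both rewrites into the first equality yields the posterior formula directly. The only point requiring care — and what passes for the \emph{obstacle} in an otherwise definitional argument — is notational: the summation variable in the denominator is a dummy index ranging over all of $\Omega_X$ and must be kept distinct from the fixed $x$ appearing in the numerator, so that the denominator is genuinely the constant-in-$x$ normalizing evidence $p_Y(y)$ and not accidentally tied to the particular $x$ of the numerator. With that understood, collecting the steps gives the stated chain of equalities, and the reading of $p_X$ as prior, $p_{Y|X}$ as likelihood (the data-generating process $f$), and $p_Y$ as evidence follows from the structural roles these three factors play in the derivation.
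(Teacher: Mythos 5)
Your proposal is correct and follows exactly the route the paper intends: the paper gives no separate proof, treating the result as a restatement of the conditional PMF definition $p_{X|Y}(x|y)=p_{X,Y}(x,y)/p_Y(y)$ combined with the multiplication rule $p_{X,Y}(x,y)=p_X(x)\,p_{Y|X}(y|x)$ and the marginalization/total-probability identity $p_Y(y)=\sum_{x}p_{X,Y}(x,y)$, which are precisely your three steps. Your added care about the positivity condition $p_Y(y)>0$ and the dummy summation index in the denominator is sound and, if anything, slightly more precise than the paper's presentation.
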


\begin{example}[Computation of joint PMF]\label{ex:may-01}
\eqref{ex:may-01} Professor May often has her facts wrong, and answers each of her students' questions incorrectly with probability $1/4$, independently of other questions. In each lecture May is asked 0, 1, or 2 questions with equal probability $1/3$. Find the desired joint probability mass function.

\noindent Solution: Let X and Y be the number of questions May is asked and the number of questions she answers wrong in a given lecture, respectively. Now need to compute the joint PMF of X and Y as follows:
\begin{align*}
p_{(X,Y)}((x,y)= p_X(x)p_{Y|X}(y|x).
\end{align*}
We have $(a.) \ Y|X=0 \in \{0\}$ with PMF $p_{Y|X}(y|0)=1, \ (b.) \ Y|X=1 \in \{0,1\}$ with PMF $p_{Y|X}(y|0)=\left(\frac{1}{4}\right)^y\left(\frac{3}{4}\right)^{1-y}$, and $(c.) \ Y|X=2 \in \{0,1,2\}$ with PMF $p_{Y|X}(y|0)=\left(\begin{array}{cc}
2 \\
y
\end{array} \right)\left(\frac{1}{4}\right)^y\left(\frac{3}{4}\right)^{2-y}$.
Then
\begin{align*}
p_{(X,Y)}(0,0)& = p_X(0)p_{Y|X}(0|0)=1/3\cdot 1=1/3=16/48, \\
p_{(X,Y)}(0,1)& = p_X(0)p_{Y|X}(1|0)=1/3\times 0=0, \\
\vdots  & = \vdots \\
p_{(X,Y)}(2,2)& = p_X(2)p_{Y|X}(2|2)=1/3\times 1/16=1/48. 
\end{align*}
\begin{center}
\begin{figure}[h]
    \centering
    \includegraphics[width=1.0\textwidth]{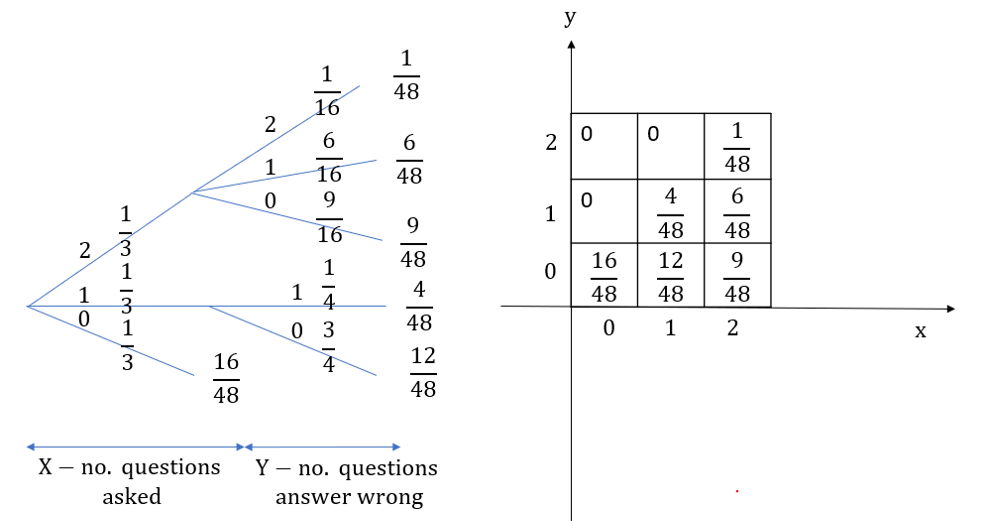}
    \caption{Computation of joint PMF of dependent discrete random variables $X$ and $Y$, where $p_{(X,Y)}(x,y)=p_X(x)p_{Y|X}(y|x)$.}
    \label{fig:jpmf-04}
\end{figure}
\end{center}
\end{example}

\begin{example}[Computation of posterior PMF]
Question: suppose $X$ is the number of $1$s that appear in a binary string of length $L$, each bit in the string is equal to zero or one with probability $1/2$, and the bits are independent. Suppose that the length of the string is also random and uniformly distributed between 1 and 10. We learn that the binary string contains 4 ones. By using the given information, compute the updated PMF for the length of the string L.

\noindent Answer: Given $L=\ell$, we know that X has the binomial distribution when $x\leq \ell$ (the probability is zero otherwise):
\begin{align*}
p_{X|L}(x|\ell)=\left\lbrace\begin{array}{cc}
\left(\begin{array}{cc}\ell \\ x \end{array} \right)\left(\frac{1}{2}\right)^x\left(\frac{1}{2}\right)^{\ell -x} & \ \mbox{if} \  x\leq \ell, \\
0 & \ \mbox{otherwise}.
\end{array} \right.
\end{align*}
And 
\begin{align*}
p_L(\ell)=\left\lbrace\begin{array}{cc}
\frac{1}{10} & \ \mbox{if} \  \ell =1,2,\ldots, 10, \\
0 & \ \mbox{otherwise}.
\end{array} \right.
\end{align*}
Given $L=\ell$, we have the binary string contains 4 ones (i.e. $X=4$), so likelihood of observing $X=4$ is given by 
\begin{align*}
p_{X|L}(4|\ell)= \left(\begin{array}{cc}\ell \\ 4 \end{array} \right)\left(\frac{1}{2}\right)^4\left(\frac{1}{2}\right)^{\ell -4}
               = \left(\begin{array}{cc}\ell \\ 4 \end{array} \right)\left(\frac{1}{2}\right)^{\ell} = f(\ell).
\end{align*}
And by suing the prior probability mass function of L, we get
\begin{align*}
p_X(4)=\sum_{L}p_{(X,L)}(x,\ell)=\sum_{L}p_L(\ell)p_{X|L}(4|\ell)=\sum_{\ell =1}^{10}\frac{1}{10}\left(\begin{array}{cc}\ell \\ 4 \end{array} \right)\left(\frac{1}{2}\right)^{\ell}
\end{align*}
Then, the updated probability mass function as the posterior PMF of L is given by
\begin{align*}
p_{L|X}(\ell|4)=\frac{p_L(\ell)p_{X|L}(4|\ell)}{p_X(4)}=\left\lbrace\begin{array}{cc}
\frac{\frac{1}{10} \times \left(\begin{array}{cc}\ell \\ 4 \end{array} \right)\left(\frac{1}{2}\right)^{\ell}}{p_X(4)} & \ \mbox{if} \ \ell =1,2,\ldots, 10, \\
0 & \ \mbox{otherwise}.
\end{array} \right.
\end{align*}
\end{example}

\begin{definition}[In-dependency or dependency of joint discrete random variables]
Jointly-distributed discrete random variables X and Y are independent if observation of one random variable before the other is going to affect their probability mass functions or their joint PMF is the product of the marginal PMF's:
\begin{align*}
p_{X|Y}(x|y)=p_X(x) \ \mbox{or} \ p_{X,Y}(x,y)=p_X(x)p_Y(y) \ \mbox{or} F_{X,Y}(x,y)=F_X(x)F_y(y),
\end{align*}
If $E(XY)\neq E(X)E(Y)$ or $Cov(X,Y) \neq 0$, then the jointly distributed random variables $X$ and $Y$ are dependent.
\end{definition}

\begin{example}
Roll two dice. Let X be the value on the first die and let Y be the value on the second die. Independent discrete variables means the probability in a cell must be the product of the marginal probabilities of its row and column. In the probability table below this is true: every marginal probability is $1/6$ and every cell contains $1/36$, i.e. the product of the marginals and $X,Y\sim Unif(\{1,2,3,4,5,6\})$. Therefore X and Y are independent.

Question: We rolled two dice and let X be the value on the first die and T be the total on both dice. First construct a probability table for joint probability mass function and then establish whether X and Y are independent or nor.

\noindent Answer: we rolled two dice and let X be the value on the first die and let Y be the value on the second die. In the probability table \eqref{tab:jpmfdice-04}, the probability in a cell is the product of the marginal probabilities of its row and column. Therefore X and Y are independent. 
\begin{table}[htb]

    \centering 
    \caption{Joint probability table for joint probability mass function $p_{(X,Y)}$ of $X\in \Omega_X=\{1,2,\ldots, 6\}$ and $Y\in \Omega_Y=\{1,2,\ldots,6\}$ along with marginal PMFs $p_X(x)$ and $p_Y(y)$.}
    \scalebox{1.5}{ 
    \begin{tabular}{ | c | c | c | c | c | c | c | c | c | c |}
        \hline
        $X \backslash Y$ & 1 & 2 & 3 & 4 & 5 & 6 & $p_X(x)$ \\ 
        \hline
        1 & $\frac{1}{36}$ & $\frac{1}{36}$ & $\frac{1}{36}$ & $\frac{1}{36}$ & $\frac{1}{36}$ & $\frac{1}{36}$ & $\frac{1}{36}$ \\ 
         \hline
        2 & $\frac{1}{36}$ & $\frac{1}{36}$ & $\frac{1}{36}$ & $\frac{1}{36}$ & $\frac{1}{36}$ & $\frac{1}{36}$ & $\frac{1}{36}$ \\
         \hline
        3 & $\frac{1}{36}$ & $\frac{1}{36}$ & $\frac{1}{36}$ & $\frac{1}{36}$ & $\frac{1}{36}$ & $\frac{1}{36}$ & $\frac{1}{36}$ \\
         \hline 
        4 & $\frac{1}{36}$ & $\frac{1}{36}$ & $\frac{1}{36}$ & $\frac{1}{36}$ & $\frac{1}{36}$ & $\frac{1}{36}$ & $\frac{1}{36}$ \\
         \hline
        5 & $\frac{1}{36}$ & $\frac{1}{36}$ & $\frac{1}{36}$ & $\frac{1}{36}$ & $\frac{1}{36}$ & $\frac{1}{36}$ & $\frac{1}{36}$ \\ 
         \hline 
        6 & $\frac{1}{36}$ & $\frac{1}{36}$ & $\frac{1}{36}$ & $\frac{1}{36}$ & $\frac{1}{36}$ & $\frac{1}{36}$ & $\frac{1}{36}$ \\ 
         \hline 
        $p_Y(y)$ & $\frac{1}{36}$ & $\frac{1}{36}$ & $\frac{1}{36}$ & $\frac{1}{36}$ & $\frac{1}{36}$ & $\frac{1}{36}$ & 1 \\ 
         \hline  
    \end{tabular}\label{tab:jpmfdice-04}}

\end{table}
\end{example}

\begin{example}
Question: We rolled two dice and let X be the value on the first die and T be the total on both dice. First construct a probability table for joint probability mass function and then establish whether X and T are independent or nor.

\noindent Answer: we rolled two dice and let X be the value on the first die and T be the total on both dice. In the probability table \eqref{tab:jpmfdice-05}, below most of the cell probabilities are not the product of the marginal probabilities. For example, none of marginal probabilities are 0, so none of the cells with 0 probability can be the product of the marginals.
\begin{table}[htb]

    \centering 
    \caption{Joint probability table for joint probability mass function $p_{(X,T)}$ of $X\in \Omega_X=\{1,2,\ldots, 6\}$ and $T\in \Omega_T=\{2,3,\ldots,12\}$ along with marginal PMFs $p_X(x)$ and $p_T(t)$.}
    \scalebox{1.0}{ 
    \begin{tabular}{ | c | c | c | c | c | c | c | c | c | c | c | c | c |}
        \hline
        $X \backslash T$ & 2 & 3 & 4 & 5 & 6 & 7 & 8 & 9 & 10 & 11 & 12 & $p_X(x)$ \\ 
        \hline
        1 & $\frac{1}{36}$ & $\frac{1}{36}$ & $\frac{1}{36}$ & $\frac{1}{36}$ & $\frac{1}{36}$ & $\frac{1}{36}$ & 0 & 0 & 0 & 0 & 0 & $\frac{1}{6}$ \\ 
         \hline
        2 & 0 & $\frac{1}{36}$ & $\frac{1}{36}$ & $\frac{1}{36}$ & $\frac{1}{36}$ & $\frac{1}{36}$ & $\frac{1}{36}$ & 0 & 0 & 0 & 0 & $\frac{1}{6}$ \\
         \hline
        3 & 0 & 0 & $\frac{1}{36}$ & $\frac{1}{36}$ & $\frac{1}{36}$ & $\frac{1}{36}$ & $\frac{1}{36}$ & $\frac{1}{36}$ & 0 & 0 & 0 & $\frac{1}{6}$ \\
         \hline 
        4 & 0 & 0 & 0 & $\frac{1}{36}$ & $\frac{1}{36}$ & $\frac{1}{36}$ & $\frac{1}{36}$ & $\frac{1}{36}$ & $\frac{1}{36}$ & 0 & 0 & $\frac{1}{6}$ \\
         \hline
        5 & 0 & 0 & 0 & 0 & $\frac{1}{36}$ & $\frac{1}{36}$ & $\frac{1}{36}$ & $\frac{1}{36}$ & $\frac{1}{36}$ & $\frac{1}{36}$ & 0 & $\frac{1}{6}$ \\ 
         \hline 
        6 & 0 & 0 & 0 & 0 & 0 & $\frac{1}{36}$ & $\frac{1}{36}$ & $\frac{1}{36}$ & $\frac{1}{36}$ & $\frac{1}{36}$ & $\frac{1}{36}$ & $\frac{1}{6}$ \\ 
         \hline 
        $p_T(t)$ & $\frac{1}{36}$ & $\frac{2}{36}$ & $\frac{3}{36}$ & $\frac{4}{36}$ & $\frac{5}{36}$ & $\frac{6}{36}$ & $\frac{5}{36}$ & $\frac{4}{36}$ & $\frac{3}{36}$ & $\frac{2}{36}$ & $\frac{1}{36}$ & 1 \\ 
         \hline  
    \end{tabular}\label{tab:jpmfdice-05}}

\end{table}

\end{example}

\begin{example}
Consider a tossing of a biased coin (with probability of success is 1/4) in the following 2 steps: in the first step we flip the coin until we get a heads and let X denote the trial on which the first heads occurs. In the second step we flip the coin X more times and let Y be the number of heads in the second step. Compute joint PMF of X and Y.

\noindent Here, the random variable $X$ is geometrically distributed with parameter $p=1/4$, so
\begin{align*}
p_X(x)=p(1-p)^{x-1}=\left(\frac{3}{4}\right)^{x-1}\frac{1}{4} \ \mbox{for} \ x=1,2,\ldots.
\end{align*}
And, the conditional PMF of $Y$ conditioned on $X=x$ is given by
\begin{align*}
p_{Y|X}(y|x)=\left(\begin{array}{cc}
x \\
y
\end{array} \right)\left(\frac{1}{4}\right)^y\left(\frac{3}{4} \right)^{x-y} \ \mbox{for} \ y=0,1,2,\ldots ,x. 
\end{align*}
And hence, the joint PMF can be computed as follows:
\begin{align*}
p_{X,Y}(x,y)=p_X(x)p_{Y|X}(y|x)=\left(\begin{array}{cc}
x \\
y
\end{array} \right)\left(\frac{1}{4}\right)^y\left(\frac{3}{4} \right)^{x-y}
\left(\frac{3}{4}\right)^{x-1}\frac{1}{4}
\end{align*}
\end{example}

\subsection{Expectation and variance of discrete random variable}
\begin{definition}[Expectation of a discrete random variable]
Consider a discrete random variable, if we observe $n$ random values of X, then the mean of the $n$ values will be approximately equal to expectation of X, $E(X)$, for large $n$. The expectation, $E(X)$ is defined as follows:
\begin{align*}
E(X)=\sum_{\{x \in \Omega_X\}}xp_X(x).
\end{align*}
\end{definition}

\begin{definition}[Expected value rule]
Consider a discrete random variable and let $Y=g(X)$ be a function of X. If we observe $X$ many times ($n$ times) to give results $x_1,x_2, \ldots, x_n$ and hence $g$ provides the observations $g(x_1),g(x_2),\ldots, g(x_n)$. The mean of $g(x_1),g(x_2),\ldots, g(x_n)$ approaches $E(g(X))$, for large $n$. The expectation, $E(g(X))$ is defined as follows:
\begin{align*}
E(g(X))& = E(Y)=\sum_{\{y \in \Omega_Y\}} yp_Y(y) = \sum_{\{y \in \Omega_Y\}} y \sum_{g^{-1}(y)=\{x|g(x)=y\}}p_X(x) \\
       & = \sum_{\{x \in \Omega_X\}}g(x)p_X(x).
\end{align*}
\end{definition}

\noindent The expectation tells us important information about the average value of a random variable, but there is a lot of information that it doesn't provide. If you are investing in the stock market, the expected rate of return of some stock is not the only quantity you will be interested in, you would also like to get some idea of the riskiness of the investment. The typical size of the 
fluctuations of a random variable around its expectation is described by another summary statistic, the variance.
\begin{definition}[Variance of a discrete random variable]
For a discrete random variable $X$, the variance, $Var(X)$, of $X$ is defined by
\begin{align*}
Var(X) = E\left(\left(X-E(X)\right)^2\right)=E(X^2)-(E(X))^2.
\end{align*}
provided that this quantity exists. We can also treat variance, $Var(X)$, of X as an expected value $E(g(X))$ of the function $g(X)=(X-E(X))^2$.

\noindent That is variance is a measure of how much the distribution of X is spread out about its mean and only randomness gives rise to variance. Statisticians often prefer to use the standard deviation (in practice for interpretation and visualization) rather than variance (in principle for theoretical analysis and understanding) as a measure of spread. 
\end{definition}

\begin{definition}[Covariance of two a joint discrete random variable]
Consider a joint discrete random variable $(X,Y)$, where $X$ and $Y$ are associated with the same random experiment, then covariance of $(X,Y)$ is defined as follows:
\begin{align*}
& Cov(X,Y) =E\left((X-E(X))(Y-E(Y))\right) \\
& =E\left(XY-XE(Y)-YE(X)+E(X)E(Y)\right)=E(XY)-E(X)E(Y).
\end{align*}
\end{definition}

\begin{theorem}\label{th:indrvexp-01}
If X and Y are independent discrete random variables whose expectations exist, then
\begin{align*}
(a.) \ E(XY)=E(X)E(Y), \ (b.) \ E(g(X)h(Y))=E(g(X))E(h(Y)).
\end{align*}
\end{theorem}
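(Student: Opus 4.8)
The plan is to prove part (b) first, since part (a) is precisely the special case obtained by taking $g(x)=x$ and $h(y)=y$; proving the general statement avoids duplicating the argument. The engine of the proof is the two-dimensional version of the expected value rule: for any real-valued function $\phi$ of the joint discrete random variable $(X,Y)$,
\begin{align*}
E(\phi(X,Y)) = \sum_{x\in\Omega_X}\sum_{y\in\Omega_Y}\phi(x,y)\,p_{(X,Y)}(x,y),
\end{align*}
which extends the one-variable expected value rule stated earlier to the product sample space $\Omega_X\times\Omega_Y$. I would either invoke this directly or note that it follows by the same derived-distribution bookkeeping used in the single-variable case, grouping the outcomes of the composite experiment according to the value of $\phi$.

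First I would apply this rule to $\phi(x,y)=g(x)h(y)$, writing $E(g(X)h(Y))$ as the double sum $\sum_x\sum_y g(x)h(y)\,p_{(X,Y)}(x,y)$. Then, invoking the independence hypothesis in its factored form $p_{(X,Y)}(x,y)=p_X(x)p_Y(y)$, the summand becomes $\bigl(g(x)p_X(x)\bigr)\bigl(h(y)p_Y(y)\bigr)$, in which the $x$-dependent and $y$-dependent factors separate cleanly. The heart of the computation is then to split the double sum into a product of single sums,
\begin{align*}
\sum_x\sum_y g(x)p_X(x)\,h(y)p_Y(y) = \Bigl(\sum_x g(x)p_X(x)\Bigr)\Bigl(\sum_y h(y)p_Y(y)\Bigr),
\end{align*}
and to recognize the two factors as $E(g(X))$ and $E(h(Y))$ via the expected value rule. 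An equally clean route would be to first observe that $U=g(X)$ and $V=h(Y)$ are themselves independent discrete random variables and then apply part (a) to $U,V$; I would present the direct double-sum version as the primary argument.

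The main obstacle is the legitimacy of the factorization when $\Omega_X$ and $\Omega_Y$ are countably infinite. For finite sample spaces this regrouping is a trivial application of the distributive law, but for infinite sums it amounts to interchanging and rearranging the terms of a double series, which is not automatically valid. This is exactly where the hypothesis that the expectations exist does its work: it guarantees absolute convergence, so that a Fubini/Tonelli-type justification permits the double sum to be factored without altering its value. I would therefore insert a brief remark verifying $\sum_{x,y}\lvert g(x)h(y)\rvert\,p_X(x)p_Y(y)=E(\lvert g(X)\rvert)\,E(\lvert h(Y)\rvert)<\infty$ before carrying out the regrouping, and then close by specializing $g(x)=x$, $h(y)=y$ to obtain part (a).
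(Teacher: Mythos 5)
Your proposal is correct, but there is nothing in the paper to compare it against: the text states Theorem \eqref{th:indrvexp-01} without any proof (and later restates the analogous claim for jointly continuous random variables, again without proof), so your argument supplies exactly the missing content. Your route is the standard and right one: the two-dimensional expected value rule $E(\phi(X,Y))=\sum_x\sum_y \phi(x,y)\,p_{(X,Y)}(x,y)$, the factorization $p_{(X,Y)}(x,y)=p_X(x)p_Y(y)$ from the paper's own definition of independence, and the separation of the double sum into a product of single sums. You are also right to flag the interchange/regrouping of the double series as the only nontrivial step; since the absolute-value terms are nonnegative, Tonelli-type rearrangement gives $\sum_{x,y}|g(x)h(y)|\,p_X(x)p_Y(y)=E(|g(X)|)E(|h(Y)|)$ unconditionally, and finiteness of that product then licenses the regrouping of the signed series. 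One small point worth making explicit: the theorem's hypothesis says the expectations of $X$ and $Y$ exist, but for part (b) what your convergence check actually requires (and implicitly assumes) is that $E(|g(X)|)$ and $E(|h(Y)|)$ are finite; that is the correct reading of the hypothesis for (b), and stating it removes any ambiguity. Your observation that $U=g(X)$ and $V=h(Y)$ are themselves independent, so that (b) reduces to (a), is a clean alternative and consistent with the paper's definition of independence via conditional PMFs, though the direct double-sum computation you chose as primary is preferable here since the paper never proves that functions of independent variables are independent.
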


\begin{example}[A counter-example for converse of \eqref{th:indrvexp-01}]
Consider a uniformly distributed discrete random variable, $X\in \{-1,0,1\}$ i.e. $X\sim Unif([-1,1]\cap \mathbb{Z})$, then
\begin{align*}
& a. \ E(X)=0, \ b. \ E(X^2)=\frac{2}{3}, \ c. \ E(X^3)=0, \\ & d. \ Cov(X,X^2)=E(X^3)-E(X)E(X^2)=0.
\end{align*}
For the observation $X^2=t$,
\begin{align*}
p_{X|X^2}(x|t)=\frac{P(X=x,X^2=t)}{P(X^2=t)}=\frac{P(X=x, X\in \{-\sqrt{t},\sqrt{t}\})}{P(X \in \{-\sqrt{t},\sqrt{t}\})}.
\end{align*}
So,
\begin{align*}
p_{X|X^2}(\sqrt{t}|t)=\frac{p_X(\sqrt{t})}{p_X(-\sqrt{t})+p_X(\sqrt{t})} \neq p_X(\sqrt{t}).
\end{align*}
Also,
\begin{align*}
p_{X|X^2}(-\sqrt{t}|t)=\frac{p_X(-\sqrt{t})}{p_X(-\sqrt{t})+p_X(\sqrt{t})} \neq p_X(-\sqrt{t})
\end{align*}
Therefore $X$ and $X^2$ are dependent discrete random variable despite of having zero covariance.
\end{example}

\begin{theorem}
If $X$ is a non-negative and non-constant discrete random variable, then $X$ and $X^2$ are positively correlated, and hence dependent.
\end{theorem}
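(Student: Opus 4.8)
The plan is to show that $Cov(X,X^2) = E(X^3) - E(X)E(X^2) > 0$; strict positivity of the covariance immediately gives $E(X\cdot X^2)\neq E(X)E(X^2)$, so by the contrapositive of Theorem \ref{th:indrvexp-01}(a) the variables $X$ and $X^2$ cannot be independent, which is exactly the claimed dependence. Thus the whole statement reduces to proving that the covariance is strictly positive.

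First I would introduce an independent copy $X'$ of $X$, i.e.\ a discrete random variable sharing the PMF $p_X$ of $X$ and independent of it. Expanding $E[(X-X')(X^2-(X')^2)]$ term by term and using Theorem \ref{th:indrvexp-01} to factor the cross terms (so that, e.g., $E[X(X')^2]=E(X)E(X^2)$), together with the fact that $X'$ has the same distribution as $X$, yields the symmetric identity
\begin{align*}
2\,Cov(X,X^2) = E\left[(X-X')\left(X^2-(X')^2\right)\right].
\end{align*}
The key algebraic observation is the factorization $X^2-(X')^2=(X-X')(X+X')$, so the quantity inside the expectation becomes $(X-X')^2(X+X')$, a product of a square and the term $X+X'$.

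Because $X$, and hence $X'$, takes only non-negative values, we have $(X-X')^2(X+X')\geq 0$ pointwise, and taking expectations gives $Cov(X,X^2)\geq 0$ at once. The main obstacle is upgrading this to a \emph{strict} inequality, and this is precisely where the non-constant hypothesis enters. Since $X$ is non-constant, its range $\Omega_X$ contains two distinct values $a\neq b$, each with positive probability under $p_X$. As $a,b\geq 0$ and $a\neq b$, they cannot both be zero, so $a+b>0$ while $(a-b)^2>0$. By independence the joint event $\{X=a,\,X'=b\}$ has probability $p_X(a)\,p_X(b)>0$ and contributes the strictly positive amount $(a-b)^2(a+b)$ to $E[(X-X')^2(X+X')]$; as every remaining term of the sum is non-negative, the entire expectation is strictly positive. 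Hence $Cov(X,X^2)>0$, so $X$ and $X^2$ are positively correlated and, by the argument of the first paragraph, dependent.
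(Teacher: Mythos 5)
Your proposal is correct, but it takes a genuinely different route from the paper's. The paper centers at the mean: since $E(X-E(X))=0$, it drops the constant part of $X^2-E(X^2)$ to write $Cov(X,X^2)=E\left((X-E(X))(X^2-(E(X))^2)\right)$, then factors the difference of squares to obtain $E\left((X-E(X))^2(X+E(X))\right)>0$. You instead symmetrize with an independent copy $X'$, arriving at $2\,Cov(X,X^2)=E\left[(X-X')^2(X+X')\right]$, and extract strictness from a single pair of atoms. Both arguments hinge on the same factorization $u^2-v^2=(u-v)(u+v)$ against a non-negative weight, but the weights differ ($X+E(X)$ versus $X+X'$), and the strict-positivity step is handled differently: the paper simply asserts the final strict inequality, which implicitly uses that a non-negative non-constant variable has $E(X)>0$ (so the weight is bounded below by a positive constant) together with $P\left((X-E(X))^2>0\right)>0$, whereas your atom-level term $p_X(a)\,p_X(b)\,(a-b)^2(a+b)>0$ makes the justification fully explicit and avoids any appeal to $E(X)>0$. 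Your symmetrization also buys generality: the identity $2\,Cov(f(X),g(X))=E\left[(f(X)-f(X'))(g(X)-g(X'))\right]$ immediately gives Chebyshev's association inequality for any two increasing functions, of which this theorem is the special case $f(x)=x$, $g(x)=x^2$ on $[0,\infty)$; the paper's mean-centering trick is more economical (no auxiliary variable) but is tailored to this particular pair. One bookkeeping caution: the source assigns the label \texttt{th:indrvexp-01} to two distinct theorems (the discrete and the continuous product rules), so when you invoke the contrapositive of the product rule to conclude dependence, you should make clear you mean the discrete version.
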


\begin{proof}
We have $X\geq 0$ and $X^2\geq 0$, then
\begin{align*}
Cov(X,X^2) & =E\left((X-E(X))(X^2-E(X^2))\right) \\
           & =E\left((X-E(X))(X^2-(E(X))^2+(E(X))^2-E(X^2))\right) \\
           & =E\left((X-E(X))(X^2-(E(X))^2)\right) \\
           & + E\left((X-E(X))((E(X))^2-E(X^2))\right) \\
           & =E\left((X-E(X))(X^2-(E(X))^2)\right) \\
           & =E\left((X-E(X))^2(X+E(X))\right) > 0.
\end{align*}
\end{proof}

\begin{example}
There are 5 candidates for 2 job openings, 3 of the candidates are women and 2 are men. Let X be the number of women hired. Find PMF, expected value, and variance of X.

\noindent Solution: the count of total possible outcomes of hiring 2 person from 5 candidates as a size of the sample is given by 
\begin{align*}
|\Omega|=\binom{5}{2}=10.
\end{align*}
Suppose, X be the number of women hired, then probability mass function of X is computed as:
\begin{align*}
& \{X=0\}\implies |\{X=0\}|=\binom{3}{0}\cdot \binom{2}{2}=1 \implies p_X(0)=P(X=0)=\frac{1}{10}. \\
& \{X=1\}\implies |\{X=1\}|=\binom{3}{1}\cdot \binom{2}{1}=6 \implies p_X(1)=P(X=1)=\frac{6}{10}. \\
& \{X=2\}\implies |\{X=2\}|=\binom{3}{2}\cdot \binom{2}{0}=3 \implies p_X(2)=P(X=2)=\frac{3}{10}.
\end{align*}
Then
\begin{align*}
E(X)=\sum_{x=0}^2xp_X(x)=\frac{12}{10}, \ E(X^2)=\sum_{x=0}^2x^2p_X(x)=\frac{18}{10} \implies Var(X)=\frac{36}{100}.
\end{align*}
\end{example}

\begin{example}[Expectation and variance of various discrete distributions]
Determine the expectation and variance of the following discrete random variables:
\begin{enumerate}
\item[(a.)] $X\sim Ber(p)$,
\begin{solution}
As $X\sim Ber(p)$, so $p_X(x)=p^x(1-p)^{1-x}$ for $x\in \{0,1\}$. Then
\begin{align*}
E(X)=\sum_{x\in \{0,1\}}xp_X(x) = 0\cdot (1-p) + 1\cdot p =p.
\end{align*}
And,
\begin{align*}
E(X^2)=\sum_{x\in \{0,1\}}x^2p_X(x) = 0\cdot (1-p) + 1\cdot p =p.
\end{align*}
Then,
\begin{align*}
Var(X)=E(X^2)-(E(X))^2=p-p^2=p(1-p).
\end{align*}
\end{solution}
\item[(b.)] $Y\sim Bin(n,p)$,
\begin{solution}
As $Y\sim Ber(p)$, $p_Y(y)=\left(\begin{array}{cc} n \\ y \end{array} \right) p^y(1-p)^{n-y}$ for $y\in \{0,1,2,\ldots, n \}$. Then
\begin{align*}
E(Y) & =\sum_{y\in \{0,1,\ldots,n\}}yp_Y(y) \\
     & =\sum_{y\in \{0,1,\ldots,n\}}y\left(\begin{array}{cc} n \\ y \end{array} \right) p^y(1-p)^{n-y} \\
     & =\sum_{y\in \{0,1,\ldots,n\}}n\left(\begin{array}{cc} n-1 \\ y-1 \end{array} \right) p^y(1-p)^{n-y}=np.
\end{align*}
And,
\begin{align*}
E(Y^2) & =\sum_{y\in \{0,1,\ldots,n\}}y^2p_Y(y) \\
       & =\sum_{y\in \{0,1,\ldots,n\}}y^2\left(\begin{array}{cc} n \\ y \end{array} \right) p^y(1-p)^{n-y} \\
     & =\sum_{y\in \{0,1,\ldots,n\}}npy\left(\begin{array}{cc} n-1 \\ y-1 \end{array} \right) p^{y-1}(1-p)^{n-y} \\
     & =np\left((n-1)p+1 \right)=(np)^2+np(1-p).
\end{align*}
Then,
\begin{align*}
Var(Y)& =E(Y^2)-(E(Y))^2=(np)^2+np(1-p)-(np)^2\\
      & =np(1-p).
\end{align*}
\end{solution}
\item[(c.)] $Z\sim Geo(p)$,
\begin{solution}
As $Z\sim Geo(p)$, so $p_Z(z)=p(1-p)^{z-1}$ for $x\in \{1,2,\ldots\}$. Then
\begin{align*}
E(Z)=\sum_{z\in \{1,2,\ldots\}}zp_Z(z) =\frac{1}{p}.
\end{align*}
And,
\begin{align*}
E(Z(Z-1))=\sum_{z\in \{1,2,\ldots\}}z(z-1)p_Z(z) = \frac{2(1-p)}{p^2}.
\end{align*}
Then,
\begin{align*}
Var(Z)& =E(Z^2)-(E(Z))^2=E(Z(Z-1))+E(Z)-(E(Z))^2 \\
      & =\frac{1-p}{p^2}.
\end{align*}
\end{solution}
\item[(d.)] $W\sim Poiss(\lambda)$.
\begin{solution}
As $W$, we have $p_W(w)=\frac{\lambda^w e^{-\lambda}}{w!} \ \mbox{for} \ w=0,1,2,\ldots$ with $E(W)=\lambda=Var(W)$.
\end{solution}
\end{enumerate}
\end{example}

\begin{definition}[Conditional expectation]
In a probabilistic model if a certain event A has already occurred, then we define conditional expectation of the possible observations of a random variable X conditioned on the event A as follows:
\begin{align*}
E(X|A)=\sum_{\{x \in \Omega_X\}}xp_{X|A}(x|A).
\end{align*}
Moreover, if the event A is characterized by another discrete random variable $Y$ in  the same experiment as $Y=y$ and the above summation is absolutely convergent, then the conditional expectation of X conditioned on $Y=y$ is defined as follows:
\begin{align*}
E(X|Y=y)=\sum_{\{x\in \Omega_X\}}xp_{X|Y}(x|y)=g(y).
\end{align*}
That is for each possible observation $Y=y, E(X|Y=y)=g(y)$, and hence conditional expectation defines a random variable (as a function of random variable) as follows:
\begin{align*}
g(Y)=E(X|Y).
\end{align*}
\end{definition}

\begin{theorem}[Law of iterated expectation]
If $\{B_1,B_2,\ldots\}$ be a partition of $\Omega$ such that $P(B_i)> 0 \ \forall \ i$, and $X$ is a discrete random variable in the same experiment, then
\begin{align*}
E(X)=\sum_{i\geq 1} E(X|B_i)P(B_i).
\end{align*} 
Moreover, if $Y$ is a discrete random variable such that $Y^{-1}(y_i)=B_i=\{\omega | Y(\omega)=y_i\}$, then
\begin{align*}
E(X) & =\sum_{\{y\in \Omega_Y\}}E(X|Y=y)P(Y=y)=\sum_{\{y \in \Omega_y\}}E(X|Y=y)p_Y(y)\\
     & =E(E(X|Y)).
\end{align*}
\end{theorem}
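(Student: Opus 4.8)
The plan is to prove the first identity directly by unfolding the definition of conditional expectation on the right-hand side and then collapsing the resulting double sum by means of the total probability theorem; the ``moreover'' equalities will follow as the special case in which the partition $\{B_i\}$ is the one induced by the level sets of $Y$, and the tower-form $E(E(X|Y))$ will be read off as the expectation of a derived random variable.

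First I would start from $\sum_{i\geq 1} E(X|B_i)P(B_i)$ and substitute the definition of conditional expectation together with the conditional PMF, writing $E(X|B_i)=\sum_{x\in\Omega_X} x\,p_{X|B_i}(x|B_i)$ with $p_{X|B_i}(x|B_i)=P(X=x,B_i)/P(B_i)$. Multiplying by $P(B_i)$ cancels the denominator, so each summand becomes $\sum_{x} x\,P(X=x,B_i)$, and after interchanging the two summations I obtain $\sum_{x} x \sum_{i} P(X=x,B_i)$. I would then invoke the total probability theorem \eqref{def:tp-03} in the form $\sum_{i} P(X=x,B_i)=\sum_i P(B_i)P(X=x|B_i)=P(X=x)=p_X(x)$, valid precisely because $\{B_i\}$ partitions $\Omega$. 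The inner sum collapses and the expression reduces to $\sum_x x\,p_X(x)=E(X)$, which establishes the first identity.

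For the moreover clause I would specialize $B_i=Y^{-1}(y_i)=\{Y=y_i\}$, so that $P(B_i)=p_Y(y_i)$ and $E(X|B_i)=E(X|Y=y_i)$; the first identity then reads $E(X)=\sum_{y\in\Omega_Y}E(X|Y=y)\,p_Y(y)$, giving the two middle expressions at once. The final equality $E(X)=E(E(X|Y))$ is then a matter of recognizing that $g(Y):=E(X|Y)$ is itself a discrete random variable taking the value $g(y_i)=E(X|Y=y_i)$ on $B_i$; applying the definition of expectation (equivalently, the expected value rule) to $g(Y)$ reproduces the very sum $\sum_{y}E(X|Y=y)\,p_Y(y)$, so the three quantities coincide.

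The main obstacle is the interchange of summation used in the second paragraph, which is not automatic for doubly infinite series. I expect this to be the only delicate point, and the clean way to dispose of it is to assume — as the statement implicitly does when it posits that $E(X)$ exists — that $\sum_x |x|\,p_X(x)<\infty$, whence $\sum_{i}\sum_x |x|\,P(X=x,B_i)$ converges absolutely; Fubini's theorem for absolutely summable double series then legitimizes the reordering, and everything else is routine bookkeeping.
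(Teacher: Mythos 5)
Your proof is correct, and the first identity is the paper's own computation run in reverse: the paper starts from $E(X)=\sum_x x\,p_X(x)$, expands $P(X=x)=\sum_i P(X=x,B_i)=\sum_i P(B_i)P(X=x|B_i)$, and swaps the order of summation to reach $\sum_i E(X|B_i)P(B_i)$, which is exactly your chain read backwards. Where you genuinely depart from the paper is in the ``moreover'' clause: you obtain $E(X)=\sum_{y}E(X|Y=y)\,p_Y(y)$ as a pure specialization of the first identity to the partition $B_i=Y^{-1}(y_i)$, and then identify this sum with $E(E(X|Y))$ by applying the expected value rule to the derived random variable $g(Y)=E(X|Y)$; the paper instead reproves this part from scratch with an independent double-sum computation through the joint PMF, using $p_{X|Y}(x|y)\,p_Y(y)=p_{X,Y}(x,y)$ and marginalizing to recover $p_X(x)$. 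Your organization is arguably more economical, since it does the interchange-of-sums work once rather than twice, while the paper's version has the virtue of exhibiting the tower property directly in terms of the joint distribution without presupposing the event-partition form. You also flag the one analytic subtlety both computations rely on --- the rearrangement of a doubly infinite series --- and correctly dispose of it via absolute convergence of $\sum_i\sum_x |x|\,P(X=x,B_i)$ under the standing assumption that $E(X)$ exists; the paper performs the same interchange silently, so this is a point where your write-up is more careful than the source.
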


\begin{proof}
We have
\begin{align*}
E(X)& =\sum_{\{x \in \Omega_X\}}xp_X(x)=\sum_x xP(X=x)=\sum_x x\sum_iP(X=x, B_i) \\
    & =\sum_x x\sum_iP(B_i)P(X=x|B_i)=\sum_i P(B_i)\sum_x xP(X=x|B_i) \\
    & =\sum_i E(X|B_i)P(B_i).
\end{align*}
Further, We have
\begin{align*}
LHS & =E(E(X|Y)) =\sum_{\{y \in \Omega_Y\}}E(X|Y=y)p_Y(y)\\
    & =\sum_{\{y \in \Omega_Y\}}\sum_{\{x \in \Omega_X\}}xp_{X|Y}(x|y)p_Y(y) =\sum_{\{y \in \Omega_Y\}}\sum_{\{x \in \Omega_X\}}xp_{(X,Y)}(x,y) \\
    & = \sum_{\{x \in \Omega_X\}}x\sum_{\{y \in \Omega_Y\}}p_{(X,Y)}(x,y) = \sum_{\{x \in \Omega_X\}}xp_X(x)=E(X)=RHS.
\end{align*}
\end{proof}

\begin{theorem}
Suppose X and Y are discrete random variables and $a,b \in \mathbb{R}$ are constants. Then
\begin{align*}
E(aX+bY)=aE(X)+bE(Y)
\end{align*}
provided that both $E(X)$ and $E(Y)$ exist.
\end{theorem}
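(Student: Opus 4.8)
The plan is to prove linearity of expectation for discrete random variables directly from the definition $E(X)=\sum_x x\,p_X(x)$, using the joint PMF machinery developed earlier in the excerpt. The central tool is the expected value rule together with marginalization: for a function $g(X,Y)$ of a jointly-distributed pair, the expectation is computed as a double sum against the joint PMF $p_{(X,Y)}$. I would apply this with the specific function $g(X,Y)=aX+bY$.

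First I would write $Z=aX+bY$ and invoke the (two-variable analogue of the) expected value rule to express
\begin{align*}
E(aX+bY)=\sum_{\{x\in\Omega_X\}}\sum_{\{y\in\Omega_Y\}}(ax+by)\,p_{(X,Y)}(x,y).
\end{align*}
Next I would split the sum into two pieces and pull the constants $a$ and $b$ out, obtaining
\begin{align*}
E(aX+bY)=a\sum_{\{x\in\Omega_X\}}\sum_{\{y\in\Omega_Y\}}x\,p_{(X,Y)}(x,y)+b\sum_{\{x\in\Omega_X\}}\sum_{\{y\in\Omega_Y\}}y\,p_{(X,Y)}(x,y).
\end{align*}
Then the key step is to collapse the inner sum in each term using marginalization. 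In the first term I would fix $x$, pull it outside the $y$-sum, and recognize $\sum_{\{y\in\Omega_Y\}}p_{(X,Y)}(x,y)=p_X(x)$, the marginal PMF of $X$; symmetrically, in the second term I would sum over $x$ first to recover $p_Y(y)$. This reduces the double sums to $a\sum_x x\,p_X(x)+b\sum_y y\,p_Y(y)=aE(X)+bE(Y)$.

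The main subtlety, rather than obstacle, is justifying that the two-variable expected value rule holds, i.e. that $E(g(X,Y))=\sum_{x,y}g(x,y)\,p_{(X,Y)}(x,y)$; this is the natural extension of the single-variable expected value rule stated earlier and follows by grouping outcomes according to the value of $g(X,Y)$, exactly as in that proof. The interchange and splitting of the sums, and pulling the constants out, are valid under the standing hypothesis that $E(X)$ and $E(Y)$ exist, which guarantees absolute convergence so that the iterated sums may be freely rearranged. Notably, this argument does \emph{not} require $X$ and $Y$ to be independent, since we only use marginalization of the joint PMF and never factor it as a product.
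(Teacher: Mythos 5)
Your proposal is correct and takes essentially the same route as the paper's proof: both expand $E(aX+bY)$ as the double sum $\sum_{x}\sum_{y}(ax+by)\,p_{(X,Y)}(x,y)$, split it into two terms, pull out $a$ and $b$, and marginalize the joint PMF to recover $a\sum_x x\,p_X(x)+b\sum_y y\,p_Y(y)$. Your added justifications (the two-variable expected value rule and absolute convergence permitting rearrangement, which the paper leaves implicit) only make the same argument more careful.
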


\begin{proof}
\begin{align*}
E(aX+bY)& =\sum_x\sum_y (ax+by)p_{(X,Y)}(x,y)=a\sum_x x \sum_y p_{(X,Y)}(x,y) \\
        & + b\sum_y y \sum_x p_{(X,Y)}(x,y) = a\sum_x xp_X(x) + b\sum_y yp_Y(x) \\
        & = aE(X)+bE(Y).
\end{align*}
\end{proof}

\begin{example}
Let $X$ be the number of rolls of a fair die required to get the first 6. (So X is geometrically distributed with parameter $1/6$.) Now, computation of expectation, $E(X)$, and variance, $Var(X)$, of $X$ as below.

\noindent Let $B_1$ be the event that the first roll of the die gives a 6 (i.e. $X|B_1 \in \{1\}$ or $\Omega_{X|B_1}=\{1\}$, so that $B_1^c$ is the event that it does not (so $X|B_1^c \in \{2,3,\ldots\}$ or $\Omega_{X|B_1^c}=\{2,3,\ldots\}$ or number of rolls till first 6 given the first roll is nor 6 or $1+X$ as $\underline{1} \ \underrightarrow{\ \ X \ \ }$). Then
\begin{align*}
E(X) & =E(X|B_1)P(B_1)+E(X|B_1^c)P(B_1^c)=\frac{1}{6}+\frac{5}{6} E(X|B_1^c)\\
     & =\frac{1}{6}+\frac{5}{6}E(X+1)=1+\frac{5}{6}E(X) \implies E(X)=6.
\end{align*}
Above computation is much simpler as compared a direct calculation using the probability mass function:
\begin{align*}
E(X)=\sum_{x=1}^{\infty}x\left(\frac{1}{6} \right)\left(\frac{5}{6}\right)^{x-1}=\frac{1}{1/6} =6.
\end{align*}
\begin{align*}
E(X^2)& =E(X^2|B_1)+E(X^2|B_1^c)P(B_1^c)=\frac{1}{6}+\frac{5}{6}E(X^2|B_1^c) \\
      & =\frac{1}{6}+\frac{5}{6}E((1+X)^2)=\frac{1}{6}+\frac{5}{6}(1+2E(X)+E(X^2)) \\
      & =11+\frac{5}{6}E(X^2) \implies E(X^2)=66.
\end{align*}
And hence variance $Var(X)=E(X^2)-(E(X))^2=66-36=30$. Alternatively, we compute the variance as follows:
\begin{align*}
E(X^2)=\sum_{x=1}^{\infty}x^2\left(\frac{1}{6} \right)\left(\frac{5}{6}\right)^{x-1}=66 \implies Var(X)=30.
\end{align*} 
\end{example}

\begin{example}
Your spaghetti bowl contains $n$ strands of spaghetti. You repeatedly choose $2$ ends at random, and join them together. Compute the average number of loops in the bowl, once no ends remain.


\noindent Answer: 
\begin{enumerate}
\item[a.] On the first try you have n noodles and $2n$ ends. You pick an end  and the probability that you pick up the other end of the same noodle and make a loop is 1 out of $2n-1$ i.e $\frac{1}{2n-1}$. 
\item[b.] On the next try you have n-1 noodles and $2n-2$ ends. You pick an end  and the probability that you pick up the other end of the same noodle and make a loop is 1 out of $2n-3$ i.e $\frac{1}{2n-3}$. 
\item[c.] Since, formation of each loop decreases the number of ends by 2. So, on the ith try you $(n-i)$ noodles and $2(n-i)+1)$ ends. You pick an end and the probability that you pick up the other end of the same noodle and make a loop is 1 out of $2(n-i)+1$.
\end{enumerate}
Let $X_i$ be the indicator function of the event that we form a loop at the ith step. Then $E(X_i)=1/(2(n-i)+1)$. Suppose T be the total number of loops formed, then $T=E(X_1+X_2+\ldots+X_n)$. So, using linearity of expectation, we have
\begin{align*}
E(T)& =E(X_1+X_2+\ldots+X_n)=\frac{1}{2n-1}+\frac{1}{2n-3}+\ldots+1/3+1 \\
    & \approx log(n) \ \mbox{for large} \ n.
\end{align*}
Alternatively, let the expected number of loops from n noodles be $E(n)$. Then, obviously, $E(1)=1$. Now, for $n>1$, if you pick up two ends, the possibility for those two ends belongs to the same noodle will be $\frac{1}{2n-1}$. Then you have one loop now and there are $n-1$ noodles to keep on going. Otherwise, you get no loop and still have $n-1$ noodles to go (the two you pick before are now connected as one). Pick up two ends. They either belong to the same noodle (S) or they don't (S' i.e. no loop addition from the two end). So,
\begin{align*}
E(n)&=E(n|S)P(S)+E(n|S')P(S')=\left(1+ E(n-1)\right)\cdot \frac{1}{2n-1} \\
     &+ E(n-1)\cdot \left(1 - \frac{1}{2n-1}\right)=E(n-1)+\frac{1}{2n-1}.
\end{align*}
Then, inductively you may expect that 
\begin{align*}
E(n)=1 + \frac{1}{3} + \frac{1}{5}+\ldots+\frac{1}{2n-1} \to \ln(n) \ \mbox{for large n}.
\end{align*}
\end{example}

\subsection{Function of jointly discrete random variables}
The purpose of assessing the student's selection in a campus placement, we might wish to know his/her cgp, numbers of desired courses, number of projects, etc. All these measurements and others are used by an interview panel/recruiter agency for an offer assessment of a candidate.

\begin{theorem}[An another form of total probability]
In general to compute probabilities of events it is advantageous to use conditioning arguments whether or not X and Y are independent.
\begin{align*}
P(Y\in A) = \sum_x P\left( Y\in A|X=x\right)p_X(x)dx=\sum_x \sum_A p_{Y|X}(y|x)p_X(x).
\end{align*}
\end{theorem}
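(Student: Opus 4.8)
The plan is to recognise that this identity is nothing more than the total probability theorem \eqref{def:tp-03} applied to the event $\{Y \in A\}$, using the partition of the sample space induced by the level sets of the discrete random variable $X$. So the whole argument reduces to assembling two ingredients already established in the excerpt: the partition property of $\{X = x\}$ and the definition of the conditional PMF.

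First I would set up the partition. For a discrete random variable $X$ with $\Omega_X = \{x_k\}_{k \in \mathbb{N}}$, the events $B_k = X^{-1}(x_k) = \{\omega \mid X(\omega) = x_k\}$ are mutually exclusive ($B_i \cap B_j = \phi$ for $i \neq j$) and collectively exhaustive ($\cup_k B_k = \Omega$); these are exactly the properties recorded when defining the conditional PMF of a DRV. Hence $\{B_k\}$ is a partition of $\Omega$ with $P(B_k) = p_X(x_k)$, and I may feed it directly into total probability. Applying \eqref{def:tp-03} to the event $\{Y \in A\}$ gives
\begin{align*}
P(Y \in A) = \sum_x P(X = x)\, P(Y \in A \mid X = x) = \sum_x p_X(x)\, P(Y \in A \mid X = x).
\end{align*}

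Next I would expand the inner conditional probability. Since $\{Y \in A\} = \bigcup_{y \in A} \{Y = y\}$ is a disjoint union over the (at most countable) values $y \in A$, the conditional PMF yields
\begin{align*}
P(Y \in A \mid X = x) = \sum_{y \in A} P(Y = y \mid X = x) = \sum_{y \in A} p_{Y|X}(y \mid x).
\end{align*}
Substituting this into the previous display produces $\sum_x \sum_{y \in A} p_{Y|X}(y \mid x)\, p_X(x)$, which is the claimed right-hand side (read $\sum_A$ as $\sum_{y \in A}$).

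The main obstacle is not conceptual but a matter of careful bookkeeping. I must justify interchanging the two summations, over $x \in \Omega_X$ and over $y \in A$, and confirm that every conditional probability written is well-defined, i.e. that $p_X(x) > 0$ wherever a conditional appears. Because all terms are non-negative, the rearrangement of the resulting double series is unconditionally valid, so the interchange is legitimate; the values $x$ with $p_X(x) = 0$ contribute a zero summand and may be dropped by the usual convention. I would also flag the harmless typographical slip in the statement: the trailing $dx$ belongs to the continuous analogue and should be absent in this discrete setting.
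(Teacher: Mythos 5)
Your proposal is correct, and it is essentially the argument the paper intends: the paper states this theorem without a written proof, and your derivation assembles exactly the ingredients it has already set up, namely the partition $\{X=x_k\}$ of $\Omega$ recorded in the conditional-PMF definition and the total probability rule of \eqref{def:tp-03}, followed by expanding $P(Y\in A\mid X=x)$ as $\sum_{y\in A}p_{Y|X}(y|x)$. Your additional bookkeeping --- justifying the interchange of sums by non-negativity, dropping terms with $p_X(x)=0$, and flagging the stray $dx$ as a leftover from the continuous analogue --- is sound and in fact supplies rigor the paper's statement leaves implicit.
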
 

\begin{corollary}[PDF of a function of jointly discrete random variables $(X,Y)$]
Consider $X$ and $Y$ are two jointly discrete and independent random variables and a function $Z=\frac{Y}{X}$, then PMF of $Z$ is computed as follows:
\begin{align*}
p_Z(z)=\sum_x p_X(x)p_Y(xz).
\end{align*}
\end{corollary}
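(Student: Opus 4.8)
The plan is to reduce the computation of $p_Z(z) = P(Z = z)$ to the conditioning identity (the ``another form of total probability'' stated immediately above), specialised to the event $\{Z = z\}$. The first step is to rewrite this event purely in terms of $X$ and $Y$: since $Z = Y/X$, we have $\{Z = z\} = \{Y/X = z\} = \{Y = zX\}$, where throughout we restrict attention to outcomes with $X \neq 0$, on which $Z$ is defined.

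Next I would condition on the value of $X$ and apply the total-probability theorem, writing
\[
p_Z(z) = P(Y/X = z) = \sum_x P(Y/X = z \mid X = x)\, p_X(x).
\]
On the event $\{X = x\}$ the condition $Y/X = z$ collapses to the deterministic condition $Y = xz$, so each conditional probability simplifies to $P(Y = xz \mid X = x)$.

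The decisive step is to invoke the assumed independence of $X$ and $Y$: because conditioning on $X = x$ does not alter the distribution of $Y$, we obtain $P(Y = xz \mid X = x) = P(Y = xz) = p_Y(xz)$. Substituting this back into the sum yields
\[
p_Z(z) = \sum_x p_X(x)\, p_Y(xz),
\]
which is exactly the claimed formula.

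I expect the only real subtlety to be bookkeeping rather than any deep difficulty. First, the summation index $x$ must range over the support of $X$ with the value $x = 0$ excluded, so that $Y/X$ is well defined; terms for which $xz$ falls outside the support of $Y$ simply vanish since then $p_Y(xz) = 0$, so they may be kept or dropped harmlessly. Second, the substitution of $X = x$ inside the event $\{Y/X = z\}$ deserves a careful word: it is legitimate precisely because, on $\{X = x\}$, the random variable $X$ may be replaced by the constant $x$, and it is the independence hypothesis that then licenses discarding the conditioning on $X = x$. Apart from these remarks, the argument is a direct application of the preceding conditioning theorem together with independence.
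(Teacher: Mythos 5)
Your proposal is correct and follows essentially the same route as the paper's proof: rewrite $\{Z=z\}$ as $\{Y=zX\}$, condition on $X=x$ via the total-probability theorem, and use independence to replace $p_{Y|X}(zx\mid x)$ by $p_Y(zx)$. Your additional remarks on excluding $x=0$ and on justifying the substitution of $X=x$ inside the event are sound bookkeeping that the paper leaves implicit.
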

\begin{proof}
Derivation as follows:
\begin{align*}
p_Z(z)&=P(Z= z)=P(Y=zX)=\sum_x P\left(Y=zx|X=x\right)p_X(x) \\
      &=\sum_x p_{Y|X}(zx|x)p_x(x)=\sum_x p_x(x)p_Y(zx).
\end{align*}
\end{proof}

\begin{corollary}[PDF of a function of jointly discrete random variables $(X,Y)$]
Consider $X$ and $Y$ are two jointly discrete and independent random variables and a function $Z=XY$, then PMF of $Z$ is computed as follows:
\begin{align*}
p_Z(z)=\sum_x p_X(x)p_Y(z/x).
\end{align*}
\end{corollary}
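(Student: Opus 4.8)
The plan is to mirror the derivation of the preceding corollary for $Z=Y/X$, replacing the ratio event by the product event and invoking the same conditioning identity (the ``another form of total probability'' theorem stated just above). First I would write $p_Z(z)=P(Z=z)=P(XY=z)$ and condition on the value of $X$:
\begin{align*}
p_Z(z)=P(XY=z)=\sum_x P\left(XY=z \mid X=x\right)p_X(x).
\end{align*}
This is a direct application of the total-probability form, valid whether or not $X$ and $Y$ are independent, so the only ingredient I need beyond it is independence, which will enter in the final step.

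Next I would simplify the conditional probability. For each fixed $x\neq 0$, the event $\{XY=z\}$ under the condition $\{X=x\}$ is exactly $\{Y=z/x\}$, so that $P(XY=z\mid X=x)=P(Y=z/x\mid X=x)$. Because $X$ and $Y$ are independent, conditioning on $X=x$ does not change the distribution of $Y$, and hence $P(Y=z/x\mid X=x)=p_Y(z/x)$. Substituting back into the sum gives the claimed formula $p_Z(z)=\sum_x p_X(x)p_Y(z/x)$, and the computation terminates.

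The only step requiring genuine care — and thus the main obstacle — is the treatment of the atom $x=0$. When $x=0$ the product $XY$ equals $0$ for every value of $Y$, so the symbol $z/x$ is undefined and the conditional event $\{XY=z\mid X=0\}$ is either the whole sample space (if $z=0$) or empty (if $z\neq 0$). The clean way to handle this is to restrict the summation to values $x\neq 0$ in the support of $X$ (exactly as the companion statement for $Z=Y/X$ implicitly excludes $X=0$), and, should $P(X=0)>0$, to add the separate contribution $P(X=0)\,\ds1\{z=0\}$ to $p_Z(0)$. I would state this caveat explicitly at the outset and then present the main computation as above, so that the corollary reads as an honest restatement of the product analogue of the ratio case.
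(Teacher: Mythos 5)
Your proof is correct and follows essentially the same route as the paper's: both condition on the value of $X$ via the total-probability identity and then use independence to replace $P\left(Y=z/x \mid X=x\right)$ by $p_Y(z/x)$. The only difference is your explicit handling of the atom $x=0$ — the paper's derivation jumps straight to $P(Z=z)=P\left(Y=\frac{z}{X}\right)$, silently assuming $X\neq 0$, so your caveat (restricting the sum to $x\neq 0$ and adding the contribution $P(X=0)$ to $p_Z(0)$ when that atom has positive mass) is a more careful rendering of the same argument rather than a different approach.
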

\begin{proof}
Derivation as follows:
\begin{align*}
p_Z(z)&=P(Z=z)=P\left(Y=\frac{z}{X}\right)=\sum_x P\left(Y=\frac{z}{x}|X=x\right)p_X(x)dx \\
      &=\sum_x p_X(x)p_{Y|X}(z/x|x)=\sum_x p_X(x)p_Y(z/x).
\end{align*}
\end{proof}

\noindent Conditional PMFs can be used to simplify probability calculations. For example, consider the determination of the PMF for $Z=X+Y$, where X and Y are jointly discrete and independent random variables as $p_Z(z)=p_X\* p_Y (z)$. To solve this problem using conditional PMFs, we ask ourselves the question: Could I find the PMF of Z if X were known? If so, then we should be able to use conditioning arguments to first find the conditional PMF of Z given X, and then uncondition the result to yield the PMF of Z.

\begin{corollary}[PDF of a function of jointly discrete random variables $(X,Y)$]
Consider $X$ and $Y$ are two jointly discrete and independent random variables and a function $Z=X+Y$, then PMF of $Z$ is computed as follows:
\begin{align*}
p_Z(z)=\sum_x p_X(x)p_Y(z-x).
\end{align*}
\end{corollary}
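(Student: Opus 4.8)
The plan is to mimic exactly the conditioning argument used in the two immediately preceding corollaries (for $Z=Y/X$ and $Z=XY$), since $Z=X+Y$ is the additive analogue. First I would write the PMF of $Z$ as the probability of the defining event, $p_Z(z)=P(Z=z)=P(X+Y=z)$, and then invoke the total-probability form stated just above, namely $P(Y\in A)=\sum_x P(Y\in A\mid X=x)\,p_X(x)$, to condition on the value of $X$. The key observation is that the event $\{X+Y=z\}$, once we fix $X=x$, is precisely the event $\{Y=z-x\}$, so the conditional probability appearing in the sum is $p_{Y\mid X}(z-x\mid x)$.

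Next I would apply the independence hypothesis. Since $X$ and $Y$ are independent, the conditional PMF collapses to the marginal, $p_{Y\mid X}(z-x\mid x)=p_Y(z-x)$, exactly as in the earlier corollaries. Substituting this back yields the claimed convolution
\begin{align*}
p_Z(z)&=P(X+Y=z)=\sum_x P\left(Y=z-x\mid X=x\right)p_X(x) \\
      &=\sum_x p_{Y\mid X}(z-x\mid x)\,p_X(x)=\sum_x p_X(x)\,p_Y(z-x).
\end{align*}
This is the discrete convolution $p_Z=p_X\ast p_Y$ already foreshadowed in the paragraph preceding the statement.

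There is no substantive obstacle here: the entire argument is a one-line application of the conditioning identity together with independence, and it is structurally identical to the proofs of the two corollaries for $Y/X$ and $XY$ that the excerpt has just carried out. The only point requiring any care is the reindexing step $\{X+Y=z\}\cap\{X=x\}=\{Y=z-x\}\cap\{X=x\}$, which is what lets the sum run over $x$ with summand $p_Y(z-x)$; this is immediate for discrete variables. One could optionally remark that by symmetry the formula may equally be written as $\sum_y p_Y(y)\,p_X(z-y)$, confirming the commutativity of convolution, but that is not needed for the stated claim.
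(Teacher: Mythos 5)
Your proposal is correct and matches the paper's proof essentially verbatim: the paper also writes $p_Z(z)=P(X+Y=z)=\sum_x P(Y=z-x\mid X=x)\,p_X(x)$, identifies the conditional PMF $p_{Y\mid X}(z-x\mid x)$, and uses independence to replace it with $p_Y(z-x)$. Nothing is missing; the reindexing step you flag is handled the same way in the paper.
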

\begin{proof}
Derivation as follows:
\begin{align*}
p_Z(z)&=P(Z= z)=P(X+Y=z)=\sum_x P\left(Y=z-x|X=x\right)p_X(x) \\
      &=\sum_x p_{Y|X}(z-x|x)p_x(x)=\sum_x p_x(x)p_Y(z-x).
\end{align*}
\end{proof}


\section{Continuous distribution} \label{sec:crv}
In the previous section, we discussed discrete random variables and the relevant methods employed to describe them probabilistically. The principal assumption necessary in order to do so is that the range of sample space $\Omega_X$, which is the set of all possible outcomes, is finite or at most countably infinite. It followed then that a probability mass function (PMF) could be defined as the probability of each sample point and used to calculate the probability of all possible events (which are subsets of the sample space). Most physical measurements, however, do not produce a discrete set of values but rather a continuum of values such as the rainfall measurement data, maximum temperature measured during the day. The number of possible rainfalls or temperatures take values, in an interval (e.g. $[20,60]$, is in continuum (i.e. infinite and uncountable). Of course, we could always choose to round off the measurement to the nearest unit so that the possible outcomes would then become $\{20,21,\ldots,60\}$. Then, many valid PMFs could be
assigned. But this approach compromises the measurement precision and so is to be avoided if possible. What we are ultimately interested in is the probability of any interval, to do so we must extend our previous approaches to be able to handle this new case. And if we later decide that less precision is warranted, such that the rounding of 20.6 to 21 is acceptable, we will still be able to determine the probability of observing 21. To do so we can regard the rounded temperature of 21 as having arisen from all temperatures in the interval $A = [20.5,21.5)$. Then, $P(\mbox{rounded temperature} = 21) = P(A)$, so that we have lost nothing by considering a continuum of outcomes.
\begin{definition}[Continuous random variable]
A measurable function $X:\Omega \to \Omega_X \subset \mathbb{R}$ such that $X^{-1}(B)\in \Sigma_{\Omega} \ \forall \ B\in \mathcal{B}$ such that

\noindent (a.) $X^{-1}(B)=\{\omega | X(\omega) \in B\} \in \Sigma_{\Omega} \ \mbox{for every} \ B \in \mathcal{B}$, (b.) $\Omega_X \subset \mathbb{R}$ is at least continuum in nature.

\noindent (c.) $\Omega_X \neq \{x_k\}_{k\in\mathbb{N}} \implies B$ is an interval or union of interval and we bring at most countable scenario using discretization or $\epsilon -$ covering.

\noindent Then, the measurable function X is known as a continuous random variable and its probability distribution is characterized by a probability density function. An event of a continuous random variable $X$ is described as below:
\begin{align*}
X\in B & = \{X=x \in [a,b]\} \equiv \{X\in [a,b]\} \equiv \{\omega | X(\omega)\in [a,b]\} \\
       & \ \mbox{for some real numbers} \ a,b \in \mathbb{R}.
\end{align*}
Here, $\{X \in B\} \to $ the event that the random variable $X$ takes the given value x from an interval $B=[a,b]$ in continuum manner.
\end{definition}

\begin{center}
\begin{figure}[h]
    \centering
    \includegraphics[width=1.0\textwidth]{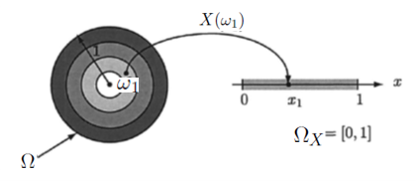}
    \caption{Continuous random variable as a mapping of an at least continuum sample space into an at least continuum set in $\mathbb{R}$.}
    \label{fig:crv-01}
\end{figure}
\end{center}

\begin{definition}[Probability density function]
Distribution pattern of a continuous random variable is characterized by a probability density function (pdf) (see in \eqref{fig:chardrv-01}), $f:\Omega_X \to \mathbb{R_+}$ as a probability per unit length and defined as follows:
\begin{align*}
f_X(x)= \lim_{\delta \to 0}\frac{P(x \leq X \leq x+\delta)}{\delta} \implies P(x \leq X \leq x+\delta) \approx f_X(x)\delta.
\end{align*}
\begin{center}
\begin{figure}[h]
    \centering
    \includegraphics[width=1.0\textwidth]{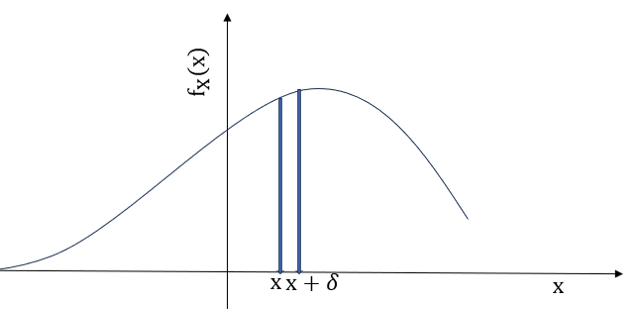}
    \caption{Characterization probability density function as a probability per unit length.}
    \label{fig:chardrv-01}
\end{figure}
\end{center}
The PDF $p_X$ satisfies the following properties:

\noindent (a.) $f_X(x) \geq 0$, (b.) $\int_{\Omega_X}f_X(x)dx =1$, (c.) for any event $X\in B$, we have $P(X\in B) = \int_{x\in B} f_X(x)dx$.
\end{definition}
\noindent Nature of an event in term of continuous random variable: if $X$ is a continuous random variable, then $\{\omega|X(\omega) \leq \} = \{X\leq x\}= (-\infty, x]$ is a basic event for every x. If we have an event $\{a < X \leq b\}$, then it is characterized as $\{a < X \leq b\}=\{X >a\} \cap \{ X\leq b\}$, where $\{X>a\} = \{X\leq a\}^c$. Furthermore, $\{a-\frac{1}{n} < X \leq a + \frac{1}{n}\}$ is an event for every $n$ and consequently implies $\cup_{n=1}^{\infty}\{a-\frac{1}{n} < X \leq a + \frac{1}{n}\}=\{X=a\}$ is also an event. And, probability of the above events are computed as follows 
\begin{align*}
P(X \in (a,b]) & =P(\{a < X \leq b\})= \lim_{n \to \infty} \sum_{i=1}^n f_X(x_i)\delta x_i =\int_a^b f_X(x)dx, \\
P(\{X=a\}) & = \lim_{n \to \infty}f_X(a)\frac{1}{n}=0 \ \mbox{as} \ P(x_{i-1} < X \leq x_i) \approx f_X(a)\delta x_i.
\end{align*}

\subsection{Various continuous distribution}
\begin{example}[Uniform distribution]
If a continuous random variable $X$ takes values in an interval $[a,b]$, and all subintervals of the same length (with respect to uniform partition of the interval) are equally likely. That is, $X$ has a uniform continuous distribution (i.e. a uniform PDF) if 
\begin{align*}
f_X(x) = \left\lbrace \begin{array}{cc}
\frac{1}{b-a} & \ \mbox{if} \ x \in [a,b] \\
0  & \ \mbox{if} \ x \notin [a,b].
\end{array} \right.
\end{align*}
So, $X$ is a uniformly continuous random variable and we write $X\sim Unif([a,b])$. e.g. in a wheel of fortune game, a gambler spins a wheel of fortune, continuously calibrated between 0 and 1, and observes the resulting number. Then, all subintervals of $[0,1]$ of the same length (under a uniform discretization scheme) are equally likely, and hence
\begin{align*}
f_X(x) = \left\lbrace \begin{array}{cc}
1 & \ \mbox{if} \ x \in [0,1] \\
0  & \ \mbox{if} \ x \notin [0,1].
\end{array} \right.
\end{align*}

\begin{center}
\begin{figure}[h]
    \centering
    \includegraphics[width=1.0\textwidth]{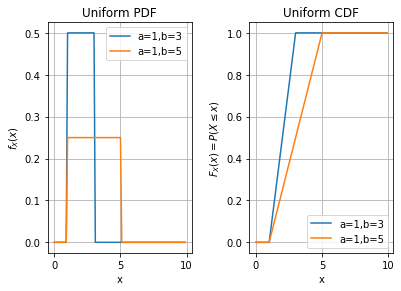}
    \caption{Probability density function as a uniformly distributed random variable on $[a,b]$.}
    \label{fig:unpdf-01}
\end{figure}
\end{center}
\end{example}

\begin{remark}[Characterization of PDF of continuous random variable]
The uniform distribution is the simplest continuous random variable and that obeys uniform law (or equally likely principle) as follows (for a small incremental length $\delta$):

\noindent for any two partition points $x_i,x_j$ of $[a,b]$,
\begin{align*}
& P(x_i < X \leq x_i +\delta)=P(x_j < X \leq x_j +\delta) \implies f_X(x_i)\delta = f_X(x_j)\delta  \\ 
& \implies f_X(x_i)=f_X(x_j). 
\end{align*} 
So $f_X(x)=\frac{1}{b-a} \to$ is a constant and hence defines a uniform continuous distribution.

\noindent For other than uniform continuous random variable, the corresponding PDF is non uniform is characterized as below: 

\noindent For small values of $\delta$ and for any two partition points $x_i,x_j$ of $[a,b]$, we have
\begin{align*}
& P(x_i < X \leq x_i +\delta)\neq P(x_j < X \leq x_j +\delta) \implies f_X(x_i)\delta \neq f_X(x_j)\delta  \\ 
& \implies f_X(x_i)\neq f_X(x_j).
\end{align*}
So $f_X(x)$ is a non-constant and hence defines a non-uniform continuous distribution.
\end{remark}

\begin{example}[A piecewise continuous PDF]
Alvin's driving time to work is between $15$ and $20$ minutes if the day is sunny, and between $20$ and $25$ minutes if the day is rainy, with all times being equally likely in each case. Assume that a day is sunny with probability $2/3$ and rainy with probability $1/3$. Compute the PDF of the driving time.

\noindent Here, all times are equally likely in each case means the PDF of $X$ is constant in each of the two subintervals $[15,20]$ and $[20,25]$. And hence
\begin{align*}
f_X(x)=\left\lbrace\begin{array}{ccc}
c_1 & \ \mbox{if} \ x \in [15,20], \\
c_2 & \ \mbox{if} \ x \in [20,25], \\
0 & \ \mbox{otherwise}.
\end{array} \right.
\end{align*}
\noindent Further, we have
\begin{align*}
\frac{2}{3}=P(sunny\ day)=\int_{15}^{20}f_X(x)dx=5c_1 \implies c_1=\frac{2}{15}, \\
\frac{1}{3}=P(rainy\ day)=\int_{20}^{25}f_X(x)dx=5c_2 \implies c_2=\frac{1}{15}.
\end{align*}
\end{example}

\begin{example}[Exponential distribution]
A continuous random variable $X$ takes values in an interval $[0,\infty)$, and has an exponential continuous distribution (i.e. an exponential PDF) with a parameter $\lambda$ if 
\begin{align*}
f_X(x) = \left\lbrace \begin{array}{cc}
\lambda e^{-\lambda x} & \ \mbox{if} \ x \geq 0, \\
0  & \ \mbox{if} \ x<0.
\end{array} \right.
\end{align*}
\begin{center}
\begin{figure}[h]
    \centering
    \includegraphics[width=1.0\textwidth]{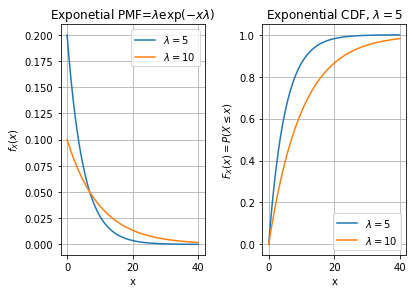}
    \caption{Probability density function as a exponentially distributed random variable on $[0,\infty)$.}
    \label{fig:epdf-01}
\end{figure}
\end{center}
\noindent So, $X$ is an exponentially continuous random variable with a discontinuous at $x = 0$ and we write $X\sim Exp(\lambda)$. One important property of exponential distribution is that it provides a formula for right tail probability in order to model life-time or decay-time of a product or object as follows
\begin{align*}
P(X>a) &=\int_a^{\infty}f_X(x)dx=\int_a^{\infty}\lambda e^{-\lambda x}dx =\lim_{b \to \infty}\int_a^b \lambda e^{-\lambda x}dx \\
       &=\lim_{b \to \infty}\left[-e^{-\lambda x}\right]_a^b=e^{-\lambda a} - \lim_{b \to \infty}e^{-\lambda b}= e^{-\lambda a}.
\end{align*}
\noindent For example, if $X$ is the failure time in days of a light-bulb, then $P(X>100)$ is the probability that the light-bulb will fail after 100 days or it will last for at least 100 days. This can be computed as follows:
\begin{align*}
P(X>100) =e^{-100\lambda}=\left\lbrace\begin{array}{cc}
0.367 & \ \mbox{if} \ \lambda=0.01, \\
0.904 & \ \mbox{if} \ \lambda=0.001.
\end{array} \right.
\end{align*}
\end{example}

\begin{example}[Normal distribution]
A continuous random variable $X$ takes values in an interval $(-\infty,\infty)$, and has a normal distribution (i.e. a bell-shaped Gaussian PDF) with a pair of parameters $(\mu, \sigma^2), \mu \in (-\infty,\infty), \sigma \in [0,\infty)$ if 
\begin{align*}
f_X(x) = \frac{1}{\sqrt{2\pi\sigma^2}}\exp\left(-\frac{(x-\mu)^2}{2\sigma^2} \right) \ \mbox{for} \ x\in (-\infty,\infty).
\end{align*}
\begin{center}
\begin{figure}[h]
    \centering
    \includegraphics[width=1.0\textwidth]{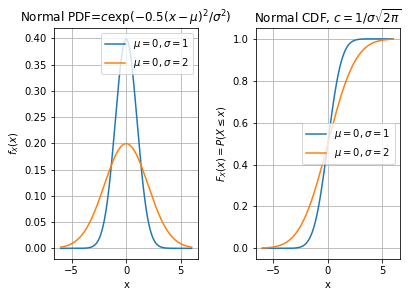}
    \caption{Probability density function as a normally distributed random variable on $(-\infty,\infty)$.}
    \label{fig:npdf-01}
\end{figure}
\end{center}
\noindent So, $X$ is an normally continuous random variable characterized by the pair of parameters $(\mu, \sigma^2), \mu \in (-\infty,\infty), \sigma \in [0,\infty)$ and we write $X\sim Exp(\lambda)$. 

\noindent The parameter $\mu$ indicates the center of the PDF with a depiction of the average value of the observations made by the random variable $X$ as a mean or expectation of $X$.

\noindent The parameter $\sigma^2$ indicates the width of the PDF with a depiction of variability of the observations made by the random variable $X$ as a variance of $X$. 
\end{example}

\begin{example}[Laplace distribution]
A continuous random variable $X$ takes values in an interval $(-\infty,\infty)$, and has a Laplace distribution (similar to normal distribution, but it does not decrease as rapidly from its maximum value) with a pair of parameters $(\mu, \sigma^2), \mu \in (-\infty,\infty), \sigma \in [0,\infty)$ if 
\begin{align*}
f_X(x) = \frac{1}{\sqrt{2\pi\sigma^2}}\exp\left(-\sqrt{\frac{2}{\sigma^2}}|x| \right) \ \mbox{for} \ x\in (-\infty,\infty).
\end{align*}
\begin{center}
\begin{figure}[h]
    \centering
    \includegraphics[width=1.0\textwidth]{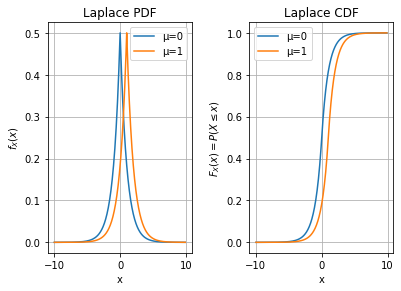}
    \caption{Probability density function as a Lapalcianally distributed random variable on $(-\infty,\infty)$.}
    \label{fig:lpdf-01}
\end{figure}
\end{center}
\noindent So, $X$ is an Laplacianally continuous random variable characterized by the pair of parameters $(\mu, \sigma^2), \mu \in (-\infty,\infty), \sigma \in [0,\infty)$ and we write $X\sim Exp(\lambda)$. 

\noindent The parameter $\mu=0$ indicates the center of the PDF with a depiction of the average value of the observations made by the random variable $X$ as a mean or expectation of $X$.

\noindent The parameter $\sigma^2$ indicates the width of the PDF with a depiction of variability of the observations made by the random variable $X$ as a variance of $X$. Here, the outcomes are larger due to the larger probability in the tails of the PDF (i.e.. the tail region of the PDF is that for which $|x|$ is large). This PDF is used as a model for speech amplitudes.
\end{example}

\begin{example}[Cauchy distribution]
A continuous random variable $X$ takes values in an interval $(-\infty,\infty)$, and has a Cauchy distribution (as a ratio of two standard normal distributions) if 
\begin{align*}
f_X(x) = \frac{1}{\pi(1+x^2)} \ \mbox{for} \ x\in (-\infty,\infty).
\end{align*}
\begin{center}
\begin{figure}[h]
    \centering
    \includegraphics[width=1.0\textwidth]{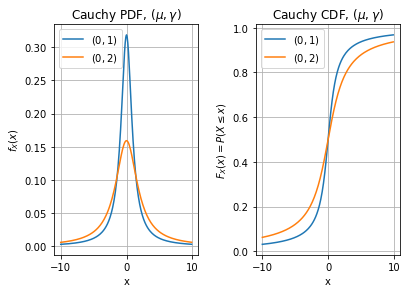}
    \caption{Probability density function as a Cauchy distributed random variable on $(-\infty,\infty)$.}
    \label{fig:capdf-01}
\end{figure}
\end{center}
\noindent So, $X$ is an Cauchy continuous random variable characterized by the pair of parameters $(\mu, \sigma^2), \mu \in (-\infty,\infty), \sigma \in [0,\infty)$ and we write $X\sim Exp(\lambda)$. 

\noindent The parameter $\mu=0$ indicates the center of the PDF with a depiction of the average value of the observations made by the random variable $X$ as a mean or expectation of $X$.

\noindent The parameter $\sigma^2$ indicates the width of the PDF with a depiction of variability of the observations made by the random variable $X$ as a variance of $X$. 
\end{example}

\begin{example}[Gamma distribution]
A continuous random variable $X$ takes values in an interval $[0,\infty)$, and has a gamma distribution (as a generalization of many continuous distributions) if 
\begin{align*}
f_X(x) = \left\lbrace\begin{array}{cc}
\frac{\lambda^{\alpha}}{\Gamma(\alpha)}x^{\alpha -1}\exp\left(-\lambda x \right) & \ \mbox{for} \ x\geq 0,\\
0 & \ \mbox{for} \ x<0.
\end{array} \right.
\end{align*}
\noindent where $\lambda >0, \alpha >0$, and $\Gamma(\alpha)$ is the Gamma function which is defined as below:
\begin{align*}
\Gamma(\alpha)=\int_0^{\infty}x^{\alpha -1}\exp(-x)dx.
\end{align*}
\begin{center}
\begin{figure}[h]
    \centering
    \includegraphics[width=1.0\textwidth]{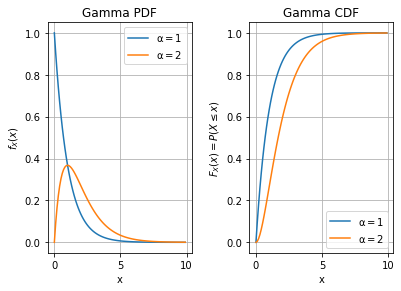}
    \caption{Probability density function as a gamma distributed random variable on $[0,\infty)$.}
    \label{fig:npdf-01}
\end{figure}
\end{center}
\noindent Few properties of the gamma distributed random variable are as follows:
\begin{enumerate}
\item[a.] $\Gamma(\alpha +a)=\alpha \Gamma(\alpha),$
\item[b.] $Gamma(n)=(n-1)!$ for a natural number $n\in \mathbb{N}$,
\item[c.] $\Gamma(1/2)=\sqrt{\pi}$.
\end{enumerate}
\noindent The Gamma PDF reduces to many well known PDFs for appropriate choices of the parameters $\alpha$a and $\lambda$ as follows:
\begin{enumerate}
\item[a.] If $\alpha=1$, the gamma distribution reduces to an exponential distribution as follows:
\begin{align*}
f_X(x) = \left\lbrace\begin{array}{cc}
\frac{\lambda}{\Gamma(1)}\exp\left(-\lambda x \right) & \ \mbox{for} \ x\geq 0,\\
0 & \ \mbox{for} \ x<0.
\end{array} \right.
\end{align*}
\item[b.] If $\alpha =n/2$ and $\lambda=1/2$, the the gamma distribution reduces to a Chi-squared PDF with n degrees of freedom (i.e. the sum of the squares of n independent standard normal random variables all with the same PDF $N(0,1)$) as follows:
\begin{align*}
f_X(x) = \left\lbrace\begin{array}{cc}
\frac{1}{2^n\Gamma(n/2)}x^{n/2 -1}\exp\left(- x/2 \right) & \ \mbox{for} \ x\geq 0,\\
0 & \ \mbox{for} \ x<0.
\end{array} \right.
\end{align*}
\item[c.] If $\alpha =n$, the the gamma distribution reduces to an Erlang PDF with n degrees of freedom (i.e. the sum of n independent exponential random variables all with the same $\lambda$) as follows:
\begin{align*}
f_X(x) = \left\lbrace\begin{array}{cc}
\frac{\lambda^n}{\Gamma(n)}x^{n -1}\exp\left(- \lambda x \right) & \ \mbox{for} \ x\geq 0,\\
0 & \ \mbox{for} \ x<0.
\end{array} \right.
\end{align*}
\end{enumerate}
\end{example}

\begin{example}[Rayleigh distribution]
A continuous random variable $X$ takes values in an interval $[0,\infty)$, and has a Rayleigh distribution (as a square-root of the sum of two independent standard normal distributions with the same PDF $N(0,1)$) if 
\begin{align*}
f_X(x) = \left\lbrace\begin{array}{cc}
\frac{x}{\sigma^2} \exp\left(-\frac{x^2}{2\sigma^2} \right) & \ \mbox{if} \ x\geq 0, \\
0, & \ \mbox{if} \ x<0.
\end{array} \right.
\end{align*}
\begin{center}
\begin{figure}[h]
    \centering
    \includegraphics[width=1.0\textwidth]{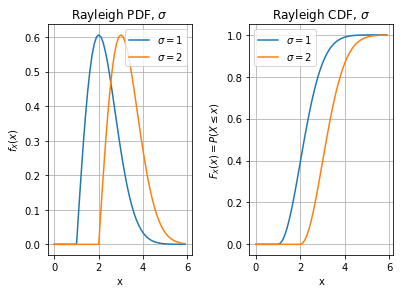}
    \caption{Probability density function of a CRV with Rayleigh distribution on $[0,\infty)$.}
    \label{fig:rpdf-01}
\end{figure}
\end{center}
\noindent So, $X$ is an Cauchy continuous random variable characterized by the pair of parameters $(\mu, \sigma^2), \mu \in (-\infty,\infty), \sigma \in [0,\infty)$ and we write $X\sim Exp(\lambda)$. 

\noindent The parameter $\mu=0$ indicates the center of the PDF with a depiction of the average value of the observations made by the random variable $X$ as a mean or expectation of $X$.

\noindent The parameter $\sigma^2$ indicates the width of the PDF with a depiction of variability of the observations made by the random variable $X$ as a variance of $X$. 
\end{example}

\begin{remark}
Finally, note that many of these PDFs arise as the PDFs of transformed Gaussian random variables. Therefore, realizations of the random variable may be obtained by first generating multiple realizations of independent standard normal or $N(0,1)$ random variables, and then performing the appropriate transformation. An alternative and more general approach to generating realizations of a random variable, once the PDF is known, is via the probability integral transformation.
\end{remark}

\begin{example}[Multi-valued function of a CRV]
In general, if $y=g(x)$ has k solutions namely $y_i=g_i^{-1}(y)$ for $i=1,2,\ldots, k$, then we have
\begin{align*}
p_Y(y)=\sum_{i=1}^k p_X(g^{-1}_i(y))\left|\frac{dg^{-1}_i(y)}{dy}\right|.
\end{align*}
\end{example}

\begin{theorem}
Consider a continuous random variable X with a continuous and monotonically increasing cumulative distribution function $F_X$, then $Y=F_X(X)$ is a uniformly distributed random variable on $[0,1]$.
\end{theorem}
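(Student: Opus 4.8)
The plan is to identify the distribution of $Y=F_X(X)$ by computing its cumulative distribution function directly and matching it against that of a $Unif([0,1])$ random variable. The guiding observation is that a random variable is uniform on $[0,1]$ precisely when its CDF equals $F_Y(y)=y$ for $y\in[0,1]$ (with value $0$ for $y<0$ and $1$ for $y>1$), so it suffices to establish this identity.

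First I would note that since $F_X$ is itself a CDF, its range lies in $[0,1]$, whence $Y=F_X(X)$ automatically takes values in $[0,1]$; this immediately settles the tails, giving $F_Y(y)=0$ for $y<0$ and $F_Y(y)=1$ for $y\geq 1$. Next, because $F_X$ is continuous and strictly increasing, it is a bijection from $\Omega_X$ onto its range, so I may introduce the inverse $F_X^{-1}$. The key step is then the chain of equalities, valid for $y\in[0,1]$,
\begin{align*}
F_Y(y)=P(Y\leq y)=P(F_X(X)\leq y)=P(X\leq F_X^{-1}(y))=F_X(F_X^{-1}(y))=y,
\end{align*}
where the crucial third equality uses monotonicity of $F_X$ to pass from the event $\{F_X(X)\leq y\}$ to $\{X\leq F_X^{-1}(y)\}$, and the fourth is just the definition of $F_X$ as the CDF of $X$.

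The main obstacle is justifying the inversion step rigorously. I must confirm that strict monotonicity together with continuity genuinely produces a single-valued, order-preserving inverse on the relevant range, and that the event manipulation $\{F_X(X)\leq y\}=\{X\leq F_X^{-1}(y)\}$ is therefore legitimate: continuity rules out jumps, so every target value in the open range is attained, while strict monotonicity rules out flat stretches, so the inverse is well defined. These two hypotheses are exactly what the third equality consumes. Once $F_Y(y)=y$ is established on $[0,1]$ and matched against the tail behaviour computed above, I would conclude that $Y$ has the cumulative distribution function of $Unif([0,1])$, and hence $Y\sim Unif([0,1])$, completing the argument.
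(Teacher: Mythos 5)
Your proposal is correct and follows essentially the same route as the paper: both compute $F_Y(y)=P(F_X(X)\leq y)=P(X\leq F_X^{-1}(y))=F_X(F_X^{-1}(y))=y$ for $y\in[0,1]$ and identify the uniform law from this. You are in fact slightly more careful than the paper, since you explicitly handle the tails $y<0$, $y\geq 1$ and justify the existence of the inverse, whereas the paper simply concludes by differentiating to obtain $f_Y(y)=1$ on $[0,1]$.
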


\begin{proof}
For $y\in [0,1]$, we have
\begin{align*}
F_Y(y) & =P(Y\leq y)=P(F_X(X)\leq y)=P(X\leq F_X^{-1}(y)) =F_X(F_X^{-1}(y))\\
       & =y \implies  f_Y(y) =\frac{dF_Y(y)}{dy}=1.
\end{align*}
So, we have
\begin{align*}
f_y(y)=\left\lbrace\begin{array}{cc}
1 & \ \mbox{if} \ y\in [0,1] \\
0 & \ \mbox{if} \ y\notin [0,1].
\end{array} \right.
\end{align*}
\end{proof}

\begin{theorem}
Consider a continuous random variable X such that $X=F_X^{-1}(U)$, where $U\sim \mathcal{U}(0,1)$, then X has the probability density function $f_X(x)=\frac{dF_X(x)}{dx}$.
\end{theorem}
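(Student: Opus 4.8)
The plan is to compute the cumulative distribution function of $X = F_X^{-1}(U)$ directly and recognize it as $F_X$, from which the density follows by differentiation, exactly mirroring the forward direction established in the preceding theorem. Since the statement presupposes that $F_X$ is continuous and monotonically increasing, its inverse $F_X^{-1}$ is well defined and strictly increasing on $[0,1]$, and this monotonicity is the only structural fact the argument will require.

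First I would fix $x \in \mathbb{R}$ and write
\begin{align*}
P(X \le x) = P(F_X^{-1}(U) \le x).
\end{align*}
The key step is to transport the event through the inverse: because $F_X$ is strictly increasing and continuous, so that $F_X$ and $F_X^{-1}$ are mutually inverse order-preserving maps, the inequality $F_X^{-1}(U) \le x$ holds if and only if $U \le F_X(x)$. Applying $F_X$ to both sides of the first inequality, or equivalently $F_X^{-1}$ to both sides of the second, justifies this equivalence.

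Next I would invoke that $U \sim \mathcal{U}(0,1)$, whose cumulative distribution function is the identity on $[0,1]$. Since $F_X(x) \in [0,1]$ for every $x$, this yields
\begin{align*}
P(U \le F_X(x)) = F_X(x).
\end{align*}
Chaining the two displays gives $P(X \le x) = F_X(x)$, so that $X = F_X^{-1}(U)$ indeed has cumulative distribution function $F_X$. Differentiating and using the relation between a continuous distribution's CDF and its density then produces $f_X(x) = \frac{dF_X(x)}{dx}$, as claimed.

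The main obstacle I anticipate is the careful justification of the equivalence $F_X^{-1}(U) \le x \iff U \le F_X(x)$: strictly speaking this is transparent only when $F_X$ is a genuine bijection onto its range, and one must be mildly careful at the endpoints of that range and at any values where $F_X$ could otherwise be flat or jump. The stated hypotheses of continuity and strict monotonicity rule out exactly these degeneracies, so the equivalence is clean and the remainder of the argument is routine.
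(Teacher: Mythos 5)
Your proposal is correct and follows essentially the same route as the paper's own proof: compute $P(F_X^{-1}(U)\leq x)$ by transporting the event through the monotone inverse to get $P(U\leq F_X(x))=F_U(F_X(x))=F_X(x)$, and then differentiate the CDF. Your explicit justification of the equivalence $F_X^{-1}(U)\leq x \iff U\leq F_X(x)$ via strict monotonicity is a welcome addition that the paper leaves implicit, but it does not change the argument.
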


\begin{proof}
\begin{align*}
F_X(x) & =P(X\leq x)=P(F_X^{-1}(U)\leq x)=P(U\leq F_X(x)) =F_U(F_X(x))\\
       & =F_X(x) \implies  f_X(x) =\frac{dF_X(x)}{dx}.   
\end{align*}
\end{proof}

\begin{remark}
In simulating the outcome of a discrete random variable $X$, first an outcome of a $U\sim \mathcal{U}(0,1)$ random variable is generated and then mapped into a value of $X$ under the inverse of the CDF. This result is also valid for a continuous random variable so that $X=F_X^{-1}(U)$ is a random variable with CDF $F_X(x)$.
\end{remark}

\begin{example}[Laplace distribution]
Consider a continuous random variable with the following Laplace distribution:
\begin{align*}
f_X(x) = \frac{1}{\sqrt{2\pi\sigma^2}}\exp\left(-\sqrt{\frac{2}{\sigma^2}}|x| \right) \ \mbox{for} \ x\in (-\infty,\infty).
\end{align*}
Then, its cumulative distribution can be computed as follows:
\begin{align*}
F_X(x)  =\int_{-\infty}^{\infty} \frac{1}{\sqrt{2\pi\sigma^2}}\exp\left(-\sqrt{\frac{2}{\sigma^2}}|x| \right)dx.
\end{align*}


For $x<0$, we have
\begin{align*}
F_X(x)  =\int_{-\infty}^x \frac{1}{\sqrt{2\pi\sigma^2}}\exp\left(\sqrt{\frac{2}{\sigma^2}}x \right)dx = \frac{1}{2}\exp\left(\sqrt{\frac{2}{\sigma^2}}x \right)      
\end{align*}
For $x\geq 0$, we have
\begin{align*}
F_X(x)& =\int_{-\infty}^0 \frac{1}{\sqrt{2\pi\sigma^2}}\exp\left(\sqrt{\frac{2}{\sigma^2}}x \right)dx + \int_0^x \frac{1}{\sqrt{2\pi\sigma^2}}\exp\left(-\sqrt{\frac{2}{\sigma^2}}x \right)dx \\
      & = 1-\frac{1}{2}\exp\left(-\sqrt{\frac{2}{\sigma^2}}x \right).
\end{align*}
Now, we have $y=F_X(x)$ for each $x$, so
\begin{align*}
y=\left\lbrace\begin{array}{cc}
\frac{1}{2}\exp\left(\sqrt{\frac{2}{\sigma^2}}x \right) & \ \mbox{if} \ x<0 \\
1-\frac{1}{2}\exp\left(-\sqrt{\frac{2}{\sigma^2}}x \right) & \ \mbox{if} \ x\geq 0.
\end{array} \right.
\end{align*}
So, $0<y<1/2$ for $x<0$ and $1/2\leq y <1$ for $x\geq 0$, then we have the inverse map
\begin{align*}
x=\left\lbrace\begin{array}{cc}
\sqrt{\sigma^2/2}\ln(2y) & \ \mbox{for} \ y \in (0,1/2) \\
\sqrt{\sigma^2/2}\ln(1-2y) & \ \mbox{for} \ y \in [1/2,1). 
\end{array} \right.
\end{align*}
Finally to generate the outcome of a Laplacian random variable we can use
\begin{align*}
x=\left\lbrace\begin{array}{cc}
\sqrt{\sigma^2/2}\ln(2u) & \ \mbox{for} \ u \in (0,1/2) \\
\sqrt{\sigma^2/2}\ln(1-2u) & \ \mbox{for} \ u \in [1/2,1), 
\end{array} \right.
\end{align*}
where $u$ is a realization of a uniformly distributed random variable $\mathcal{U}(0,1)$.
\begin{center}
\begin{figure}[h]
    \centering
    \includegraphics[width=1.0\textwidth]{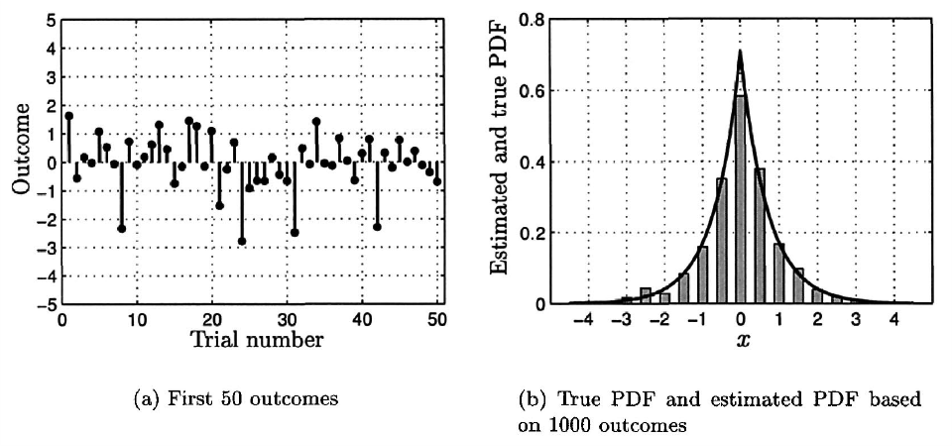}
    \caption{An example of the outcomes of a Laplacian random variable with $\sigma^2=1$, along with the true PDF (the solid curve) and the estimated PDF (the bar plot).}
    \label{fig:lpdf-02}
\end{figure}
\end{center}
\end{example}

\newpage 
 
\subsection{Jointly continuous distribution}
In science and in real life, we are often interested in two (or more) random variables at the same time in a random experiment. In such situations the random variables have a joint distribution that allows us to compute probabilities of events involving both variables and understand the relationship between the variables. This is simplest when the variables are independent. When they are not, we use
covariance and correlation as measures of the nature of the dependence between them.
\begin{example}[Health of a student at IIIT-DWD]
Consider an experiment of investigate health of a student at III-TDWD, then we need to measure a list of possible attributes (jointly) such as:
\begin{enumerate}
\item[a.] health of a student $\to$ varies randomly in continuum i.e. it may be characterized as a random variable $H$.
\item[b.] weight of a student $\to$ varies randomly in continuum i.e. it may be characterized as a random variable $W$.
\item[c.] average blood pressure of a student $\to$ varies randomly in continuum i.e. it may be characterized as a random variable $B$.
\item[d.] body temperature of a student $\to$ varies randomly in continuum i.e. it may be characterized as a random variable $T$.
\item[e.] and a few more.
\end{enumerate}
So, using some statistical method, we can come up with a conclusion that
\begin{align*}
\mbox{helth of a student at IIITDWD}\ = g(H,W,B,T,O).
\end{align*}
\end{example}

\begin{definition}[Jointly continuous random variables]\label{def:jcrv-01}
Suppose X and Y are two continuous random variables and that X takes values $\Omega_X$ and Y takes values $\Omega_Y$. The ordered pair $(X, Y)$ take values in the product $\Omega_X \times \Omega_Y$. The joint probability density function (joint pdf) of X and Y, $f_{(X,Y)}(x,y)$ is defined as the probability per unit as follows:
\begin{align*}
f_{X,Y}(x,y) = \lim_{(\delta x, \delta y) \to (0^+,0^+)} \frac{P(x\leq X\leq x+\delta x,y\leq Y\leq y+\delta y)}{\delta x \delta y}.
\end{align*}
And satisfies the following properties:
\begin{enumerate}
\item $f_{(X,Y)}(x,y) \geq 0 $ for any $(x,y)\in \Omega_X \times \Omega_Y$,
\item $\int_{\{(x,y) \in \Omega_X \times \Omega_Y\}}f_{(X,Y)}(x,y)dxdy =1$,
\item $P((X,Y)\in B)=\int_{\{(x,y)\in B\}}f_{(X,Y)}(x,y)dxdy$.
\end{enumerate}
Computationally visualizing of joint PDF over the red region in figure \eqref{fig:jpdf-01}.
\begin{center}
\begin{figure}[h]
    \centering
    \includegraphics[width=1.0\textwidth]{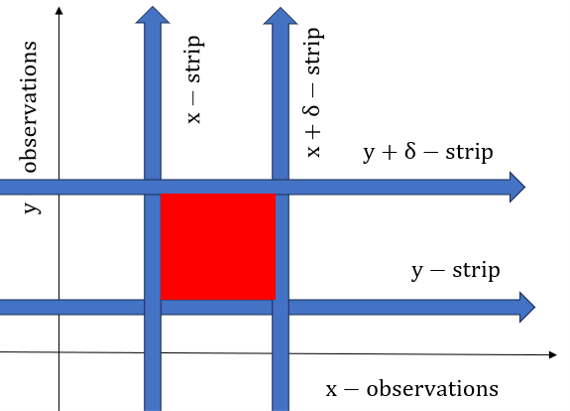}
    \caption{A visualizing of computation of joint PDF over the red region.}
    \label{fig:jpdf-01}
\end{figure}
\end{center}
\end{definition}

\begin{definition}[Conditional PDF for a CRV X conditioned on $Y=y$]\label{def:cpdf-01}
Consider X and Y are jointly-distributed and continuous random variables with a statistically dependent, we may want to consider only one of them say $X$ conditioned on one observation the other, say $X|Y=y$. Then, conditional CDF of X, conditioned on $Y=y$ is defined as
\begin{align*}
f_{X|Y}(x|y) & = \lim_{\delta \to 0}  \frac{P(x\leq X \leq x+\delta|Y=y)}{\delta} =  \frac{f_{X,Y}(x,y)}{f_Y(y)} \\
             & = \frac{f_{X,Y}(x,y)}{\int_{x\in \mathbb{R}}f_{X,Y}(x,y)dx}.
\end{align*}

\noindent Where derivation of conditional probability $P(x\leq X \leq x+\delta|Y=y)$ performed as:
\begin{align*}
P(x\leq X \leq x+\delta|Y=x) & =\lim_{\epsilon \to 0}P\left(x\leq X \leq x+\delta \leq x|Y \in (y-\epsilon,y+\epsilon)\right) \\
                              & = \lim_{\epsilon \to 0}\frac{P(x\leq X \leq x+\delta, y \leq Y \leq y +\epsilon)}{P(y\leq Y \leq y+ \epsilon)} \\
                              & \approx \lim_{\epsilon \to 0} \frac{f_{X,Y}(x,y)\delta \epsilon}{f_Y(y)\epsilon} = \frac{f_{X,Y}(x,y)\delta}{f_Y(y)}.                       
\end{align*}

Similarly, 
\begin{align*}
f_{Y|X}(y|x) & = \lim_{\delta \to 0}  \frac{P(y\leq Y \leq y+\delta|X=x)}{\delta} =  \frac{f_{X,Y}(x,y)}{f_X(x)} \\
             & = \frac{f_{X,Y}(x,y)}{\int_{y\in \mathbb{R}}f_{X,Y}(x,y)dy}.
\end{align*}
More explicitly, computational visualization of conditional PDFs in figure \eqref{fig:cpdf-02} as:
\begin{center}
\begin{figure}[h]
    \centering
    \includegraphics[width=1.0\textwidth]{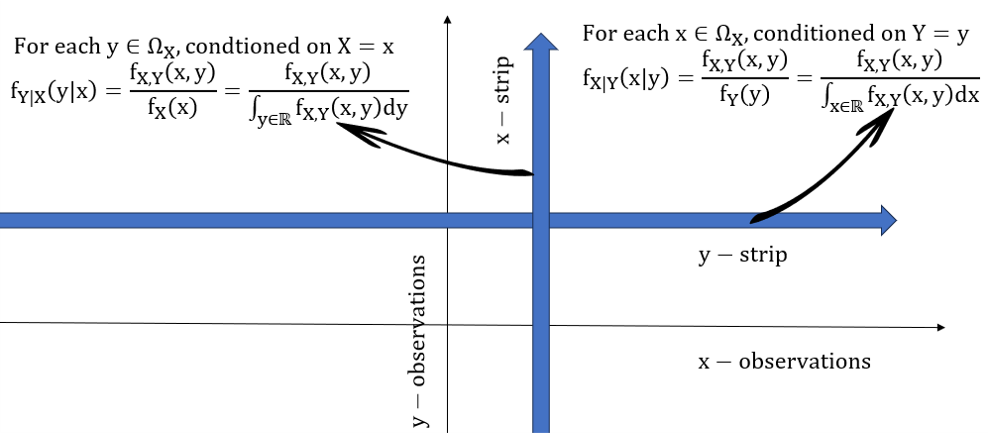}
    \caption{Computationally visualizing conditional PDFs $f_{Y|X}(y|x) \&  f_{X|Y}(x|y)$.}
    \label{fig:cpdf-01}
\end{figure}
\end{center}
A geometrical visualization of conditional PDFs including joint and marginal PDFs in figure \eqref{fig:cpdf-02-01} as:
\begin{center}
\begin{figure}[h]
    \centering
    \includegraphics[width=1.0\textwidth]{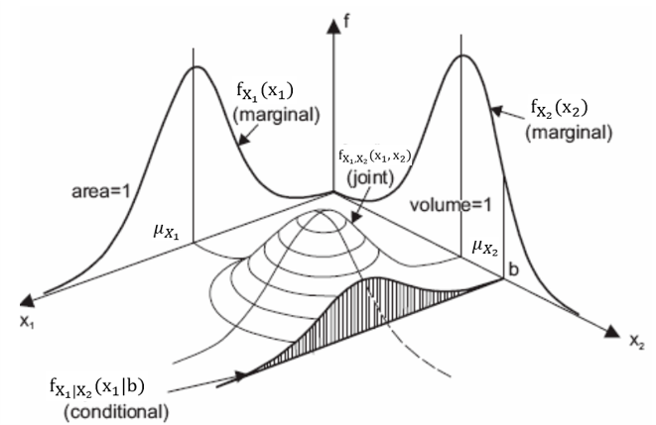}
    \caption{A geometrical visualizing conditional PDF $f_{X_1|X_2}(x_1|b)$ including joint and marginal PDFs.}
    \label{fig:cpdf-02-01}
\end{figure}
\end{center}
\end{definition}

\begin{example}[Conditional PDF from standard bivariate PDF]
Consider a joint continuously distributed bivariate $(X,Y)$ with a standard bivariate Gaussian PDF as follows:
\begin{align*}
f_{X,Y}(x,y)=\frac{1}{2\pi\sqrt{(1-\rho^2)}}\exp\left(-\frac{x^2-2\rho xy +y^2}{2(1-\rho^2)} \right).
\end{align*}
and the marginal PDF for X is computed as 
\begin{align*}
f_X(x)=\int_{y\in \mathbb{R}}f_{X,Y}(x,y)dy=\frac{1}{\sqrt{2\pi}}\exp\left(-\frac{x^2}{2} \right).
\end{align*}
Now, the condition PDF of Y conditioned on $X=x$ can be computed as follows:
\begin{align*}
f_{Y|X}(y|x)=\frac{\frac{1}{2\pi\sqrt{(1-\rho^2)}}\exp\left(-\frac{x^2-2\rho xy +y^2}{2(1-\rho^2)} \right)}{\frac{1}{\sqrt{2\pi}}\exp\left(-\frac{x^2}{2} \right)} 
=\frac{1}{2\pi\sqrt{(1-\rho^2)}}\exp\left(-\frac{Q}{2}\right),
\end{align*}
where
\begin{align*}
Q=-\frac{x^2-2\rho xy +y^2}{2(1-\rho^2)} + \frac{x^2}{2} =
\frac{(y-\rho x)^2}{(1-\rho^2)}.
\end{align*}
So, we can write $Y|X=x \sim N\left(x\rho, 1-\rho^2 \right)$ and visualize in figure \eqref{fig:cpdf-05} as follows:
\begin{center}
\begin{figure}[h]
    \centering
    \includegraphics[width=1.0\textwidth]{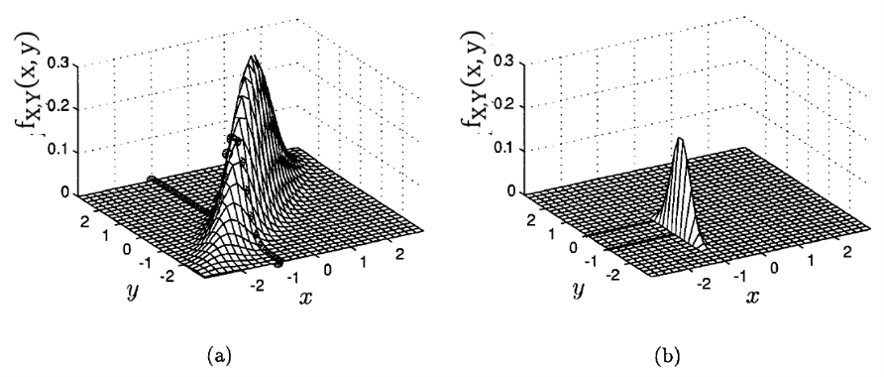}
    \caption{Standard bivariate Gaussian PDF with $\rho=0.9$ and its cross-section at $x=-1$. The
normalized cross-section is the conditional PDF.}
    \label{fig:cpdf-05}
\end{figure}
\end{center}
And, visualizing joint marginal and conditional PDFs together in figure \eqref{fig:jmcpdf-03} as follows:
\begin{center}
\begin{figure}[h]
    \centering
    \includegraphics[width=0.8\textwidth]{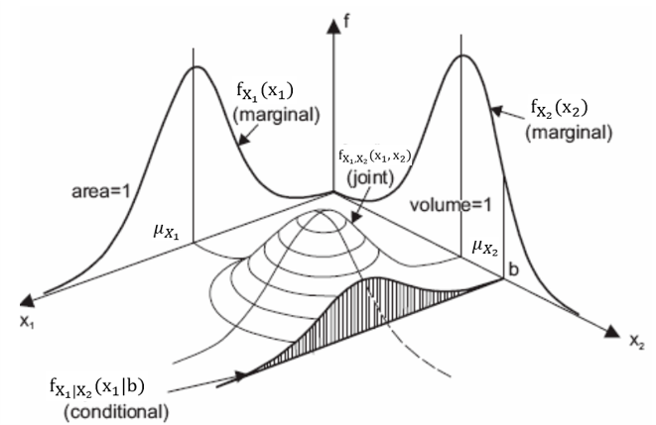}
    \caption{Visualizing joint marginal and conditional PDFs in together.}
    \label{fig:jmcpdf-03}
\end{figure}
\end{center}
\end{example}

\begin{example}
Let S denote the height (in inches) of a randomly chosen father, and T denote the height (in inches) of his son at maturity. Suppose each of S and T has a $N(\mu, \sigma^2)$ distribution with $\mu =69$ and $\sigma =2$. Suppose also that the standardized variables $\frac{S-\mu}{\sigma}$ and $\frac{T-\mu}{\sigma}$ have a standard bivariate normal distribution with correlation $\rho =0.3$. If Sam has a height of $S=74$ inches, the predict the ultimate height T of his young son Tom.

\noindent In standardized units, Sam has height $X=\frac{S-\mu}{\sigma}$, which we are given to equal $2.3$. Tom's ultimate standardized height is $Y=\frac{T-\mu}{\sigma}$. By assumption, before the value of X was known, the pair $(X,Y)$ has a standard bivariate normal distribution with correlation $\rho$. The conditional distribution of Y given that $X=2.5$ is
\begin{align*}
Y|X=2.5 \sim N(2.5\rho, 1-\rho^2).
\end{align*}
In the original units, the conditional distribution of T given $S=74$ is normal with mean $\mu +2.5\rho$ and variance $(1-\rho^2)\sigma^2$, that is, Tom's ultimate height | Sam's height = 74 inches $\sim N(70.5,3.64)$. If I had to make a guess, I would predict that Tom would ultimately reach a height of $70.5$ inches.
\end{example}

\begin{definition}[Conditional CDF for a CRV X conditioned on $Y=y$]
Consider X and Y are jointly-distributed and continuous random variables with a statistically dependent, we may want to consider only one of them say $X$ conditioned on one observation the other, say $X|Y=y$. Then, conditional CDF of X, conditioned on $Y=y$ is defined as
\begin{align*}
F_{X|Y}(x|y)=\lim_{\epsilon \to 0}P\left(X\leq x|Y \in (y-\epsilon,y+\epsilon)\right)=\int_{-\infty}^x f_{X|Y}(x|y)dx.
\end{align*}
\end{definition}

\begin{example}
Consider a joint continuously distributed bivariate $(X,Y)$ with a standard bivariate Gaussian PDF as follows:
\begin{align*}
f_{X,Y}(x,y)=\frac{1}{2\pi\sqrt{(1-\rho^2)}}\exp\left(-\frac{x^2-2\rho xy +y^2}{2(1-\rho^2)} \right).
\end{align*}
and the marginal PDF for X is computed as 
\begin{align*}
f_X(x)=\int_{y\in \mathbb{R}}f_{X,Y}(x,y)dy=\frac{1}{\sqrt{2\pi}}\exp\left(-\frac{x^2}{2} \right).
\end{align*}
Now, the condition PDF of Y conditioned on $X=x$ can be computed as follows:
\begin{align*}
f_{Y|X}(y|x)=\frac{\frac{1}{2\pi\sqrt{(1-\rho^2)}}\exp\left(-\frac{x^2-2\rho xy +y^2}{2(1-\rho^2)} \right)}{\frac{1}{\sqrt{2\pi}}\exp\left(-\frac{x^2}{2} \right)} 
=\frac{1}{2\pi\sqrt{(1-\rho^2)}}\exp\left(-\frac{Q}{2}\right),
\end{align*}
where
\begin{align*}
Q=-\frac{x^2-2\rho xy +y^2}{2(1-\rho^2)} + \frac{x^2}{2} =
\frac{(y-\rho x)^2}{(1-\rho^2)}.
\end{align*}
So, we can write $Y|X=x \sim N\left(x\rho, 1-\rho^2 \right)$. Then
\begin{align*}
F_{Y|X}(y|x)=\int_{-\infty}^y f_{Y|X}(y|x)dy=1-\Phi\left( \frac{y-\rho x}{\sqrt{1-\rho^2}}\right).
\end{align*}
\end{example}

\begin{example}
Consider two continuously distributed random variables $X,Y\sim \mathcal{U}(0,1)$ and distributed jointly with the following joint PDF:
\begin{align*}
f_{X,Y}(x,y)=\left\lbrace\begin{array}{cc}
cxy & \ \mbox{if} 0\leq y \leq x \leq 1\\
0 & \ \mbox{otherwise}.
\end{array} \right.
\end{align*}
Then compute (a.) $c$, (b.) $P\left( Y\leq \frac{X}{2}\right)$ and (c.) $P\left( Y\leq \frac{X}{4}\right)$. And, visualization of domain of joint PDF and surface of the joint PDF in figure \eqref{fig:pc-jpdf} as:
\begin{center}
\begin{figure}[h]
    \centering
    \includegraphics[width=0.8\textwidth]{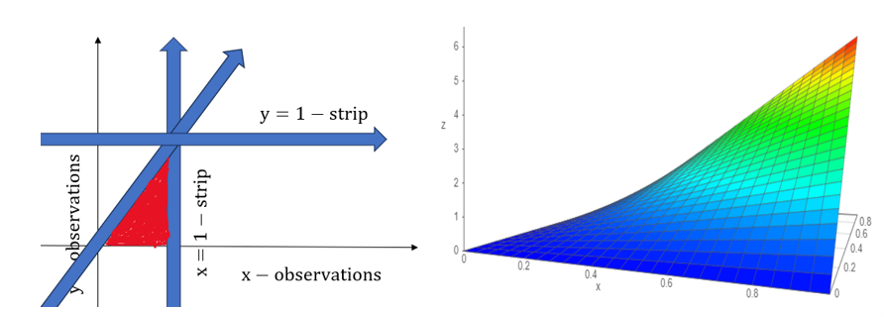}
    \caption{Visualizing domain of joint PDF in left plot and surface of the joint PDF in right plot.}
    \label{fig:pc-jpdf-01}
\end{figure}
\end{center}

\noindent Answer: to compute $c$, we have
\begin{align*}
& 1= \int_0^1\int_0^1 f_{X,Y}dxdy =\int_0^1\int_y^1 cxydxdy=\frac{c}{2}\int_0^1(1-y^2)ydy=\frac{c}{8}  \\
& \implies c=8.
\end{align*}
Now, to compute the probability $P(Y\leq \frac{X}{2})$, we have
\begin{align*}
P(Y\leq \frac{X}{2})=\int_0^1\int_0^{\frac{x}{2}}8xydydx=\int_0^1 x^3dx =\frac{1}{4}.
\end{align*}
Finally, to compute the probability $P(Y\leq \frac{X}{4})$, we have
\begin{align*}
P(Y\leq \frac{X}{4})=\int_0^1\int_0^{\frac{x}{4}}8xydydx=\frac{1}{4}\int_0^1 x^3dx =\frac{1}{16}.
\end{align*}
\end{example}

\begin{theorem}[Computation of joint PDF]
So, by using the definition of conditional probability for events we have
\begin{align*}
f_{(X,Y)}((x,y) =f_Y(y)f_{X|Y}(x|y) =f_X(x)f_{Y|X}(y|x).
\end{align*}
It provides a computational framework to determine joint PMF of the joint discrete random variable $(X,Y)$.
\end{theorem}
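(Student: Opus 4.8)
The plan is to treat this identity as the continuous counterpart of the multiplication rule from Definition~\ref{def:jp-01}, reading off the two factorizations directly from the definition of the conditional PDF in Definition~\ref{def:cpdf-01}. That definition already established
\begin{align*}
f_{X|Y}(x|y) = \frac{f_{X,Y}(x,y)}{f_Y(y)}, \qquad f_{Y|X}(y|x) = \frac{f_{X,Y}(x,y)}{f_X(x)},
\end{align*}
at every point where the relevant marginal density is strictly positive, so the entire content of the theorem is obtained by clearing denominators in each of these two relations. I would state this as the first and essentially final step.

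For completeness, and to stay in the spirit of the ``one-shot'' increment derivations used throughout the chapter, I would also give a self-contained argument from first principles that does not presuppose the quotient form. Fix a point $(x,y)$ and consider the small rectangle $[x,x+\delta x]\times[y,y+\delta y]$. Applying the event-level multiplication rule of Definition~\ref{def:jp-01} to the events $\{x\le X\le x+\delta x\}$ and $\{y\le Y\le y+\delta y\}$ gives
\begin{align*}
P(x\le X\le x+\delta x,\, y\le Y\le y+\delta y) = P(y\le Y\le y+\delta y)\, P\!\left(x\le X\le x+\delta x \mid y\le Y\le y+\delta y\right).
\end{align*}
Using the probability-per-unit-length interpretation of each factor, the left side is $\approx f_{X,Y}(x,y)\,\delta x\,\delta y$, the marginal factor is $\approx f_Y(y)\,\delta y$, and the conditional factor tends to $f_{X|Y}(x|y)\,\delta x$ as $\delta y\to 0$; dividing through by $\delta x\,\delta y$ and passing to the limit yields $f_{X,Y}(x,y)=f_Y(y)f_{X|Y}(x|y)$. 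Interchanging the roles of $X$ and $Y$ gives the second factorization $f_{X,Y}(x,y)=f_X(x)f_{Y|X}(y|x)$.

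The computation itself is routine, so the only point requiring care is the set where a marginal density vanishes, say $f_Y(y)=0$. There the conditional density $f_{X|Y}(\cdot\,|y)$ is not defined by the quotient, and the increment argument degenerates. The resolution is that $\{y : f_Y(y)=0\}$ carries no probability mass, so on that set the joint density is zero almost everywhere as well and both sides of the claimed identity vanish; I expect this measure-zero bookkeeping, rather than any substantive estimate, to be the only (mild) obstacle. For an undergraduate treatment one may simply restrict attention to points where the marginals are positive, which is precisely where conditional densities were defined in the first place.
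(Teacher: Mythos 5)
Your proposal is correct and matches the paper's treatment: the identity is obtained exactly as you say, by clearing denominators in the conditional-PDF definition (Definition~\ref{def:cpdf-01}), whose own derivation in the paper is precisely the small-rectangle increment argument you reproduce via the multiplication rule of Definition~\ref{def:jp-01}. Your added remark on the null set $\{y : f_Y(y)=0\}$ is a mild refinement the paper leaves implicit, not a different route.
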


\begin{example}[Life-time of a spare bulb]
Question: a professor uses the overhead projector for his class. The time to failure of a new bulb $X$ has the exponential PDF $f_X(x)=\lambda\exp(-\lambda x)u(x)$, where x is in hours. A new spare bulb also has a time to failure $Y$ that is modelled as an exponential PDF.
However, the time to failure of the spare bulb depends upon how long the spare bulb sits unused. Assuming the spare bulb is activated as soon as the original bulb fails, the time to activation is given by X. As a result, the expected time to failure of the spare bulb is decreased as
\begin{align*}
\frac{1}{\lambda_Y} = \frac{1}{\lambda (1+\alpha x)} \ \mbox{for} \ \alpha \in (0,1),
\end{align*}
where $\alpha$ is some factor that indicates the degradation of the unused bulb with storage time. Then compute the PDF of $Y$.

\noindent Answer: the expected time to failure of the spare bulb decreases as the original bulb is used longer (and hence the spare bulb must sit unused longer). Thus, we model the time to failure of the spare bulb as
\begin{align*}
f_Y(y|x)=\lambda_Y \exp(-\lambda_Y y)u(y)=\lambda (1+\alpha x) \exp\left(-\lambda (1+\alpha x) y\right)u(y).
\end{align*}
Then, we compute the desired PDF of $Y$ as follows:
\begin{align*}
f_Y(y) & =\int_x f_X(x)f_{Y|X}(y|x)dx=\int_x \lambda^2 (1+\alpha x) e^{\left(-\lambda (x+(1+\alpha x)y)\right)}dx \\
       & =\lambda^2 \exp(-\lambda y)\int_x (1+\alpha x) \exp(ax)dx, \ a=-\lambda (\alpha y +1) \\
       & =\lambda^2 \exp(-\lambda y) \left(\int_0^{\infty} \exp(ax)dx +\alpha \int_0^{\infty} x\exp(ax)\right) \\
       & =\lambda^2 \exp(-\lambda y)\left(-\frac{1}{a} +\frac{\alpha}{a^2} \right), \ a=-\lambda (\alpha y +1) \implies \\
f_Y(y) & = \lambda^2 \exp(-\lambda y) \left(\frac{1}{(\lambda y+1)} +\frac{\alpha}{(\alpha y +1)^2} \right)u(y).
\end{align*}
\begin{center}
\begin{figure}[h]
    \centering
    \includegraphics[width=1.0\textwidth]{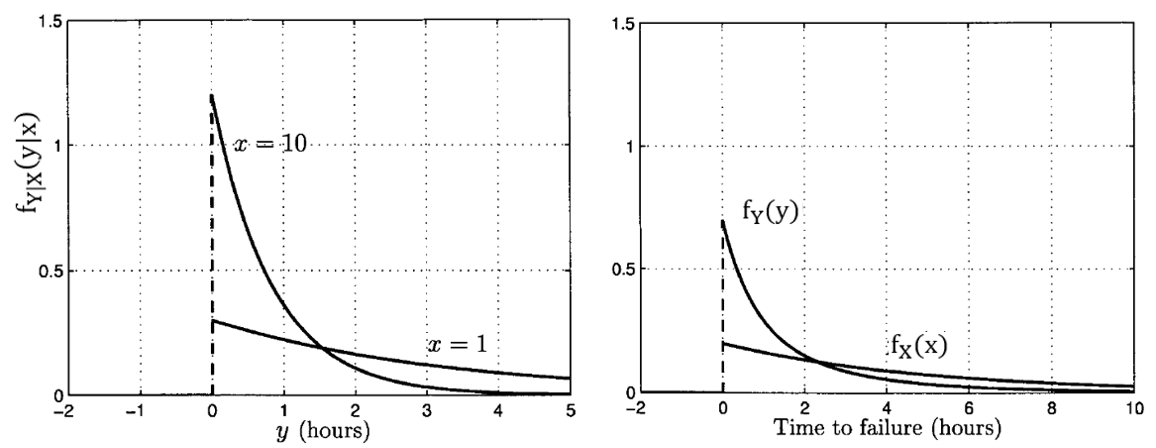}
    \caption{Conditional PDF for lifetime of spare bulb with dependence on time to failure x of original bulb, and PDFs for time to failure of original bulb $X$ \& spare bulb $Y$.}
    \label{fig:cpdf-06}
\end{figure}
\end{center}
\end{example}

\begin{theorem}[An another form of total probability]
In general to compute probabilities of events it is advantageous to use conditioning arguments whether or not X and Y are independent.
\begin{align*}
P(Y\in A) = \int_x P\left( Y\in A|X=x\right)f_X(x)dx=\int_x \int_A f_{Y|X}(y|x)dyf_X(x)dx.
\end{align*}
\end{theorem}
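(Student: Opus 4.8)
The plan is to reduce the claim to the density factorization $f_{X,Y}(x,y) = f_X(x) f_{Y|X}(y|x)$ established earlier, so that the statement becomes a bookkeeping exercise on the defining property $P((X,Y) \in B) = \int_{(x,y) \in B} f_{X,Y}(x,y)\,dx\,dy$ from Definition \eqref{def:jcrv-01}, together with an interchange in the order of integration.

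First I would observe that the event $\{Y \in A\}$, which on its face involves only $Y$, corresponds in the joint sample space to the product event $B = \mathbb{R} \times A = \{(x,y) : x \in \mathbb{R},\, y \in A\}$, since it places no constraint on $X$. Applying the third property of the joint PDF then gives $P(Y \in A) = P((X,Y) \in B) = \int_{x \in \mathbb{R}} \int_{y \in A} f_{X,Y}(x,y)\,dy\,dx$.

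Next I would substitute the factorization $f_{X,Y}(x,y) = f_X(x) f_{Y|X}(y|x)$ and pull $f_X(x)$ out of the inner $y$-integral, obtaining $\int_x f_X(x) \left( \int_A f_{Y|X}(y|x)\,dy \right) dx$, which is already the rightmost expression in the claim. To recover the middle expression I would identify the inner integral as the conditional probability: by the conditional density of Definition \eqref{def:cpdf-01}, one has $P(Y \in A \mid X = x) = \int_A f_{Y|X}(y|x)\,dy$, so the chain collapses to $\int_x P(Y \in A \mid X = x) f_X(x)\,dx$ and the two equalities are closed.

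The main delicacy is the conditioning on the null event $\{X = x\}$: since $P(X = x) = 0$ for a continuous $X$, the symbol $P(Y \in A \mid X = x)$ cannot be read off the elementary conditional-probability formula and must instead be interpreted through the limiting construction behind the conditional PDF in Definition \eqref{def:cpdf-01}; I would therefore justify $P(Y \in A \mid X = x) = \int_A f_{Y|X}(y|x)\,dy$ from that definition rather than treat it as self-evident. The remaining interchange of integration order is routine but worth flagging: because $f_{X,Y} \geq 0$ and integrates to $1$, Tonelli's theorem licenses swapping the $x$- and $y$-integrals, so no further integrability hypotheses are needed.
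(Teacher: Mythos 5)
Your proof is correct. The paper states this theorem without any proof, so there is no written argument to compare against; but your route --- realizing $\{Y\in A\}$ as the product event $\mathbb{R}\times A$, applying property (3) of the joint PDF in Definition \eqref{def:jcrv-01}, substituting the factorization $f_{X,Y}(x,y)=f_X(x)f_{Y|X}(y|x)$ from the paper's ``Computation of joint PDF'' theorem, and identifying $\int_A f_{Y|X}(y|x)\,dy$ with $P(Y\in A\mid X=x)$ --- is precisely the machinery the paper itself deploys in the adjacent corollaries (the PDFs of $Y/X$, $XY$, and $X+Y$ are all derived by conditioning on $X=x$ and integrating against $f_X$), so yours is evidently the intended argument. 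The two subtleties you flag are genuine and are glossed over by the paper: since $P(X=x)=0$, the symbol $P(Y\in A\mid X=x)$ only makes sense through the limiting construction behind Definition \eqref{def:cpdf-01}, and the interchange of the $x$- and $y$-integrations is justified by Tonelli because $f_{X,Y}\geq 0$; handling both explicitly makes your write-up strictly more careful than the source.
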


\begin{theorem}[Bayes rule for probability density function]
Consider two jointly continuous random variables $X$ with PDF $f_X(x)$ and $Y$ with PDF $f_Y(y)$ having joint PDF $f_{X,Y}(x,y)$, then restatement of the definition of conditional probability density function leads to computation of a posterior distribution as:
\begin{align*}
f_{(X,Y)}(x,y) & =f_X(x)f_{Y|X}(y|x)=f_Y(y)f_{X|Y}(x|y) \implies \\
f_{X|Y}(x|y) & = \frac{f_X(x)f_{Y|X}(y|x)}{f_Y(y)} = \frac{f_X(x)f_{Y|X}(y|x)}{\sum_{\{x\in \Omega_X\}}f_X(x)f_{Y|X}(y|x)},
\end{align*}
where $f_X(x) \to $ prior distribution of $X$, $f_{Y|X}(y|x)=f(x) \to $ is the data generating process as a likelihood of $Y=y$ conditioned on $X=x$ and $f_Y(y)=\sum_{\{x\in \Omega_X\}}f_X(x)f_{Y|X}(y|x) \to $ probability of observing $Y=y$ or evidence.
\end{theorem}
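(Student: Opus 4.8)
The plan is to obtain this identity as an immediate algebraic consequence of the definition of the conditional density in \eqref{def:cpdf-01}, together with the marginalization (evidence) identity; no limiting or measure-theoretic work beyond what is already built into that definition is required. First I would recall that \eqref{def:cpdf-01} gives, at every point $(x,y)$ at which the marginals are strictly positive,
\begin{align*}
f_{X|Y}(x|y) = \frac{f_{X,Y}(x,y)}{f_Y(y)}, \qquad f_{Y|X}(y|x) = \frac{f_{X,Y}(x,y)}{f_X(x)}.
\end{align*}
Clearing the denominators in each of these and equating yields the two-sided multiplication (chain) rule
\begin{align*}
f_{X,Y}(x,y) = f_Y(y)\, f_{X|Y}(x|y) = f_X(x)\, f_{Y|X}(y|x),
\end{align*}
which is exactly the first displayed equation of the statement.

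For the second displayed equation I would take the equality $f_Y(y)\, f_{X|Y}(x|y) = f_X(x)\, f_{Y|X}(y|x)$ and divide through by $f_Y(y)$, giving
\begin{align*}
f_{X|Y}(x|y) = \frac{f_X(x)\, f_{Y|X}(y|x)}{f_Y(y)}.
\end{align*}
It then remains only to re-express the evidence $f_Y(y)$ in the denominator in terms of the prior and the likelihood. Marginalizing the joint density over $x$ and substituting the chain rule for $f_{X,Y}$ gives
\begin{align*}
f_Y(y) = \int_{x \in \R} f_{X,Y}(x,y)\, dx = \int_{x \in \R} f_X(x)\, f_{Y|X}(y|x)\, dx,
\end{align*}
and inserting this into the previous display completes the derivation.

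There is essentially no genuine obstacle here, since the content is purely a rearrangement of \eqref{def:cpdf-01}; the only points that need care are bookkeeping ones. I would be explicit that the manipulations are valid only where $f_X(x) > 0$ and $f_Y(y) > 0$, so that the quotients defining the conditional densities are well defined (on the null set where a marginal vanishes, the conditional density may be assigned arbitrarily without affecting any probability). The one substantive correction I would make to the statement as written is that, in the continuous setting, the normalizing constant in the denominator is the integral $\int_{x \in \R} f_X(x)\, f_{Y|X}(y|x)\, dx$ rather than the sum $\sum_{x \in \Omega_X}$; the sum is the discrete analogue already recorded in the Bayes rule for the PMF, and replacing it by the integral is precisely what guarantees that the resulting posterior $f_{X|Y}(\cdot\,|\,y)$ integrates to unit mass.
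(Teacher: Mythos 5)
Your proposal is correct and takes essentially the same route as the paper, which likewise treats the result as an immediate algebraic rearrangement of the conditional-density definition \eqref{def:cpdf-01} (giving $f_{X,Y}(x,y)=f_X(x)f_{Y|X}(y|x)=f_Y(y)f_{X|Y}(x|y)$) followed by marginalization of the joint density to express the evidence $f_Y(y)$, with no further argument supplied. Your substantive correction is also right: in the continuous case the denominator must be the integral $\int_{x} f_X(x)\,f_{Y|X}(y|x)\,dx$ rather than the sum $\sum_{\{x\in \Omega_X\}}$ appearing in the paper's statement, the latter being a carry-over from the discrete (PMF) version of Bayes rule, and the integral form is what makes the posterior $f_{X|Y}(\cdot|y)$ integrate to one on the set where $f_Y(y)>0$.
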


\begin{definition}[Independent jointly continuous random variables]
Jointly-distributed continuous random variables X and Y are independent if observation of one random variable before the other is not going to affect their probability density functions or their joint PDF is the product of the marginal PDF's:
\begin{align*}
f_{X|Y}(x|y)=f_X(x) \ \mbox{or} \ f_{Y|X}(y|x)=f_Y(y),
\end{align*}
or
\begin{align*}
f_{(X,Y)}(x,y)=f_X(x)f_Y(y) \ \mbox{for any} \ (x,y) \in \Omega_X \times \Omega_Y. 
\end{align*}
\end{definition}

%

\subsection{Derived distribution of a continuous random variable}
\begin{definition}[Derived distribution of a continuous random variable]\label{def:ddcrv-01}
Consider a continuous random variable $X$ with its PDF $f_X(x)$ and a transformation $Y=g(X)$ as a function of $X$ such that $\Omega_Y$ is continuum in nature. Then probability density function of $Y$ can be computed as follows:
\begin{align*}
& F_Y(y)=P(Y\leq y)=P(g(X)\leq y)=P(X \leq g^{-1}(y))=F_X(g^{-1}(y)) \\ 
& \implies f_Y(y)=\frac{dF_Y(y)}{dy}=\frac{dF_X(g^{-1}(y))}{dy}=\frac{dF_X(g^{-1}(y))}{dg^{-1}(y)}\left|\frac{dg^{-1}(y)}{dy} \right|.
\end{align*}
\end{definition} 

\noindent So, computation of derived distribution $p_Y(y)$ of a continuous random variable $X$ (with $\Omega_Y$ as continuum in nature) follows two steps:
\begin{enumerate}
\item Determining all possible observed value of the derived random variable $Y$ i.e. $\Omega_Y$,
\item Computing the derived distribution as follows:
\begin{align*}
f_Y(y)=\frac{dF_Y(y)}{dy}=\frac{dF_X(g^{-1}(y))}{dy}=\frac{dF_X(g^{-1}(y))}{dg^{-1}(y)}\left|\frac{dg^{-1}(y)}{dy} \right|.
\end{align*}
\end{enumerate}

\begin{example}[Derived distribution of a CRV $X\sim Unif(0,1)$]
If $X\sim Unif(0,1)$ and $Y=\sqrt{X}$, then derived PDF $f_Y(y)$ can be computed as:
\begin{enumerate}
\item As $X\sim Unif(0,1)$ and $Y=\sqrt{X}$, so $\Omega_X =[0,1]$ transformed to $\Omega_Y=[0,1]$ and for each $x \in \Omega_X, y=\sqrt{x} \implies x=g^{-1}(y)=y^2$.
\item Now, we have
\begin{align*}
& F_Y(y) =P(Y\leq y)=P(\sqrt{X}\leq y)=P(X\leq y^2) =\int_0^{y^2} dx = y^2, \\ 
& \implies f_y(y) =2y \ \mbox{for} \ y \in [0,1].   .
\end{align*}
Alternatively
\begin{align*}
F_Y(y) & =P(Y\leq y)=P(g(X)\leq y)=P(X\leq g^{-1}(y)) \\
       & =F_X(g^{-1}(y))= P(X\leq y^2)=F_X(y^2), \implies \\
f_y(y) & =\frac{dF_X(g^{-1}(y))}{dg^{-1}(y)}\left|\frac{dy^2}{dy}\right| = 1\times 2y =2y. 
\end{align*}
\end{enumerate} 
\end{example}

\begin{example}
Question: if $X\sim Unif(-1,1)$ and $Y=|X|$, then compute the derived distribution $f_Y(y)$  of $Y=g(X)=|X|$c.

\noindent Answer: derived distribution $f_Y(y)$  of $Y=g(X)=|X|$can be computed as:
\begin{enumerate}
\item As $X\sim Unif(-1,1)$ and $Y=|X|$, so $\Omega_X =[-1,1]$ transformed to $\Omega_Y=[0,1]$ and for each $x \in \Omega_X, y=|x| \implies x=g^{-1}(y)=\{-y,y\}$.
\item Now, we have
\begin{align*}
F_Y(y) & =P(Y\leq y)=P(|X|\leq y)=P(X\leq y \in \{-y,y\}) \\
       & =P(-y\leq X\leq y) =P(X\leq y) - P(X\leq -y), \\
\implies & f_y(y) =1 \ \mbox{for} \ y \in [0,1].   .
\end{align*}
Alternatively
\begin{align*}
F_Y(y) & =P(Y\leq y)=P(|X|\leq y)=P(X\leq y \in \{-y,y\}) \\
       & = P(-y\leq X \leq y)=F_X(y)-F_Y(-y),  \\
\implies  & f_y(y) =\frac{dF_X(g^{-1}(y))}{dg^{-1}(y)}\left|\frac{dg^{-1}(y)}{dy}\right| = \frac{1}{2}\times 1 + \frac{1}{2}\times 1=1. 
\end{align*}
\end{enumerate} 
\end{example}

\begin{example}
Question: If $X\sim Unif(-1,1)$ and $Y=X^2$, then compute the derived distribution $f_Y(y)$ of $Y=g(X)=X^2$.

\noindent Answer: derived distribution $f_Y(y)$ of $Y=g(X)=X^2$ can be computed as:
\begin{enumerate}
\item As $X\sim Unif(-1,1)$ and $Y=X^2$, so $\Omega_X =[-1,1]$ transformed to $\Omega_Y=[0,1]$ and for each $x \in \Omega_X, y=x^2 \implies x=g^{-1}(y)=\{-\sqrt{y},\sqrt{y}\}$.
\item Now, we have
\begin{align*}
F_Y(y) & =P(Y\leq y)=P(X^2\leq y)=P(X\leq y \in \{-\sqrt{y},\sqrt{y}\}) \\ 
       & =P(-\sqrt{y}\leq X \leq \sqrt{y}) =F_X(\sqrt{y}) - F_X(-\sqrt{y}), \\
\implies & f_y(y) =\frac{1}{4\sqrt{y}}+\frac{1}{4\sqrt{y}}.   
\end{align*}
Alternatively
\begin{align*}
F_Y(y) & =P(Y\leq y)=P(X^2\leq y)=P(X\leq y \in \{-\sqrt{y},\sqrt{y}\}) \\ 
       & =P(-\sqrt{y}\leq X \leq \sqrt{y}) =F_X(\sqrt{y}) - F_X(-\sqrt{y})  \\
\implies  & f_y(y) =\frac{dF_X(g^{-1}(y))}{dg^{-1}(y)}\left|\frac{dg^{-1}(y)}{dy}\right| ==\frac{1}{2}\left(\frac{1}{2\sqrt{y}}+\frac{1}{2\sqrt{y}} \right)=\frac{1}{2\sqrt{y}}. 
\end{align*}
\end{enumerate} 
\end{example}

\begin{example}
Question: if $X\sim N(0,1)$ and $Y=g(X)=X^2$, then compute the derived distribution $f_Y(y)$ of $Y=g(X)=X^2$.

\noindent Answer: derived distribution $f_Y(y)$ of $Y=g(X)=X^2$ can be computed as:
\begin{enumerate}
\item As $X\sim N(0,1)$ and $Y=g(X)=X^2$, so $\Omega_X = (-\infty,\infty)$ transformed to $\Omega_Y=[0,\infty)$.
\item Now, for $y\in [0,\infty)$, we have
\begin{align*}
F_Y(y) & =P(Y\leq y)=P(X^2\leq y)=P(X\leq y \in \{-\sqrt{y},\sqrt{y}\})  \\ 
         & =P(X\leq -\sqrt{y}) + P(X\leq \sqrt{y}) \\
\implies & f_y(y) =f_X(x)\left(\frac{1}{2\sqrt{y}}+\frac{1}{2\sqrt{y}}\right) =\frac{1}{\sqrt{2\pi y}}\exp(-y/2).   
\end{align*}
Alternatively
\begin{align*}
F_Y(y) & =P(Y\leq y)=P(g(X)\leq y)=P(X\leq g^{-1}(y)) \\
       & =F_X(g^{-1}(y))= P(X\leq y \in \{-y,y\})=F_X(-y)+F_Y(y),  \\
\implies  & f_y(y) =\frac{dF_X(g^{-1}(y))}{dg^{-1}(y)}\left|\frac{dy^2}{dy}\right| =f_X(x)\left(\frac{1}{2\sqrt{y}}+\frac{1}{2\sqrt{y}} \right) \\
& =\frac{1}{\sqrt{2\pi y}}\exp(-y/2). 
\end{align*}
And, $Y$ does not observe any value in $(-\infty,0)$, so, we have
\begin{align*}
f_Y(y)=\left\lbrace\begin{array}{cc}
\frac{1}{\sqrt{2\pi y}}\exp(-y/2) & \ \mbox{if} \ y\geq 0 \\
0, & \ \mbox{if} \ y<0.
\end{array} \right.
\end{align*}
\end{enumerate}
 \begin{center}
\begin{figure}[h]
    \centering
    \includegraphics[width=1.0\textwidth]{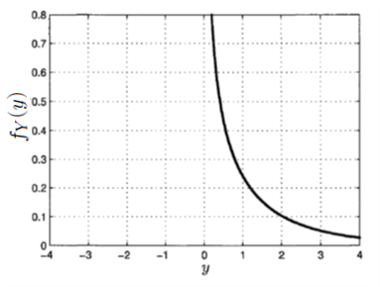}
    \caption{Probability density function for $Y=X^2, X\sim N(0,1)$.}
    \label{fig:jpmf-01}
\end{figure}
\end{center}
\end{example}

\begin{example}
If $X\sim N(0,1)$ and $Y=g(X)=\exp(X)$, then compute the derived distribution $f_Y(y)$ of $Y=g(X)=\exp(X)$.

\noindent Answer: derived distribution $f_Y(y)$ of$Y=g(X)=\exp(X)$ can be computed as:
\begin{enumerate}
\item As $X\sim N(0,1)$ and $Y=g(X)=\exp(X)$, so $\Omega_X = (-\infty,\infty)$ transformed to $\Omega_Y=(0,\infty)$.
\item Now, for $y\in (0,\infty)$, we have
\begin{align*}
F_Y(y) & =P(Y\leq y)=P(\exp(X)\leq y)=P(X\leq \ln(y)) =F_X(\ln(y)) \\
\implies & f_y(y) =\frac{dF_X(\ln(y))}{d\ln(y)}\left|\frac{d\ln(y)}{dy}\right| =f_X(\ln(y))\frac{1}{y} \\
         & =\frac{1}{\sqrt{2\pi} y}\exp(-(\ln(y))^2/2).   
\end{align*}
Alternatively
\begin{align*}
F_Y(y) & =P(Y\leq y)=P(g(X)\leq y)=P(X\leq g^{-1}(y)) \\
       & =F_X(g^{-1}(y))= P(X\leq y \in \{-y,y\})=F_X(-y)+F_Y(y),  \\
\implies  & f_y(y) =\frac{dF_X(\ln(y))}{d\ln(y)}\left|\frac{d\ln(y)}{dy}\right| =f_X(\ln(y))\frac{1}{y} \\
& =\frac{1}{\sqrt{2\pi} y}\exp\left(-\frac{(\ln(y))^2}{2}\right). 
\end{align*}
And, $Y$ does not observe any value in $(-\infty,0)$, so, we have
\begin{align*}
f_Y(y)=\left\lbrace\begin{array}{cc}
\frac{1}{\sqrt{2\pi} y}\exp\left(-\frac{(\ln(y))^2}{2}\right) & \ \mbox{if} \ y> 0 \\
0, & \ \mbox{if} \ y\leq 0.
\end{array} \right.
\end{align*}
\end{enumerate}
 \begin{center}
\begin{figure}[h]
    \centering
    \includegraphics[width=1.0\textwidth]{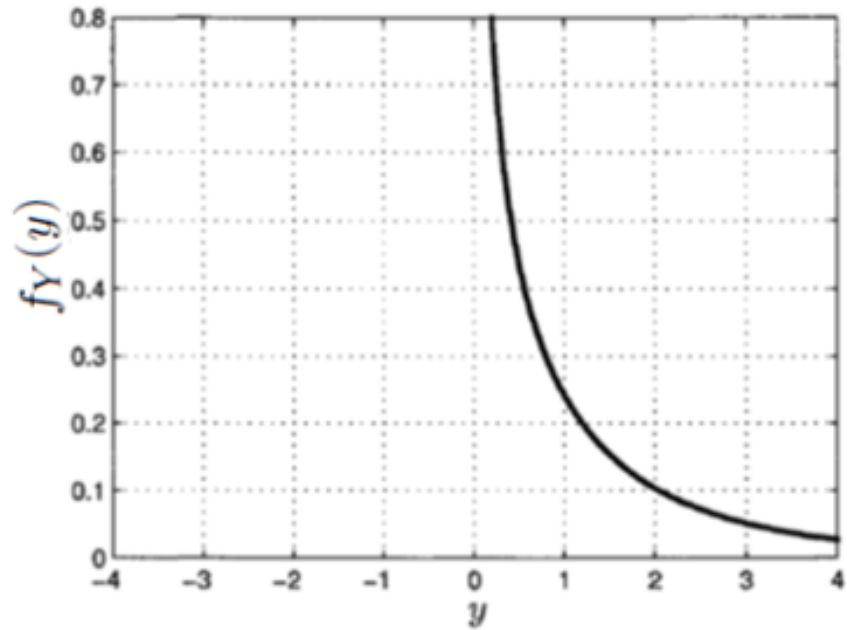}
    \caption{Probability density function for $Y=X^2, X\sim N(0,1)$.}
    \label{fig:jpmf-01}
\end{figure}
\end{center}
\noindent This PDF is called the log-normal PDF. It is frequently used as a model for a quantity that is measured in decibels (dB) and which has a normal PDF in dB quantities.
\end{example}

\begin{corollary}[PDF of a function of jointly continuous random variables $(X,Y)$]
Consider $X$ and $Y$ are two jointly continuous and independent random variables and a function $Z=\frac{Y}{X}$, then PDF is $Z$ is computed as follows:
\begin{align*}
f_Z(z)=\int_x f_X(x)f_Y(xz)|x|dx.
\end{align*}
\end{corollary}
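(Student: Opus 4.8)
The plan is to follow the cumulative-distribution-function method used in Definition~\eqref{def:ddcrv-01}, but with extra care because the map $Z=Y/X$ reverses the defining inequality whenever $X$ is negative. First I would write
\begin{align*}
F_Z(z) = P(Z \le z) = P\left( \frac{Y}{X} \le z \right),
\end{align*}
and observe that the event $\{Y/X \le z\}$ unpacks differently on the two half-lines: on $\{X>0\}$ it is $\{Y \le zX\}$, while on $\{X<0\}$ it becomes $\{Y \ge zX\}$. The event $\{X=0\}$ carries zero probability and may be discarded.

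Next, using independence to factor the joint density as $f_{X,Y}(x,y)=f_X(x)f_Y(y)$, I would express the two contributions as iterated integrals,
\begin{align*}
F_Z(z) = \int_0^{\infty} f_X(x) \int_{-\infty}^{zx} f_Y(y)\,dy\,dx + \int_{-\infty}^0 f_X(x) \int_{zx}^{\infty} f_Y(y)\,dy\,dx.
\end{align*}
Differentiating with respect to $z$ and applying the Leibniz rule to the inner integrals (whose variable limits are $zx$) produces a factor $x$ from the first term and $-x$ from the second,
\begin{align*}
f_Z(z) = \int_0^{\infty} f_X(x) f_Y(zx)\,x\,dx + \int_{-\infty}^0 f_X(x) f_Y(zx)\,(-x)\,dx.
\end{align*}
Since $x=|x|$ on $(0,\infty)$ and $-x=|x|$ on $(-\infty,0)$, the two integrals merge into the single claimed expression $f_Z(z) = \int_x f_X(x) f_Y(xz)\,|x|\,dx$.

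An alternative I would keep in reserve, closer to the conditioning style of the preceding \emph{another form of total probability} result, is to condition on $X=x$: for fixed $x \neq 0$ the map $y \mapsto y/x$ has inverse $z \mapsto xz$ with $\left| \frac{d(xz)}{dz} \right| = |x|$, so the single-valued change-of-variables rule yields the conditional density $f_{Z|X}(z|x) = f_Y(xz)\,|x|$; integrating this against $f_X(x)$ via total probability recovers the same formula in one stroke and sidesteps the explicit inequality-reversal casework.

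The step I expect to be the main obstacle is the sign bookkeeping — both the inequality reversal on $\{X<0\}$ and the emergence of $|x|$ rather than $x$ — together with the technical point that differentiation under the integral sign is legitimate here, which I would justify under the mild assumption that $f_Y$ is continuous and the defining integrals converge. Everything else is routine once the split by the sign of $X$ is set up correctly.
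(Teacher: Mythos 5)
Your proof is correct, and it is in fact more careful than the paper's own. The paper takes the route you hold \emph{in reserve}: it writes $F_Z(z)=P(Z\le z)=P(Y\le zX)=\int_x P\left(Y\le zx\mid X=x\right)f_X(x)\,dx$ and then differentiates under the integral, producing $f_Z(z)=\int_x f_{Y|X}(zx\mid x)\,|x|\,f_X(x)\,dx$ in a single stroke. But as written, that derivation glosses over exactly the two points you flagged as the main obstacle: the identity $\{Y/X\le z\}=\{Y\le zX\}$ holds only on $\{X>0\}$ (on $\{X<0\}$ the inequality reverses), and the Leibniz rule gives $\frac{\partial}{\partial z}F_{Y|X}(zx\mid x)=x\,f_{Y|X}(zx\mid x)$ with a signed $x$, so the absolute value $|x|$ in the paper simply appears without the casework that justifies it. Your primary argument --- splitting the event by the sign of $X$, factoring the joint density by independence, and letting $x$ and $-x$ merge into $|x|$ over the two half-lines --- supplies precisely the missing bookkeeping, and your alternative via the conditional density $f_{Z|X}(z\mid x)=|x|\,f_Y(xz)$ (from the change-of-variables rule applied to $y\mapsto y/x$ for fixed $x\neq 0$) is the cleanest rigorous version of the paper's one-line conditioning argument. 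What each approach buys: the paper's version is shorter and formally parallel to its companion corollaries for $Z=XY$ and $Z=X+Y$, while yours trades a few extra lines for an honest derivation of $|x|$; your closing remark about justifying differentiation under the integral sign (continuity of $f_Y$, convergence of the integrals) addresses a technicality the paper does not mention at all.
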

\begin{proof}
Derivation as follows:
\begin{align*}
F_Z(z)&=P(Z\leq z)=P(Y\leq zX)=\int_x P\left(Y\leq zx|X=x\right)f_X(x)dx \\
\implies & f_Z(z)=\frac{\partial F_Z(z)}{\partial z}=\int_x \frac{\partial F_{Y|X}(zx|X=x)}{\partial z}f_X(x)dx \\
         & =\int_x f_{Y|X}(zx|X=x)|x|f_X(x)dx=\int_x f_X(x)f_Y(zx)|x|dx.
\end{align*}
\end{proof}

\begin{corollary}[PDF of a function of jointly continuous random variables $(X,Y)$]
Consider $X$ and $Y$ are two jointly continuous and independent random variables and a function $Z=XY$, then PDF is $Z$ is computed as follows:
\begin{align*}
f_Z(z)=\int_x f_X(x)f_Y(z/x)\frac{1}{|x|}dx.
\end{align*}
\end{corollary}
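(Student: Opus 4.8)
The plan is to mirror the conditioning argument used in the two preceding corollaries: first express the distribution of $Z=XY$ through its CDF, condition on the value of $X$, and then differentiate to recover the density. Concretely, I would start from
\begin{align*}
F_Z(z)=P(Z\leq z)=P(XY\leq z)=\int_x P\left(XY\leq z\mid X=x\right)f_X(x)dx,
\end{align*}
invoking the conditioning form of total probability stated earlier in the excerpt, which holds whether or not $X$ and $Y$ are independent.

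The crucial and most delicate step is resolving the event $\{XY\leq z\mid X=x\}$ into a statement about $Y$ alone, because dividing the inequality $xY\leq z$ by $x$ reverses its direction when $x$ is negative. I would therefore split the integral at $x=0$: for $x>0$ the event becomes $\{Y\leq z/x\}$, contributing $F_{Y|X}(z/x\mid x)$, whereas for $x<0$ it becomes $\{Y\geq z/x\}$, contributing $1-F_{Y|X}(z/x\mid x)$. Keeping these two regions separate until after differentiation is essential.

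Differentiating under the integral sign in $z$ is where the absolute value is born. On the region $x>0$ the chain rule produces the factor $1/x>0$, while on $x<0$ the derivative of $1-F_{Y|X}(z/x\mid x)$ carries an extra minus sign which, combined with the negative $1/x$, again yields a positive factor $-1/x=1/|x|$. Hence both regions contribute $f_{Y|X}(z/x\mid x)\,\tfrac{1}{|x|}$, and the two pieces reassemble into a single integral over all of $x$.

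The final step is cosmetic: by independence $f_{Y|X}(z/x\mid x)=f_Y(z/x)$, which gives
\begin{align*}
f_Z(z)=\int_x f_X(x)f_Y(z/x)\frac{1}{|x|}dx,
\end{align*}
as claimed. I expect the only genuine obstacle to be the careful bookkeeping of the sign reversal across $x=0$; once that is handled correctly the appearance of $1/|x|$ is automatic, and the argument is otherwise identical in spirit to the $Z=Y/X$ corollary that precedes it.
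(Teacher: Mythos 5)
Your proposal is correct and follows essentially the same route as the paper's proof: condition on $X=x$ via total probability, write the CDF $F_Z(z)=\int_x P(XY\leq z\mid X=x)f_X(x)\,dx$, differentiate under the integral sign, and use independence to replace $f_{Y|X}$ by $f_Y$. In fact your treatment is \emph{more} careful than the paper's, which writes $P(XY\leq z)=P\left(Y\leq \frac{z}{X}\right)$ without splitting at $x=0$ (an identity that fails for $x<0$) and only recovers the correct $\frac{1}{|x|}$ factor tacitly in the differentiation step; your explicit two-region bookkeeping across the sign of $x$ is exactly the justification that step needs.
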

\begin{proof}
Derivation as follows:
\begin{align*}
F_Z(z)&=P(Z\leq z)=P\left(Y\leq \frac{z}{X}\right)=\int_x P\left(Y\leq \frac{z}{x}|X=x\right)f_X(x)dx \\
\implies & f_Z(z)=\frac{\partial F_Z(z)}{\partial z}=\int_x \frac{\partial F_{Y|X}(z/x|X=x)}{\partial z}f_X(x)dx \\
         & =\int_x f_{Y|X}(z/x|X=x)\frac{1}{|x|}f_X(x)dx=\int_x f_X(x)f_Y(z/x)\frac{1}{|x|}dx.
\end{align*}
\end{proof}

\begin{theorem}[PDF of a sum of jointly continuous random variables $(X,Y)$]
Consider $X$ and $Y$ are two jointly continuous and independent random variables and a function $Z=X+Y$, then PDF of $Z$ is computed as follows:
\begin{align*}
f_Z(z)=\int_x f_X(x)f_Y(z-x)dx.
\end{align*}
\end{theorem}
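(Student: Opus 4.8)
The plan is to mirror exactly the two preceding corollaries (for $Z=Y/X$ and $Z=XY$): pass through the cumulative distribution function, exploit the ``another form of total probability'' theorem stated just above to condition on $X=x$, differentiate in $z$, and finally invoke independence to convert the conditional density into the marginal density of $Y$.

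First I would write down the CDF of the sum and condition on the value of $X$. Since the event $\{Z\le z\}$ is $\{X+Y\le z\}$, conditioning on $X=x$ turns the inner event into $\{Y\le z-x\}$, so that
\begin{align*}
F_Z(z)=P(Z\le z)=P(X+Y\le z)=\int_x P\left(Y\le z-x\,|\,X=x\right)f_X(x)\,dx.
\end{align*}
Next I would differentiate with respect to $z$ and move the derivative inside the integral (Leibniz rule), noting that $\frac{\partial}{\partial z}F_{Y|X}(z-x\,|\,x)=f_{Y|X}(z-x\,|\,x)$ because the upper limit $z-x$ has derivative $1$ in $z$:
\begin{align*}
f_Z(z)=\frac{\partial F_Z(z)}{\partial z}=\int_x f_{Y|X}(z-x\,|\,x)\,f_X(x)\,dx.
\end{align*}

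Finally I would use the hypothesis that $X$ and $Y$ are independent, which by the definition of independence for jointly continuous random variables gives $f_{Y|X}(z-x\,|\,x)=f_Y(z-x)$, yielding the claimed convolution
\begin{align*}
f_Z(z)=\int_x f_X(x)\,f_Y(z-x)\,dx.
\end{align*}
The only genuinely delicate step is the interchange of differentiation and integration; the hard part will be justifying the Leibniz rule here, which requires mild regularity (e.g.\ continuity of the joint density and dominated-convergence-type control of the integrand), exactly as is tacitly assumed in the two corollaries immediately preceding. Everything else is a direct restatement of conditioning plus independence, so I would keep the argument parallel to those corollaries rather than re-deriving the regularity conditions from scratch.
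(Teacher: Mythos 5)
Your proposal is correct and matches the paper's own proof step for step: both pass through $F_Z(z)=\int_x P\left(Y\leq z-x\,|\,X=x\right)f_X(x)\,dx$ via the total-probability theorem, differentiate under the integral to obtain $\int_x f_{Y|X}(z-x\,|\,x)f_X(x)\,dx$, and then invoke independence to replace the conditional density with $f_Y(z-x)$. Your added remark about justifying the Leibniz-rule interchange is a regularity point the paper tacitly assumes, but it does not change the argument.
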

\begin{proof}
Derivation as follows:
\begin{align*}
F_Z(z)&=P(Z\leq z)=P\left(X+Y\leq z\right) = \int_x P\left(Y\leq z-x|X=x\right)f_X(x)dx \implies \\ 
f_Z(z) &=\frac{\partial F_Z(z)}{\partial z}=\int_x f_X(x) \frac{\partial F_Y(z-x)}{\partial z}dx =\int_x f_X(x)f_{Y|X}(z-x|X=x)dx \\
          &=\int_x f_X(x)f_Y(z-x)dx.
\end{align*}
\end{proof}

\begin{example}
Suppose X and Y are uniformly distributed on $[0,1]$ and independent random variables. Find the PDF of $Z=X+Y$. 

\noindent Solution: We have $X,Y \sim \mathcal{U}(0,1)$, then
\begin{align*}
f_X(x)=\left\lbrace\begin{array}{cc}
1 & x \in [0,1], \\
0 & x \notin [0,1].
\end{array} \right.
\end{align*}
and
\begin{align*}
f_Y(y)=\left\lbrace\begin{array}{cc}
1 & y \in [0,1], \\
0 & y \notin [0,1].
\end{array} \right.
\end{align*}
Now, $Z=X+Y$, so $f_Z(z)=\int_{-\infty}^{\infty}f_X(x)f_Y(z-x)dx$ can be computed case-wise as follows:
\begin{enumerate}
\item[] Case-I: If $0\leq z \leq 1$, then $0\leq x\leq z$, so we have
\begin{align*}
f_Z(z)=\int_0^z f_X(x)f_Y(z-x)dx =\int_0^z 1\cdot 1\cdot dx =z
\end{align*}
\item[] Case-II: If $1< z \leq 2 \equiv 0<z-1 \leq 1$ then $z-1 \leq x\leq 1$, so, we have
\begin{align*}
f_Z(z)=\int_{z-1}^1 f_X(x)f_Y(z-x)dx =\int_{z-1}^1 1\cdot 1\cdot dx =2-z
\end{align*}
\end{enumerate}
 \begin{center}
\begin{figure}[h]
    \centering
    \includegraphics[width=1.0\textwidth]{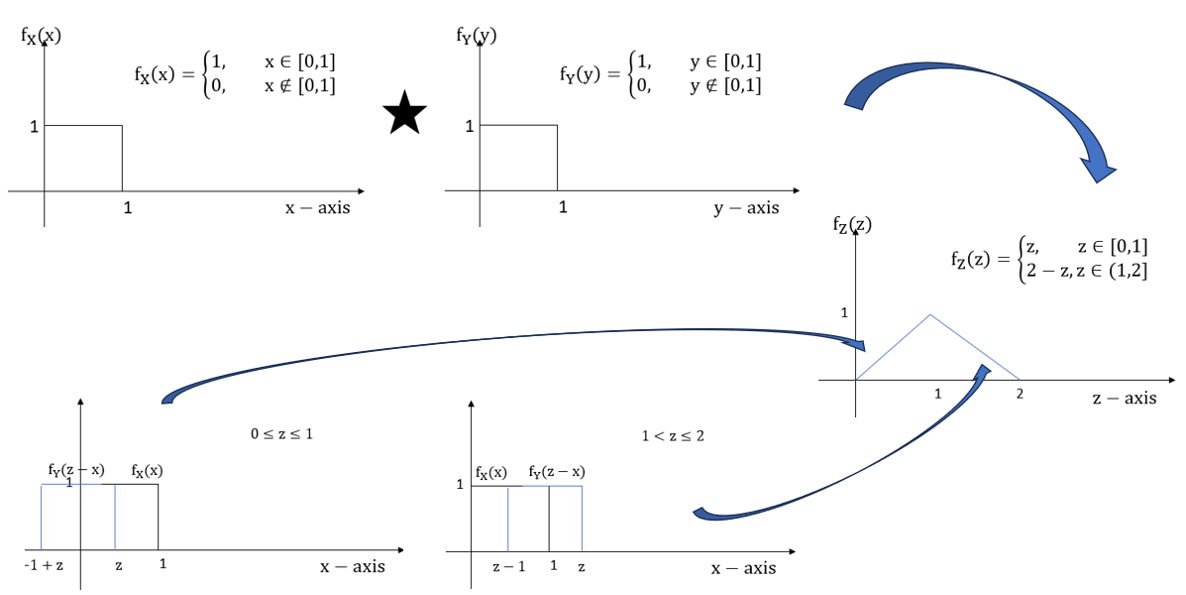}
    \caption{Convolution of two uniform and independent distributions on $[0,1]$.}
    \label{fig:cuconv-01}
\end{figure}
\end{center}
 \begin{center}
\begin{figure}[h]
    \centering
    \includegraphics[width=1.0\textwidth]{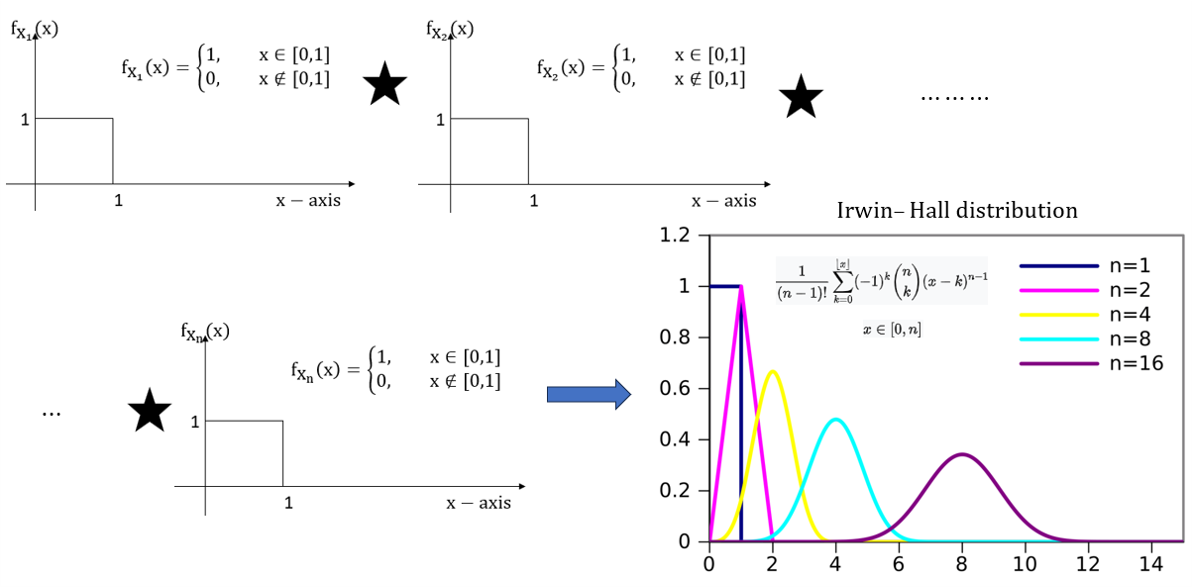}
    \caption{Convolution of $n$ uniform and independent distributions on $[0,1]$.}
    \label{fig:cunconv-02}
\end{figure}
\end{center}
\end{example}

\begin{example}
Backdrop: consider two jointly continuous and independent random variables X and Y uniformly distributed in $[0,1]$. 

\noindent Question: Let $Z=XY$, then compute CDF and PDF of $Z$.

\noindent Answer: we have $X\sim \mathcal{U}(0,1)$ and $Y\sim \mathcal{U}(0,1)$ and independent to each other. So, we get
\begin{align*}
& f_X(x)  =\left\lbrace\begin{array}{cc}
1 & \ \mbox{if} \ x\in [0,1] \\
0 & \ \mbox{if} \ x \notin [0,1].
\end{array} \right.
\ \mbox{and} \
f_Y(y)=\left\lbrace\begin{array}{cc}
1 & \ \mbox{if} \ y\in [0,1] \\
0 & \ \mbox{if} \ y \notin [0,1].
\end{array} \right.
\implies \\
& f_{X,Y}(x,y) =f_X(x)f_Y(y)=\left\lbrace\begin{array}{cc}
1 & \ \mbox{if} \ (x,y)\in [0,1]\times [0,1] \\
0 & \ \mbox{if} \ (x,y) \notin [0,1]\times [0,1].
\end{array} \right.
\end{align*}
Now, cumulative distribution function of $Z$ is given by
\begin{align*}
F_Z(z)&=P(Z\leq z)=P(XY\leq z)=P\left(X\leq \frac{z}{Y}\right)=\int_0^1\int_0^{\min\left(1, \frac{z}{y}\right)}dxdy \\
      &=\int_0^1\int_0^{\min\left(1, \frac{z}{y}\right)}dxdy=\int_0^1 \min\left(1, \frac{z}{y}\right)dy \\
      &=\int_0^zdy +\int_z^1\frac{z}{y}dy=z-z\ln(z) \implies f_Z(z)=-\ln(z)=\ln\left(\frac{1}{z} \right).
\end{align*}
\noindent Alternatively, using conditioning and law of total probability, we get
\begin{align*}
F_Z(z)&=P(Z\leq z)=P(XY\leq z)=P\left(X\leq \frac{z}{Y}\right) \\
      &=\int_0^1 P\left(X\leq \frac{z}{y}|Y=y\right)f_Y(y)dy =\int_0^1 \min\left(1, \frac{z}{y}\right)dy \\
      &=\int_0^zdy +\int_z^1\frac{z}{y}dy=z-z\ln(z) \implies f_Z(z)=-\ln(z)=\ln\left(\frac{1}{z} \right).
\end{align*}
\begin{center}
\begin{figure}[h]
    \centering
    \includegraphics[width=1.0\textwidth]{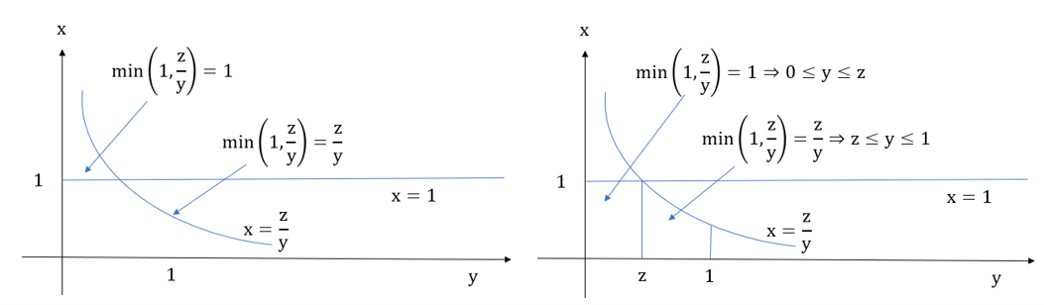}
    \caption{In left plot there are depiction of limits for inner integrating element $dx$ and in the right plot there are depiction of limits for outer integrating element $dy$.}
    \label{fig:ddjcrv-01}
\end{figure}
\end{center}
\end{example}

\subsection{Expectation and variance of continuous random variables}
\begin{definition}[Expectation of a continuous random variable]
Consider a continuous random variable with a continuous distribution or probability density function $f_X(x)$. The expectation, $E(X)$ is defined as:
\begin{align*}
E(X)=\int_{\{x \in \Omega_X\}}xf_X(x)dx.
\end{align*}
\end{definition}

\begin{definition}[Expected value rule]
Consider a continuous random variable X with PDF $f_X(x)$ and let $Y=g(X)$ be a function of X. The expectation, $E(g(X))$ is defined as:
\begin{align*}
E(g(X))& =\int_{\{y \in \Omega_Y\}} yf_Y(y)dy = \int_{\{y \in \Omega_Y\}} y \left( \int_{g^{-1}(y)=\{x|g(x)=y\}}f_X(x)dx \right)dy \\
       & = \int_{\{x \in \Omega_X\}}g(x)f_X(x)dx.
\end{align*}
\end{definition}

\noindent The expectation tells us important information about the average value of a random variable, but there is a lot of information that it doesn't provide. If you are investing in the stock market, the expected rate of return of some stock is not the only quantity you will be interested in, you would also like to get some idea of the riskiness of the investment. The typical size of the 
fluctuations of a random variable around its expectation is described by another summary statistic, the variance.
\begin{definition}[Variance of a continuous random variable]
For a continuous random variable $X$ with a PDF $f_X(x)$, the variance, $Var(X)$ is defined as:
\begin{align*}
Var(X) = E\left(\left(X-E(X)\right)^2\right)=E(X^2)-(E(X))^2.
\end{align*}
provided that this quantity exists. We can also treat variance, $Var(X)$, of X as an expected value $E(g(X))$ of the function $g(X)=(X-E(X))^2$.

\noindent That is variance is a measure of how much the distribution of X is spread out about its mean and only randomness gives rise to variance. Statisticians often prefer to use the standard deviation (in practice for interpretation and visualization) rather than variance (in principle for theoretical analysis and understanding) as a measure of spread. 
\end{definition}

\begin{definition}[Covariance of two jointly continuous random variables]
Consider a jointly continuous random variable $(X,Y)$ with joint PDF $f_{X,Y}(x,y)$, where $X$ and $Y$ are associated with the same random experiment, then covariance of $(X,Y)$ is defined as:
\begin{align*}
Cov(X,Y) & =E\left((X-E(X))(Y-E(Y))\right)= E\left(XY-XE(Y)-YE(X)\right.\\
         & + \left. E(X)E(Y)\right)=E(XY)-E(X)E(Y).
\end{align*}
Furthermore, the variance-covariance matrix is given by 
\begin{align*}
C_{X,Y}=\left(\begin{array}{cc}
Var(X) & Cov(X,Y) \\
Cov(X,Y) & Var(Y)
\end{array} \right).
\end{align*}
\end{definition}

\begin{theorem}\label{th:indrvexp-01}
If X and Y are independent and jointly continuous random variables whose expectations exist, then
\begin{align*}
a. \ E(XY)=E(X)E(Y) \ \mbox{and} \ b. \ Cov(X,Y)=0.
\end{align*}
\end{theorem}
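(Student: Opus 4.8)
The statement to prove is that for independent, jointly continuous random variables $X$ and $Y$, we have $E(XY)=E(X)E(Y)$ and consequently $Cov(X,Y)=0$. The plan is to leverage two facts established earlier in the excerpt: the independence characterization $f_{X,Y}(x,y)=f_X(x)f_Y(y)$, and the definition of covariance as $Cov(X,Y)=E(XY)-E(X)E(Y)$. Once part (a) is in hand, part (b) follows by direct substitution with no further work.

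**Main computation.** First I would write $E(XY)$ as a double integral over $\Omega_X\times\Omega_Y$ against the joint density, using the expected-value rule for a function $g(X,Y)=XY$ of the jointly distributed pair. Then the key step is to invoke independence to factor the joint density as a product of marginals; this is the one nontrivial ingredient, and it is exactly where the hypothesis is used. After factoring, the integrand becomes $xy\,f_X(x)f_Y(y)$, which separates as a product of a function of $x$ alone and a function of $y$ alone. The plan is then to apply Fubini-type separation of the double integral into a product of two single integrals, identifying each single integral as $E(X)$ and $E(Y)$ respectively. Schematically:
\begin{align*}
E(XY) &= \int_{\Omega_X}\int_{\Omega_Y} xy\, f_{X,Y}(x,y)\,dy\,dx
= \int_{\Omega_X}\int_{\Omega_Y} xy\, f_X(x) f_Y(y)\,dy\,dx \\
&= \left(\int_{\Omega_X} x f_X(x)\,dx\right)\left(\int_{\Omega_Y} y f_Y(y)\,dy\right)
= E(X)E(Y).
\end{align*}

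**Finishing part (b).** With part (a) established, I would simply substitute $E(XY)=E(X)E(Y)$ into the covariance formula, giving $Cov(X,Y)=E(XY)-E(X)E(Y)=E(X)E(Y)-E(X)E(Y)=0$. This requires no additional hypotheses beyond the existence of the expectations, which is assumed in the statement.

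**Anticipated obstacle.** The only genuine subtlety is the interchange of the order of integration (the Fubini separation) and the legitimacy of splitting the double integral into a product; rigorously this needs the integrability condition $E(|XY|)<\infty$, which is guaranteed by independence together with the stated existence of $E(X)$ and $E(Y)$, since $E(|XY|)=E(|X|)E(|Y|)<\infty$ by the same factoring argument applied to absolute values. In the computational spirit of this text, I would note this integrability holds and proceed, treating the separation of variables as the decisive move rather than belaboring measure-theoretic justification. Everything else is routine substitution.
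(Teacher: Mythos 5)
Your proof is correct. The paper states this theorem without any proof at all (the statement is immediately followed by the definition of conditional expectation), so there is no paper argument to compare against; what you give is the standard and essentially the only natural route, and it uses exactly the paper's own ingredients: the factorization $f_{X,Y}(x,y)=f_X(x)f_Y(y)$ from the definition of independent jointly continuous random variables, and the identity $Cov(X,Y)=E(XY)-E(X)E(Y)$ from the definition of covariance. Two small remarks: your integrability check $E(|XY|)=E(|X|)E(|Y|)<\infty$, obtained by running the same factorization on absolute values, is a genuine point of rigor that the text would have skipped, and it is what licenses the Fubini separation; and the one tool you invoke that the paper states only in univariate form is the expected-value rule --- the bivariate version $E(g(X,Y))=\int_{\Omega_X}\int_{\Omega_Y} g(x,y)\,f_{X,Y}(x,y)\,dy\,dx$ is a routine extension, used implicitly elsewhere in the chapter, so citing it is harmless but worth flagging as an unstated lemma.
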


\begin{definition}[Conditional expectation]
In a probabilistic model if a certain event A has already occurred, then we define conditional expectation of the possible observations of a continuous random variable X conditioned on the event A as follows:
\begin{align*}
E(X|A)=\int_{\{x \in \Omega_X\}}xf_{X|A}(x|A)dx.
\end{align*}
Moreover, if the event A is characterized by another continuous random variable $Y$ in  the same experiment as $Y=y$, then the conditional expectation of X conditioned on $Y=y$ is defined as follows:
\begin{align*}
E(X|Y=y)=\int_{\{x\in \Omega_X\}}xf_{X|Y}(x|y)dx=g(y).
\end{align*}
That is for each possible observation $Y=y, E(X|Y=y)=g(y)$, and hence conditional expectation defines a random variable (as a function of random variable Y) as:
\begin{align*}
g(Y)=E(X|Y).
\end{align*}
\end{definition}

\begin{example}
Question: consider two jointly continuous random variables $(X,Y)$ with a joint PDF $f_{X,Y}=2$ over a triangle region $X+Y\leq 1, X\geq 0, Y\geq 0$. Compute the conditional expectation $E(X|Y=y)$ conditioned on the observation $Y=y$.

\noindent Answer: we have the joint PDF as 
\begin{align*}
f_{X,Y}(x,y)=\left\lbrace\begin{array}{cc}
2 & x+y\leq 1, x\geq 0, y\geq 0 \\
0 & \ \mbox{otherwise}.
\end{array} \right.
\end{align*}
\begin{center}
\begin{figure}[h]
    \centering
    \includegraphics[width=1.0\textwidth]{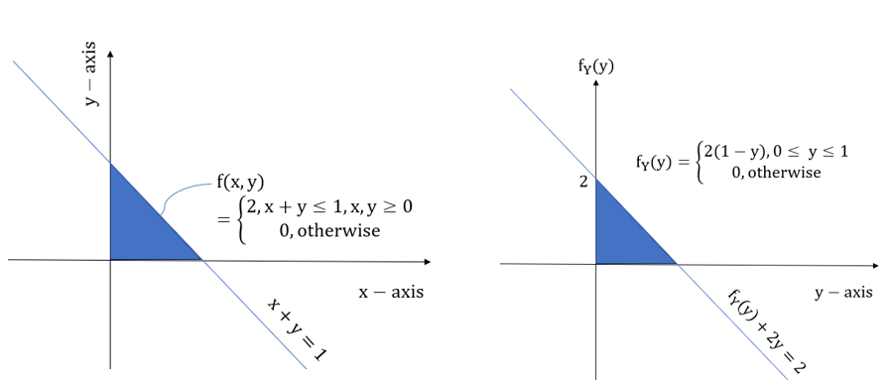}
    \caption{In left plot there is a joint region of the joint $f_{X,Y}=2$, in right plot its marginal PDF $f_Y(y)$.}
    \label{fig:jmpdf-01}
\end{figure}
\end{center}
Now, we compute marginal PDF $f_Y(y)$ (for $0\leq y \leq 1$) from the given joint PDF $f_{X,Y}(x,y)$ as:
\begin{align*}
f_Y(y)=\int_{-\infty}^{\infty}f_{X,Y}(x,y)dx=\int_0^{1-y} 2dx=2(1-y).
\end{align*} 
And the condition PDF $f_{X|Y}(x|y)$, conditioned on $Y=y$, can be computed as:
\begin{align*}
f_{X|Y}(x|y)=\frac{f_{X,Y}(x,y)}{f_Y(y)}=\frac{2}{2(1-y)}=\frac{1}{1-y}, 0\leq y\leq 1.
\end{align*}
Then, we compute the conditional expectation $E(X|Y=y)$, conditioned on $Y=y$, can be computed as:
\begin{align*}
E(X|Y=y)=\int_{-\infty}^{\infty}xf_{X|Y}(x|y)dx=\int_0^{1-y}x\frac{1}{(1-y)}dx=\frac{1-y}{2}.
\end{align*}
As $y$ varies, so does the conditional expectation $E(X|Y=y)$, and hence, we have the conditional expectation as a random variable as:
\begin{align*}
E(X|Y)& =\frac{1-Y}{2} \implies E(X)=\frac{1}{2}(1-E(Y)), E(X)=E(Y) \implies \\
E(X)  & =\frac{1}{3}.
\end{align*}
\begin{center}
\begin{figure}[h]
    \centering
    \includegraphics[width=1.0\textwidth]{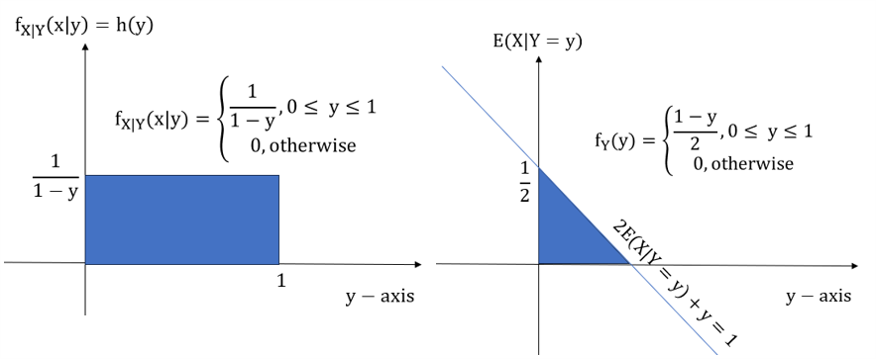}
    \caption{In left plot there is a conditional PDF $f_{X|Y}(x|y)$, in right plot its conditional expectation $E(X|Y=y)$.}
    \label{fig:cpdf-cexp-01}
\end{figure}
\end{center}
\end{example}

\begin{theorem}[Law of iterated expectation]
If $\{B_1,B_2,\ldots\}$ be a partition of $\Omega$ such that $P(B_i)> 0 \ \forall \ i$, and $X$ is a continuous random variable in the same experiment, then
\begin{align*}
E(X)=\sum_{i\geq 1} E(X|B_i)P(B_i).
\end{align*} 
Moreover, if $Y$ is a continuous random variable such that $Y^{-1}(y_i)=B_i=\{\omega | Y(\omega)=y_i\}$, then
\begin{align*}
E(X) =\int_{\{y\in \Omega_Y\}}E(X|Y=y)f_Y(y)dy =E(E(X|Y)).
\end{align*}
\end{theorem}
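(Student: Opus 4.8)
The plan is to mirror the discrete Law of iterated expectation proven earlier in the excerpt, replacing the sum over the values of $X$ by an integral against the density $f_X$, while keeping the sum (or integral) over the conditioning events. For the first identity, I would start from the definition $E(X)=\int_{\{x\in\Omega_X\}} x f_X(x)\,dx$ and invoke a total-probability decomposition of the density itself: since $\{B_1,B_2,\ldots\}$ partitions $\Omega$ with $P(B_i)>0$, the marginal density splits as $f_X(x)=\sum_{i\geq 1} f_{X|B_i}(x|B_i)\,P(B_i)$, in direct analogy with $P(X=x)=\sum_i P(B_i)P(X=x|B_i)$ used in the discrete case. Substituting this in and interchanging the sum with the integral yields
\begin{align*}
E(X)=\int x\sum_{i\geq 1} f_{X|B_i}(x|B_i)P(B_i)\,dx=\sum_{i\geq 1} P(B_i)\int x f_{X|B_i}(x|B_i)\,dx=\sum_{i\geq 1} E(X|B_i)P(B_i),
\end{align*}
where the last equality is just the definition of the conditional expectation $E(X|B_i)$.

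For the second identity, the idea is to unfold $E(E(X|Y))$ and recognize the joint density. Using that $g(Y)=E(X|Y)$ is a random variable with $g(y)=\int_{\{x\in\Omega_X\}} x f_{X|Y}(x|y)\,dx$, and that $E(g(Y))=\int_{\{y\in\Omega_Y\}} g(y) f_Y(y)\,dy$, I would write
\begin{align*}
E(E(X|Y))=\int_{\Omega_Y}\left(\int_{\Omega_X} x f_{X|Y}(x|y)\,dx\right) f_Y(y)\,dy=\int_{\Omega_Y}\int_{\Omega_X} x\, f_{X|Y}(x|y) f_Y(y)\,dx\,dy.
\end{align*}
The crucial substitution is the product rule for the conditional PDF from Definition~\ref{def:cpdf-01}, namely $f_{X|Y}(x|y) f_Y(y)=f_{X,Y}(x,y)$, after which I swap the order of integration and marginalize out $y$ using $\int_{\Omega_Y} f_{X,Y}(x,y)\,dy=f_X(x)$:
\begin{align*}
E(E(X|Y))=\int_{\Omega_X} x\left(\int_{\Omega_Y} f_{X,Y}(x,y)\,dy\right)dx=\int_{\Omega_X} x f_X(x)\,dx=E(X).
\end{align*}

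The main obstacle is the justification of interchanging the order of summation/integration and the two integrals, which is licensed by Fubini's theorem once one assumes the expectations exist, i.e. that $\int x f_X(x)\,dx$ is absolutely convergent; I would state this hypothesis explicitly (as the discrete statement implicitly does) so the rearrangements are legitimate rather than merely formal. A secondary subtlety, which I would flag but treat within the density-based framework of the text, is that for a genuinely continuous $Y$ the prescribed partition $B_i=Y^{-1}(y_i)$ consists of probability-zero events, so $E(X|Y=y)$ is to be understood through the conditional density $f_{X|Y}(x|y)$ of Definition~\ref{def:cpdf-01} rather than as a conditional probability $P(\cdot\,|\,Y=y_i)$; with that reading the computation above is exactly the continuous analogue of the discrete chain and needs no further machinery.
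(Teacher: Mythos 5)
Your proof is correct and is exactly the continuous transposition of the argument the paper gives for the discrete Law of iterated expectation (the paper itself states this continuous version without proof): total probability for densities plus a Fubini interchange for the first identity, and the factorization $f_{X|Y}(x|y)\,f_Y(y)=f_{X,Y}(x,y)$ followed by marginalization over $y$ for the second. Your closing remark also rightly flags that the hypothesis $P(B_i)>0$, carried over verbatim from the discrete statement, is incompatible with $B_i=Y^{-1}(y_i)$ when $Y$ is genuinely continuous, so the second identity must indeed be read through the conditional density $f_{X|Y}$ — a defect in the theorem's statement that your reading repairs.
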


\begin{theorem}
Consider two jointly continuous random variables $(X,Y)$. Then, the variance of conditional expectation and expected value of conditional variance are related as:
\begin{align*}
Var(X)=Var(E(X|Y))+E(Var(X|Y)).
\end{align*}
\end{theorem}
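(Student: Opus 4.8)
The plan is to expand both terms on the right-hand side using the definitions of conditional variance and of variance, and then to observe that the two occurrences of the cross quantity $E\left((E(X|Y))^2\right)$ cancel, leaving exactly $Var(X)$. Throughout I would lean on the law of iterated expectation established above, which furnishes both $E(E(X|Y))=E(X)$ and, applied to the random variable $X^2$, the identity $E(E(X^2|Y))=E(X^2)$.

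First I would record the conditional analogue of the identity $Var(X)=E(X^2)-(E(X))^2$, namely $Var(X|Y)=E(X^2|Y)-(E(X|Y))^2$, which holds for each fixed observation $Y=y$ and hence as an identity between random variables (functions of $Y$). Taking expectations of both sides and invoking the tower property on the first term yields
\begin{align*}
E(Var(X|Y))=E\left(E(X^2|Y)\right)-E\left((E(X|Y))^2\right)=E(X^2)-E\left((E(X|Y))^2\right).
\end{align*}

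Next I would treat $E(X|Y)$ as a bona fide random variable, i.e. a function $g(Y)$, and apply the definition of variance to it. Using $E(E(X|Y))=E(X)$, this gives
\begin{align*}
Var(E(X|Y))=E\left((E(X|Y))^2\right)-\left(E(E(X|Y))\right)^2=E\left((E(X|Y))^2\right)-(E(X))^2.
\end{align*}
Adding the two displays, the $E\left((E(X|Y))^2\right)$ terms cancel, and what remains is $E(X^2)-(E(X))^2=Var(X)$, as required.

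The computation has no genuinely hard step; the only point demanding care is the justification that $E(E(X^2|Y))=E(X^2)$, that is, that the law of iterated expectation may be applied not merely to $X$ but to the derived random variable $X^2$ (which presupposes that $E(X^2)$, and hence $Var(X)$, exists). I would state this existence hypothesis explicitly and note that $X^2$ is itself a random variable in the same experiment, so the previously proved iterated-expectation theorem applies to it verbatim.
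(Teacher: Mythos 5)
Your proof is correct, but it takes a genuinely different route from the paper's. The paper proves the identity by inserting $E(X|Y)$ inside the square: it expands $Var(X)=E\left((X-E(X|Y)+E(X|Y)-E(X))^2\right)$ into $E\left((X-E(X|Y))^2\right)$ plus a cross term plus $E\left((E(X|Y)-E(X))^2\right)$; completing that argument requires showing the cross term $2E\left((X-E(X|Y))(E(X|Y)-E(X))\right)$ vanishes (by conditioning on $Y$, since $E\left(X-E(X|Y)\,|\,Y\right)=0$) and then recognizing the two surviving terms as $E(Var(X|Y))$ and $Var(E(X|Y))$ --- steps the paper's printed proof actually leaves implicit, as it stops after the expansion. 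You instead work entirely with second moments: you expand $Var(X|Y)$ and $Var(E(X|Y))$ via the identity $Var(\cdot)=E(\cdot^2)-(E(\cdot))^2$, apply the tower property to both $X$ and $X^2$, and let the two occurrences of $E\left((E(X|Y))^2\right)$ cancel. Your route is more elementary --- pure algebra plus iterated expectation, with no orthogonality argument needed --- and you are right to flag the existence of $E(X^2)$ as the one hypothesis doing real work. What the paper's decomposition buys, once completed, is the geometric content that the estimation error $X-E(X|Y)$ is uncorrelated with functions of $Y$, which is exactly the projection property underlying the least-mean-square estimation theorem later in the text; your cancellation argument does not expose that structure, but it reaches the identity faster and with fewer unverified steps.
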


\begin{proof}
We have
\begin{align*}
Var(X)& =E((X-E(X))^2)=E((X-E(X|Y)+E(X|Y)-E(X))^2) \\
      & =E((X-E(X|Y))^2)+2E((X-E(X|Y))(E(X|Y)-E(X)))\\
      & +E((E(X|Y)-E(X))^2)
\end{align*}
\end{proof}

\begin{example}[\citep{BT08}]
Question: we start with a stick of length $\ell$. We break it at a point which is chosen randomly and uniformly over its length, and keep the piece that contains the left end of the stick. We then repeat the same process on the stick that we were left with. Compute the expected length of the stick that we are left with, after breaking twice.

\noindent Answer: Let Y be the length of the stick after we break for the first time. Let X be the length after the second time. We have $E(X|Y)=\frac{Y}{2}$, since the breakpoint is chosen uniformly over the length Y of the remaining stick. For a similar reason, we also have $E(Y)=\frac{\ell}{2}$. Then, the expected value of $X$ can be computed as:
\begin{align*}
E(X)=E(E(X|Y))=E(Y/2)=\frac{\ell}{4}.
\end{align*}
\end{example}

\section{General random variable}
It may be of interest to note that discrete and continuous random variables can be subsumed under the topic of a general random variable. There exists the mathematical machinery to analyse both types of random variables simultaneously. This theory is called measure theory.

\chapter{Large sample theory}
\label{lsample}
Till now, we have described and computed the exact probability of events based on known probability mass functions (PMFs)/probability density functions (PDFs) and the implementation of its summation/integration. Also of importance were the methods to determine the moments/statistic/parameters of these random variables. In many practical situations the PMF/PDF may be unknown or the summation/integration may not be easily carried out. It would be of great utility, therefore, to be able to approximate the desired quantities using much simpler methods e.g. random sampling and point estimation. Law of large numbers asserts that the sample mean (the average of IID copies) converges to the expected value, a number, of each random variable in the average. Also, it provides a justification for the relative frequency interpretation of probability. Central limit theorem asserts that a properly normalized sum of I.I.D. random variables converges to a Gaussian random variable. Alternatively, it can be demonstrated that the repeated convolution of PDFs produces a Gaussian PDF.

\noindent So, in this section, we discuss large sample theory of sample statistic, when distribution of a measurement (or a random variable) is unknown. It discuss one (i.e. large sample size) of two important concepts to understand, obverse and utilize Big-Data. Another one (i.e. high-dimension) will be discussed in the next chapter \eqref{statm}.

\section{Sample statistic}\label{sec:ss-01}
Consider a random variable (or a measurement) $X$ with unknown distribution, then it is very difficult to compute statistic or parameter of the observed values under the unknown distribution. Now, we need to define sample statistic instead with respect to a random sample of size $n$ (i.e. $(X_1,X_2,\ldots,X_n) \to n$ independent and identically distributed (I.I.D.) copies of X).

\begin{definition}[Sample mean]\label{def:sm-01}
Consider a random variable (or a measurement) $X$ with unknown distribution. Assume a random sample of size $n$ (i.e. $(X_1,X_2,\ldots,X_n) \to n$ independent and identically distributed (I.I.D.) copies of X), then sample mean is defined as follows:
\begin{align*}
\bar{X}_n =\frac{X_1+X_2+\ldots +X_n}{n}.
\end{align*}
The sample mean is an estimator of the mean $\mu$ of the measurement X and its bias is defined as follows:
\begin{align*}
Bias(\bar{X})=E(\bar{X})-\mu.
\end{align*}
Consider a realization/observation of the random sample of size n, $(x_1, x_2,\ldots, x_n)$, then the corresponding realization of the sample mean as $\bar{x}$ returns the sample average
\begin{align*}
\bar{x}=\frac{x_1+x_2+\ldots +x_n}{n}.
\end{align*}
\end{definition}

\begin{definition}[Sample variance]\label{def:sv-02}
Consider a random variable (or a measurement) $X$ with unknown distribution. Assume a random sample of size $n$ (i.e. $(X_1,X_2,\ldots,X_n) \to n$ independent and identically distributed (I.I.D.) copies of X), then sample variance is defined as follows:
\begin{align*}
S_n^2 =\frac{1}{n-1}\sum_{i=1}^n \left(X_i-\bar{X}_n \right)^2=\frac{1}{n-1}\left(\sum_{i=1}^n X_i^2 -n\bar{X}^2 \right).
\end{align*}
The sample variance is an estimator of the variance $\sigma^2$ of the measurement X and its bias is defined as follows:
\begin{align*}
Bias(S^2)=E(S^2)-\sigma^2.
\end{align*}
Consider a realization/observation of the random sample of size $n, (x_1, x_2,\ldots, x_n)$, then the corresponding realization of the sample variance as $S^2$ returns the sample variance
\begin{align*}
S^2=\frac{1}{n-1}\sum_{i=1}^n \left(x_i-\bar{x} \right)^2=\frac{1}{n-1}\left(\sum_{i=1}^n x_i^2 -n\bar{x}^2 \right).
\end{align*}
Alternatively, we can have another definition of sample variance as follows:
\begin{align*}
S^2=\hat{Var(X)} &=\bar{X^2}-\left(\bar{X}\right)^2=\frac{1}{n}\sum_{i=1}^n X_i^2-\left(\frac{1}{n}\sum_{i=1}^n X_i \right)^2 \\
                 &=\frac{1}{n}\left(\sum_{i=1}^n X_i^2 -n\bar{X}^2 \right)=\frac{1}{n}\sum_{i=1}^n\left(X_i^2 -\bar{X} \right)^2.
\end{align*}
\end{definition}

\begin{definition}[Sample standard deviation]\label{def:sd-03}
In the above definition \eqref{def:sv-02}, $S^2$ is a sample variance of X, so the sample standard deviation is defined as
\begin{align*}
S=\sqrt{S^2}=\sqrt{\frac{1}{n-1}\sum_{i=1}^n \left(X_i-\bar{X}_n \right)^2}.
\end{align*}
Further, we have
\begin{align*}
0<Var(S^2)=E(S^2)-\left(E(S) \right)^2=\sigma^2 -\left(E(S) \right)^2 \implies E(S)<\sigma.
\end{align*}
So, the sample standard deviation $S$ is a biased estimator of the standard deviation $\sigma$ with a bias
\begin{align*}
Bias(S)=E(S)-\sigma <0.
\end{align*}
Consider a realization/observation of the random sample of size $n, (x_1, x_2,\ldots, x_n)$, then the corresponding realization of the sample standard deviation as $S$ returns the sample standard error
\begin{align*}
S=\sqrt{S^2}=\sqrt{\frac{1}{n-1}\sum_{i=1}^n \left(x_i-\bar{x} \right)^2}.
\end{align*}
\end{definition}

\begin{example}[Average mark in the course MA201]
Consider $X_1, X_2, X_3, X_4, X_5$ are the marks in the different evaluation components (e.g. quiz-1,quiz-2, assignment, mid-term, end-term respectively) i.e. we have a random sample of size 5 (upto normalization). Then, the sample mean $\bar{X}=\frac{X_1+X_2+\ldots + X_5}{5}$ with $70$ realization in section-A and 71 realization in section-B and $\bar{X}$ is a unbiased estimator of the average mark in the course MA201. 
\end{example}

\begin{example}
Backdrop: let $T$ be the time that is needed for a specific task in a factory to be completed. In order to estimate the mean and variance of $T$, we observe a random sample $(T_1,T_2,\ldots,T_6)$. Thus, $T_i$'s are i.i.d. and have the same distribution as $T$. 

\noindent Question: we obtain the following values (in minutes) as a realization/observation of the random sample of size $6$:
\begin{align*}
(t_1,t_2,t_3,t_4,t_5,t_6)=(18,21,17,16,24,20).
\end{align*}
Then compute the values of the sample mean, the sample variance, and the sample standard deviation for the observed sample.

\noindent Answer: as we have 
\begin{align*}
\bar{T}=\frac{T_1+T_2+\ldots +T_6}{6} \ S^2=\frac{1}{5}\left(\sum_{i=1}^n T_i^2 -6\bar{T}^2 \right)
\end{align*}
So, with respect to the realization/observation $(t_1,t_2,t_3,t_4,t_5,t_6)=(18,21,17,16,24,20)$ of the random sample of size 6, we have
\begin{align*}
\bar{t}=\frac{t_1+t_2+\ldots +t_6}{6} \ S^2=\frac{1}{5}\left(\sum_{i=1}^n t_i^2 -6\bar{t}^2 \right)
\end{align*}
So, we have the following computations:
\begin{align*}
\bar{T}=\bar{t}=19.33, \ S^2=8.67, \ S=\sqrt{S^2}=2.94.
\end{align*}
\end{example}

\subsection{Methods of estimation for sample statistic}
Various methods of estimation for sample statistic such as:
\begin{enumerate}
\item[a.] maximum likelihood estimation, 
\item[b.] method of moments,
\item[c.] max a-posteriori estimation,
\item[d.] least squared estimation
\end{enumerate}
will be discussed in detail in next chapter \eqref{statm}.

\subsection{Method of sampling}
In practice (e.g. in everyday activity or scientific research), our understanding and applicability are based on observation of some relevant samples. For example, a person (with several years of experience with an institute) can often determine an opinion, of an institute that conducts thousands of transactions everyday, by one or two encounters. In other scenario, a traveller who spends 10 days in a country and then proceeds to write a book telling the inhabitants how to revive their industries, reform their political system, balance their budget, and improve the food in their hotels is a familiar figure of fun. But in reality he differs from the political scientist who devotes 20 years to living and studying in the country only in that he bases his conclusions on a much smaller sample of experience and is less likely to be aware of the extent of his ignorance. In every branch of science we lack the resources to "study more than a fragment of the phenomena that might advance our knowledge. A list of the principal advantages of sampling as compared with complete enumeration.
\begin{enumerate}
\item[a.] Reduced cost: if data are secured from only a small fraction of the aggregate, expenditures may be expected to be smaller than if a complete census is attempted.
\item[b.] Greater speed: for the same reason, the data can be collected and summarized more quickly with a sample than with a complete
count. This may be a vital consideration when the information is urgently needed.
\item[c.] Greater scope: in certain types of inquiry, highly trained personnel or specialized equipment, limited in availability, must be used to obtain the data. A complete census may then be impracticable: the choice lies between obtaining the information by sampling or not at all. Thus surveys which rely on sampling have more scope and flexibility as to the types of information that can be obtained. On the other hand, if information is wanted for many subdivisions or segments of the population, it may be found that a complete enumeration offers the best solution.
\item[d.] Greater accuracy: because personnel of higher quality can he employed and can be given intensive training, a sample may actually produce more accurate results than the kind of complete enumeration that it is feasible to take.
\end{enumerate}

\noindent There are various methods of sampling:
\begin{enumerate}
\item[a.] Random sampling
\item[b.] Other sampling 
\item[c.] Issues with sampling such as sampling bias.
\end{enumerate}

\section{Probabilistic inequalities}
There are three fundamental probabilistic inequalities:
\begin{enumerate}
\item[a.] Markov inequality
\item[b.] Chebushev inequality
\item[c.] Chernoff inequality
\end{enumerate}

\subsection{Probabilistic inequalities and non-asymptotic sample theory}
The mean and variance of a random variable indicate the average value and variability of the outcomes of a repeated experiment. As such, they summarize important information about the distribution (i.e. PMF/PDF). However, they are not sufficient to determine probabilities
of events. For example, consider the PDFs:
\begin{align*}
f_X(x)=\frac{1}{\sqrt{2\pi}}\exp\left(-\frac{x^2}{2} \right), \ f_Y(y)=\frac{1}{\sqrt{2}}\exp\left(-\sqrt{2}|y| \right).
\end{align*}
both have $E(X)=0=E(Y)$ (due to symmetry about origin) and $var(X) = 1 = Var(Y)$. Yet, the probability of a given interval can be very different. Although the relationship between the mean and variance, and the probability of an event is not a direct one, we can still obtain some information about the probabilities based on the mean and variance. In particular, it is possible to bound the probability or to be able to assert that
\begin{align*}
P(|X-E(X)|\geq \epsilon) \leq \delta <1.
\end{align*}
For example, if the probability of a speech signal of mean 0 and variance 1 exceeding a given magnitude $\epsilon$ is to be no more than
$1\%$, then we would be satisfied if we could determine a $\epsilon$ so that
\begin{align*}
P(|X-E(X)|\geq \epsilon) \leq \delta <0.01.
\end{align*}
We now show that the probability for the event $|X-E(X)|\geq \epsilon$ can be bounded if we know the mean and variance. Computation of the probability is not required and therefore the distribution does not need to be known. Estimating the mean and variance is much easier than the entire distribution.

\begin{theorem}[Chebyshev inequality]
Consider a random variable X, then we have
\begin{align*}
P(|X-E(X)|> \epsilon) \leq \frac{Var(X)}{\epsilon^2}.
\end{align*}
Hence, the probability that a random variable deviates from its mean by more than $\epsilon$ (in either direction) is less than or equal to $\frac{Var(X)}{\epsilon^2}$. This agrees with the intuition that the probability of an outcome departing from the mean must become smaller as the width of the distribution decreases or equivalently as the variance decreases.
\end{theorem}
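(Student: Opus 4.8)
The plan is to derive the bound directly from the definition of variance, in the same truncation-of-the-tail spirit that underlies all three inequalities in this section. Write $\mu = E(X)$ and observe first that the event whose probability we want to bound can be recast as a statement about the non-negative quantity $(X-\mu)^2$, namely $\{|X-\mu| > \epsilon\} = \{(X-\mu)^2 > \epsilon^2\}$. This is the only genuinely ``clever'' step; once the deviation event is squared it matches exactly the integrand appearing in $Var(X) = E((X-\mu)^2)$.

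Next I would start from the definition of variance as an expectation and restrict the region of integration to the tail. Taking the continuous case for concreteness,
\begin{align*}
Var(X) = \int_{-\infty}^{\infty}(x-\mu)^2 f_X(x)\,dx \geq \int_{\{x:\,|x-\mu|>\epsilon\}}(x-\mu)^2 f_X(x)\,dx,
\end{align*}
where the inequality holds because the discarded integrand is non-negative. On the remaining region the factor $(x-\mu)^2$ exceeds $\epsilon^2$, so replacing it by the constant $\epsilon^2$ only decreases the integral:
\begin{align*}
Var(X) \geq \epsilon^2 \int_{\{x:\,|x-\mu|>\epsilon\}} f_X(x)\,dx = \epsilon^2\,P(|X-\mu|>\epsilon).
\end{align*}
Dividing through by $\epsilon^2 > 0$ yields the claimed inequality. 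The discrete case is identical with the integral replaced by a sum over $\{x : |x-\mu|>\epsilon\}$, and the measure-theoretic general random variable discussed later unifies both into one statement.

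An equivalent and slightly more modular route is to isolate the two ideas: first prove Markov's inequality $P(Y \geq a) \leq E(Y)/a$ for an arbitrary non-negative random variable $Y$ and $a>0$ (by exactly the truncation argument above applied to $Y$), then apply it with $Y = (X-\mu)^2$ and $a = \epsilon^2$ to get $P((X-\mu)^2 > \epsilon^2) \leq E((X-\mu)^2)/\epsilon^2 = Var(X)/\epsilon^2$. Since Markov is listed first in this section, establishing it as a short lemma and invoking it here is the cleaner bookkeeping, and it keeps the two inequalities from duplicating the same computation.

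I do not anticipate a real obstacle: the argument is a one-line lower bound on an integral. The only points requiring care are the rewriting $\{|X-\mu|>\epsilon\} = \{(X-\mu)^2>\epsilon^2\}$ and keeping the inequality direction straight when the non-negative integrand over the complementary region is dropped. The degenerate cases are trivial to dispatch in a remark: if $Var(X)=\infty$ the bound is vacuous, and $\epsilon>0$ is exactly what is needed to divide.
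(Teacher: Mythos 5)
Your proof is correct and takes essentially the same approach as the paper: both arguments lower-bound the variance integral $Var(X)=\int_{-\infty}^{\infty}(x-\mu)^2 f_X(x)\,dx$ by restricting to the tail region $\{x : |x-\mu|>\epsilon\}$ and replacing $(x-\mu)^2$ by $\epsilon^2$ there, then dividing by $\epsilon^2$. Your alternative, more modular route via Markov's inequality with $Y=(X-\mu)^2$ is also present in the paper, which later records exactly this derivation as a corollary of the Markov inequality.
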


\begin{proof}
We have
\begin{align*}
Var(X)& =E((X-E(X))^2)=\int_{-\infty}^{\infty}\left(x-E(X) \right)^2f_X(x)dx \\
      & =\int_{|X-E(X)|>\epsilon \cup |X-E(X)|\leq \epsilon}\left(x-E(X) \right)^2f_X(x)dx \\
      & \geq \int_{|X-E(X)|> \epsilon}\left(x-E(X) \right)^2f_X(x)dx \\
      & \geq \epsilon^2 \int_{|X-E(X)|> \epsilon}f_X(x)dx =\epsilon^2 P(|X-E(X)|\geq \epsilon) \\
\implies & P\left(|X-E(X)|> \epsilon\right) \leq \frac{Var(X)}{\epsilon^2}.       
\end{align*}
\end{proof}

\begin{theorem}[Chebyshev inequality and confidence interval]
Consider a random sample of size n i.e. $(X_1,X_2,\ldots, X_n) \to n$ I.I.D. copies of a random variable $X$ with unknown distribution and $\mu=E(X), \sigma^2 =Var(X)<\infty$. Then, for any small positive real number $\epsilon$, we have
\begin{align*}
& P\left(|\bar{X}-\mu|>\epsilon \right) \leq \frac{\sigma^2}{n\epsilon^2}\leq \delta \equiv P\left(|\bar{X}-\mu|\leq \epsilon \right) \geq 1-\frac{\sigma^2}{n\epsilon^2} \geq 1-\delta \\
& \implies \bar{X} \in \left[\mu -\sigma \frac{1}{\sqrt{n\delta}}, \mu + \sigma \frac{1}{\sqrt{n\delta}}\right] \ \mbox{with high probability} \ 1-\delta. 
\end{align*}
\end{theorem}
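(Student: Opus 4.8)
The plan is to apply the Chebyshev inequality already established above directly to the sample mean $\bar{X}$, once its mean and variance have been computed. First I would establish that $\bar{X}$ is an unbiased estimator of $\mu$, i.e. $E(\bar{X}) = \mu$. This follows from the linearity of expectation proved earlier: since $\bar{X} = \frac{1}{n}\sum_{i=1}^n X_i$ and each $X_i$ is distributed as $X$ with $E(X_i) = \mu$, we obtain $E(\bar{X}) = \frac{1}{n}\sum_{i=1}^n E(X_i) = \mu$.

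Next I would compute the variance of the sample mean. Using independence of the $X_i$, the covariance of any pair $X_i, X_j$ with $i \neq j$ vanishes by the theorem relating independence to zero covariance proved above, so the variance of the sum is the sum of the variances. Hence $Var(\bar{X}) = \frac{1}{n^2}\sum_{i=1}^n Var(X_i) = \frac{1}{n^2}\cdot n\sigma^2 = \frac{\sigma^2}{n}$. This is the crucial step where the i.i.d. assumption genuinely enters and is responsible for the $\frac{1}{n}$ decay of the bound; one needs independence (not merely identical distribution) to drop the cross terms.

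With these two facts in hand, I would apply the Chebyshev inequality to the random variable $\bar{X}$, whose mean is $\mu$ and whose variance is $\sigma^2/n$, obtaining $P(|\bar{X} - \mu| > \epsilon) \leq \frac{Var(\bar{X})}{\epsilon^2} = \frac{\sigma^2}{n\epsilon^2}$. Passing to the complementary event gives the equivalent lower bound $P(|\bar{X} - \mu| \leq \epsilon) \geq 1 - \frac{\sigma^2}{n\epsilon^2}$, which is the second displayed chain of the statement.

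Finally, to produce the stated confidence interval, I would impose $\frac{\sigma^2}{n\epsilon^2} \leq \delta$ and solve for the threshold; the choice $\epsilon = \frac{\sigma}{\sqrt{n\delta}}$ makes the bound exactly $\delta$, so the event $|\bar{X} - \mu| \leq \epsilon$ has probability at least $1 - \delta$. Substituting this value of $\epsilon$ yields $\bar{X} \in \left[\mu - \frac{\sigma}{\sqrt{n\delta}}, \mu + \frac{\sigma}{\sqrt{n\delta}}\right]$ with probability at least $1 - \delta$, which is the claim. The argument is essentially routine given the Chebyshev inequality; the only point that requires genuine care is the variance computation, and there is no serious obstacle beyond correctly invoking independence there.
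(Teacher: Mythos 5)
Your proposal is correct and takes essentially the same route as the paper: although the paper states this theorem without a separate proof, the argument it uses in the adjacent results (the proof of the law of large numbers and the convergence-in-probability example) is precisely yours, namely $E(\bar{X})=\mu$ by linearity, $Var(\bar{X})=\sigma^2/n$ via independence, Chebyshev applied to $\bar{X}$, and then solving $\frac{\sigma^2}{n\epsilon^2}=\delta$ to get $\epsilon=\frac{\sigma}{\sqrt{n\delta}}$. There are no gaps, and your observation that independence (really just pairwise uncorrelatedness) is what kills the cross terms in the variance computation is exactly the right point of care.
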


\begin{example}
Consider a random variable X with $E(X)=0$ and $Var(X)=1$, then for an $\epsilon$, we have
\begin{align*}
P(|X|> \epsilon) \leq \frac{1}{\epsilon^2}.
\end{align*}
If $\epsilon =3$, then 
\begin{align*}
P(|X| > 3) \leq \frac{1}{9} \approx 0.11.
\end{align*} 
This is a rather loose bound as follows: 
\begin{enumerate}
\item[a.] if $X\sim N(0,1)$, then the actual value of this probability is 
\begin{align*}
P(|X|>3)&= 1-P(|X|\leq 3)=1-\left(P(3)-P(-3) \right) \\
        &=1-Q(3)+(1+Q(3))=2(1-Q(3))\approx 0.0026.
\end{align*}
\item[b.] if $X\sim L(0,1)$, then the actual value of this probability is 
\begin{align*}
P(|X|>3)&= \int_{x <-3 \cup x>3} \frac{1}{\sqrt{2}}\exp\left(-\sqrt{2}|x| \right)dx \\
        &=2\int_3^{\infty} \frac{1}{\sqrt{2}}\exp\left(-\sqrt{2}|x| \right) \\
        &=exp\left(-3\sqrt{2} \right) \approx 0.0144.
\end{align*}
\end{enumerate}
Hence, the actual probability is indeed less than or equal to the bound of $0.11$, but quite a bit less i.e. the bound provides a gross overestimation of the probability. A graph of the Chebyshev bound as well as the actual probabilities of $P(|X|> \epsilon)$ versus $\epsilon$ is shown in the below figure.
\begin{center}
\begin{figure}[h]
    \centering
    \includegraphics[width=1.0\textwidth]{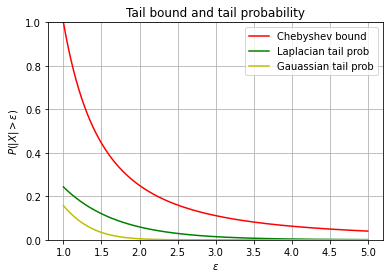}
    \caption{Probabilities $P(|X|> \epsilon)$ for Gaussian and Laplacian random variables with zero mean and unity variance compared to Ghebyshev inequality.}
    \label{fig:cbtb-01}
\end{figure}
\end{center}
\end{example}

\begin{theorem}[Markov inequality]
Consider a non-negative random variable X, then we have
\begin{align*}
P(X> \epsilon) \leq \frac{E(X)}{\epsilon}.
\end{align*}
Furthermore, if X is any random variable and $g$ is a non-negative function, then
\begin{align*}
P(g(X)> \epsilon) \leq \frac{E(g(X))}{\epsilon}.
\end{align*}
\end{theorem}

\begin{proof}
We have
\begin{align*}
E(g(X)) &=\int_{-\infty}^{\infty} g(x)f_X(x)dx =\int_{\{x:g(x)\geq \epsilon \cup g(x) < \epsilon\}}g(x)f_X(x)dx \\
        &\leq \int_{\{x:g(x)\geq \epsilon \}}g(x)f_X(x)dx \geq \epsilon \int_{\{x:g(x)\geq \epsilon \}}f_X(x)dx \\
        & =\epsilon P(g(X)\geq \epsilon) \implies P(g(X)> \epsilon) \leq \frac{E(g(X))}{\epsilon}.        
\end{align*}
\end{proof}

\begin{example}[Bound for QuickSort run \citep{T20}]
Question: Suppose the expected runtime of QuickSort is $2n\ln(n)$ operations/comparisons to sort an array of size n (we can show this using linearity of expectation with dependent indicator variables). Use Markov's inequality to bound the probability that QuickSort runs for longer than $20n\ln(n)$ time.

\noindent Answer: Let X be the runtime of QuickSort, with $E(X)=2n\ln(n)$. Then, since X is non-negative, we can use Markov's inequality as:
\begin{align*}
E(X>20n\ln(n))\leq \frac{E(X)}{20n\ln(n)}=\frac{2n\ln(n)}{20n\ln(n)}=\frac{1}{10}.
\end{align*}
So we know there's at most $10\%$ probability that QuickSort takes this long to run. Again, we can get this bound despite not knowing anything except its expectation.
\end{example}

\begin{example}[coin toss]
Question: A coin is weighted so that its probability of landing on heads is $20\%$, independently of other flips. Suppose the coin is  flipped 20 times. Use Markov's inequality to bound the probability it lands on heads at least 16 times.

\noindent Answer: We actually do know this distribution; the number of heads is $X\sim Bin(n=20, p=0.2)$. Thus, $E(X)=np=20\cdot 0.2=4$. By Markov's inequality: we have
\begin{align*}
& P(X\geq 16)\leq \frac{E(X)}{16}=\frac{4}{16}=0.25, \ \mbox{but} \\
& P(X\geq 16)=\sum_{k=16}^{20}\binom{n}{k}(0.2)^k(0.8)^{20-k}\approx 1.38\times 10^{-8}.
\end{align*}
\end{example}

\begin{corollary}[Chebyshev inequality as a corollary of the Markov inequality]
Consider a function of $X$ as $g(X)=\left(X-E(X)\right)$, then we have
\begin{align*}
P(|X-E(X(X)| &\geq \epsilon) =P\left((X-E(X))^2 \geq \epsilon^2 \right) \leq \frac{E\left(X-E(X))^2 \right)}{\epsilon^2} \\
             & =\frac{Var(X)}{\epsilon^2}.
\end{align*} 
\end{corollary}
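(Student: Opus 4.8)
The plan is to deduce this inequality directly from the Markov inequality established just above, by applying it to a cleverly chosen non-negative function of $X$. The entire argument hinges on rewriting the deviation event $\{|X-E(X)|\geq \epsilon\}$ as an event about a squared quantity, to which the general (function) form of Markov's inequality applies verbatim.

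First I would set $g(X)=(X-E(X))^2$ and observe that this is a non-negative function of $X$, so the second form of the Markov inequality in the statement above, namely $P(g(X)>\epsilon)\leq E(g(X))/\epsilon$, is available for it. The one genuine step is the event equivalence: since the map $t\mapsto t^2$ is strictly increasing on $[0,\infty)$ and $|X-E(X)|\geq 0$, the event $\{|X-E(X)|\geq \epsilon\}$ coincides exactly with the event $\{(X-E(X))^2\geq \epsilon^2\}$ for every $\epsilon>0$. This is really just monotonicity of squaring on the non-negative reals, and it is the only place where any reasoning beyond substitution is needed.

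Having identified the two events, I would apply the Markov inequality to $g(X)=(X-E(X))^2$ with threshold $\epsilon^2$ in place of $\epsilon$, which yields
\begin{align*}
P\left((X-E(X))^2\geq \epsilon^2\right) \leq \frac{E\left((X-E(X))^2\right)}{\epsilon^2}.
\end{align*}
Finally I would invoke the definition of variance, $Var(X)=E((X-E(X))^2)$, to recognize the numerator on the right-hand side as $Var(X)$, which closes the chain of one equality, one inequality, and one final equality displayed in the statement.

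The hard part is essentially nonexistent here: this is a one-line corollary whose only substantive ingredient, the Markov inequality, has already been proved. The sole thing to get right is the bookkeeping of thresholds, making sure that squaring the deviation forces the comparison level to be $\epsilon^2$ rather than $\epsilon$, so that the denominator matches and the variance appears cleanly. No additional assumptions on $X$ beyond the existence of $E(X)$ and $Var(X)$ are required, and the argument is identical whether $X$ is discrete or continuous.
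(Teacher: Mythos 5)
Your proposal is correct and follows essentially the same route as the paper: apply the function form of Markov's inequality to the non-negative function $g(X)=(X-E(X))^2$ with threshold $\epsilon^2$, using the equivalence of the events $\{|X-E(X)|\geq \epsilon\}$ and $\{(X-E(X))^2\geq \epsilon^2\}$, and then identify $E\left((X-E(X))^2\right)$ as $Var(X)$. You even silently repair the paper's small slip of writing $g(X)=\left(X-E(X)\right)$ without the square, and your explicit justification of the event equivalence via monotonicity of squaring on $[0,\infty)$ makes the one nontrivial step clearer than the paper's inline chain.
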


\begin{example}[coin toss]
Question: Consider toss of a weighted coin independently with probability of landing heads $p= 0.2$. Upper bound the probability it lands on heads at least $16$ times out of $20$ flips using Chebyshev's inequality.

\noindent Answer: We actually do know this distribution; the number of heads is $X\sim Bin(n=20, p=0.2)$. Thus, $E(X)=np=20\cdot 0.2=4$.  and variance $Var(X)=np(1-p)=20\cdot 0.2\cdot 0.8=3.2$. By Chebyshev's inequality: we have
\begin{align*}
P(X\geq 16) &=P(X-4\geq 12)\leq \frac{1}{2}\cdot P(|X-4|\geq 12 ) \leq \frac{1}{2}\cdot \frac{Var(X)}{144} \\
           &=\frac{1}{2}\cdot \frac{3.2}{144}\approx 0.011, \ \mbox{but} \ P(X\geq 16)=\sum_{k=16}^{20}\binom{n}{k}(0.2)^k(0.8)^{20-k} \\
           &\approx 1.38\times 10^{-8}.
\end{align*}
\end{example}

\begin{remark}
As we are not able to improve Markov’s Inequality and Chebyshev’s Inequality in general, it is worth to consider whether we can say something stronger for a more restricted, yet interesting, class of random variables. This idea brings us to consider the case of a random variable that is the sum of a number of independent random variables.

\noindent This scenario is particularly important and ubiquitous in statistical applications. Examples of such random variables are the number of heads in a sequence of coin tosses, or the average support obtained by a political candidate in a poll.

\noindent Can Markov’s and Chebyshev’s Inequality be improved for this particular kind of random variable? Before confronting this question, let us check what Chebyshev’s Inequality (the stronger of the two) gives us for a sum of independent random variables.

\noindent Consider a sequence of random variables $X_1,X_2,\ldots,X_n,\ldots$ (i.e. $\{X_n\}$) with $E(X_i)=\mu_i$ and $Var(X_i)=\sigma^2_i$. Then for any real positive number $\epsilon$, we have
\begin{align*}
P\left(|\sum_{i=1}^n X_i-\sum_{i=1}^n \mu_i|>\epsilon\right) \leq \frac{Var(\sum_{i=1}^n X_i)}{\epsilon^2}=\frac{\sum_{i=1}^n \sigma^2_i}{\epsilon^2} 
\end{align*}
In particular, for identically distributed random variables with expectation $\mu$ and variance $\sigma^2$, we obtain
\begin{align*}
P\left(|\frac{1}{n}\sum_{i=1}^n X_i -\mu|>\epsilon \right) \leq \frac{\sigma^2}{n\epsilon^2}.
\end{align*}
\end{remark}

\begin{theorem}[Chernoff inequality]
Consider a random variable X, then we have
\begin{align*}
P(|X-E(X)|> \epsilon) \leq 2 \exp\left(-\frac{\epsilon^2}{2\sigma^2}\right).
\end{align*}
Hence, the probability that a random variable deviates from its mean by more than $\epsilon$ (in either direction) is less than or equal to $2 \exp\left(-\frac{\epsilon^2}{2\sigma^2}\right)$. This agrees with the intuition that the probability of an outcome departing from the mean must become smaller as the width of the distribution decreases or equivalently as the variance decreases and the sample size increases.
\end{theorem}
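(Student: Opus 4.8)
The plan is to follow the standard exponential-moment (``Chernoff'') route, building directly on the Markov inequality proved just above. First I would isolate the upper tail $P(X - E(X) > \epsilon)$. For any parameter $t > 0$ the event $\{X - E(X) > \epsilon\}$ coincides with $\{e^{t(X-E(X))} > e^{t\epsilon}\}$, and since $e^{t(X-E(X))}$ is a non-negative random variable, the Markov inequality applied to it gives
\begin{align*}
P(X - E(X) > \epsilon) = P\left(e^{t(X-E(X))} > e^{t\epsilon}\right) \leq e^{-t\epsilon}\, E\left(e^{t(X-E(X))}\right).
\end{align*}
This is the mechanism that converts a tail probability into a bound on the moment generating function $M(t) = E\left(e^{t(X-E(X))}\right)$, and it is the step where the earlier Markov inequality does all the work.

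Second, I would establish the sub-Gaussian control $M(t) \leq \exp\left(t^2 \sigma^2/2\right)$, where $\sigma^2 = Var(X)$. This is the crux of the argument and, I expect, the main obstacle: such an estimate is not automatic for an arbitrary random variable, so it must be read as the standing hypothesis that $X$ is sub-Gaussian — equivalently, that its centered exponential moments grow no faster than those of a Gaussian with the same variance. Under that hypothesis the inequality is precisely the defining estimate; without it the stated bound genuinely fails, so the honest move is to name the assumption explicitly and then invoke it rather than to pretend it holds universally.

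Third, with the moment generating function bound in hand the right-hand side becomes $\exp\left(t^2\sigma^2/2 - t\epsilon\right)$, a smooth convex function of $t$ that I would minimize over $t > 0$. The optimizer is $t = \epsilon/\sigma^2$, and substituting it yields the one-sided bound $P(X - E(X) > \epsilon) \leq \exp\left(-\epsilon^2/(2\sigma^2)\right)$. This optimization is routine calculus and carries no real difficulty.

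Finally, I would obtain the two-sided statement by repeating the first three steps for the lower tail $P(X - E(X) < -\epsilon)$ — either by applying the same argument to $-(X - E(X))$ or by allowing $t < 0$ — which produces an identical bound. Combining the two one-sided estimates through $P(|X - E(X)| > \epsilon) \leq P(X - E(X) > \epsilon) + P(X - E(X) < -\epsilon)$ accounts for the factor of $2$ in the claim. The only genuine content beyond the optimization lies in the second step, so I would take care to foreground the sub-Gaussian assumption on which the whole bound rests.
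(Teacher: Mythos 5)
Your proposal is the standard exponential-moment argument and it is sound; note, however, that the paper itself offers no proof of this theorem at all (it is stated bare, with the next proof-bearing material being the Chebyshev-based discussion), so there is no in-text argument to compare against. Your four steps are exactly right given the hypothesis you flag: Markov's inequality applied to $e^{t(X-E(X))}$ gives $P(X-E(X)>\epsilon)\leq e^{-t\epsilon}M(t)$, the sub-Gaussian bound $M(t)\leq \exp\left(t^2\sigma^2/2\right)$ turns this into $\exp\left(t^2\sigma^2/2-t\epsilon\right)$, the minimizer $t=\epsilon/\sigma^2$ yields $\exp\left(-\epsilon^2/(2\sigma^2)\right)$, and the union of the two one-sided tails produces the factor $2$.

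Your central observation — that the theorem as printed, for an arbitrary random variable $X$, is simply false without the sub-Gaussian hypothesis — is correct and is the most valuable part of the review. For instance, if $X\sim Exp(\lambda)$ the upper tail decays like $e^{-\lambda\epsilon}$, linearly in $\epsilon$ in the exponent, which violates $2\exp\left(-\epsilon^2/(2\sigma^2)\right)$ for large $\epsilon$; more drastically, heavy-tailed $X$ with finite variance (e.g.\ a Pareto law) admits only polynomial tail decay. The paper tacitly concedes this point: its immediately following result, the ``Chernoff inequality and confidence interval'' theorem, explicitly assumes $X$ is sub-Gaussian in the sense $M_{X-E(X)}(s)\leq \exp\left(s^2\sigma^2/2\right)$, which is precisely the standing hypothesis you identified as the crux. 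So your proof is correct as a proof of the theorem the paper evidently intends, and your insistence on naming the assumption repairs a genuine defect in the statement rather than introducing a detour.
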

%

\begin{remark}
To improve a point estimation to a high accuracy (i.e. a small confidence interval), we need to improve either assumptions for the tight bound or sample size, because, confidence intervals produced by typical concentration inequalities have size $O\left(\frac{1}{\sqrt{n}} \right)$.
\end{remark}

\begin{example}[Chernoff inequality for a sample mean]
Consider a random variable X, then we have the Chernoff inequality as:
\begin{align*}
P(|X-E(X)|> \epsilon) \leq 2 \exp\left(-\frac{\epsilon^2}{2\sigma^2}\right).
\end{align*}
Now, consider a realization $\bold{x}=(x_1,x_2,\ldots,x_n)$ of an I.I.D. random sample of size n from $X\sim Ber(p)$. Then, the Chernoff inequality for the sample mean is given by
\begin{align*}
P(|\bar{X}-E(X)|> \epsilon) \leq 2 \exp\left(-\frac{n\epsilon^2}{2\sigma^2}\right).
\end{align*}
\end{example}

\begin{theorem}[Chernoff inequality and confidence interval]
Consider a random sample of size n i.e. $(X_1,X_2,\ldots, X_n) \to$ n I.I.D. copies of a sub-Gaussian random variable $X$ (i.e. $M_{X-E(X)}(s)\leq \exp\left(\frac{s^2\sigma^2}{2} \right)$) with unknown distribution and $\mu=E(X), \sigma^2 =Var(X)$. Then, for any small positive real number $\epsilon$, we have
\begin{align*}
& P\left(|\bar{X}-\mu|>\epsilon \right) \leq 2 e^{-\frac{n\epsilon^2}{2\sigma^2}} \leq \delta \equiv P\left(|\bar{X}-\mu|\leq \epsilon \right) \geq 1-2 e^{-\frac{n\epsilon^2}{2\sigma^2}} \implies \\
& \bar{X} \in \left[\mu -\sigma \sqrt{\frac{2}{n}\log\left(\frac{2}{\delta} \right)}, \mu + \sigma \sqrt{\frac{2}{n}\log\left(\frac{2}{\delta} \right)}\right] \ \mbox{with high prob.} \ 1-\delta.
\end{align*}
\end{theorem}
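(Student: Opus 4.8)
The plan is to use the exponential moment (Chernoff) method: introduce a free parameter $s>0$, transform the tail event through the map $x \mapsto e^{sx}$, apply the Markov inequality already proved above, exploit independence to factor the moment generating function, invoke the sub-Gaussian hypothesis on each factor, and finally optimize over $s$. The confidence interval is then obtained by inverting the resulting exponential bound.

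First I would reduce the two-sided statement to a one-sided tail. Writing $\bar{X}-\mu = \frac{1}{n}\sum_{i=1}^n (X_i-\mu)$, the event $\{\bar{X}-\mu > \epsilon\}$ coincides with $\{\sum_{i=1}^n (X_i-\mu) > n\epsilon\}$. For any $s>0$ this is the event $\{e^{s\sum_i (X_i-\mu)} > e^{sn\epsilon}\}$, and since the exponential is non-negative the Markov inequality in its $g(X)$ form gives $P(\bar{X}-\mu>\epsilon) \leq e^{-sn\epsilon}\, E\!\left[e^{s\sum_i (X_i-\mu)}\right]$.

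Next I would handle the joint moment generating function. Because the $X_i$ are independent and identically distributed, $E\!\left[e^{s\sum_i (X_i-\mu)}\right] = \prod_{i=1}^n E\!\left[e^{s(X_i-\mu)}\right] = \left(M_{X-E(X)}(s)\right)^n$, and the sub-Gaussian hypothesis $M_{X-E(X)}(s) \leq e^{s^2\sigma^2/2}$ bounds this by $e^{ns^2\sigma^2/2}$. Substituting, $P(\bar{X}-\mu>\epsilon) \leq \exp\!\left(\frac{ns^2\sigma^2}{2} - sn\epsilon\right)$. The exponent is a convex quadratic in $s$; minimizing over $s>0$ yields the optimum $s^\star = \epsilon/\sigma^2$, at which the exponent equals $-n\epsilon^2/(2\sigma^2)$, giving the one-sided bound $\exp\!\left(-n\epsilon^2/(2\sigma^2)\right)$. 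Applying the identical argument to $-(X-\mu)$, which is sub-Gaussian with the same parameter, bounds the lower tail $\{\bar{X}-\mu<-\epsilon\}$, and a union bound over the two tails produces the claimed factor of $2$, namely $P(|\bar{X}-\mu|>\epsilon) \leq 2e^{-n\epsilon^2/(2\sigma^2)}$.

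Finally I would convert the concentration bound into the confidence interval by inversion: imposing $2e^{-n\epsilon^2/(2\sigma^2)} \leq \delta$ and solving for $\epsilon$ gives $\epsilon = \sigma\sqrt{\frac{2}{n}\log(2/\delta)}$, so that the complementary event $\{|\bar{X}-\mu|\leq\epsilon\}$ has probability at least $1-\delta$, which is exactly the stated interval. The routine calculus of the $s$-optimization is not the real difficulty; the step I expect to require the most care is the factorization of the moment generating function, since it is precisely here that independence enters and that the sub-Gaussian parameter accumulates linearly in $n$. Getting the $n$ in the right place (multiplying $\sigma^2$ inside the exponent rather than dividing) is what makes the final rate $e^{-n\epsilon^2/(2\sigma^2)}$ correct and forces the interval half-width to shrink like $1/\sqrt{n}$.
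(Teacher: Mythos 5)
Your proposal is correct: the exponential-moment argument (Markov's inequality applied to $e^{s\sum_i(X_i-\mu)}$, factorization of the moment generating function by independence, the sub-Gaussian bound on each factor, optimization at $s^{\star}=\epsilon/\sigma^2$, a union bound for the two tails, and inversion of $2e^{-n\epsilon^2/(2\sigma^2)}\leq\delta$) is exactly the standard route, and every step checks out, including the placement of $n$ multiplying $\sigma^2 s^2/2$ in the exponent. Note that the paper states this theorem without any proof, so your argument fills a genuine gap, and it does so using only the Markov inequality in its $g(X)$ form that the paper proves earlier in the same chapter; the one point worth making explicit is that bounding the lower tail requires the sub-Gaussian hypothesis $M_{X-E(X)}(s)\leq \exp\left(\frac{s^2\sigma^2}{2}\right)$ to hold for all real $s$ (or equivalently for $-(X-\mu)$ as well), which is the usual convention and what your symmetry step implicitly uses.
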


\begin{theorem}[Chernoff-Hoeffding inequality]
Consider a random variable X that observes values only in $[a,b]$, then we have
\begin{align*}
P(|X-E(X)|> \epsilon) \leq 2 \exp\left(-2\frac{\epsilon^2}{(b-a)^2}\right).
\end{align*}
Hence, the probability that a random variable deviates from its mean by more than $\epsilon$ (in either direction) is less than or equal to $\frac{Var(X)}{\epsilon^2}$. This agrees with the intuition that the probability of an outcome departing from the mean must become smaller as the width of the distribution decreases or equivalently as the variance decreases.
\end{theorem}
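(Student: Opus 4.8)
The plan is to reduce the two-sided statement to a one-sided exponential tail bound controlled through the moment generating function, exactly in the Chernoff style already used in this section. First I would center the variable by setting $Y = X - E(X)$, so that $E(Y) = 0$ and $Y$ takes values in an interval $[\alpha,\beta]$ with $\beta - \alpha = b-a$ and $\alpha \leq 0 \leq \beta$. The whole problem then reduces to controlling $P(Y > \epsilon)$ and $P(-Y > \epsilon)$ separately and combining them by a union bound, which is precisely what produces the leading factor of $2$.

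For the one-sided bound I would invoke the exponential form of the Markov inequality proved above: for any $s > 0$, $P(Y > \epsilon) = P(e^{sY} > e^{s\epsilon}) \leq e^{-s\epsilon} E(e^{sY}) = e^{-s\epsilon} M_Y(s)$. The governing quantity is therefore the MGF $M_Y(s)$ of the centered, bounded variable, and the main technical step — the real obstacle — is Hoeffding's lemma: for a mean-zero variable confined to an interval of width $b-a$, one has $M_Y(s) \leq \exp\left(\frac{s^2(b-a)^2}{8}\right)$ for every $s$.

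To establish the lemma I would exploit convexity of the exponential: for $y \in [\alpha,\beta]$, writing $y$ as a convex combination of the endpoints gives $e^{sy} \leq \frac{\beta - y}{\beta-\alpha} e^{s\alpha} + \frac{y-\alpha}{\beta-\alpha} e^{s\beta}$. Taking expectations and using $E(Y) = 0$ collapses the right-hand side into a function of $s$ alone; with the substitutions $p = -\alpha/(\beta-\alpha) \in [0,1]$ and $u = s(\beta-\alpha)$, its logarithm becomes $\varphi(u) = -pu + \log(1 - p + p e^{u})$. The heart of the argument is then a Taylor expansion of $\varphi$ about $u=0$: one verifies $\varphi(0) = \varphi'(0) = 0$ and that $\varphi''(u) = t(1-t)$ for $t = \frac{pe^{u}}{1-p+pe^{u}} \in [0,1]$, whence $\varphi''(u) \leq 1/4$ and the Lagrange remainder yields $\varphi(u) \leq u^2/8$. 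I expect this convexity-plus-Taylor estimate to be the delicate part; the bookkeeping of the $p$ and $u$ substitutions is where sign and bound errors are easiest to introduce.

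Finally I would optimize the exponent. Substituting the lemma gives $P(Y > \epsilon) \leq \exp\left(-s\epsilon + \frac{s^2(b-a)^2}{8}\right)$, and minimizing this quadratic in $s$ at $s^{*} = 4\epsilon/(b-a)^2$ produces $P(Y > \epsilon) \leq \exp\left(-\frac{2\epsilon^2}{(b-a)^2}\right)$. Running the identical argument on $-Y$ and summing the two tail estimates yields the claimed $2\exp\left(-\frac{2\epsilon^2}{(b-a)^2}\right)$, completing the proof.
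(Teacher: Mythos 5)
Your proposal is correct, and in fact it supplies something the text itself omits: the paper states the Chernoff--Hoeffding inequality without any proof (only Markov and Chebyshev receive proofs in that section), so there is no in-paper argument to compare against. Your route is the canonical one and every step checks out: centering gives $Y=X-E(X)$ with $E(Y)=0$ and $Y\in[\alpha,\beta]$, $\beta-\alpha=b-a$, $\alpha\leq 0\leq\beta$; the exponential Markov bound $P(Y>\epsilon)\leq e^{-s\epsilon}M_Y(s)$ is legitimately available from the paper's generalized Markov inequality with $g(x)=e^{sx}$; your derivation of Hoeffding's lemma is sound, since with $p=-\alpha/(\beta-\alpha)$ and $u=s(\beta-\alpha)$ one indeed gets $\log M_Y(s)\leq\varphi(u)=-pu+\log(1-p+pe^{u})$, with $\varphi(0)=\varphi'(0)=0$ and $\varphi''(u)=t(1-t)\leq 1/4$, hence $\varphi(u)\leq u^2/8$ by the Lagrange remainder; and the optimization at $s^{*}=4\epsilon/(b-a)^2$ correctly yields the exponent $-2\epsilon^2/(b-a)^2$, with the union bound over the two tails (the argument for $-Y$ is identical since $-Y$ is mean-zero on an interval of the same width) producing the factor $2$. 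Two small remarks: the degenerate case $a=b$ should be dispatched trivially before dividing by $\beta-\alpha$; and note that the trailing sentence of the theorem as printed in the paper, asserting the bound $\frac{Var(X)}{\epsilon^2}$, is a leftover from the Chebyshev statement and is inconsistent with the exponential bound your proof (correctly) establishes.
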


\begin{theorem}[Chernoff-Hoeffding inequality for sample mean]
Consider a random sample of size n i.e. $(X_1,X_2,\ldots, X_n) \to$ n I.I.D. copies of a random variable $X$ having a unknown distribution supported on $[a,b]$. Then, for any small positive real number $\epsilon >0$, we have
\begin{align*}
P\left(|\bar{X}-\mu|>\epsilon \right) \leq 2 \exp\left(-\frac{2n\epsilon^2}{(b-a)^2}\right)\leq \delta.
\end{align*}
In particular, when X is supported on $[a,b]$ we have
\begin{align*}
P\left(|\bar{X}-\mu|>\epsilon \right) \leq 2 \exp\left(-\frac{2n\epsilon^2}{(b-a)^2}\right)\leq \delta.
\end{align*}
\end{theorem}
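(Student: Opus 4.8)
The plan is to use the exponential moment method (the Chernoff technique), which is exactly the route suggested by the generalized Markov inequality already proved above. First I would fix $s>0$ and apply the Markov inequality to the non-negative random variable $\exp\left(s(\bar{X}-\mu)\right)$, obtaining
\begin{align*}
P\left(\bar{X}-\mu > \epsilon\right) = P\left(e^{s(\bar{X}-\mu)} > e^{s\epsilon}\right) \leq e^{-s\epsilon}\, E\left(e^{s(\bar{X}-\mu)}\right).
\end{align*}
Writing $\bar{X}-\mu = \frac{1}{n}\sum_{i=1}^n (X_i-\mu)$ and invoking independence of the $X_i$ (and hence of the factors $e^{(s/n)(X_i-\mu)}$), the expectation factorizes as
\begin{align*}
E\left(e^{s(\bar{X}-\mu)}\right) = \prod_{i=1}^n E\left(e^{(s/n)(X_i-\mu)}\right).
\end{align*}

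The crux of the argument is a bound on each individual factor, namely Hoeffding's lemma: if $Y$ is any random variable with $E(Y)=0$ taking values in an interval $[c,d]$, then $E\left(e^{tY}\right) \leq \exp\left(\frac{t^2(d-c)^2}{8}\right)$ for every real $t$. I would apply this with $Y=X_i-\mu$, which is centered and lies in $[a-\mu,\, b-\mu]$, an interval of width exactly $b-a$, and with $t=s/n$. Substituting then gives
\begin{align*}
P\left(\bar{X}-\mu > \epsilon\right) \leq e^{-s\epsilon}\prod_{i=1}^n \exp\left(\frac{s^2(b-a)^2}{8n^2}\right) = \exp\left(-s\epsilon + \frac{s^2(b-a)^2}{8n}\right).
\end{align*}

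The final step is to optimize the free parameter. Minimizing the exponent $-s\epsilon + \frac{s^2(b-a)^2}{8n}$ over $s>0$ is a one-variable quadratic problem solved by $s^* = \frac{4n\epsilon}{(b-a)^2}$, which yields exponent $-\frac{2n\epsilon^2}{(b-a)^2}$ and hence the one-sided bound $P(\bar{X}-\mu > \epsilon) \leq \exp\left(-\frac{2n\epsilon^2}{(b-a)^2}\right)$. Repeating the whole argument with $-s$ (equivalently, applying it to $-X_i$) bounds the lower tail $P(\bar{X}-\mu < -\epsilon)$ by the same quantity, and combining the two tails through the additivity axiom produces the factor $2$ in the stated inequality.

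The main obstacle is proving Hoeffding's lemma itself. The intended approach is to exploit the convexity of $y\mapsto e^{ty}$ to dominate $e^{tY}$ on $[c,d]$ by the chord joining its endpoint values, thereby reducing matters to controlling the cumulant generating function $\psi(t)=\log E(e^{tY})$; one then shows $\psi''(t) \leq (d-c)^2/4$ by recognizing $\psi''(t)$ as the variance of a random variable supported in $[c,d]$ (whose variance is at most $(d-c)^2/4$), and a second-order Taylor expansion of $\psi$ about $t=0$, using $\psi(0)=\psi'(0)=0$, closes the bound. Everything else in the derivation is routine.
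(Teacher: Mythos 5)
Your proposal is correct, and the standard one: Markov's inequality applied to $e^{s(\bar{X}-\mu)}$, factorization of the moment generating function by independence, Hoeffding's lemma for each centered bounded factor, optimization at $s^{*}=\frac{4n\epsilon}{(b-a)^2}$ (the constants check out, giving exponent $-\frac{2n\epsilon^2}{(b-a)^2}$), and a union of the two tails for the factor $2$. There is, however, nothing in the paper to compare it against: the text states the Chernoff inequality, the Chernoff--Hoeffding inequality, and this sample-mean version with no proof at all, so your argument supplies a derivation the paper omits. It also meshes well with the paper's own toolkit. The generalized Markov inequality, which the paper does prove, is exactly your starting point; and the paper's neighbouring theorem on the Chernoff inequality and confidence intervals assumes the sub-Gaussian condition $M_{X-E(X)}(s)\leq \exp\left(\frac{s^2\sigma^2}{2}\right)$ --- Hoeffding's lemma is precisely the statement that a variable supported on $[a,b]$ satisfies this with $\sigma^2=\frac{(b-a)^2}{4}$, and substituting that value into the paper's sub-Gaussian bound $2\exp\left(-\frac{n\epsilon^2}{2\sigma^2}\right)$ reproduces the stated $2\exp\left(-\frac{2n\epsilon^2}{(b-a)^2}\right)$. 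So your route both proves the theorem from scratch and exhibits it as the bounded special case of the paper's sub-Gaussian statement.

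One small tidy-up in your sketch of Hoeffding's lemma: you conflate two distinct standard proofs. The chord bound $e^{ty}\leq \frac{d-y}{d-c}e^{tc}+\frac{y-c}{d-c}e^{td}$ is the basis of one self-contained argument (take expectations, then do calculus on the resulting function of $t$), while the cumulant route you then describe --- $\psi(0)=\psi'(0)=0$, $\psi''(t)$ is the variance of the exponentially tilted law supported on $[c,d]$, hence $\psi''(t)\leq \frac{(d-c)^2}{4}$, and Taylor expansion gives $\psi(t)\leq \frac{t^2(d-c)^2}{8}$ --- needs no chord domination at all. Either route alone is complete; as written, the chord step is superfluous to the argument you actually close. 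This is a presentational blemish, not a gap.
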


\begin{theorem}[Chernoff-Hoeffding inequality and confidence interval]
Consider a random sample of size n i.e. $(X_1,X_2,\ldots, X_n) \to$ n I.I.D. copies of a random variable $X$ having a unknown distribution supported on $[a,b]$. Then, for any small positive real number $\epsilon$, we have
\begin{align*}
& P\left(|\bar{X}-\mu|>\epsilon \right) \leq 2 e^{-\frac{2n\epsilon^2}{(b-a)^2}} \leq \delta \equiv P\left(|\bar{X}-\mu|\leq \epsilon \right) \geq 1-2 e^{-\frac{2n\epsilon^2}{(b-a)^2}} \implies \\
& \bar{X} \in \left[\mu -(b-a) \sqrt{\frac{1}{2n}\log\left(\frac{2}{\delta} \right)}, \mu + (b-a) \sqrt{\frac{1}{2n}\log\left(\frac{2}{\delta} \right)}\right] \ \mbox{w. h. p.} \ 1-\delta.
\end{align*}
\end{theorem}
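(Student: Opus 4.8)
The target statement is the Chernoff–Hoeffding confidence interval: given $n$ i.i.d. copies of $X$ supported on $[a,b]$, for every $\epsilon>0$ one has $P(|\bar X-\mu|>\epsilon)\le 2\exp(-2n\epsilon^2/(b-a)^2)$, and by setting the right-hand side equal to $\delta$ and inverting, $\bar X$ lies in $[\mu-(b-a)\sqrt{\tfrac1{2n}\log(2/\delta)},\ \mu+(b-a)\sqrt{\tfrac1{2n}\log(2/\delta)}]$ with probability at least $1-\delta$. Let me plan a proof.

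The statement to be proved is the confidence-interval reformulation of the Chernoff--Hoeffding tail bound. The plan is to observe that the interval is nothing more than an algebraic inversion of the tail inequality $P(|\bar{X}-\mu|>\epsilon)\le 2\exp(-2n\epsilon^2/(b-a)^2)$ established in the immediately preceding theorem, and then, for completeness, to indicate how that tail bound itself is obtained by the exponential-moment (Chernoff) method. First I would assume the tail bound, set its right-hand side equal to the target confidence level $\delta$, i.e. impose $2\exp(-2n\epsilon^2/(b-a)^2)=\delta$, and solve for $\epsilon$. Taking logarithms gives $2n\epsilon^2/(b-a)^2=\log(2/\delta)$, hence $\epsilon=(b-a)\sqrt{\tfrac{1}{2n}\log(2/\delta)}$. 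Substituting this $\epsilon$ into the complementary event $\{|\bar{X}-\mu|\le\epsilon\}$, whose probability is at least $1-\delta$, yields precisely the claimed two-sided interval.

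The remaining content is the proof of the tail bound, for which I would use the same Chernoff recipe that underlies the other inequalities in this section. Writing $\bar{X}-\mu=\tfrac{1}{n}\sum_{i=1}^n(X_i-\mu)$, for any $s>0$ I would apply the Markov inequality (already proved above) to the non-negative variable $\exp(s\sum_i(X_i-\mu))$:
\begin{align*}
P\left(\bar{X}-\mu>\epsilon\right) &= P\left(\sum_{i=1}^n(X_i-\mu)>n\epsilon\right) \le e^{-sn\epsilon}\,E\left(e^{s\sum_{i=1}^n(X_i-\mu)}\right) \\
&= e^{-sn\epsilon}\prod_{i=1}^n E\left(e^{s(X_i-\mu)}\right),
\end{align*}
where the factorization across the product uses independence of the sample.

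The hard part will be bounding each factor $E(e^{s(X_i-\mu)})$, which is exactly Hoeffding's lemma: for a random variable $Z$ with $E(Z)=0$ and $Z\in[c,d]$ one has $E(e^{sZ})\le\exp(s^2(d-c)^2/8)$. I would prove this lemma separately, using convexity of $z\mapsto e^{sz}$ on $[c,d]$ together with a second-order Taylor estimate of the associated cumulant generating function; this is the one genuinely analytic step, the rest being bookkeeping. Applying the lemma with $Z=X_i-\mu\in[a-\mu,b-\mu]$, of range $b-a$, gives $E(e^{s(X_i-\mu)})\le\exp(s^2(b-a)^2/8)$, so that $P(\bar{X}-\mu>\epsilon)\le\exp(-sn\epsilon+ns^2(b-a)^2/8)$. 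Minimizing the exponent over $s>0$ at $s^{\ast}=4\epsilon/(b-a)^2$ produces the one-sided bound $\exp(-2n\epsilon^2/(b-a)^2)$; the identical argument applied to $\mu-\bar{X}$ handles the lower tail, and a union bound over the two tails supplies the factor $2$. Combining this tail bound with the inversion of the first paragraph completes the proof.
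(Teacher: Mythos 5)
Your proposal is correct, and in fact it supplies more than the paper does: the paper states this theorem with no proof at all, presenting it as an immediate restatement of the preceding Chernoff--Hoeffding tail bound (which is itself asserted without proof in the text). Your inversion step is exactly the intended content of the theorem: setting $2\exp\left(-2n\epsilon^2/(b-a)^2\right)=\delta$ and solving gives $\epsilon=(b-a)\sqrt{\tfrac{1}{2n}\log(2/\delta)}$, and substituting into the complementary event yields the stated interval with probability at least $1-\delta$; this matches the pattern the paper follows (also without written proof) for its Chebyshev and Chernoff confidence-interval theorems. What you add beyond the paper is a complete and correct derivation of the tail bound itself: Markov's inequality applied to $\exp\left(s\sum_{i=1}^n(X_i-\mu)\right)$, factorization by independence, Hoeffding's lemma $E\left(e^{sZ}\right)\leq\exp\left(s^2(d-c)^2/8\right)$ for mean-zero $Z\in[c,d]$, and optimization of the exponent $-sn\epsilon+ns^2(b-a)^2/8$ at $s^{\ast}=4\epsilon/(b-a)^2$ to obtain $\exp\left(-2n\epsilon^2/(b-a)^2\right)$, with a union bound giving the factor $2$ --- all of these steps check out numerically. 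This buys a self-contained argument where the paper leaves a gap; the only piece you defer rather than execute is the proof of Hoeffding's lemma itself, which you correctly identify as the single genuinely analytic step, so if this were to be written out in full that lemma's convexity/Taylor argument would still need to be supplied.
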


\subsection{Stochastic convergence and asymptotic large sample theory}
\begin{definition}[Convergence in probability]
Consider a sequence of random variables $X_1,X_2,\ldots,X_n,\ldots$ (i.e. $\{X_n\}$). Then,  $\{X_n\}$ converges in probability to a random variable $X$ for a small positive real number $\epsilon$, if 
\begin{align*}
\lim_{n \to \infty} P(|X_n -X| >\epsilon) =0 \equiv \lim_{n \to \infty} P(|X_n -X| \leq \epsilon) =1.
\end{align*}
This is abbreviated as 
\begin{align*}
X_n \to X \equiv plim_{n \to \infty} X_n= X.
\end{align*} 
\end{definition}

\begin{example}[Sample variance]
Consider a random sample of size n i.e. n copies of independent and identically distributed random variables $X_1,X_2,\ldots, X_n$ of a random X with unknown distribution having mean $\mu=E(X)$ and variance $\sigma^2=Var(X)$. Then
\begin{align*}
\lim_{n \to \infty} P(|\bar{X} -E(X)| >\epsilon) =0 \equiv \lim_{n \to \infty} P(|\bar{X} -E(X)| \leq \epsilon) =1
\end{align*}

\noindent We have
\begin{align*}
P(|\bar{X} -E(X)| >\epsilon) \leq \frac{Var(\bar{X})}{\epsilon^2} =\frac{\sigma^2}{n\epsilon^2} \to 0 \ \mbox{as} \ n \to \infty.
\end{align*}
\end{example}

\begin{example}
Let X be a random variable such that $X_n =X +Y_n$ where $E(Y_n)=\frac{1}{n}$ and $Var(Y_n)=\frac{\sigma^2}{n}$. Then prove that $X_n$ converges to X in probability as $n \to \infty$.

\noindent Solution: for a random variable X, it is given that
\begin{align*}
X_n=X+Y_n, \ \mbox{where} \ E(Y_n)=\frac{1}{n}, \ Var(Y_n)=\frac{\sigma^2}{n}.
\end{align*}
Now, we have 
\begin{align}
|Y_n|=|Y_n-E(Y_n)+E(Y_n)|\leq |Y_n-E(Y_n)|+\frac{1}{n}.
\end{align}\label{eq:cpp-01}
Then, using the above inequality \eqref{eq:cpp-01} and Chebyshev inequality, we have 
\begin{align*}
P(|X_n-X|> \epsilon) &= P(|Y_n|> \epsilon) \leq P(|Y_n-E(Y_n)|>\epsilon)\leq \frac{Var(Y_n)}{\epsilon} \\
                     &=\frac{\sigma^2}{n\epsilon^2} \to 0 \ \mbox{as} \ n \to \infty.
\end{align*}
\end{example}

\begin{definition}[Convergence in mean square]
Consider a sequence of random variables $X_1,X_2,\ldots,X_n,\ldots$ (i.e. $\{X_n\}$). Then,  $\{X_n\}$ converges in mean square error to a random variable $X$ for a small positive real number $\epsilon$, if 
\begin{align*}
\lim_{n \to \infty} E((X_n -X)^2) =0.
\end{align*}
This is abbreviated as 
\begin{align*}
X_n \to X \equiv mselim_{n \to \infty} X_n = X.
\end{align*} 
\end{definition}

\begin{remark}[Consistent an estimator in mean square error]
The $2 -$ mean convergence plays a remarkable role in defining a consistent estimator. Consider a sample statistic $\hat{\Theta}=T(X_1,X_2,\ldots , X_n)$ of a parameter $\theta$. We say that the estimator is consistent if its Mean Squared Error (MSE)
\begin{align*}
MSE(\hat{\Theta})& =E\left((\hat{\Theta}-\theta)^2 \right) = E\left(\left(\hat{\Theta}- E(\hat{\Theta})+ E(\hat{\Theta})-\theta\right)^2 \right) \\
& = E\left((\hat{\Theta}- E(\hat{\Theta}))^2 \right) + \left(E(\hat{\Theta})-\theta\right)^2 \\
& -2E\left(\hat{\Theta}-E(\hat{\Theta})\right)\left(E(\hat{\Theta})-\theta\right) \\
& =Var(\hat{\Theta})+Bias^2(\hat{\Theta}) -0 \to 0 \ \mbox{as} \ n\to \infty.
\end{align*}
\end{remark}

\begin{definition}[Convergence in distribution]
Consider a sequence of random variables $X_1,X_2,\ldots,X_n,\ldots$ (i.e. $\{X_n\}$). Then,  $\{X_n\}$ converges in distribution to a random variable $X$ for a small positive real number $\epsilon$, if 
\begin{align*}
\lim_{n \to \infty} P(X_n \leq x) =P(X\leq x) \equiv lim_{n \to \infty }F_{X_n}(x)=F(x)..
\end{align*}
This is abbreviated as 
\begin{align*}
X_n \to X \equiv dlim_{n \to \infty} X_n= X.
\end{align*} 
\end{definition}

\begin{example}[Sample mean]
Consider a random sample of size n i.e. n copies of independent and identically distributed random variables $X_1,X_2,\ldots, X_n$ of a random X with unknown distribution having mean $\mu=E(X)$ and variance $\sigma^2=Var(X)$. Then
\begin{align*}
\lim_{n \to \infty} P\left(\frac{\bar{X}-\mu}{\sigma} \leq z\right) =\phi(z).
\end{align*}
\end{example}

\begin{definition}[Convergence almost surely]  
Consider a sequence of random variables $X_1,X_2,\ldots,X_n,\ldots$ (i.e. $\{X_n\}$). Then,  $\{X_n\}$ converges almost surely to a random variable $X$ if
\begin{align*}
P\left(\{\omega : \lim_{n\to \infty}X_n(\omega)=X(\omega)\} \right).
\end{align*}
A probability of outcomes with point-wise convergence.
\end{definition}

\begin{remark}
Let $X_n$ be a sequence of random variables and X a random variable. Then the following implication diagram is satisfied:
\begin{center}
\begin{figure}[h]
    \centering
    \includegraphics[width=0.75\textwidth]{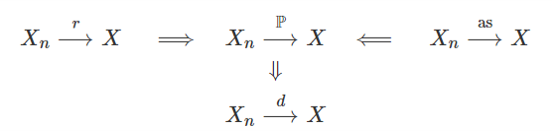}
    \caption{Relation and implication of various stochastic convergences of a sequence of random variables.}
    \label{fig:sci-01}
\end{figure}
\end{center}
\end{remark}

\section{Limit theorems}
Often, our goal is to understand how collections of a large number of independent random variables behave. This is what the laws of large numbers reveal. In general, the idea is that the average of a large number of I.I.D. random variables will approach the expectation. Sometimes that is enough, but usually, we also need to understand how fast does the average converge to the expectation.
\subsection{Law of large numbers}
\begin{theorem}[Strong law of large numbers]
Consider a random sample of size n i.e. n copies of independent and identically distributed random variables $X_1,X_2,\ldots, X_n$ of a random X with unknown distribution having mean $\mu=E(X)$ and variance $\sigma^2=Var(X)$. Then
\begin{align*}
\lim_{n \to \infty} P(|\bar{X} -E(X)| >\epsilon) =0 \equiv \lim_{n \to \infty} P(|\bar{X} -E(X)| \leq \epsilon) =1.
\end{align*}
\end{theorem}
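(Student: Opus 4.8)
The plan is to derive the stated conclusion directly from the Chebyshev bound for the sample mean already established earlier in this chapter. Since the displayed conclusion is convergence in probability of $\bar{X}$ to $\mu = E(X)$, and since the finite-variance hypothesis $\sigma^2 = Var(X) < \infty$ is exactly what the Chebyshev argument needs, no new machinery is required beyond two elementary facts about the moments of $\bar{X}$.

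First I would record the two moment identities for the sample mean. By linearity of expectation, $E(\bar{X}) = \frac{1}{n}\sum_{i=1}^n E(X_i) = \mu$, so $\bar{X}$ is unbiased for $\mu$. By independence of the $X_i$ the variances add, giving $Var(\bar{X}) = \frac{1}{n^2}\sum_{i=1}^n Var(X_i) = \frac{\sigma^2}{n}$; this is the one place the independence assumption is genuinely used. Next I would invoke Chebyshev's inequality applied to $\bar{X}$ (equivalently, the ``Chebyshev inequality and confidence interval'' theorem stated above), which yields, for every fixed $\epsilon > 0$, the bound $P(|\bar{X} - \mu| > \epsilon) \le Var(\bar{X})/\epsilon^2 = \sigma^2/(n\epsilon^2)$.

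Passing to the limit, the right-hand side tends to $0$ as $n \to \infty$, and since probabilities are non-negative the squeeze forces $\lim_{n\to\infty} P(|\bar{X} - \mu| > \epsilon) = 0$; the equivalent complementary statement follows immediately by taking complements. There is no hard computational step: the entire argument is a one-line consequence of the already-proved concentration bound, with the two moment identities supplied as lemmas.

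The honest obstacle here is interpretive rather than technical: the result is \emph{labelled} ``strong law'' yet the displayed conclusion is convergence in probability, which is the \emph{weak} law. The Chebyshev route proves exactly that weak statement under $\sigma^2 < \infty$. To earn the ``strong'' label, namely almost-sure convergence $P(\{\omega : \lim_n \bar{X}(\omega) = \mu\}) = 1$ in the sense of the almost-sure convergence definition given earlier, the Chebyshev bound in probability alone is insufficient; one would need either a fourth-moment hypothesis together with a Borel--Cantelli summability argument on $\sum_n P(|\bar{X} - \mu| > \epsilon)$, or the full Kolmogorov strong law. I would therefore either prove the weak statement as written via Chebyshev, or, if almost-sure convergence is truly intended, strengthen the moment hypotheses and route the proof through Borel--Cantelli instead.
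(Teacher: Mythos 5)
Your proposal is correct and takes essentially the same route as the paper: the paper's proof likewise records $E(\bar{X})=E(X)$ and $Var(\bar{X})=\sigma^2/n$ and applies Chebyshev's inequality to obtain $P\left(|\bar{X}-E(X)|>\epsilon\right)\leq \sigma^2/(n\epsilon^2)\to 0$ as $n\to\infty$. Your interpretive caveat is also well taken: despite the ``strong law'' label, the displayed conclusion and the paper's Chebyshev argument establish only convergence in probability (the weak law), and almost-sure convergence would indeed require a Borel--Cantelli summability argument under stronger moment hypotheses or Kolmogorov's strong law.
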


\begin{proof}
Consider the probability of the sample mean random variable deviating from the expected value by more than $\epsilon$, where $\epsilon$ is a small positive number. This probability is given by
\begin{align*}
P\left(|\bar{X}-E(\bar{X})|>\epsilon \right) & =P\left(|\bar{X}-E(X)|>\epsilon \right) \leq \frac{Var(\bar{X})}{\epsilon^2} \\
                                            & =\frac{\sigma^2}{n\epsilon^2} \to 0 \ \mbox{as} \ n \to \infty. 
\end{align*}
It says that for large enough n, the probability of the error in the approximation of $\bar{X}$ by $E(X)$ exceeding $\epsilon$ (which can be chosen as small as desired) will be exceedingly small.
\end{proof}

\begin{example}[Bernoulli law of large numbers]
Prove that if a fair coin is tossed n times in succession, then the relative frequency of heads, i.e., the number of heads observed divided by the number of coin tosses, should be close to $1/2$. This was why we intuitively accepted the assignment of a probability of 1/2 to the event that the outcome of a fair coin toss would be a head.
\end{example}

\begin{proof}
Consider coin toss experiment as a sequence of n Bernoulli sub-experiments i.e. a random sample of size n for the experiment i.e. $(X_1,X_2,\ldots, X_n), X_i\sim Ber(1/2)$.

\noindent, Now, all possible relative frequency is given by the sample mean of the random sample of size n
\begin{align*}
\bar{X}=\frac{X_1+X_2+\ldots +X_n}{n}.
\end{align*}
In order to see asymptotic behaviour of the sample mean $\bar{X}$, we compute the following:
\begin{align*}
& E(\bar{X}) =E\left(\frac{\sum_{i=1}^n}{n}\right)=\frac{\sum_{i=1}^nE(X_i)}{n}=1/2. \\
& Var(\bar{X}) =Var\left(\frac{\sum_{i=1}^n X_i}{n}\right)=\frac{\sum_{i=1}^n Var(X_i)}{n^2}=\frac{1}{4n}.
\end{align*}
Now, as per definition of variance, we have
\begin{align*}
&Var(\bar{X})=E\left((\bar{X}-E(\bar{X}))^2 \right)=\frac{1}{4n} \to 0 \ \mbox{as} \ n\to \infty \\
& \implies \bar{X} \to E(\bar{X})=1/2 \ \mbox{as} \ n \to \infty.
\end{align*}
\begin{center}
\begin{figure}[h]
    \centering
    \includegraphics[width=0.75\textwidth]{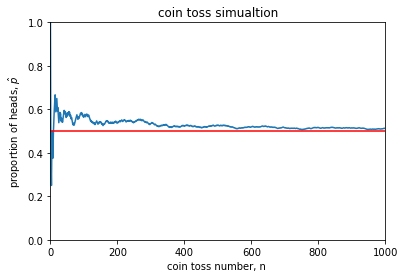}
    \caption{A realization of sample mean random variable of n I.I.D. Bernoulli random variables with $p=1/2$ as n increases.}
    \label{fig:blln-02}
\end{figure}
\end{center}
More generally for a Bernoulli sub-experiment with probability p, we have that
\begin{align*}
&Var(\bar{X})=E\left((\bar{X}-E(\bar{X}))^2 \right)=\frac{p(1-p)}{n} \to 0 \ \mbox{as} \ n\to \infty \\
& \implies \bar{X} \to E(\bar{X})=p \ \mbox{as} \ n \to \infty.
\end{align*}
\end{proof}

\begin{example}[Sample mean for I.I.D. Gaussian random variables]
Consider a sequence of n Gaussian sub-experiments i.e. a random sample of size n for the experiment i.e. $(X_1,X_2,\ldots, X_n), X_i\sim N(\mu, \sigma^2)$. Then $\bar{X} \to \mu$ in probability.
\end{example}

\subsection{Limit theorems}
At an intuitive level, it says that the appropriately scaled sum of a bunch of independent random variables behaves like a Gaussian (AKA Normal) random variable.
\begin{theorem}[Central limit theorem]
Consider a random sample of size n i.e. n copies of independent and identically distributed random variables $X_1,X_2,\ldots, X_n$ of a random X with unknown distribution having mean $\mu=E(X)$ and variance $\sigma^2=Var(X)$. And, define a standardized sample mean is given by
\begin{align*}
\bar{Z}_n =\frac{\bar{X}_n-E(\bar{X}_n)}{\sqrt{Var(\bar{X})}} = \frac{\sum_{i=1}^n X_i-n\mu}{\sqrt{n\sigma^2}}.
\end{align*}
Then, for any $z$, we have
\begin{align*}
\lim_{n\to \infty} P(\bar{Z}_n\leq z) = P(Z\leq z)=\phi(z)=\int_{-\infty}^z\frac{1}{\sqrt{2\pi}}\exp\left(-\frac{x^2}{2} \right)dx.
\end{align*}
In summary, $\bar{Z}_n \to Z\sim N(0,1)$ in distribution. Furthermore, the Berry-Esséen inequality gives a more precise quantification of the speed of this convergence
\begin{align*}
\left\|P(\bar{Z}_n \leq z)-\int_{-\infty}^z \frac{1}{\sqrt{2\pi}} \exp \left(-\frac{x^2}{2}\right)dx \right\| \leq \frac{0.77E\left(|X_1-E(X_1)|^3 \right)}{(Var(X_1))^{3/2}\sqrt{n}}.
\end{align*}
\end{theorem}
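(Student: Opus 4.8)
The plan is to establish convergence in distribution by the transform method, exactly in the spirit of the moment generating function techniques flagged for Part~I. First I would reduce to the standardized case: set $Y_i = (X_i-\mu)/\sigma$, so that the $Y_i$ are i.i.d.\ with $E(Y_i)=0$ and $E(Y_i^2)=1$, and note that
\begin{align*}
\bar{Z}_n = \frac{\sum_{i=1}^n X_i - n\mu}{\sqrt{n\sigma^2}} = \frac{1}{\sqrt{n}}\sum_{i=1}^n Y_i.
\end{align*}
This isolates the only two features of the underlying distribution that survive in the limit, namely its first two moments, and it frees the argument from $\mu$ and $\sigma$.

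Next I would pass to the moment generating function $M_Y(s)=E(e^{sY})$ (or, to drop any existence hypothesis, the characteristic function, which is the genuinely general route). By independence and identical distribution of the $Y_i$, the transform of $\bar{Z}_n$ factorizes:
\begin{align*}
M_{\bar{Z}_n}(s) = E\!\left(\exp\!\left(\frac{s}{\sqrt{n}}\sum_{i=1}^n Y_i\right)\right) = \left[M_Y\!\left(\frac{s}{\sqrt{n}}\right)\right]^n.
\end{align*}
I would then Taylor-expand $M_Y$ about $0$. Since $M_Y(0)=1$, $M_Y'(0)=E(Y)=0$, and $M_Y''(0)=E(Y^2)=1$, one has $M_Y(u)=1+\tfrac{1}{2}u^2+o(u^2)$, hence
\begin{align*}
M_{\bar{Z}_n}(s) = \left[1+\frac{s^2}{2n}+o\!\left(\frac{1}{n}\right)\right]^n \longrightarrow \exp\!\left(\frac{s^2}{2}\right) \quad \text{as } n\to\infty,
\end{align*}
and $\exp(s^2/2)$ is precisely the transform of the standard normal (respectively $\exp(-t^2/2)$ for the characteristic function).

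To upgrade this to $\bar{Z}_n \to Z\sim N(0,1)$ in distribution, I would invoke the continuity theorem: pointwise convergence of the transforms to the transform of a genuine distribution forces convergence of the cumulative distribution functions at every continuity point of the limit, and since the standard normal CDF $\phi$ is continuous everywhere, this gives $\lim_{n\to\infty}P(\bar{Z}_n\le z)=\phi(z)$ for all $z$. The main obstacle is exactly this continuity theorem: the factorization and the Taylor step are routine, but the implication ``convergence of transforms $\Rightarrow$ convergence in distribution'' is the hard analytic ingredient, resting on a uniqueness/inversion argument for characteristic functions, and it is what does the real work.

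Finally, for the Berry--Esséen refinement I expect the difficulty to increase substantially, since the sharp rate cannot be read off from the limiting argument above. Instead I would deploy Esséen's smoothing inequality, which bounds the Kolmogorov distance $\sup_z |P(\bar{Z}_n\le z)-\phi(z)|$ by a quantity of the form $\int_{-T}^{T}\frac{|\psi_n(t)-e^{-t^2/2}|}{|t|}\,dt + \frac{C}{T}$, where $\psi_n$ is the characteristic function of $\bar{Z}_n$ and $T$ is a free parameter to be optimized. The estimate then follows by controlling the integrand through a \emph{third}-order Taylor expansion of $M_Y$ with explicit remainder, which is where the third absolute moment $E(|X_1-E(X_1)|^3)$ and the numerical constant $0.77$ enter. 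This smoothing-plus-remainder analysis, rather than any single clever identity, is the genuine crux of the quantitative statement.
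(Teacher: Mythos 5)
Your proposal matches the paper's own proof essentially step for step: the paper likewise standardizes, factorizes the moment generating function of $\bar{Z}_n$ into $\left(M_{(X-\mu)/\sigma}(s/\sqrt{n})\right)^n$ by independence, Taylor-expands about $0$ using $M(0)=1$, $M'(0)=0$, $M''(0)=1$, and concludes $\left(1+\frac{s^2}{2n}\right)^n \to \exp\left(\frac{s^2}{2}\right)$, hence convergence in distribution. You are somewhat more careful than the paper in explicitly invoking the L\'evy continuity theorem and in sketching a smoothing-inequality route to the Berry--Ess\'een bound, which the paper states without proof, but the core argument is the same.
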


\begin{proof}
Consider a random sample of size n i.e. n I.I.D. copies of X as $(X_1,X_2,\ldots,X_n)$ and the corresponding standardized sample mean is given by 
\begin{align*}
\bar{Z}_n=\frac{\bar{X}_n -E(X)}{\sqrt{Var(\bar{X}_n)}}=\frac{n\bar{X}_n-nE(X)}{\sqrt{nVar(X)}}=\frac{S_n-nE(X)}{\sqrt{nVar(X)}}.
\end{align*}
Now, we have
\begin{align*}
M_{\bar{Z}_n}(s) &=E_{\bar{Z}_n}(\exp(s\bar{Z}_n))=E\left(\exp\left(\frac{s}{\sigma\sqrt{n}}(\sum_{i=1}^n X_i -n\mu)\right) \right) \\
                 &=E\left(\exp\left(\left(\frac{s}{\sqrt{n}}\right)\left(\frac{X_1-\mu}{\sigma}\right) +\ldots +\left(\frac{s}{\sqrt{n}}\right)\left(\frac{X_n-\mu}{\sigma}\right)\right)\right) \\
                 &= E\left(\exp\left(\frac{s}{\sqrt{n}}\frac{X_1-\mu}{\sigma}\right)\right) \cdots E\left(\exp\left(\frac{s}{\sqrt{n}}\frac{X_1-\mu}{\sigma}\right)\right) \\
                 & =\left(E\left(\exp\left(\frac{s}{\sqrt{n}}\frac{X-\mu}{\sigma}\right)\right) \right)^n = \left(M_{\frac{X-\mu}{\sigma}}\left(\frac{s}{\sqrt{n}}\right) \right)^n  \\
                 & \approx \left(M_{\frac{X-\mu}{\sigma}}(0) +M_{\frac{X-\mu}{\sigma}}^{'}(0)\frac{s}{\sqrt{n}} +M_{\frac{X-\mu}{\sigma}}^{''}(s_1/\sqrt{n})\frac{s^2}{2n} \right)^n \\
                 & \approx \left(1 + \frac{s^2}{2n} \right)^n \to \exp \left(\frac{s^2}{2}\right) \ \mbox{as} \ n\to \infty \implies \bar{Z}_n \xrightarrow{D} Z\sim \mathcal{N}(0,1).
\end{align*}
\end{proof}

\begin{example}[\citep{BT08}]
We have 100 bags, with weight distributed uniformly in $[5,50]$, so that the mean weight is $27.5$ lbs and the variance is $168.75$lbs. Can we upper bound the probability that the total weight is larger than $3000$lbs?

\noindent For comparison, let's see what happens if we use Chebyshev’s inequality
\begin{align*}
P(S_{100} > 3000) & =\frac{1}{2}\times P\left(\left|\frac{1}{100}\sum_{i=1}^{100}X_i -\frac{2750}{100} \right|> 2.5 \right) \\
                  & \leq \frac{1}{2}\times \frac{Var(X)}{100\cdot (2.5)^2}\approx 0.13 =13\%.
\end{align*}
That is a strict inequality, but intuitively it is overestimating the probability by a factor of two since it is also including the case of the average being significantly smaller than the mean as well.

\noindent If we use the CLT instead,
\begin{align*}
P\left(\sum_{i=1}^{100}X_i > 3000 \right) & =1-P\left(\sum_{i=1}^{100}X_i \leq 3000 \right) \\
& =1-P\left(\frac{\sum_{i=1}^{100}X_i -3000}{\sqrt{100\cdot 168.75}} \leq \frac{275}{\sqrt{100\cdot 168.75}} \right) \\
& \approx 1-P\left(Z\leq \frac{275}{\sqrt{100\cdot 168.75}} \right) =0.027.
\end{align*}
where Z is a random variable distributed as $N(0,1)$. So the CLT predicts that the probability of having a total weight of over 3000lbs is about $3\%$, which is much lower than the $27\%$ bound (or $13\%$ if we divide by two) that we got from Chebyshev’s inequality! It turns out that the Normal approximation here is very good even though the CLT is not a bound and $3\%$ is not an appropriately low number according to the Berry-Esséen inequality. This is because the CLT is very good for the sums of bounded random variables like the uniform.
\end{example}

\begin{remark}[An inequality for computing tail bounds for Gaussian distribution]
In order to use the CLT to get easily calculated bounds, the following approximations will often prove useful: for any $z>0$,
\begin{align*}
\left(1-\frac{1}{z^2} \right)\frac{\exp\left(-\frac{z^2}{2} \right)}{z\sqrt{2\pi}} \leq \int_z^{\infty} \frac{1}{\sqrt{2\pi}}\exp\left(-\frac{x^2}{2} \right)dx \leq \frac{\exp\left(-\frac{z^2}{2} \right)}{z\sqrt{2\pi}}.
\end{align*}
\end{remark}

\begin{example}
Backdrop: Assume a realization/people $\bold{x}=(x_1,\ldots,x_n)$ (of an I.I.D random sample of size n from $X\sim Ber(p)$)vote party A with probability p. We poll n people. Let $M_n$ be the fraction/average of the polled persons for Party A and defined as
\begin{align*}
M_n=\frac{1}{n}\sum_{i=1}^n X_i.
\end{align*}

\noindent Find n such that estimator $M_n$ to be within $\pm 1\%$ of the true value p with probability at least $95\%$, statistically 
\begin{align*}
P(|M_n-p|>0.01)\leq 0.05.
\end{align*}
And geometrically, can be visualized as in the figure \eqref{fig:clttb-03}:
\begin{center}
\begin{figure}[h]
    \centering
    \includegraphics[width=0.75\textwidth]{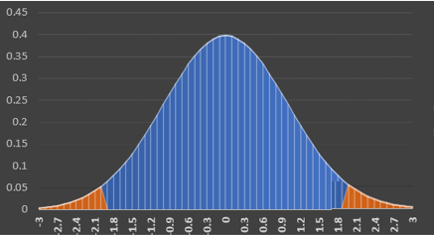}
    \caption{The distribution $M_n -p$ is symmetric, and we are looking for tail bounds.}
    \label{fig:clttb-03}
\end{figure}
\end{center}

\noindent Answer: Since the distribution of $M_n$ is symmetric around its mean, we can use this symmetry to write $P(|M_n-p|>0.01)\approx 2P(M_n-p>0.01)$. Because the $X_i\sim Ber(p)$, we can bound their variance by $1/4$, so $Var(M_n)\leq 1/4n$. Dividing both sides by the square root of the variance to put the probability into the CLT form, we get
\begin{align*}
& P(|M_n-p|>0.01) \approx 2P(M_n-p>0.01) \approx 2P\left(\frac{M_n -p}{\sqrt{1/4n}} > \frac{0.01}{\sqrt{1/4n}} \right) \to  \\
& 2P\left(Z > \frac{0.01}{\sqrt{1/4n}} \right) \implies  P\left(Z > \frac{0.01}{\sqrt{1/4n}} \right)=1-\phi \left(\frac{0.01}{\sqrt{1/4n}} \right) \leq .025 \\
& \implies \phi\left(\frac{0.01}{\sqrt{1/4n}}\right) \geq 0.9750 \implies \frac{0.01}{\sqrt{1/4n}} \geq 1.96 \implies n \geq 9604.
\end{align*}
In this calculation we used the approximation given by the CLT, but how good is it? If we work out the exact computation, using the fact that $M_n$ is a scaled binomial and the worse-case parameter $p=1/2$, then we get that $n=9604$ will give us a $95.1\%$ probability of being within $0.01$ of the true value p: in this case, the CLT’s approximation is very good. So if the CLT is so much better, why use Chebyshev’s inequality at all? The reason is that Chebyshev is a bound that works for all $\epsilon$ and probabilities, is always a valid upper bound but is very conservative. The CLT is often much sharper, however it only works for $\epsilon$ of appropriate scale and does not give a rock-solid bound as it is an approximation and not a bound. Is it possible to strengthen Chebyshev’s inequality to get a bound nearly as sharp as the CLT’s, under appropriate assumptions? Yes, the Chernoff inequality provides the optimal approximation.
\end{example}

\begin{remark}[Chernoff inequality and CLT]
To get some insight into the CLT by exploring how the value of the KL divergence $D(a\|p)$ of a scales when a is in the vicinity of p.
\begin{align*}
& D(a\|p)=E_a\left(\ln\left(\frac{a}{p} \right) \right) = \\
& \left\lbrace\begin{array}{cc}
\sum_{x\in \Omega_X} \ln\left(\frac{a(x)}{p(x)} \right)a(x) =\sum_{x\in \Omega_X}a(x)\ln(a(x)) - \sum_{x\in \Omega_X}a(x)\ln(p(x)) \\
\int_{x\in \Omega_X} \ln\left(\frac{a(x)}{p(x)} \right)a(x)dx =\int_{x\in \Omega_X}a(x)\ln(a(x))dx - \int_{x\in \Omega_X}a(x)\ln(p(x))dx.
\end{array} \right.
\end{align*}
KL divergence $D(a\|p))$ of a in the vicinity of p is the difference of entropy of a and expected value of $\ln(p)$ with respect to a:
\begin{align*}
D(a\|p)=a\ln(a)+(1-a)\ln(1-a)-a\ln(p)-(1-a)\ln(1-p).
\end{align*}
\end{remark}


\chapter{Statistical methods}
\label{statm}
In the previous chapter we discuss large sample theory that plays very important role in the statistical inference of statistical methods to be discussed here in details.

\section{Point estimation} \label{sec:pe}
There are various approaches of point estimations such as
\begin{enumerate}
\item[(i)] maximum likelihood estimation i.e. MLE
\item[(ii)] method of moments
\item[(iii)] Max a posterior estimation i.e. MAP.
\end{enumerate}
\subsection{Sample statistic}
Consider a random variable (or a measurement) $X$ with unknown distribution, then it is very difficult to compute statistic or parameter of the observed values under the unknown distribution. Now, we need to define sample statistic instead with respect to a random sample of size n (i.e. $(X_1,X_2,\ldots,X_n) \to$ n independent and identically distributed (i.i.d.) copies of X). Then, recalling concepts of sample statistic from subsection \eqref{ssec:ss-01} of chapter \eqref{lsample}:
\begin{enumerate}
\item[a.] Sample mean:
\begin{align*}
\bar{X}_n =\frac{X_1+X_2+\ldots +X_n}{n} \ \mbox{with a realization} \ \bar{x}=\frac{x_1+x_2+\ldots +x_n}{n}.
\end{align*}
\item[b.] Sample variance:
\begin{align*}
& S_n^2 =\frac{1}{n-1}\sum_{i=1}^n \left(X_i-\bar{X}_n \right)^2=\frac{1}{n-1}\left(\sum_{i=1}^n X_i^2 -n\bar{X}^2 \right) \\
& \ \mbox{with a realization} \ S^2=\frac{1}{n-1}\sum_{i=1}^n \left(x_i-\bar{x} \right)^2=\frac{1}{n-1}\left(\sum_{i=1}^n x_i^2 -n\bar{x}^2 \right).
\end{align*}
\item[c.] Sample standard deviation:
\begin{align*}
S&=\sqrt{S^2}=\sqrt{\frac{1}{n-1}\sum_{i=1}^n \left(X_i-\bar{X}_n \right)^2}, \ \mbox{with a realization} \\
s&=\sqrt{s^2}=\sqrt{\frac{1}{n-1}\sum_{i=1}^n \left(x_i-\bar{x} \right)^2}.
\end{align*}
\end{enumerate}

\subsection{Maximum likelihood estimation}
\subsubsection{Likelihood function}
\begin{definition}
Consider a realization/data $\bold{x}=(x_1,\ldots, x_n)$ an i.i.d. random sample of size n from $X\sim p_X(x|\theta)$ or $f_X(x|\theta)$, where $\theta$ is a parameter (or vector of parameters). Now, we define a likelihood of $\bold{x}$ given $\theta$ to be the probability of observing the data $\bold{x}$ as
\begin{align*}
L(\theta|\bold{x}) =P(\bold{x}|\theta) =\left\lbrace\begin{array}{cc}
\Pi_{i=1}^n p_X(x_i|\theta) & \ \mbox{if X is discrete}\\
\Pi_{i=1}^n f_X(x_i|\theta) & \ \mbox{if X is continuous}.
\end{array} \right.
\end{align*}
\begin{center}
\begin{figure}[h]
    \centering
    \includegraphics[width=1.0\textwidth]{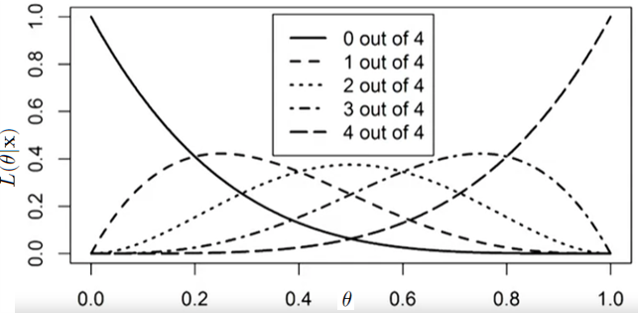}
    \caption{A likelihood function $L(\theta|\bold{x})$ wrt a probabilistic model $p_X(x|\theta)$ or $f_X(x|\theta)$ with a unknown parameter $\theta$.}
    \label{fig:lf-01}
\end{figure}
\end{center}
The likelihood function $L(\theta|\bold{x})$ has the following silent features:
\begin{enumerate}
\item[a.] The likelihood function is not a probability density function.
\item[b.] It is an important component of both frequentist and Bayesian analyses.
\item[c.] It measures the support provided by the data for each possible value of the parameter.
\end{enumerate}
If we compare the likelihood function at two parameter points and find that $L(\theta_1|\bold{x}) > L(\theta_2|\bold{x})$ then the realized sample data $\bold{x}$, we actually observed, is more likely to have occurred if $\theta=\theta_1$ than if $\theta=\theta_2$. This can be interpreted as $\theta_1$ is a more plausible value for $\theta$ than $\theta_2$.
\end{definition}

\begin{example}[A Bernoulli model]
Question: Consider a realization $\bold{x}=(x_1=1,x_2=0, x_3=1)$ of an i.i.d. sample of size 3 from $X\sim Ber(\theta)$, where $\theta$ is the unknown probability of a success. Compute the likelihood function $L(\theta|\bold{x})$.

\noindent Answer: it is given a realization $\bold{x}=(x_1=1,x_2=0, x_3=1)$ of an i.i.d. samples from $X\sim Ber(\theta)$, then we have
\begin{align*}
L(\theta|\bold{x})=P(\bold{x}|\theta)=\Pi_{i=1}^3p_X(x_i|\theta)=\Pi_{i=1}^3\theta^{x_i}(1-\theta)^{1-x_i}=\theta^2(1-\theta).
\end{align*}
\begin{center}
\begin{figure}[h]
    \includegraphics[width=0.80\textwidth]{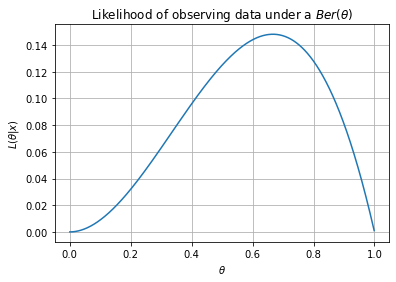}
    \caption{A likelihood function $L(\theta|\bold{x})=\theta^2(1-\theta)$ wrt $X\sim Ber(\theta)$ with a unknown parameter $\theta$.}
    \label{fig:lfbm-01}
\end{figure}
\end{center}
\end{example}

\begin{example}
Consider a realization $\bold{x}=(x_1=3,x_2=0, x_3=2, x_4=7)$ of an I.I.D. sample of size 4 from $X\sim Pois(\theta)$, where $\theta$ is the historical average number of events in a unit of time).

\noindent Answer: it is given a realization $\bold{x}=(x_1=1,x_2=0, x_3=1)$ of an I.I.D. samples from $X\sim Ber(\theta)$, then we have
\begin{align*}
L(\theta|\bold{x})=P(\bold{x}|\theta)=\Pi_{i=1}^4p_X(x_i|\theta)=\Pi_{i=1}^4e^{\theta}\frac{\theta^{x_i}}{(x_i)!}=e^{-4\theta}\frac{\theta^{12}}{2!3!7!}.
\end{align*}
\end{example}

\subsubsection{Maximum likelihood estimation}
\begin{definition}[Maximum likelihood estimate]\label{def:mle-01}
Consider a realization $\bold{x}=(x_1, \ldots, x_n)$ of an I.I.D. random sample of size n from $X\sim p_X(x|\theta)$ or $f_X(x|\theta)$, where $\theta$ is a parameter (or vector of parameters). Then, the maximum likelihood estimate $\hat{\theta}_{MLE}$ of $\theta$ to be the parameter which maximizes the likelihood (or equivalently, the log-likelihood) of the data as:
\begin{align*}
\hat{\theta}=\arg\max_{\theta} L(\theta|\bold{x}) =\arg\max_{\theta)} ln\left(L(\theta|\bold{x}) \right).
\end{align*}
\end{definition}

\begin{example}[Bernoulli model]
Question: consider a realization $\bold{x}=(x_1=1, x_2=1, x_3=1, x_4=1, x_5=0)$ of an I.I.D. sample of size 4 from $X\sim Ber(\theta)$ distribution with unknown parameter $\theta$. Find the maximum likelihood estimate $\hat{\theta}$ of $\theta$.

\noindent We have the likelihood function with respect to the given realization of the sample as
\begin{align*}
L(\theta|\bold{x}) & =\Pi_{i=1}^5 p_X(x_i|\theta)=\Pi_{i=1}^5\theta^{x_i}(1-\theta)^{1-x_i}=\theta^4(1-\theta).
\end{align*}
The plot of the likelihood with $\theta$ on the x-axis and $L(HHHHT|\theta)$ on the y-axis in \eqref{fig:lfbm-02} as:
\begin{center}
\begin{figure}[h]
    \centering
    \includegraphics[width=1.0\textwidth]{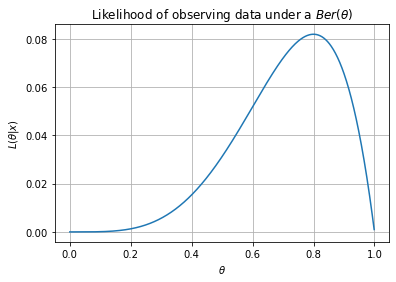}
    \caption{A likelihood function with the Bernoulli model for unknown parameter $\theta$ (the probability of heads).}
    \label{fig:lfbm-02}
\end{figure}
\end{center}
and we can actually see the $\theta$ which maximizes the likelihood is $\hat{\theta}=4/5$. But sometimes it is not possible to plot the likelihood, so we solve it analytically now.

\noindent To find the $\theta$ which maximizes this likelihood, so we take the derivative with respect to $\theta$ and set it to 0:
\begin{align*}
\frac{d}{d\theta}(L(\theta|\bold{x})) =\frac{d}{d\theta}(\theta^4 (1-\theta)) =\theta^3 (4-5\theta)=0 \implies \theta_{opt}=4/5.
\end{align*}
So, in MLE, we're trying to find the $\theta$ that maximizes the likelihood, and we don't care what the maximum value of the likelihood is. We didn't even compute it. We just care that the $\arg\max_{\theta} L(\theta|x)$ is $4/5$.
\end{example}

\begin{remark}[Likelihood vs log-likelihood]
Consider a realization $\bold{x}=(x_1,\ldots, x_n)$ of an I.I.D. random sample of size $n$ from $X\sim p_X(x|\theta)$ or $f_X(x|\theta)$, where $\theta$ is a parameter (or vector of parameters). The log-likelihood of x given $\theta$ are defined as:
\begin{align*}
ln\left(L(\theta|x \right) = \left\lbrace\begin{array}{cc}
\sum_{i=1}^n ln\left(p_X(x_i|\theta \right) & \ \mbox{X is discrete} \\
\sum_{i=1}^n ln\left(f_X(x_i|\theta \right) & \ \mbox{X is continuous}.
\end{array} \right..
\end{align*}
Since, $\ln(\cdot)$ is a monotone increasing function, so it preserves order of the argument, so whatever was the maximizer ($\arg\max$) in the original function, will also be maximizer in the log function. For example, we can see in the following picture 
\begin{center}
\begin{figure}[h]
    \centering
    \includegraphics[width=1.0\textwidth]{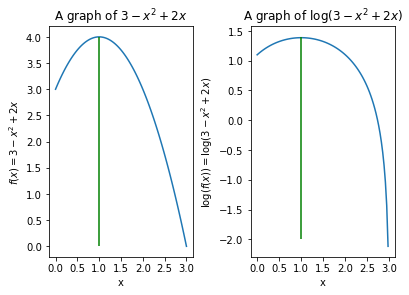}
    \caption{The values are different in y-axis, but if you look at the x-axis, it happens that both functions are maximized at $1$ i.e. the $\arg\max$'s are the same.}
    \label{fig:lf-llf-02}
\end{figure}
\end{center}
\end{remark}

\begin{example}
Consider a realization $\bold{x}=(x_1=1,x_2=0, x_3=1)$ of an I.I.D. sample of size $n=3$ from $X\sim Ber(\theta)$.Compute the MLE estimate of $\theta$.

\noindent Answer: Since $X\sim Ber(\theta)$, so we have $p_X(x|\lambda)=\lambda^{x}(1-\lambda)^{1-x}$). Now, likelihood function is defined as
\begin{align*}
L(\theta|\bold{x}) =P(\bold{x}|\theta)=\Pi_{i=1}^n p_X(x_i|\theta)=\Pi_{i=1}^n \theta^{x_i}(1-\theta)^{1-x_i}.
\end{align*}
Taking log both sides, we get
\begin{align*}
\ln\left(L(\theta|x) \right) & =\sum_{i=1}^n \left(x_i \ln(\theta) +(1-x_i)\ln(1-\theta) \right) \\
                             & =\ln(\theta)\sum_{i=1}^n x_i + \ln(1-\theta)\sum_{i=1}^n (1-x_i).
\end{align*}
Take the derivative(s) with respect to $\theta$ and set to 0, we have
\begin{align*}
\frac{d}{d\theta}\left(\ln\left(L(\theta|x)\right)\right) &=\frac{1}{\theta}\sum_{i=1}^n x_i -\frac{1}{1-\theta}\sum_{i=1}^n(1-x_i) \\
                                                          &=\frac{1}{\theta (1-\theta)} \left((1-\theta)\sum_{i=1}^n x_i -\theta \sum_{i=1}^n (1-x_i) \right) \\
                                                          &= \frac{1}{\theta (1-\theta)} \left(\sum_{i=1}^n x_i -n\theta \right) \\
\frac{d^2}{d\theta^2}\left(\ln\left(L(\theta|x)\right)\right) &=\frac{-1}{\theta^2}\sum_{i=1}^n x_i -\frac{1}{(1-\theta)^2}\sum_{i=1}^n(1-x_i)
\end{align*}
Equating the derivative of $\ln(L(\theta|x))$ to 0, we get
\begin{align*}
\hat{\theta}_{MLE}=\frac{1}{n}\sum_{i=1}^n x_i =\bar{x} \ \mbox{with} \ \frac{d^2}{d\theta^2}\left(\ln\left(L(\theta|x)\right)\right)\leq 0. \ \mbox{So,} \ \hat{\Theta}=\bar{X}.
\end{align*}
\end{example}

\begin{example}[Poisson model]
Consider a realization $\bold{x}=(x_1,\ldots, x_n)$ of an I.I.D. sample of size n from $X\sim Pois(\theta)$. Compute the MLE estimate of $\theta$.

\noindent Answer: Since $X\sim Ber(\theta)$, so we have $p_X(x|\theta)=e^{-\theta}\frac{\theta^x}{x!}$). Now, likelihood function is defined as:
\begin{align*}
L(\theta|\bold{x}) =P(\bold{x}|\theta) =\Pi_{i=1}^n p_X(x_i|\theta)=\Pi_{i=1}^n e^{-\theta}\frac{\theta^{x_i}}{(x_i)!}.            
\end{align*}
Now, taking log sides, we get
\begin{align*}
\ln\left(L(x|\theta)\right) & =\ln\left(\Pi_{i=1}^n p_X(x_i|\theta)\right)=\sum_{i=1}^n\ln\left(p_X(x_i|\theta)\right)  \\
            & = \sum_{i=1}^n -\theta + x_i \ln(\theta) -\ln((x_i)!).
\end{align*}
Take the derivative(s) with respect to $\theta$ and set to 0, we have
\begin{align*}
& \frac{d}{d\theta}\ln\left(L(\theta|x) \right)=\sum_{i=1}^n \left(-1 +\frac{x_i}{\theta} \right)=0 \implies -n+\frac{1}{\theta}\sum_{i=1}^n =0 \implies \\
& \hat{\theta}=\frac{1}{n}\sum_{i=1}^n x_i =\bar{x} \ \mbox{with} \ \frac{d^2}{d\theta^2}\ln\left(L(\theta|x) \right)=\sum_{i=1}^n\left( -\frac{x_i}{\theta^2}\right) \leq 0. \ \mbox{So,} \ \hat{\Theta}=\bar{X}.
\end{align*}
\end{example}

\begin{example}
Consider a realization $\bold{x}=(x_1,\ldots, x_n)$ of an I.I.D. sample of size $n$ from $X\sim Exp(\theta)$. Compute the MLE estimate of $\theta$.

\noindent Answer: Since $X\sim Ber(\theta)$, so we have $p_X(x|\theta)=\theta e^{-x\theta}$). Now, likelihood function is defined as:
\begin{align*}
L(\theta|\bold{x}) =P(\bold{x}|\theta) =\Pi_{i=1}^n p_X(x_i|\theta)=\Pi_{i=1}^n \theta e^{-x_i\theta}.            
\end{align*}
Now, taking log sides, we get
\begin{align*}
\ln\left(L(x|\theta)\right) & =\ln\left(\Pi_{i=1}^n p_X(x_i|\theta)\right)=\sum_{i=1}^n\ln\left(p_X(x_i|\theta)\right)  \\
            & = \sum_{i=1}^n -x_i\theta +  \ln(\theta).
\end{align*}
Take the derivative(s) with respect to $\theta$ and set to 0, we have
\begin{align*}
& \frac{d}{d\theta}\ln\left(L(\theta|x) \right)=\sum_{i=1}^n \left(-x_i +\frac{1}{\theta} \right)=0 \implies \frac{n}{\theta}-\sum_{i=1}^n x_i =0 \implies \\
& \hat{\theta}=\frac{1}{n}\sum_{i=1}^n x_i =\bar{x} \ \mbox{with} \ \frac{d^2}{d\theta^2}\ln\left(L(\theta|x) \right)=\sum_{i=1}^n\left( -\frac{1}{\theta^2}\right) \leq 0. \ \mbox{So,} \ \hat{\Theta}=\bar{X}.
\end{align*}
\end{example}

\begin{example}
Consider a realization $\bold{x}=(x_1,\ldots, x_n)$ of an I.I.D. sample of size $n$ from $X\sim Unif(0,\theta)$. Compute the MLE estimate of $\theta$.

\noindent Answer: Since $X\sim Ber(\theta)$, so we have $p_X(x|\theta)=\frac{1}{\theta}I_{0\leq x \leq \theta}$). Now, likelihood function is defined as:
\begin{align*}
L(x|\theta) & =\Pi_{i=1}^n p_X(x_i|\theta)=p_X(x_1|\theta)\cdot p_X(x_2|\theta)\cdots p_X(x_n|\theta) \\
            & = \frac{1}{\theta}I_{\{0\leq x_1\leq \theta\}} \cdot \frac{1}{\theta}I_{\{0\leq x_2\leq \theta\}} \cdots \frac{1}{\theta}I_{\{0\leq x_n\leq \theta\}} \\
            & =\frac{1}{\theta^n}I_{\{0\leq x_1,x_2,\ldots,x_n\leq \theta\}}.
\end{align*}
\begin{center}
\begin{figure}[h]
    \centering
    \includegraphics[width=0.80\textwidth]{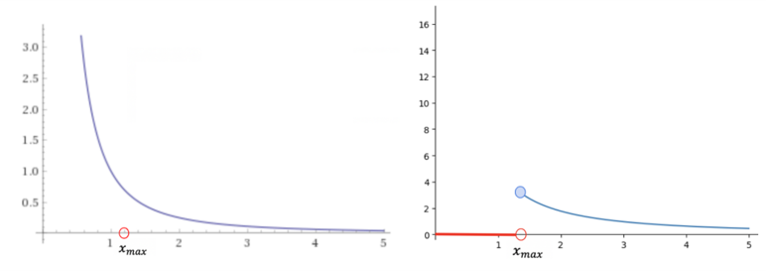}
    \caption{The first (left) plot is graph of $\frac{1}{\theta^n}$, and the second (right) plot of the actual likelihood function $\frac{1}{\theta^n}I_{\{0\leq x_1,\ldots,x_n\leq \theta\}} \equiv \frac{1}{\theta^n}I_{\{x_{max}\leq \theta\}}$.}
    \label{fig:udlf-03}
\end{figure}
\end{center}
Take the derivative(s) with respect to $\theta$ and set to 0, we have
\begin{align*}
& \frac{d}{d\theta}L(\theta|x)=\frac{-n}{\theta^{n+1}}I_{\{0\leq x_1,x_2,\ldots,x_n\leq \theta\}}=0 \implies  \hat{\theta}=\max_{\{i=1,2,\ldots,n\}} x_i \\
&=x_{max} \ \mbox{with} \ \frac{d^2}{d\theta^2} L(\theta|x) =\frac{n(n+1)}{\theta^{n+2}}I_{\{0\leq x_1,x_2,\ldots,x_n\leq \theta\}} \geq 0. \\
& \ \mbox{So,} \ \hat{\Theta}=max_{\{i=1,2,\ldots,n\}}X_i=X_{max}.
\end{align*}
%
\end{example}

\subsection{Method of moments}
In the last sub-section, we have seen maximum likelihood estimation (MLE) had a nice intuition but mathematically is a bit tedious to solve. So, we'll learn an another technique for estimating parameters called the Method of Moments (MoM) with several examples which hopefully makes things clearer.

\begin{definition}[kth moment]\label{def-km-01}
Let X be a random variable and $c\in \mathbb{R}$ be a scalar. Then: (a.) the kth moment of X and (b.) the kth moment of X (about c) are defined as:
\begin{align*}
(a.) \ E(X^k) \ \mbox{and} \ (b.) \ E((X-c)^k) \ \mbox{respectively}.
\end{align*}
Usually, we are interested in the first moment of X: $\mu =E(X)$, and the second moment of X about $\mu$: $Var(X)=E((X-\mu)^2)$.
\end{definition}

\begin{definition}[kth sample moment]\label{def-ksm-02}
Consider a realization $\bold{x}=(x_1,\ldots,x_n)$ of an I.I.D. random sample of size $n$ from $X\sim p_X(x|\theta)$ or $f_X(x|\theta)$ with unknown parameter $\theta$ and $c\in \mathbb{R}$ be a scalar. Then: (a.) the kth sample  moment of X and (b.) the kth sample moment of X (about c) are defined as:
\begin{align*}
(a.) \ \frac{1}{n}\sum_{i=1}^n x_i^k \ \mbox{and} \ (b.) \ \frac{1}{n}\sum_{i=1}^n(X-c)^k \ \mbox{respectively}.
\end{align*}
Usually, we are interested in the first sample moment of X: $\bar{x} =\frac{1}{n}\sum_{i=1}^n x_i$, and the second sample moment of X about $\mu$: $S^2=\frac{1}{n}\sum_{i=1}^n(x_i-\bar{x})^2$.
\end{definition}

\begin{example}
Consider a realization $\bold{x}=(x_1,\ldots , x_n)$ of an I.I.D. random sample size $n$ from $X\sim Unif(0,\theta)$, where $\theta$ is a unknown parameter. Compute MoM estimator of $\theta$.

\noindent Answer: since $X\sim Ber(\theta)$, so we have $p_X(x|\theta)=\frac{1}{\theta}I_{0\leq x \leq \theta}$), so $E(X)=\frac{\theta}{2}$. Then, from first moment approximation, we have
\begin{align*}
E(X)=\frac{\theta}{2}=\frac{1}{n}\sum_{i=1}^n \implies \hat{\theta}_{MoM} =\frac{2}{n}\sum_{i=1}^n x_i.
\end{align*}
It can be observed that
\begin{align*}
\hat{\theta}_{MoM}=\frac{2}{n}\sum_{i=1}^n x_i \neq \hat{\theta}_{MLE}=x_{max}.
\end{align*}
And the MoM estimator of $\theta$ is given by
\begin{align*}
\hat{\Theta}_{MoM}=\frac{2}{n}\sum_{i=1}^n X_i=2\bar{X}.
\end{align*}
\end{example}

\begin{example}
Consider a realization $\bold{x}=(x_1,\ldots , x_n)$ of an I.I.D. sample of size $n$ from $X\sim Exp(\theta)$. Compute the MoM estimator of $\theta$.

\noindent Answer: since $X\sim Exp(\theta)$, so we have $f_X(x|\theta)=\theta e^{-x\theta}$), so $E(X)=\frac{1}{\theta}$. Then, from first moment approximation, we have
\noindent We set the first true moment to the first sample moment as follows (recall that $X\sim Exp(\lambda), E(X)=\frac{1}{\lambda}$):
\begin{align*}
E(X)=\frac{1}{\theta}=\frac{1}{n}\sum_{i=1}^n x_i \implies \hat{\theta}_{MoM}=\frac{1}{\frac{1}{n}\sum_{i=1}^n x_i}=\hat{\theta}_{MLE}=\frac{1}{\bar{x}}.
\end{align*}
So, the MoM estimator of the $\theta$ is given by
\begin{align*}
\hat{\Theta}_{MoM}=\frac{1}{\frac{1}{n}\sum_{i=1}^n X_i}=\frac{1}{\bar{X}}.
\end{align*}
\end{example}

\begin{example}
Consider a realization $\bold{x}=(x_1,\ldots , x_n)$ of an I.I.D. random sample size $n$ from $X\sim Pois(\theta)$. Compute MoM estimator of $\theta$.

\noindent Answer: since $X\sim Ber(\theta)$, so we have $p_X(x|\theta)=\frac{1}{\theta}I_{0\leq x \leq \theta}$), so $E(X)=\theta$. Then, from first moment approximation, we have
\begin{align*}
E(X)=\theta=\frac{1}{n}\sum_{i=1}^n \implies \hat{\theta}_{MoM} =\frac{1}{n}\sum_{i=1}^n x_i=\bar{x}.
\end{align*}
Notice that in this case, the MoM estimate disagrees with the MLE  as follows:
\begin{align*}
\frac{1}{n}\sum_{i=1}^n x_i=\hat{\theta}_{MoM} = \hat{\theta}_{MLE}=\bar{x}.
\end{align*}
So, the MoM estimator of $\theta$ is given by
\begin{align*}
\hat{\Theta}_{MoM}=\frac{1}{n}\sum_{i=1}^n X_i=\bar{X}.
\end{align*}
\end{example}

%

\subsection{Max a posterior estimation}
In order to begin with Bayesian estimation, it requires us to learn at least few special distributions (e.g. Beta distribution) especially meant for prior distribution assumption.

\subsubsection{The Beta Random Variable}
In order to proceed in Bayesian framework, it is very much essential to define few useful distributions to be utilized for prior distribution of the unknown parameter $\theta$.

\begin{definition}[Beta distribution \citep{T20}]\label{def:betadis-01}
A random variable X is having beta distribution i.e. $X\sim Beta(\alpha,\beta)$, if and only if X has the following density function (and range $\Omega_X =[0,1]$):
\begin{align*}
f_X(x)=\left\lbrace\begin{array}{cc}
\frac{1}{B(\alpha,\beta)}x^{\alpha-1}(1-x)^{\beta-1} & \ x\in [0,1],\\
0 & \ \mbox{otherwise}.
\end{array} \right.
\end{align*}
X is typically the belief distribution about some unknown probability of success, where we pretend we've seen $\alpha -1$ successes and $\beta -1$ failures. Hence the mode (most likely value of the probability/point with highest density) $\arg\max_{x\in [0,1]}f_X(x)$, is
\begin{align*}
mode(X)=\frac{\alpha -1}{\alpha -1 +\beta -1}.
\end{align*}
Also note the following:
\begin{enumerate}
\item[a.] The first term in the pdf, $\frac{1}{B(\alpha,\beta)}$ is just a normalizing constant (ensures the pdf to integrates to
1). It is called the Beta function, and so our random variable is named as a Beta random variable.
\item[b.] There is an annoying off-by-1 issue: ($\alpha -1$ heads and $\beta -1$ tails), so when choosing these parameters, be careful.
\item[c.] x is the probability of success, and $(1 - x)$ is the probability of failure.
\end{enumerate}
\end{definition}

\begin{example}[\citep{T20}]
If you flip a coin with unknown probability of heads X, identify the parameters of the most appropriate Beta distribution to model your belief:
\begin{enumerate}
\item[a.] You didn't observe anything (i.e. $\alpha = \# heads +1=0+1=1, \beta=\# tails +1=0+1=1$)?

\noindent It's $B(0+1,0+1) \equiv B(1,1) \equiv Unif(0,1) \to$ NO mode (because it follows the Uniform distribution, every point has same density).
\item[b.] You observed 8 heads and 2 tails?

\noindent It's $B(8+1,2+1)\equiv B(9,3) \to$ mode(X) = $\frac{9-1}{9-1 +3-1}=\frac{8}{10}=0.8$.
\item[c.] You observed 80 heads and 20 tails?

\noindent It's $B(80+1,20+1)\equiv B(81,21) \to$ mode(X) = $\frac{81-1}{81-1 +21-1}=\frac{80}{100}=0.8$.
\item[d.] You observed 2 heads and 3 tails?

\noindent It's $B(2+1,3+1)\equiv B(3,4) \to$ mode(X) = $\frac{3-1}{3-1 +4-1}=\frac{2}{5}=0.4$.
\end{enumerate}
\end{example}

\noindent The Dirichlet random vector generalizes the Beta random variable to having a belief distribution over $(p_1, p_2,\ldots , p_r)$ (like in the multinomial distribution so $\sum_{i=1}^rp_i=1$), and has r parameters $(\alpha_1, \alpha_2,\ldots,\alpha_r)$. It has the similar interpretation of pretending you've seen $\alpha_i -1$ outcomes of type i.

\begin{definition}[Dirichlet distribution]
A random variable X is having Dirichlet distribution i.e. $X\sim Dir(\alpha_1,\alpha_2,\ldots,\alpha_r)$, if and only if X has the following density function (and range $\Omega_X =[0,1]$):
\begin{align*}
f_X(x)=\left\lbrace\begin{array}{cc}
\frac{1}{B(\alpha_1,\alpha_2,\ldots,\alpha_r)}\Pi_{i=1}^r x^{\alpha_i-1} & \ x_i\in [0,1], \sum_{i=1}^r x_i=1\\
0 & \ \mbox{otherwise}.
\end{array} \right.
\end{align*}
This is a generalization of the Beta random variable from 2 outcomes to r. X is typically the belief distribution about some unknown probability of different outcomes, where we pretend we've seen $\alpha_1 -1$ outcomes of type 1, $\alpha_2 -1$ outcomes of type 2 $\ldots$, $\alpha_r -1$ outcomes of type r. Hence the mode (most likely value of the probability/point with highest density) $\arg\max_{x_i\in [0,1], \sum_{i=1}^r x_i=1}f_X(x)$, is
\begin{align*}
mode(X)=\left( \frac{\alpha_1 -1}{\sum_{i=1}^r(\alpha_i -1)}, \frac{\alpha_2 -1}{\sum_{i=1}^r(\alpha_i -1)}, \ldots, \frac{\alpha_r -1}{\sum_{i=1}^r(\alpha_i -1)} \right).
\end{align*}
Also note the following:
\begin{enumerate}
\item[a.] Similar to the Beta RV, the first term in the pdf, $\frac{1}{B(\alpha)}$ is just a normalizing constant (ensures the pdf to integrates to 1), where $\alpha=(\alpha_1,\alpha_2,\ldots,\alpha_r)$. 
\item[b.] Notice that this is the probability distribution over the random vector $x_i$'s, which is the vector of probabilities, so they must sum to 1 i.e. $\sum_{i=1}^r x_i =1$.
\end{enumerate}
\end{definition}

\subsubsection{MAP}

\begin{definition}[MAP]
Consider a realization/data $x=(x_1,\ldots, x_n)$ of an I.I.D. sample of size $n$ from probability mass function $p_X(x:\Theta =\theta)$ (if X discrete), or from density $f_X(x;\Theta=\theta)$ (if X continuous), where $\Theta$ is the random variable representing the
parameter (or vector of parameters). We define the Maximum A Posteriori (MAP) estimate $\hat{\theta}_{MAP}$ of $\theta$ as 
\begin{align*}
\hat{\theta}_{MAP}=\arg\max_{\theta}\pi_{\Theta}(\theta|x) =\arg\max_{\theta} \pi_{\Theta}(\theta)L_{\Theta}(\theta|x).
\end{align*}
That is, it's exactly the same as maximum likelihood, except instead of just maximizing the likelihood, we are maximizing the likelihood multiplied by the prior.
\end{definition}

\begin{example}[uniform prior distribution and $\hat{\theta}_{MLE}=\hat{\theta}_{MAP}$]
Question: consider a realization $\bold{x}=(0,0,1,1,0)$ of an I.I.D. random sample of size 5 from $X\sim Ber(\Theta)$, where $\Theta \in \{0.2,0.5,0.7\}$. Compute the (a.) $\hat{\theta}_{MLE}$ and $\hat{\theta}_{MAP}$ with respect to prior distribution $\pi_{\Theta}(\theta)=1/3$ for $\theta \in \{0.2,0.5,0.7\}$.

\noindent Solution: it is given a realization $\bold{x}=(0,0,1,1,0)$ of an i.i.d. sample of size 5 from $X\sim Ber(\theta)$, then we have
\begin{align*}
L(\theta|\bold{x})=P(\bold{x}|\theta)=\Pi_{i=1}^5p_X(x_i|\theta)=\Pi_{i=1}^5\theta^{x_i}(1-\theta)^{1-x_i}=\theta^2(1-\theta)^3.
\end{align*}
Then,
\begin{align*}
L(0.2|x)& =P(x|0.2)=(0.2)^2\cdot (0.8)^3=0.02048, \\
L(0.5|x)& =P(x|0.5)=(0.5)^2\cdot (0.5)^3=0.03125, \\
L(0.7|x)& =P(x|0.7)=(0.7)^2\cdot (0.3)^3=0.01323.
\end{align*}
So, $\hat{\theta}_{MLE}=\arg\max_{\theta \in \{0.2, 0.5, 0.7\}}L(\theta|x)=0.5$. Furthermore,
\begin{align*}
\pi_{\Theta}(0.2)L(0.2|x) &=\frac{1}{3}\cdot (0.2)^2 \cdot(0.8)^3 \approx 0.00683, \\
\pi_{\Theta}(0.5)L(0.5|x) &=\frac{1}{3}\cdot (0.5)^2\cdot(0.5)^3 \approx 0.01042, \\ 
\pi_{\Theta}(0.7)L(0.7|x) &=\frac{1}{3}\cdot (0.7)\cdot (0.3)^3 =0.00441.
\end{align*}
So, $\hat{\theta}_{MAP}=\arg\max_{\theta \in \{0.2, 0.5, 0.7\}}\pi_{\Theta}(\theta)L(\theta|x)=0.5$. i.e. with a uniform prior distribution of $\Theta$, we have
\begin{align*}
\hat{\theta}_{MLE}=0.5=\hat{\theta}_{MAP}.
\end{align*}
\end{example}

\begin{example}[Non-uniform prior distribution and $\hat{\theta}_{MLE} \neq \hat{\theta}_{MAP}$]
Question: consider a realization $\bold{x}=(x_1=0,x_2=0,x_3=1,x_4=0)$ of an I.I.D. random sample of size 4 from $X\sim Ber(\Theta)$, where $\Theta \in \{0.2,0.5,0.7\}$ with a prior distribution $p_{\Theta}(0.2)=0.10, p_{\Theta}(0.5)=0.01$ and $p_{\Theta}(0.7)=0.89$. Compute the (a.) $\hat{\theta}_{MLE}$ and $\hat{\theta}_{MAP}$.

\noindent Solution: it is given a realization $\bold{x}=(0,0,1,0)$ of an i.i.d. sample of size 4 from $X\sim Ber(\theta)$, then we have
\begin{align*}
L(\theta|\bold{x})=P(\bold{x}|\theta)=\Pi_{i=1}^4p_X(x_i|\theta)=\Pi_{i=1}^4\theta^{x_i}(1-\theta)^{1-x_i}=\theta(1-\theta)^3.
\end{align*}
Then,
\begin{align*}
L(0.2|x)& =P(x|0.2)=(0.2)\cdot (0.8)^3=0.01024, \\
L(0.5|x)& =P(x|0.5)=(0.5)\cdot (0.5)^3=0.06250, \\
L(0.7|x)& =P(x|0.7)=(0.7)\cdot (0.3)^3=0.01890.
\end{align*}
So, $\hat{\theta}_{MLE}=\arg\max_{\theta \in \{0.2, 0.5, 0.7\}}L(\theta|x)=0.5$. Furthermore,
\begin{align*}
\pi_{\Theta}(0.2)L(0.2|x) &=0.10\cdot (0.2)\cdot(0.8)^3 = 0.001024, \\
\pi_{\Theta}(0.5)L(0.5|x) &=0.01\cdot (0.5)\cdot(0.5)^3 \approx 0.000625, \\ 
\pi_{\Theta}(0.7)L(0.7|x) &=0.89\cdot (0.7)\cdot (0.3)^3 =0.16821.
\end{align*}
So, $\hat{\theta}_{MAP}=\arg\max_{\theta \in \{0.2, 0.5, 0.7\}}\pi_{\Theta}(\theta)L(\theta|x)=0.7$. i.e. with a non-uniform prior distribution of $\Theta$, we have
\begin{align*}
\hat{\theta}_{MLE}=0.5 \neq 0.7=\hat{\theta}_{MAP}.
\end{align*}
\end{example}

\subsection{Goodness of fit}
In this section, we have discussed various estimation techniques to find an estimate of a unknown parameter that leads to an estimator or sample statistic of the corresponding parameter and runs over all possible realizations/observations of an I.I.D. random sample of size n. Now, we need to find out which is relatively best i.e. how can we determine which estimator is better (rather than the technique)? There are even more different ways to estimate besides MLE/MoM/MAP, and in different scenarios, different techniques may work better. Here, we will consider some properties of estimators that allow us to compare their goodness.
\subsubsection{Bias}
Once, we come up with an estimator of a parameter using some estimation technique, we need to measure the estimator whether or not in expectation equals to the true parameter and the measure is known as bias of the estimator.
\begin{definition}[Bias]
Consider a random sample of size n i.e. $(X_1,X_2,\ldots,X_n) \to$ n I.I.D. copies of a random variable X with unknown but parametrized distribution $p_X(x|\theta)$ or $f_X(x|\theta)$. Suppose $\hat{\Theta}$ be an estimator of the unknown (but fixed) parameter $\theta$ under a suitable estimation technique. Then, bias of the estimator $\hat{\Theta}$ for $\theta$ is defined as
\begin{align*}
Bias(\hat{\Theta})=E\left(\hat{\Theta} \right)-\theta.
\end{align*}
Now, we have
\begin{enumerate}
\item[a.] If $Bias(\hat{\Theta}) = 0 \equiv E\left(\hat{\Theta} \right)=\theta$, then we say $\hat{\Theta}$ is an unbiased estimator of $\theta$.
\item[b.] If $Bias(\hat{\Theta}) >0$, then $\hat{\Theta}$ typically overestimates $\theta$.
\item[c.] If $Bias(\hat{\Theta}) <0$, then $\hat{\Theta}$ typically underestimates $\theta$.
\end{enumerate}
\end{definition}

\begin{example}[Sample mean]
Backdrop: Consider a realization $\bold{x}=(x_1, ..., x_n)$ of an i.i.d. random sample of size n i.e. $(X_1,X_2,\ldots,X_n) \to$ n I.I.D. copies of a random variable X from $Pois(\theta)$, then the MLE and MoM both estimate the same given by
\begin{align*}
\hat{\theta}_{MLE}=\frac{1}{n}\sum_{i=1}^nx_i =\bar{x}= \hat{\theta}_{MoM} \implies \hat{\Theta}=\frac{1}{n}\sum_{i=1}^n X_i =\bar{X}.
\end{align*}
Question: Prove that the estimator (or the sample mean), $\hat{\Theta}=\bar{X}$, is a unbiased estimator of the $\theta$ (or the unknown mean).

\noindent Solution: We have
\begin{align*}
E\left(\hat{\Theta} \right) = E\left(\frac{1}{n}\sum_{i=1}^n X_i \right) =\frac{1}{n}\sum_{i=1}^n E(X_i)=\theta.
\end{align*}
This makes sense: the average of your samples should be on-target for the true average.
\end{example}

\begin{example}[Sample statistic for $X\sim Unif(0,\theta)$]
Backdrop: Consider a realization $\bold{x}=(x_1, ..., x_n)$ of an i.i.d. random sample of size n i.e. $(X_1,X_2,\ldots,X_n) \to$ n I.I.D. copies of a random variable X from $X\sim Unif(0,\theta)$, then the MLE and MoM both estimate the same given by
\begin{align*}
\hat{\theta}_{MLE}=x_{max}, \hat{\theta}_{MoM}=\frac{2}{n}\sum_{i=1}^nx_i=2\bar{x} \implies \hat{\Theta}_{MLE}=X_{max}, \hat{\Theta}_{MoM}=2\bar{X}.
\end{align*}
Question: Sure, $\hat{\Theta}_{MLE}$ maximizes the likelihood, so in a way $\hat{\Theta}_{MLE}$ is better than $\hat{\Theta}_{MoM}$. But, what are the biases of these estimators? Before doing any computation: do you think $\hat{\Theta}_{MLE}$ and $\hat{\Theta}_{MoM}$ are overestimates, underestimates, or unbiased?

\noindent Solution: Actually, it can be considered that $\hat{\Theta}_{MoM}$ is spot-on since the average of the samples should be close to $\frac{\theta}{2}$ and multiplying by 2 would seem to give the true $\theta$. On the other hand, $\hat{\Theta}_{MLE}$ might be a bit of an underestimate, since we probably wouldn't have $\theta$ be exactly the largest (maybe a little larger).
\begin{enumerate}
\item[a.] Computation of $Bias(\hat{\Theta}_{MLE})$ wrt $\theta$: we have
\begin{align*}
f_{\hat{\Theta}_{MLE}}(\theta)& =f_{X_{max}}(x)=\frac{d}{dx}\left(F_{X_{max}}(x)\right)=\frac{d}{dx}\left(P(X_{max}\leq x)\right)\\
                              & =\frac{d}{dx}\left(P(X_1\leq x, X_2\leq x, \ldots X_n \leq x)\right)=\frac{d}{dx}\left((F_X(x))^n\right) \\
                              &=n(F_X(x))^{n-1}f_X(x) =n\left(\frac{x}{\theta}\right)^{n-1}\left(\frac{1}{\theta}\right).
\end{align*}
Then
\begin{align*}
E\left(\hat{\Theta}_{MLE} \right)&=E(X_{max})=\int_0^{\theta}xn\left(\frac{x}{\theta}\right)^{n-1}\left(\frac{1}{\theta}\right)dx \\
                                 &=\frac{n}{\theta^n}\int_0^{\theta}x^ndx=\frac{n}{n+1}\theta.
\end{align*}
So, we have
\begin{align*}
Bias\left(E\left(\hat{\Theta}_{MLE} \right)\right)=E\left(\hat{\Theta}_{MLE} \right)-\theta = \frac{n}{n+1}\theta -\theta=\frac{-\theta}{n+1} <0.
\end{align*}
This makes sense because if we had 3 sample points from $Unif(0,1)$ for example, we would expect them at $1/4, 2/4, 3/4$, and so it would be $n/(n+1)$ as our expected max. Similarly, if we had 4 sample points, then we would expect them at $1/5, 2/5, 3/5, 4/5$, and so it would again be $n/(n+1)$ as our expected max.
\item[b.] Computation of $Bias\left(\hat{\Theta}_{MoM} \right)$ wrt $\theta$: we have
\begin{align*}
E\left(\hat{\Theta}_{MoM} \right)&=E\left(\frac{2}{n}\sum_{i=1}^nX_i \right)=\frac{2}{n}\sum_{i=1}^n E(X_i)=\frac{2}{n}\cdot \frac{\theta}{2}=\theta.
\end{align*}
\end{enumerate}
Consider a realization $\bold{x}=(x_1=1, x_2=9,x_3=2)$ of an I.I.D. sample of size 3 from $X\sim Unif(0,\theta)$, then we have the following estimates
\begin{align*}
\hat{\theta}_{MLE}=x_{max}=9, \hat{\theta}_{MoM}=\frac{2}{3}\sum_{i=1}^3x_i=\frac{2}{3}\cdot (1+9+2)=8.
\end{align*}
However, based on our realization of the sample, the MoM estimator is impossible. If the actual parameter were 8, then that means that the distribution we pulled the sample from is $Unif(0,8)$, in which case the likelihood that we get a $9$ is 0. But we did see a 9 in our sample. So, even though $\hat{\Theta}_{MoM}$ is unbiased, it still yields an impossible estimate. This just goes to show that finding the right estimator is actually quite tricky.

\noindent A good solution would be to de-bias the MLE by scaling it appropriately. If we decided to have a new estimator based on the MLE:
\begin{align*}
\hat{\Theta}=\frac{n+1}{n}\hat{\Theta}_{MLE}=\frac{n+1}{n}X_{max}.
\end{align*}
Now, we have an unbiased estimator, but now it does not maximize the likelihood any more. Actually, the MLE is what we say to be asymptotically unbiased as follows:
\begin{align*}
Bias\left(\hat{\Theta}_{MLE}\right)=E\left(\hat{\Theta}_{MLE} \right)-\theta = \frac{n}{n+1}\theta -\theta=\frac{-\theta}{n+1} \to 0 \ \mbox{as} \ n \to \infty.
\end{align*}
\end{example}

\begin{example}[A reciprocal of sample mean]
Backdrop: Consider a realization $\bold{x}=(x_1, ..., x_n)$ of an i.i.d. random sample of size n i.e. $(X_1,X_2,\ldots,X_n) \to$ n I.I.D. copies of a random variable X from $Exp(\theta)$, then the MLE and MoM both estimate the same given by
\begin{align*}
\hat{\theta}_{MLE}=\frac{1}{\frac{1}{n}\sum_{i=1}^nx_i} =\frac{1}{\bar{x}}= \hat{\theta}_{MoM} \implies \hat{\Theta}=\frac{1}{\frac{1}{n}\sum_{i=1}^n X_i} =\frac{1}{\bar{X}}.
\end{align*}
Question: Prove that the estimator, $\hat{\Theta}=\frac{1}{\bar{X}}$, is a biased estimator of the $\theta$ (or the unknown parameter).

\noindent Solution: We have
\begin{align*}
E\left(\hat{\Theta} \right) = E\left(\frac{1}{\frac{1}{n}\sum_{i=1}^n X_i} \right) \geq \frac{1}{E\left(\frac{1}{n}\sum_{i=1}^n X_i \right)}=\frac{1}{\theta}.
\end{align*}
The function $g(x_1,\ldots,x_n) =\frac{1}{\sum_{i=1}^n x_i}$ is convex (at least in the positive octant when all $x_i\geq 0$) since $\frac{1}{x}$ is a convex function. Then, Jensen's inequality says that $E(g(x_1,\ldots, x_n))\geq g(E(x_1),E(x_2),\ldots,E(x_n))$ because of convexity. So $E\left(\hat{\Theta} \right)\geq \theta$ systematically, and we typically have an overestimate.
\end{example}

\subsubsection{Variance and mean squared error}
We are often also interested in accuracy of an estimator i.e. how much an estimator varies (we would like it to be unbiased and have small variance to that it is more accurate). There is a metric namely mean squared error that captures this property of estimators and defined as below.
\begin{definition}[Mean squared error of an estimator]
Consider a random sample of size n i.e. $(X_1,X_2,\ldots,X_n) \to$ n I.I.D. copies of a random variable X with unknown but parametrized distribution $p_X(x|\theta)$ or $f_X(x|\theta)$. Suppose $\hat{\Theta}$ be an estimator of the unknown (but fixed) parameter $\theta$ under a suitable estimation technique. Then, mean squared error of the estimator $\hat{\Theta}$ for $\theta$ is defined as
\begin{align*}
MSE(\hat{\Theta})=E\left(\left(\hat{\Theta}-\theta\right)^2 \right)=E\left(\left(\hat{\Theta}-E(\hat{\Theta})+E(\hat{\Theta})-\theta\right)^2\right)
\end{align*}
We have
\begin{align*}
& MSE(\hat{\Theta}) =E\left(\left(\hat{\Theta}-\theta\right)^2 \right) = E\left(\left(\hat{\Theta}- E(\hat{\Theta})+ E(\hat{\Theta})-\theta\right)^2 \right) \\
& = E\left(\left(\hat{\Theta}- E(\hat{\Theta})\right)^2 \right) + \left(E(\hat{\Theta})-\theta\right)^2 +2E\left(\hat{\Theta}-E(\hat{\Theta})\right)\left(E(\hat{\Theta})-\theta\right) \\
& =Var\left(\hat{\Theta}\right)+Bias^2\left(\hat{\Theta}\right) +0=Var\left(\hat{\Theta}\right)+Bias^2\left(\hat{\Theta}\right).
\end{align*}
Variance-Bias trade-off: Usually, we want to minimize MSE, and these two quantities are often inversely related. That is, decreasing one leads to an increase in the other, and finding the balance will minimize the MSE i.e.a small MSE often involves a trade-off between
variance and bias. For unbiased estimators, $MSE(\hat{\Theta})=Var(\hat{\Theta})$, so no trade-off can be made.. 
\end{definition}

\begin{example}[MSE of sample mean]
Backdrop: Consider a realization $\bold{x}=(x_1, ..., x_n)$ of an i.i.d. random sample of size n i.e. $(X_1,X_2,\ldots,X_n) \to$ n I.I.D. copies of a random variable X from $Pois(\theta)$, then the MLE and MoM both estimate the same given by
\begin{align*}
\hat{\theta}_{MLE}=\frac{1}{n}\sum_{i=1}^nx_i =\bar{x}= \hat{\theta}_{MoM} \implies \hat{\Theta}=\frac{1}{n}\sum_{i=1}^n X_i =\bar{X}.
\end{align*}
Question: Compute the mean squared error of the estimator (or the sample mean), $\hat{\Theta}=\bar{X}$, with respect to the unknown parameter $\theta$ (or the unknown mean).

\noindent Solution: We have
\begin{align*}
E\left(\hat{\Theta} \right) = E\left(\frac{1}{n}\sum_{i=1}^n X_i \right) =\frac{1}{n}\sum_{i=1}^n E(X_i)=\theta \implies Bias(\hat{\Theta})=0.
\end{align*}
And
\begin{align*}
Var(\hat{\Theta})=Var(\bar{X})=\frac{Var(X)}{n}=\frac{\theta}{n}.
\end{align*}
So, mean squared error of $\hat{\Theta}$ is given by
\begin{align*}
MSE(\hat{\Theta})=E\left(\left( \right)^2 \right)=Var(\hat{\Theta}) +Bias^2(\hat{\Theta})=\frac{\theta}{n}.
\end{align*}
\end{example}

\subsubsection{Consistency of an estimator}
Consistency of an estimator means that as the sample size gets large the estimate gets closer and closer to the true value of the parameter. 
\begin{definition}[Consistency of an estimator]
Consider a random sample of size n i.e. $(X_1,X_2,\ldots,X_n) \to$ n I.I.D. copies of a random variable X with unknown but parametrized distribution $p_X(x|\theta)$ or $f_X(x|\theta)$. Suppose $\hat{\Theta}$ be an estimator of the unknown (but fixed) parameter $\theta$ under a suitable estimation technique. Then, the estimator $\hat{\Theta}$ is consistent if $\hat{\Theta}$ converges in probability to $\theta$. That is, for any $\epsilon>0$, we have
\begin{align*}
\lim_{n\to \infty}P\left(|\hat{\Theta}-\theta|>\epsilon \right)=0 \equiv \lim_{n\to \infty}P\left(|\hat{\Theta}-\theta|\leq \epsilon \right)=1.
\end{align*}
Also, the estimator $\hat{\Theta}$ is consistent in mean squared error i.e. $\hat{\Theta}$ converges in mean squared error to $\theta$ if 
\begin{align*}
MSE(\hat{\Theta})=Var(\hat{\Theta}) +Bias^2(\hat{\Theta} \to 0 \ \mbox{as} \ n\to \infty.
\end{align*}
\end{definition}

\begin{example}[Sample statistic for $X\sim Unif(0,\theta)$]
Backdrop: Consider a realization $\bold{x}=(x_1, ..., x_n)$ of an i.i.d. random sample of size n i.e. $(X_1,X_2,\ldots,X_n) \to$ n I.I.D. copies of a random variable X from $X\sim Unif(0,\theta)$, then the MLE and MoM both estimate the same given by
\begin{align*}
\hat{\theta}_{MLE}=x_{max}, \hat{\theta}_{MoM}=\frac{2}{n}\sum_{i=1}^nx_i=2\bar{x} \implies \hat{\Theta}_{MLE}=X_{max}, \hat{\Theta}_{MoM}=2\bar{X}.
\end{align*}
Question: Prove that $\hat{\Theta}$ is a consistent estimator of $\theta$.

\noindent Answer: 
\begin{enumerate}
\item[a.] For unbiased estimator $\hat{\Theta}_{MoM}$ and $\epsilon>0$ we have
\begin{align*}
P\left(|\hat{\Theta}_{MoM}-\theta|>\epsilon \right)&=P\left(|\hat{\Theta}_{MoM}-E(\hat{\Theta}_{MoM})|>\epsilon \right) \\
                                                   &\leq \frac{Var(\hat{\Theta}_{MoM})}{\epsilon^2} =\frac{Var(2\bar{X})}{\epsilon^2} \\
                                                   & =\frac{4Var(X)}{n\epsilon^2} \to 0 \ \mbox{as} \ n\to \infty.
\end{align*}
\item[b.] For biased estimator $\hat{\Theta}_{MLE}$, we cannot use Chebyshev's inequality unfortunately. For computation of $Bias(\hat{\Theta}_{MLE})$ wrt $\theta$: we have
\begin{align*}
F_{\hat{\Theta}_{MLE}}(\theta)& =F_{X_{max}}(x)=P(X_{max}\leq x)=P(X_1\leq x, X_2\leq x, \ldots X_n \leq x) \\
                              & =(F_X(x))^n =\left(\frac{x}{\theta}\right)^n.
\end{align*}
Now, we have
\begin{align*}
& P(|\hat{\Theta}_{MoM}-\theta|>\epsilon) =P(\hat{\Theta}_{MoM} <\theta -\epsilon) + P(\hat{\Theta}_{MoM} >\theta +\epsilon)= \\                   &P(\hat{\Theta}_{MoM} <\theta -\epsilon)=\left\lbrace\begin{array}{cc}
\left(\frac{\theta -\epsilon}{\theta}\right)^n & \epsilon < \theta \ \mbox{i.e.} \ \theta -\epsilon >0 \\
0 & \epsilon \geq \theta \ \mbox{i.e.} \ \theta -\epsilon \leq 0.
\end{array} \right. \to 0 \ \mbox{as} \ n\to \infty.
\end{align*}
\end{enumerate}
Now we've seen that, even though the MLE and MoM estimators of $\theta$ for an i.i.d. sample of size n from $Unif(0,\theta)$ are different, they are both consistent. Furthermore for $Unif(0,\theta)$
\begin{enumerate}
\item[a.] $\hat{\Theta}_{MoM}$ is an unbiased and consistent estimator of $\theta$ for an I.I.D. sample of size n from $Unif(0,\theta)$ meaning it is correct in expectation, also converges to the true parameter (consistent) since the variance goes to 0.
\item[b.] However, if we ignore all the sample points and just take the first one and multiply it by 2, $\hat{\Theta}=2X_1$, it is
unbiased (as $E(\hat{\Theta})=E(2X_1)=2E(X_1)=2\cdot \frac{\theta}{2}=\theta$), but it's not consistent i.e. the estimator doesn't get better and better with more n because we're not using all n sample points. Consistency requires that as we get more samples, we approach
the true parameter.
\item[c.] Biased but consistent, on the other hand, we established that the MLE estimator $\hat{\Theta}_{MLE}$ (with expectation $E(\hat{\Theta}_{MLE})=\frac{n}{n+1}\theta \to \theta$ as $n\to \infty$) is biased but consistent.
\item[d.] $\hat{\Theta}=\frac{1}{\bar{X}^2}$ is neither unbiased nor consistent would just be some random expression.
\end{enumerate}
\end{example}

\subsubsection{Efficiency}
Efficiency says that the estimator has as low variance as possible. This property combined with consistency and unbiasedness make our estimator is on target (unbiased).
\begin{definition}
Consider a realization $\bold{x}=(x_1,x_2,\ldots,x_n)$ of an I.I.D. random sample of size n i.e. $(X_1,X_2,\ldots,X_n) \to$ n I.I.D. copies of a random variable X with unknown but parametrized distribution $p_X(x|\theta)$ or $f_X(x|\theta)$. Then, define the following
\begin{enumerate}
\item[a.] Score function:
\begin{align*}
s(\bold{x})=\frac{\partial}{\partial \theta}\left(\ln(f_X(x|\theta))\right)=\frac{f_X^{'}(x|\theta)}{f_X(x|\theta)}.
\end{align*}
Then, expectation of score statistic is given by
\begin{align*}
E(S(\theta))&=E\left(\frac{f_X^{'}(x|\theta)}{f_X(x|\theta)} \right)=\int \frac{f_X^{'}(x|\theta)}{f_X(x|\theta)}f_X(x|\theta^*)dx \\
            &\approx \frac{\partial}{\partial \theta}\int f_X(x|\theta^*)dx=\frac{\partial}{\partial \theta}(1)=0.
\end{align*}
\item[b.] Fisher information matrix:
\begin{align*}
& I(\theta)=E\left((S(\theta)-E(S(\theta)))(S(\theta)-E(S(\theta))^{\top} \right)=E\left(S(\theta)S^{\top}(\theta) \right)= \\
&nE\left(\left(\frac{\partial}{\partial}\left(\ln(L(\theta|\bold{x}) \right)\right)^2 \right)=-E\left(\frac{\partial^2}{\partial \theta^2}\left(\ln(L(\theta|\bold{x})) \right) \right) =-E\left(H_{\ln(L(\theta|\bold{x})} \right).
\end{align*}
\end{enumerate}
\end{definition}

\begin{definition}[CRLB]
Consider a realization $\bold{x}=(x_1,\ldots, x_n)$ of an i.i.d. random sample of size n i.e. $(X_1,X_2,\ldots,X_n)$ from $X\sim p_X(x|\theta)$ or $f_X(x|\theta)$, where $\theta$ is a parameter (or vector of parameters). If $\hat{\Theta}$ is an unbiased estimator for $\theta$, then
\begin{align*}
MSE(\hat{\Theta})=Var(\hat{\Theta})\geq \frac{1}{I(\theta)}.
\end{align*}
where $I(\theta)$ is the Fisher information. What this is saying is, for any unbiased estimator $\hat{\Theta}$ for $\theta$, the variance $Var(\hat{\Theta})=MSE(\hat{\Theta})$ is at least $\frac{1}{I(\theta)}$. If we achieve this lower bound, meaning our variance
is exactly equal to $\frac{1}{I(\theta)}$, then we have the best variance possible for our estimate. That is, we have the minimum variance unbiased estimator (MVUE) for $\theta$.

\noindent If If $\hat{\Theta}$ is an unbiased estimator for $\theta$, then
\begin{align*}
MSE(\hat{\Theta})=Var(\hat{\Theta}) +Bias^2(\Theta)\geq \frac{1}{I(\theta)}.
\end{align*}
\end{definition}
If $\hat{\Theta}$ is any unbiased estimator for $\theta$, there is a minimum possible variance (variance = MSE for unbiased estimators) and the estimator achieves this lowest possible variance, it is said to be efficient.

\begin{definition}[Efficiency]
Consider a realization $\bold{x}=(x_1,\ldots, x_n)$ of an i.i.d. random sample of size n i.e. $(X_1,X_2,\ldots,X_n)$ from $X\sim p_X(x|\theta)$ or $f_X(x|\theta)$, where $\theta$ is a parameter (or vector of parameters). If $\hat{\Theta}$ is an unbiased estimator for $\theta$, then its efficiency is defined as
\begin{align*}
e(\hat{\Theta})=\frac{I^{-1}(\theta)}{Var(\hat{\Theta})}\leq 1.
\end{align*}
This will always be between $0$ and $1$ because if our variance is equal to the CRLB, then it equals $1$, and anything greater will result in a smaller value. A larger variance will result in a smaller efficiency, and we want our efficiency to be as high as possible.
An unbiased estimator is said to be efficient if it achieves the CRLB - meaning $e(\hat{\Theta})=1$. That is, it could not possibly have a lower variance. Again, the CRLB is not guaranteed for biased estimators.

\noindent Let $\hat{\Theta}_1$ and $\hat{\Theta}_2$ be two unbiased estimators of a unknown parameter $\theta$ with equal sample sizes. Then,  $\hat{\Theta}_1$ is a more efficient estimator than  $\hat{\Theta}_2$ if $Var(\hat{\Theta}_1) < Var(\hat{\Theta}_2)$ i.e. $\epsilon(\hat{\Theta}_1)>\epsilon(\hat{\Theta}_2)$.
\end{definition}

\begin{example}[Efficiency of sample mean]
Backdrop: Consider a realization $\bold{x}=(x_1, ..., x_n)$ of an i.i.d. random sample of size n i.e. $(X_1,X_2,\ldots,X_n) \to$ n I.I.D. copies of a random variable X from $Pois(\theta)$, then the MLE and MoM both estimate the same given by
\begin{align*}
\hat{\theta}_{MLE}=\frac{1}{n}\sum_{i=1}^nx_i =\bar{x}= \hat{\theta}_{MoM} \implies \hat{\Theta}=\frac{1}{n}\sum_{i=1}^n X_i =\bar{X}.
\end{align*}
Question: Prove that the estimator (or the sample mean), $\hat{\Theta}=\bar{X}$, with respect to the unknown parameter $\theta$ (or the unknown mean) is efficient or not.

\noindent Solution: First, we need to check that it's unbiased, as the CRLB only holds for unbiased estimators.
\begin{align*}
E\left(\hat{\Theta} \right) = E\left(\frac{1}{n}\sum_{i=1}^n X_i \right) =\frac{1}{n}\sum_{i=1}^n E(X_i)=\theta \implies Bias(\hat{\Theta})=0.
\end{align*}
Now, we compute the variance of $\hat{\Theta}$:
\begin{align*}
Var(\hat{\Theta})=Var(\bar{X})=\frac{Var(X)}{n}=\frac{\theta}{n}.
\end{align*}
Then, we need to define likelihood function $L(\theta|x)$ to compute Fisher Information that gives us the CRLB, and see if our
variance matches.
\begin{align*}
& L(\theta|\bold{x})=P(\bold{x}|\theta)=\Pi_{i=1}^np_X(x_i|\theta)=\Pi_{i=1}^n e^{-\theta}\frac{\theta^{x_i}}{(x_i)!} \implies \\
& \ln(L(\theta|\bold{x}))=\sum_{i=1}^n\left(-\theta +x_i\ln(\theta)-\ln((x_i)!) \right) \implies \frac{\partial }{\partial \theta} \left(\ln(L(\theta|\bold{x})) \right) \\
& =\sum_{i=1}^n\left(-1 +\frac{x_i}{\theta} \right) \implies \frac{\partial^2 }{\partial \theta^2} \left(\ln(L(\theta|\bold{x})) \right)
=\sum_{i=1}^n\left(-\frac{x_i}{\theta^2}\right)=\frac{-n\bar{x}}{\theta^2}.
\end{align*}
Then, in order to compute information matrix $I(\theta)$, we compute the expectation of second derivative of log-likelihood function as
\begin{align*}
E\left(\frac{\partial^2 }{\partial \theta^2} \left(\ln(L(\theta|\bold{x})) \right) \right)=E\left(\sum_{i=1}^n -\frac{x_i}{\theta^2} \right) =\frac{-\sum_{i=1}^nE(X_i)}{\theta^2}=\frac{-n\theta}{\theta^2}=\frac{-n}{\theta}.
\end{align*}
So, Fisher Information is the negative expected value of the second derivative of the log-likelihood, so, we have
\begin{align*}
I(\theta)=-E\left( \frac{\partial^2 }{\partial \theta^2} \left(\ln(L(\theta|\bold{x})) \right) \right)=\frac{n}{\theta}.
\end{align*}
Finally, efficiency  of the estimator $\hat{\Theta}$ as the inverse of the Fisher Information over the variance computed as
\begin{align*}
e(\hat{\Theta})=\frac{I^{-1}(\theta)}{Var(\hat{\Theta}}=1.
\end{align*}
Thus, we've shown that, since the efficiency is 1, the estimator is efficient. That is, it has the best possible variance among all unbiased estimators of $\theta$. 
\end{example}

\subsubsection{Sufficiency}
Consider a realization $\bold{x}=(x_1,x_2,\ldots,x_n)$ of an i.i.d. sample of size n from a known distribution model with unknown parameter $\theta$. Imagine we have two people:
\begin{enumerate}
\item[a.] Statistician A: Knows the entire sample, gets n quantities: $\bold{x}=(x_1,\ldots, x_n)$.
\item[b.] Statistician B: Knows $T(x_1,\ldots,x_n)=t$, a single number which is a function of the realizations of an I.I.D. sample of size n. For example, the sum or the maximum of the sample points. 
\end{enumerate}
Heuristically, $T(x_1,\ldots, x_n)$ is a sufficient statistic if Statistician B can do just as good a job as Statistician A, given information. For example, if the samples are from the Bernoulli distribution, knowing $T(x_1,\ldots, x_n) =\sum_{i=1}^nx_i$ (the number of heads) is just as good as knowing all the individual outcomes, since a good estimate would be the number of heads over the number of total trials. Hence, we don't actually care the ORDER of the outcomes, just how many heads occurred. The word sufficient in English roughly means enough, and so this terminology is well-chosen.

\begin{definition}[Statistic]
Consider a realization $\bold{x}=(x_1,x_2,\ldots,x_n)$ of an i.i.d. sample of size n i.e. $(X_1,X_2,\ldots,X_n)\to$ n I.I.D. copies of X having a known distribution model with unknown parameter $\theta$. Then, we define a statistic as a function $T:\mathbb{R}^n\to\mathbb{R}$ of realizations $\bold{x}=(x_1,x_2,\ldots,x_n)$ of the I.I.D. sample of size n. For examples:
\begin{enumerate}
\item[a.] $T(x_1,\ldots,x_n)=\sum_{i=1}^nx_i \to$ the sum.
\item[b.] $T(x_1,\ldots,x_n)=\frac{1}{n}\sum_{i=1}^nx_i \to$ the sample mean.
\item[c.] $T(x_1,\ldots,x_n)=max_{i=1,2,\ldots,n}x_i=x_{max} \to$ the max/largest value.
\item[d.] $T(x_1,\ldots,x_n)=x_1 \to$ just take the first sample point.
\item[e.] $T(x_1,\ldots,x_n)=7 \to$ ignore all the sample points.
\end{enumerate}
All estimators are statistics because they take in our n data points and produce a single number.
\end{definition}

\begin{definition}[Sufficient statistic]
A statistic $T=T(X_1,\ldots,X_n)$ is a sufficient statistic if the conditional distribution of $(X_1,\ldots,X_n)$ given $T=t$ and $\theta$ does not depend on $\theta$ i.e.
\begin{align*}
P(X_1=x_1,\ldots,X_n=x_n|T=t,\theta)=P(X_1=x_1,\ldots,X_n=x_n|T=t).
\end{align*}
\end{definition}

\begin{theorem}
Consider a realization $\bold{x}=(x_1,\ldots,x_n)$ of an i.i.d. random sample of size n with likelihood $L(\theta|\bold{x})$. A statistic $T= T(x_1,\ldots, x_n)$ is sufficient if and only if there exist non-negative functions g and h such that:
\begin{align*}
L(\theta|\bold{x})=g(\bold{x})\cdot h(T(\bold{x}),\theta)
\end{align*}
That is, the likelihood of the data can be split into a product of two terms: the first term g can depend on the entire data, but not $\theta$, and the second term h can depend on $\theta$, but only on the data through the sufficient statistic T. (In other words, T is the only thing that allows the data $x_1,\ldots,x_n$ and $\theta$ to interact.) That is, we don't have access to the n individual quantities $x_1,\ldots,x_n$ just the single number (T, the sufficient statistic).

\noindent But basically, we want to split the likelihood into a product of two terms/functions:
\begin{enumerate}
\item[a.] For the first term g, you are allowed to know each individual sample point if you want, but NOT $\theta$.
\item[b.] For the second term h, you can only know the sufficient statistic (single number) $T(x_1,\ldots,x_n)$ and $\theta$. We may not know each individual $x_i$.
\end{enumerate}
\end{theorem}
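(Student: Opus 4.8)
The statement is the Fisher–Neyman factorization criterion, and the plan is to prove both implications directly from the given definition of sufficiency, working in the discrete setting where the conditional probabilities are honest ratios of PMFs (the continuous case being deferred to a density/Jacobian argument). Throughout I write $L(\theta|\mathbf{x}) = P_\theta(X_1 = x_1, \ldots, X_n = x_n)$ and use the single crucial observation that, since $T$ is a deterministic function of the sample, the event $\{X_1 = x_1, \ldots, X_n = x_n\}$ is contained in the event $\{T = T(\mathbf{x})\}$.

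For the forward ($\Rightarrow$) direction, I would assume $T$ is sufficient and condition on the value of the statistic, writing
\[
L(\theta|\mathbf{x}) = P_\theta(\mathbf{X} = \mathbf{x},\, T = T(\mathbf{x})) = P_\theta(T = T(\mathbf{x}))\, P_\theta(\mathbf{X} = \mathbf{x} \mid T = T(\mathbf{x})),
\]
where the first equality uses the containment of events noted above. By the definition of sufficiency the conditional factor does not depend on $\theta$; I would name it $g(\mathbf{x})$ and set $h(T(\mathbf{x}),\theta) = P_\theta(T = T(\mathbf{x}))$, which touches the data only through $T$. This exhibits the required factorization.

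For the reverse ($\Leftarrow$) direction, I would assume $L(\theta|\mathbf{x}) = g(\mathbf{x})\, h(T(\mathbf{x}),\theta)$ and verify that the conditional distribution is free of $\theta$. The key step is the marginal PMF of the statistic: summing the factorization over all sample points mapping to a fixed value $t$ gives
\[
P_\theta(T = t) = \sum_{\mathbf{y} : T(\mathbf{y}) = t} g(\mathbf{y})\, h(t,\theta) = h(t,\theta) \sum_{\mathbf{y} : T(\mathbf{y}) = t} g(\mathbf{y}),
\]
since $h(t,\theta)$ is constant over the summation range. Forming the ratio $P_\theta(\mathbf{X} = \mathbf{x} \mid T = t)$ for $\mathbf{x}$ with $T(\mathbf{x}) = t$, the factor $h(t,\theta)$ cancels between numerator and denominator, leaving $g(\mathbf{x}) / \sum_{\mathbf{y}:T(\mathbf{y})=t} g(\mathbf{y})$, which is manifestly independent of $\theta$. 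Hence $T$ is sufficient, closing the equivalence in the discrete case.

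The main obstacle is not the algebra but the continuous case, where conditioning on $\{T = t\}$ is conditioning on a probability-zero event, so the naive ratio is undefined. The honest route is to change variables from $(X_1,\ldots,X_n)$ to $(T, U)$ for a suitable complementary statistic $U$, express the joint density through the Jacobian, and read off the conditional density of $U$ given $T = t$; the factorization then says precisely that this conditional density is $\theta$-free. I would either carry out this transformation explicitly for the models at hand or appeal to the measure-theoretic notion of conditional expectation, emphasizing that the discrete argument already captures the essential cancellation mechanism and that the continuous statement follows by the same structure once conditional densities are correctly defined.
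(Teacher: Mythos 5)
Your proof is correct, but there is nothing in the paper to compare it against: the paper states this factorization criterion as a theorem and supplies no proof at all, moving directly to applications (the sufficiency of $X_{\max}$ for $\mathrm{Unif}(0,\theta)$ and of $\sum_{i=1}^n X_i$ for the Poisson and Bernoulli models). Your two-directional argument in the discrete case is the standard Neyman--Fisher one and is sound: the forward direction correctly exploits the containment $\{\mathbf{X}=\mathbf{x}\}\subseteq\{T=T(\mathbf{x})\}$ to write $L(\theta|\mathbf{x})=P_\theta(T=T(\mathbf{x}))\,P_\theta(\mathbf{X}=\mathbf{x}\mid T=T(\mathbf{x}))$ and identify the two factors, and the reverse direction correctly marginalizes over the level set $\{\mathbf{y}:T(\mathbf{y})=t\}$ so that $h(t,\theta)$ cancels in the conditional ratio. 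Two small points are worth making explicit: the cancellation requires $h(t,\theta)>0$, so you should restrict attention to values $t$ with $P_\theta(T=t)>0$ (when $P_\theta(T=t)=0$ the conditional distribution may be defined arbitrarily, so $\theta$-freeness is vacuous there); and for sample points $\mathbf{x}$ with $T(\mathbf{x})\neq t$ the conditional probability is $0$ regardless of $\theta$, which completes the verification that the entire conditional law is free of $\theta$. Your candid handling of the continuous case --- acknowledging that conditioning on $\{T=t\}$ is conditioning on a probability-zero event and sketching the change-of-variables or measure-theoretic route --- is more careful than the paper itself, which applies the criterion to continuous models (the uniform example) without ever addressing that subtlety.
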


\begin{example}[Sample statistic for $X\sim Unif(0,\theta)$]
Backdrop: Consider a realization $\bold{x}=(x_1, ..., x_n)$ of an i.i.d. random sample of size n i.e. $(X_1,X_2,\ldots,X_n) \to$ n I.I.D. copies of a random variable X from $X\sim Unif(0,\theta)$, then the MLE and MoM both estimate the same given by
\begin{align*}
\hat{\theta}_{MLE}=x_{max}, \hat{\theta}_{MoM}=\frac{2}{n}\sum_{i=1}^nx_i=2\bar{x} \implies \hat{\Theta}_{MLE}=X_{max}, \hat{\Theta}_{MoM}=2\bar{X}.
\end{align*}
Question: Prove that $\hat{\Theta}_{LME}=X_{max}$ is a sufficient estimator of $\theta$.

\noindent Answer: We have the likelihood function $L(\theta|\bold{x})$ as:
\begin{align*}
L(\theta|\bold{x})&=\Pi_{i=1}^n\frac{1}{\theta}I_{\{0\leq x_i\leq \theta\}}=\frac{1}{\theta^n}I_{\{0\leq x_1,\ldots,x_n\leq \theta\}} =\frac{1}{\theta^n}I_{\{0\leq x_{max}\leq \theta\}} \\ &=\frac{1}{\theta^n}I_{\{T(x_1,\ldots,x_n)\leq \theta\}}.
\end{align*}
Now, we choose
\begin{align*}
g(x_1,\ldots,x_n)=1, \ h(T(x_1,\ldots,x_n),\theta)=\frac{1}{\theta^n}I_{\{T(x_1,\ldots,x_n)\leq \theta\}}
\end{align*}
By the Neyman-Fisher Factorization Criterion, $\hat{\Theta}_{MLE}=X_{max}$ is a sufficient estimator. For the h term, notice that we just need to know the max of the samples $T(x_1,\ldots,x_n)$ to compute h: we don't actually need to know each individual $x_i$. It is noted that here the only interaction between the data and parameter $\theta$ happens through the sufficient statistic (the max of all the values).
\end{example}

\begin{example}[Sample statistic for $X\sim Pois(\theta)$]
Backdrop: Consider a realization $\bold{x}=(x_1, ..., x_n)$ of an i.i.d. random sample of size n i.e. $(X_1,X_2,\ldots,X_n) \to$ n I.I.D. copies of a random variable X from $X\sim Pois(\theta)$, then the we have
\begin{align*}
& T(x_1,\ldots,x_n)=\sum_{i=1}^nx_i, \ \hat{\theta}_{MLE}=\frac{1}{n}\sum_{i=1}^nx_i=\bar{x} \implies \\
& T(X_1,\ldots,X_n)=S_n, \hat{\Theta}_{MLE}=\bar{X}.
\end{align*}
Question: Prove that $T(X_1,\ldots,X_n)=S_n$ is a sufficient statistic and hence $\hat{\Theta}_{LME}=\bar{X}$ is a sufficient estimator. (The reason this is true is because we don't need to know each individual sample to have a good estimate for $\theta$, we just need to know how many events happened total.)

\noindent Answer: We have the likelihood function $L(\theta|\bold{x})$ as:
\begin{align*}
L(\theta|\bold{x})&=\Pi_{i=1}^np_X(x_i|\theta)=\Pi_{i=1}^ne^{-\theta}\frac{\theta^{x_i}}{(x_i)!}=\frac{e^{-n\theta}\theta^{\sum_{i=1}^nx_i}}{\Pi_{i=1}^n(x_i)!} \\
                  &=\frac{1}{\Pi_{i=1}^n(x_i)!}\cdot e^{-n\theta}\theta^{T(x_1,\ldots,x_n)}.
\end{align*}
Now, we choose
\begin{align*}
g(x_1,\ldots,x_n)=\frac{1}{\Pi_{i=1}^n(x_i)!}, \ h(T(x_1,\ldots,x_n),\theta)=e^{-n\theta}\theta^{T(x_1,\ldots,x_n)}.
\end{align*}
By the Neyman-Fisher Factorization Criterion, $T(X_1,\ldots,X_n)=\sum_{i=1}^nX_i$ is a sufficient statistic. This implies $\hat{\Theta}_{MLE}=\frac{T(X_1,\ldots,X_n)}{n}$ is also a sufficient statistic, since knowing the total number of events and the average number of
events is equivalent (since we know n).

\noindent It is noted that here we had the g term handle some function of only $x_1,\ldots,x_n$ but not $\theta$. For the h term though, we do have $\theta$ but don't need the individual samples $x_1,\ldots,x_n$ to compute h. Imagine being just given $T(x_1,\ldots,x_n)$, now you have enough information to compute h.
\end{example}

\begin{example}[Sample statistic for $X\sim Ber(\theta)$]
Backdrop: Consider a realization $\bold{x}=(x_1, ..., x_n)$ of an i.i.d. random sample of size n i.e. $(X_1,X_2,\ldots,X_n) \to$ n I.I.D. copies of a random variable X from $X\sim Ber(\theta)$, then the we have
\begin{align*}
& T(x_1,\ldots,x_n)=\sum_{i=1}^nx_i, \ \hat{\theta}_{MLE}=\frac{1}{n}\sum_{i=1}^nx_i=\bar{x} \implies \\
& T(X_1,\ldots,X_n)=S_n, \hat{\Theta}_{MLE}=\bar{X}.
\end{align*}
Question: Prove that $T(X_1,\ldots,X_n)=S_n$ is a sufficient statistic and hence $\hat{\Theta}_{LME}=\bar{X}$ is a sufficient estimator. (The reason this is true is because we don't need to know each individual sample to have a good estimate for $\theta$, we just need to know how many events happened total.)

\noindent Answer: We have the likelihood function $L(\theta|\bold{x})$ as:
\begin{align*}
L(\theta|\bold{x})&=\Pi_{i=1}^np_X(x_i|\theta)=\Pi_{i=1}^n\theta^{x_i}(1-\theta)^{1-x_i}=\theta^{x_1+\ldots+x_n}(1-\theta)^{n-x_1-\ldots-x_n} \\
                  &=\theta^{\sum_{i=1}^nx_i}(1-\theta)^{n-\sum_{i=1}^nx_i}.
\end{align*}
Now, we choose
\begin{align*}
g(x_1,\ldots,x_n)=1, \ h(T(x_1,\ldots,x_n),\theta)=\theta^{T(x_1,\ldots,x_n)}(1-\theta)^{n-T(x_1,\ldots,x_n)}.
\end{align*}
By the Neyman-Fisher Factorization Criterion, $T(X_1,\ldots,X_n)=\sum_{i=1}^nX_i$ is a sufficient statistic. This implies $\hat{\Theta}_{MLE}=\frac{T(X_1,\ldots,X_n)}{n}$ is also a sufficient statistic, since knowing the total number of events and the average number of
events is equivalent (since we know n).

\noindent It is noted that here we had the g term handle some function of only $x_1,\ldots,x_n$ but not $\theta$. For the h term though, we do have $\theta$ but don't need the individual samples $x_1,\ldots,x_n$ to compute h. Imagine being just given $T(x_1,\ldots,x_n)$, now you have enough information to compute h. 

\noindent We can notice that here the only interaction between the data and parameter $\theta$ happens through the sufficient statistic (the sum/mean of all the values). We don't actually need to know each individual $x_i$.
\end{example}

\section{Interval estimation}
Till now, we have discussed several techniques to estimate unknown parameters and the goodness criterion of the estimator. Even if the estimator is having all the good properties, the probability that the estimator for $\theta$ is exactly correct is 0 i.e. $P(\hat{\Theta}=\theta)=0$, since $\theta$ is continuous (a decimal number). So, we need to construct confidence intervals around the estimator, containing the true value of the parameter with high probability i.e. $\hat{\theta}$ is very close to $\theta$ with high probability. Confidence intervals are used in the Frequentist setting, which means the population parameters are assumed to be unknown but will always be fixed, not random variables. Credible intervals, on the other hand, are a Bayesian version of a Frequentist's confidence interval.

\subsection{Confidence interval}

\begin{theorem}[A $100(1-\alpha)\%$ confidence interval]\label{thm:ci-01}
Consider a realization $\bold{x}=(x_1,\ldots,x_n)$ of an I.I.D. random sample of size n from $X\sim p_X(x|\theta)$ or $f_X(x|\theta)$. Using CLT, an $100(1-\alpha)\%$ confidence interval of an unbiased estimate $\hat{\theta}$ for $\theta$ is given by
\begin{align*}
\hat{\theta} \in \left[\theta-z_{1-\frac{\alpha}{2}}\sqrt{Var(\hat{\theta})},\theta+z_{1-\frac{\alpha}{2}}\sqrt{Var(\hat{\theta})}\right].
\end{align*}
Where $z_{1-\frac{\alpha}{2}}$ is z-score given by
\begin{align*}
z_{1-\frac{\alpha}{2}}=\phi^{-1}(a-\frac{\alpha}{2})=\left\lbrace\begin{array}{cc}
1.96  & \ \mbox{for} \ \alpha=5\%=0.05 \\
2.576 & \ \mbox{for} \ \alpha=1\%=0.01.
\end{array} \right.
\end{align*}
\end{theorem}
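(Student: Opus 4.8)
The plan is to reduce the statement to a single symmetric tail computation for the standard normal law, obtained by standardizing the estimator and invoking the Central Limit Theorem established earlier in this chapter. First I would record that $\hat{\Theta}$ is unbiased, so $E(\hat{\Theta})=\theta$, and form the standardized quantity
\begin{align*}
Z_n=\frac{\hat{\Theta}-\theta}{\sqrt{Var(\hat{\Theta})}}=\frac{\hat{\Theta}-E(\hat{\Theta})}{\sqrt{Var(\hat{\Theta})}}.
\end{align*}
When $\hat{\Theta}=\bar{X}_n$ is the sample mean, the CLT gives directly $Z_n\xrightarrow{D}Z\sim N(0,1)$; for a general unbiased estimator I would appeal to the same asymptotic-normality conclusion, so that for large $n$ we may treat $Z_n$ as approximately standard normal.

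The second step is to convert the desired coverage probability $1-\alpha$ into a quantile of $\phi$. By symmetry of the $N(0,1)$ density about the origin, the two-sided event $\{-c\le Z\le c\}$ has probability $\phi(c)-\phi(-c)=2\phi(c)-1$. Setting this equal to $1-\alpha$ forces $\phi(c)=1-\tfrac{\alpha}{2}$, i.e. $c=z_{1-\frac{\alpha}{2}}=\phi^{-1}(1-\tfrac{\alpha}{2})$ (the expression $a-\tfrac{\alpha}{2}$ in the statement being a typographical slip for $1-\tfrac{\alpha}{2}$). This yields
\begin{align*}
P\left(-z_{1-\frac{\alpha}{2}}\le \frac{\hat{\Theta}-\theta}{\sqrt{Var(\hat{\Theta})}}\le z_{1-\frac{\alpha}{2}}\right)=1-\alpha.
\end{align*}

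Third, I would multiply the chain of inequalities through by $\sqrt{Var(\hat{\Theta})}$ and add $\theta$, a purely algebraic rearrangement that preserves the event and hence its probability, giving
\begin{align*}
P\left(\theta-z_{1-\frac{\alpha}{2}}\sqrt{Var(\hat{\Theta})}\le \hat{\Theta}\le \theta+z_{1-\frac{\alpha}{2}}\sqrt{Var(\hat{\Theta})}\right)=1-\alpha,
\end{align*}
which is exactly the claimed interval for $\hat{\theta}$. (Isolating $\theta$ rather than $\hat{\Theta}$ in the same inequality produces the more familiar estimator-centred interval $\theta\in[\hat{\theta}\pm z_{1-\frac{\alpha}{2}}\sqrt{Var(\hat{\Theta})}]$.) Finally, the two tabulated entries follow by reading the standard normal inverse CDF: $\phi^{-1}(0.975)\approx 1.96$ and $\phi^{-1}(0.995)\approx 2.576$.

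The main obstacle is conceptual rather than computational: justifying that the CLT applies to $\hat{\Theta}$ itself. The CLT proved above concerns the standardized sample mean of IID copies, so the argument is exact precisely when $\hat{\Theta}=\bar{X}_n$; for an arbitrary unbiased estimator one must separately establish asymptotic normality, and in practice replace the unknown $Var(\hat{\Theta})$ by a consistent estimate, invoking Slutsky-type reasoning. I would therefore state explicitly that the result is an \emph{asymptotic} $100(1-\alpha)\%$ confidence interval valid for large $n$, with exactness reserved for the Gaussian (or sample-mean) case.
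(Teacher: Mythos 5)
Your proof follows essentially the same route as the paper's: standardize $\hat{\Theta}-\theta$ by $\sqrt{Var(\hat{\theta})}$, invoke the CLT to treat the standardized quantity as $N(0,1)$, use the symmetry identity $P(|Z|\leq c)=2\phi(c)-1$ to identify the quantile $z_{1-\frac{\alpha}{2}}=\phi^{-1}\left(1-\frac{\alpha}{2}\right)$, and rearrange to the stated interval (the paper phrases this as a tail bound $P(|\hat{\theta}-\theta|>\epsilon)\leq\alpha$ solved for the minimal $\epsilon$, rather than an equality of coverage, but the computation is identical). Your added observations --- that the statement's $a-\frac{\alpha}{2}$ is a typographical slip for $1-\frac{\alpha}{2}$, and that for a general unbiased estimator the result is only an asymptotic interval requiring asymptotic normality and a consistent plug-in for $Var(\hat{\theta})$ --- are correct refinements of points the paper glosses over.
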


\begin{proof}
We have the following information:
\begin{enumerate}
\item[i.] $X\sim p_X(x|\theta)$ or $f_X(x|\theta)$, so
\begin{align*}
E(\Theta)=g_1(\theta) \ \mbox{and} \ Var(\Theta)=g_2(\theta).
\end{align*}
\item[ii.] a realization $\bold{x}=(x_1,\ldots,x_n)$ with given n, computes a value of $\hat{\theta}$. Also, $Var(\Theta)=\frac{g_1(\theta)}{n}$.
\item[iii.]In $100(1-\alpha)\%$ the $\alpha$ is given (e.g. $\alpha=5\%$ or $1\%$ etc).
\item[iv.] A technique-central limit theorem.
\end{enumerate}
To compute a $100(1-\alpha)\%$ confidence interval, we have
\begin{align*}
& P(|\hat{\theta}-\theta|>\epsilon) \leq \alpha \implies P\left(|\frac{\hat{\theta}-\theta}{\sqrt{Var(\hat{\theta})}}|>\frac{\epsilon}{\sqrt{Var(\hat{\theta})}}\right) \leq \alpha \approx \\
& P\left(|Z|> \frac{\epsilon}{\sqrt{Var(\hat{\theta})}} \right) \leq \alpha \implies 1- P\left(|Z|\leq  \frac{\epsilon}{\sqrt{Var(\hat{\theta})}} \right) \geq 1-\alpha \\
&P\left(|Z|\leq \frac{\epsilon}{\sqrt{Var(\hat{\theta})}} \right) \geq 1-\alpha \implies 2\phi\left(\frac{\epsilon}{\sqrt{Var(\hat{\theta})}} \right) \geq 2-\alpha \implies \\
& \frac{\epsilon}{\sqrt{Var(\hat{\theta})}} \geq \phi^{-1}\left(1-\frac{\alpha}{2}\right) \implies \frac{\epsilon}{\sqrt{Var(\hat{\theta})}} \geq z_{1-\frac{\alpha}{2}} \implies \\
& \epsilon \geq z_{1-\frac{\alpha}{2}}\sqrt{Var(\hat{\theta})} \implies \hat{\theta} \in \left[\theta-z_{1-\frac{\alpha}{2}}\sqrt{Var(\hat{\theta})},\theta+z_{1-\frac{\alpha}{2}}\sqrt{Var(\hat{\theta})}\right].
\end{align*}
\end{proof}

\begin{example}[Confidence interval wrt $Pois(\theta)$]
Backdrop: Consider a realization  $\bold{x}=(x_1,\ldots,x_n)$ of an i.i.d. sample of size from $X\sim Pois(\theta)$ where $\theta$ is unknown. Then, both MLE and MoM estimates agreed at the sample mean as:
\begin{align*}
\hat{\theta}=\hat{\theta}_{MLE}=\hat{\theta}_{MoM}=\frac{1}{n}\sum_{i=1}^nx_i=\bar{x}.
\end{align*} 
Question: Construct an  $95\%$ confidence interval of $\hat{\theta}$ (i.e. it contains $\theta$ with probability $95\%$).

\noindent Answer: We have the following information:
\begin{enumerate}
\item[i.] $X\sim Pois(\theta)$, so 
\begin{align*}
E(X)=\theta \ \mbox{and} \ Var(X)=\theta.
\end{align*}
\item[ii.] a realization $\bold{x}=(x_1,\ldots,x_n)$ with given n, computes a value of $\hat{\theta}=\bar{x}$. Also, $Var(\hat{\Theta})=\frac{\theta}{n}$.
\item[iii.]In $100(1-\alpha)\%$ the $\alpha=5\%=0.05$. So, z-score is given by
\begin{align*}
z_{1-\frac{\alpha}{2}}=\phi^{-1}\left(a-\frac{\alpha}{2}\right)=\phi^{-1}(0.975)=1.96.
\end{align*}
\item[iv.] A technique-central limit theorem.
\end{enumerate}
Now, from theorem \eqref{thm:ci-01}, we have
\begin{align*}
& \hat{\theta} \in \left[\theta-1.96\sqrt{\frac{\theta}{n}},\theta+1.96\sqrt{\frac{\theta}{n}}\right] \equiv  \theta \in \left[\hat{\theta}-1.96\sqrt{\frac{\hat{\theta}}{n}},\hat{\theta}+1.96\sqrt{\frac{\hat{\theta}}{n}}\right] \equiv \\
&\theta \in \left[\bar{x}-1.96\sqrt{\frac{\bar{x}}{n}},\bar{x}+1.96\sqrt{\frac{\bar{x}}{n}}\right].
\end{align*}

\end{example}

%

\begin{example}
Backdrop: consider a realization $\bold{x}=(x_1,\ldots,x_n)$ of an I.I.D. sample of size n from $X\sim Ber(\theta)$ with the unknown $
\theta$ as probability of success.

\noindent Question: given $n=400$ and $\sum_{i=1}^nx_i=136$ (observed 136 successes out of 400), construct a $99\%$ confidence interval for $\theta$ (the unknown probability of success).

\noindent Answer: We have the following information:
\begin{enumerate}
\item[i.] $X\sim Ber(\theta)$, so 
\begin{align*}
E(X)=\theta \ \mbox{and} \ Var(X)=\theta(1-\theta).
\end{align*}
\item[ii.] a realization $\bold{x}=(x_1,\ldots,x_n)$ such that $\sum_{i=1}^nx_i=136$ with given $n=400$, computes a value of $\hat{\theta}=\bar{x}=\frac{136}{400}=0.34$. Also, $Var(\hat{\Theta})=\frac{\theta(1-\theta)}{n}\leq \frac{1}{4n}$.
\item[iii.]In $100(1-\alpha)\%$ the $\alpha=1\%=0.01$. So, z-score is given by
\begin{align*}
z_{1-\frac{\alpha}{2}}=\phi^{-1}(a-\frac{\alpha}{2})=\phi^{-1}(0.995)=2.576.
\end{align*}
\item[iv.] A technique-central limit theorem.
\end{enumerate}
Now, from theorem \eqref{thm:ci-01}, we have
\begin{align*}
& \hat{\theta} \in \left[\theta-2.576\sqrt{\frac{\theta}{n}},\theta+2.576\sqrt{\frac{\theta}{n}}\right] \equiv  \theta \in \left[\hat{\theta}-2.576\sqrt{\frac{\hat{\theta}(1-\hat{\theta})}{n}}, \hat{\theta} + \right. \\
& \left. 2.576\sqrt{\frac{\hat{\theta}(1-\hat{\theta})}{n}}\right] = \left[\bar{x}-2.576\sqrt{\frac{\bar{x}(1-\bar{x})}{n}},\bar{x}+2.576\sqrt{\frac{\bar{x}(1-\bar{x})}{n}}\right] =\\
& \left[0.34-2.576\cdot\sqrt{\frac{0.34\cdot 0.66}{400}}, 0.34 +2.576\cdot\sqrt{\frac{0.34\cdot 0.66}{400}} \right]=[0.28,0.40].
\end{align*}

\noindent Approach-02: Central limit theorem and an optimal bound on the variance 
\begin{align*}
Var(\hat{\Theta})=\frac{\theta(1-\theta)}{n}\leq \frac{1}{4n}.
\end{align*}
We have
\begin{align*}
0.005 &\geq P(\hat{\Theta}-\theta>\epsilon)=\frac{1}{2}P(|\hat{\Theta}-\theta|>\epsilon) \\
      &=\frac{1}{2}P\left(\left|\frac{\hat{\Theta}-\theta}{\sqrt{\theta(1-\theta)/400}} \right| >\frac{\epsilon}{\sqrt{\theta(1-\theta)/400}}\right) \\
      & \approx \frac{1}{2}P\left(|Z|>\frac{\epsilon}{\sqrt{\theta(1-\theta)/400}}\right)=P\left(Z>\frac{\epsilon}{\sqrt{\theta(1-\theta)/400}}\right)\\
      &=1-P\left(Z\leq\frac{\epsilon}{\sqrt{\theta(1-\theta)/400}}\right)=1-\phi\left(\frac{\epsilon}{\sqrt{\theta(1-\theta)/400}} \right).    
\end{align*}
So, we get
\begin{align*}
& \phi\left(\frac{\epsilon}{\sqrt{\theta(1-\theta)/400}} \right) \geq 1-0.005=0.995=\phi(2.576)=\phi(z_{0.995}) \implies \\
& \frac{\epsilon}{\sqrt{\theta(1-\theta)/400}} \geq 2.576 \ \mbox{for any} \ \theta \in (0,1) \implies 20\epsilon\cdot 4 \geq 2.576 \implies  \\
& \epsilon \geq \frac{2.576}{20}\cdot \frac{1}{4}=\frac{2.576}{20}\cdot\frac{1}{4}=0.032 \implies  \mbox{for a realization} \ \hat{\theta} =0.34, \\
& \hat{\Theta}  \in [\theta-0.032,\theta+0.032] \equiv \theta \in [0.307,0.372]\approx [0.31,0.37]. 
\end{align*}
If we repeat this process several times (getting $n=400$ sample points each time and constructing different confidence intervals), about $99\%$ of the confidence intervals we construct will contain $\theta$.
\end{example}

\begin{example}[Finding $n$ with a given confidence interval]
Backdrop: Assume a realization/people $\bold{x}=(x_1,\ldots,x_n)$ (of an I.I.D random sample of size $n$ from $X\sim Ber(p)$) vote party $A$ with probability $p$. We poll $n$ people. Let $M_n$ be the fraction/average of the polled persons for Party $A$ and defined as
\begin{align*}
M_n=\frac{1}{n}\sum_{i=1}^n X_i.
\end{align*}

\noindent Find n such that estimator $M_n$ to be within $\pm 1\%$ of the true value p with probability at least $95\%$, statistically 
\begin{align*}
P(|M_n-p|>0.01)\leq 0.05.
\end{align*}
And geometrically, can be visualized as in the figure \eqref{fig:bstpm-01}:
\begin{center}
\begin{figure}[h]
    \centering
    \includegraphics[width=0.75\textwidth]{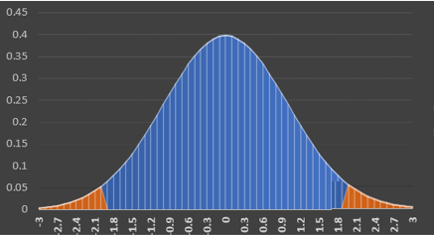}
    \caption{The distribution $M_n -p$ is symmetric, and we are looking for tail bounds.}
    \label{fig:bstpb-01}
\end{figure}
\end{center}

\noindent Answer: Since opinion of ith polled person for party A is $X_i\sim Ber(p)$ and the distribution of $M_n=\frac{1}{n}\sum_{i=1}^nX_i$ is symmetric around its mean $E(M_n)=p$ with $Var(M_n)=\frac{p(1-p)}{n}$, then, we have
\begin{align*}
0.025 &\geq P(M_n-p>0.01)=\frac{1}{2}P(|M_n-p|>0.01) \\
      &=\frac{1}{2}P\left(\left|\frac{M_n-p}{\sqrt{p(1-p)/n}} \right| >\frac{0.01}{\sqrt{p(1-p)/n}}\right) \\
      & \approx \frac{1}{2}P\left(|Z|>\frac{0.01}{\sqrt{p(1-p)/n}}\right)=P\left(Z>\frac{0.01}{\sqrt{p(1-p)/n}}\right)\\
      &=1-P\left(Z\leq\frac{0.01}{\sqrt{p(1-p)/n}}\right)=1-\phi\left(\frac{0.01}{\sqrt{p(1-p)/n}} \right)  \\
\implies & \phi\left(\frac{0.01}{\sqrt{p(1-p)/n}} \right) \geq 1-0.025=0.975=\phi(1.96)=\phi(z_{1-0.025}) \\
\implies & \frac{0.01}{\sqrt{p(1-p)/n}} \geq 1.96 \ \mbox{for any} \ p \in (0,1) \implies 0.01\sqrt{n}\cdot 4 \geq 1.96 \\
\implies & n\geq \left(\frac{1.96}{0.01}\cdot \frac{1}{4}\right)^2=9604.    
\end{align*}
In this calculation we used the approximation given by the CLT, but how good is it? If we work out the exact computation, using the fact that $M_n$ is a scaled binomial and the worse-case parameter $p=1/2$, then we get that $n=9604$ will give us a $95.1\%$ probability of being within $0.01$ of the true value p: in this case, the CLT’s approximation is very good. So if the CLT is so much better, why use Chebyshev’s inequality at all? The reason is that Chebyshev is a bound that works for all $\epsilon$ and probabilities, is always a valid upper bound but is very conservative. The CLT is often much sharper, however it only works for $\epsilon$ of appropriate scale and does not give a rock-solid bound as it is an approximation and not a bound. Is it possible to strengthen Chebyshev’s inequality to get a bound nearly as sharp as the CLT’s, under appropriate assumptions? Yes, the Chernoff inequality provides the optimal approximation.
\end{example}

\section{Hypothesis testing}
\begin{example}[A victim is not guilty]
Consider a victim is in trial for a crime in the court. Then the Judge will have following hypotheses to determine whether or not the victim is guilty (a given statement happens to be source of an alternative hypothesis) as:
\begin{align*}
H_0: & \ \mbox{the victim is not guilty} \\
H_a: & \ \mbox{the victim is guilty}.
\end{align*}
Based on evidence/observation/realization/data and its statistical analysis, the Judge come up with a conclusion that either reject $H_0$ or fail to reject $H_0$.
\end{example}

\begin{example}[Magician's fair coin]
An statistical method of hypothesis testing to prove statistically a claim by selecting a suitable hypothesis based on data. Suppose, we have a Magician Sarkar, who says
\begin{align*}
\mbox{Magician Sarkar: I have here a fair coin.}
\end{align*}
And then an audience, an Agnostic (described in \eqref{fig:agnostic-01}) statistician named Agyeya, engages him in a conversation:
\begin{enumerate}
\item[---] Agnostic statistician Agyeya: I don't believe you. Can we examine it?
\item[---] Magician Sarkar: Be my guest.
\item[---] Agnostic Statistician Agyeya: I am going to flip your coin 100 times and see how many heads I get. (Agyeya flips the coin 100 times and sees 99 heads.) You cannot be telling the truth, there's no way this coin is fair.
\item[---]Magician Sarkar: Wait I was just unlucky, I swear I am not lying.
\end{enumerate}
So let's give Sarkar the benefit of the doubt. We compute the probability that we observed an outcome at least as extreme as this, given that Sarkar isn't lying i.e. the coin is fair, so the number of heads observed should be $X\sim Bin(100,0.5)$, because there are 100 independent trials and a $50\%$ of heads since it's fair. So, the probability that we observe at least $99$ heads (because we're looking for something as least as extreme), is the sum of the probability of 99 heads and the probability of 100 heads as:
\begin{align*}
P(X\geq 99)&=\binom{100}{99}(0.5)^{99}(0.5)^1 + \binom{100}{100}(0.5)^{100}(0.5)^0 \\
           &=\frac{101}{2^{100}}\approx 7.96\times 10^{-29} \approx 0.
\end{align*}
Basically, if the coin were fair, the probability of what we just observed (99 heads or more) is basically 0. This is strong statistical evidence that the coin is NOT fair. Our assumption was that the coin is fair, but if this were the case, observing such an extreme outcome would be extremely unlikely. Hence, our assumption is probably wrong. So, this is like a Probabilistic Proof by Contradiction.
\begin{center}
\begin{figure}[h]
    \centering
    \includegraphics[width=1.0\textwidth]{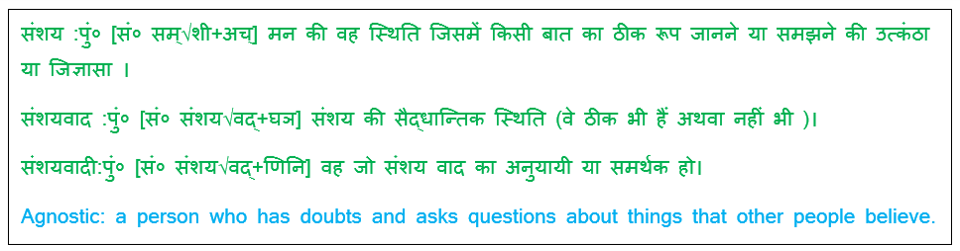}
    \caption{A self-explanatory meaning of Agnostic in Hindi with corresponding translation in English.}
    \label{fig:agnostic-01}
\end{figure}
\end{center}
\end{example}

\begin{example}[to determine a fair coin]
Consider a coin toss. Then, to determine whether the coin is fair or not (happens to be source of an alternative hypothesis), we come up with following hypotheses as:
\begin{align*}
H_0: & \ \mbox{the victim is fair i.e.} \ p=0.5 \\
H_a: & \ \mbox{the victim is not fairy i.e.} \ p\neq 0.5.
\end{align*}
Based on evidence/observation/realization/data and its statistical analysis, the we can conclude that either reject $H_0$ or fail to reject $H_0$.
\end{example}

\subsubsection{A summarized hypothesis testing procedure}
\begin{enumerate}[{Step}~1:]
\item Make a claim (e.g. like Indian food is spicy, Dirty boots belong on the table etc.)
\item Set up a null hypothesis $H_0$ and alternative hypothesis $H_a$ (based on an underlined statistical statement).
\begin{itemize}
\item Alternative hypothesis can be one-sided or two-sided.
\item The null hypothesis is usually a baseline, no effect, or benefit of the doubt.
\item The alternative is what you want to prove, and is opposite the null.
\end{itemize}
\item Choose a significance level $\alpha$ (usually $\alpha=0.05$ or $0.01$).
\item Collect data (e.g. a realization $\bold{x}=(x_1,x_2,\ldots,x_n)$ of an I.I.D. random sample of size n from $X\sim p_X(x|\theta)$ or $f_X(x|\theta)$). And come up with an estimate $\hat{\theta}$ for the unknown parameter $\theta$ (e.g. $\hat{\theta}=\bar{x}$). 
\item Compute a p-value as: 
\begin{align*}
p=P(\mbox{observing data at least as extreme as our} |H_0 \ \mbox{is true}).
\end{align*}
\item State the conclusion as:
\begin{itemize}
\item if $p<\alpha$, reject the null hypothesis $H_0$ in favour of the alternative $H_a$ (i.e. our result is statistically significant in this case).
\item if $p\geq \alpha$, fail to reject the null hypothesis $H_0$.
\end{itemize}
\end{enumerate}

\begin{example}
Backdrop: consider a realization $\bold{x}=(x_1,\ldots,x_n)$ of an I.I.D. random sample of size n from $X\sim Ber(\theta)$, where $\theta$ is the unknown probability of success.

\noindent Question: We want to determine whether or not more than $3/4$ of Indian would vote for A in 2024. In a random poll sampling $n= 137$ Indian, we collected responses $\bold{x}=(x_1,\ldots, x_n)$ (each is 1 or 0, if they would vote for him or not). We observe 131 yes responses: $\sum_{i=1}^nx_i=131$. Perform a hypothesis test and state your conclusion.

\noindent Answer:
\begin{enumerate}
\item[a.] We have 
\begin{align*}
X\sim Ber(\theta) \implies E(X)=\theta, \ Var(X)=\theta(1-\theta).
\end{align*}
\item[b.] Let p denote the true proportion of Americans that would vote for A. Then, the null and alternative hypotheses are:
\begin{align*}
& H_0:\ \mbox{Exactly $3/4$ Indian voted for party A i.e.} \ p=0.75 \\
& H_a: \ \mbox{More than $3/4$ Indian voted for party A i.e.} \ p\geq 0.75.
\end{align*}
\item[c.] Let $X_i\to$ the response of ith polled person in the poll sampling. So, we have
\begin{align*}
& \hat{\Theta}=\bar{X} \implies E(\Theta)=E(\bar{X})=\theta, \ Var(\hat{\Theta})=Var(\bar{X})=\frac{Var(X)}{n}\\
& =\frac{\theta(1-\theta)}{n}.
\end{align*}
\noindent As, we have a realization $\bold{x}=(x_1,x_2,\ldots,x_{137})$ (such that $\sum_{i=1}^{137}=131$) of an I.I.D. random sample of size $n=137$ from $X\sim Ber(x|\theta)$. So, we get
\begin{align*}
\hat{\theta}=\bar{x}=\frac{1}{n}\sum_{i=1}^n x_i=\frac{131}{137}=0.96.
\end{align*}
\item[d.] Now, test the hypothesis at the $\alpha=0.01$ significance level. Then, under the null hypothesis, we have $p=0.75$, so, the p-value (observing data at least as extreme), is given by
\begin{align*}
P(\bar{X}\geq \bar{x}) &=P\left(\bar{X}\geq \frac{131}{137}\right)=P\left(\frac{\bar{X}-0.75}{\sqrt{(0.37)^2}} \geq \frac{131/137 -0.75}{\sqrt{(0.37)^2}} \right) \\
                       & \approx P(Z\geq 5.42643) \approx 0.
\end{align*}
\item[e.] With a p-value so close to 0 (and certainly $<\alpha=0.01$, we reject the null hypothesis that (only) $75\%$ of Indian would vote for A. There is strong evidence that this proportion is actually larger.
\end{enumerate}
\noindent Again, what we did was: assume $p=0.75$ (null hypothesis), then note that the probability of observing data so extreme (in fact very close to $100\%$ of people), was nearly 0. Hence, we reject this null hypothesis because what we observed would've been so unlikely if it were true.
\end{example}

\chapter{Statistical modelling}
\label{statmod}
\section{Statistical models}
It is fortunate, however, that a large number of estimation problems can be represented by a data model in linear framework as a linear model that allows us to easily determine this estimator. Not only is the minimum variance valued (MVU) estimator immediately evident once the linear model has been identified, but in addition, the statistical performance follows naturally. The key, then, to finding the optimal estimator is in structuring the problem in the linear model form to take advantage of its unique properties.

\subsubsection{Derivation of least mean square estimator}
\begin{theorem}[Condition expectation as an LMS estimator]
The covariance between two random variables has an important bearing on the predictability of Y based on knowledge of the outcome of $X$. So, consider $Y$ is statistically related to another random variable $X$ i.e. $Y=g(X)$ and $g$ is unknown. Then, least-mean-square estimation of $Y$ is given by the conditional expectation as
\begin{align*}
\hat{Y}=E\left(Y|X \right)=\hat{g}(X).
\end{align*}
Moreover, estimation error $Y-E(Y|X)$ and the attribute $X$ are uncorrelated as:
\begin{align*}
& E(X(Y-E(Y|X)))=E(XY)-E(XY)=0=E(X)E(Y-E(Y|X))  \\
& \implies Cov(X, Y-E(Y|X))=0.
\end{align*}
Since, we have
\begin{align*}
E(XE(Y|X))&=E\left(\int_{x\in \Omega_X}xE(Y|X=x)f_X(x)dx \right)  \\
          &=\int_{y\in \Omega_Y}\int_{x\in \Omega_X}xyf_X(x)f_{Y|X}(y|x)dxdy \\
          &=\int_{(x,y)\in \Omega_X \times \Omega_Y}xyf_{X,Y}(x,y)dxdy=E(XY). 
\end{align*}
\end{theorem}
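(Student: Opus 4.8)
The plan is to establish the statement in two stages, both powered by the law of iterated expectation already proved in the excerpt. First I would show that among all (measurable) functions $h$ of $X$, the conditional expectation $\hat g(X)=E(Y|X)$ is the unique minimizer of the mean-square error $E((Y-h(X))^2)$; this is exactly what justifies calling $E(Y|X)$ the least-mean-square estimator of $Y$. Second, I would verify the orthogonality claim, namely that the residual $Y-E(Y|X)$ is uncorrelated with the attribute $X$.

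For the optimality stage, I would take an arbitrary candidate $h(X)$, insert and subtract $E(Y|X)$, and expand the square:
\begin{align*}
E((Y-h(X))^2) &= E((Y-E(Y|X)+E(Y|X)-h(X))^2) \\
&= E((Y-E(Y|X))^2) + 2E((Y-E(Y|X))(E(Y|X)-h(X))) \\
&\quad + E((E(Y|X)-h(X))^2).
\end{align*}
The crux is that the cross term vanishes. Conditioning on $X$ and pulling the $X$-measurable factor $E(Y|X)-h(X)$ out of the inner expectation leaves a factor $E(Y-E(Y|X)|X)=E(Y|X)-E(Y|X)=0$, so the middle term is zero. Hence $E((Y-h(X))^2)=E((Y-E(Y|X))^2)+E((E(Y|X)-h(X))^2)\ge E((Y-E(Y|X))^2)$, with equality precisely when $h(X)=E(Y|X)$ almost surely. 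This identifies $\hat g(X)=E(Y|X)$ as the LMS estimator.

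For the orthogonality stage, the key identity is $E(XE(Y|X))=E(XY)$, which I would obtain exactly as the excerpt indicates: by iterated expectation $E(XY)=E(E(XY|X))=E(XE(Y|X))$, since $X$ is constant given $X$ and factors out of the inner expectation. Consequently $E(X(Y-E(Y|X)))=E(XY)-E(XE(Y|X))=0$. Finally, because $E(Y-E(Y|X))=E(Y)-E(E(Y|X))=E(Y)-E(Y)=0$, both terms in $Cov(X,Y-E(Y|X))=E(X(Y-E(Y|X)))-E(X)E(Y-E(Y|X))$ vanish, giving $Cov(X,Y-E(Y|X))=0$.

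The main obstacle is not the algebra but the justification of the repeatedly used step, \emph{a function of $X$ factors out of an expectation conditioned on $X$}, together with the identity $E(Y-E(Y|X)|X)=0$; these are the defining tower and measurability properties of conditional expectation. I would therefore lean on the law of iterated expectation as already established, and handle the minimization over \emph{all} functions $h$ carefully enough to claim uniqueness (almost surely), since the statement asserts that $E(Y|X)$ is the genuinely best estimator and not merely a stationary point of the error.
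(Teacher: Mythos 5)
Your proposal is correct, and its core coincides with the second of the two arguments the paper gives. The paper's primary route is pointwise: it fixes an observation $X=x$, minimizes $E((Y-c)^2)$ over constants $c$ by differentiating and setting $-2(E(Y|X=x)-c)=0$, obtains $c=E(Y|X=x)=\hat{g}(x)$, and then lets $x$ vary to assemble the estimator $E(Y|X)$; its ``alternatively'' paragraph is exactly your global decomposition $E((Y-h(X))^2)=E((Y-E(Y|X))^2)+E((E(Y|X)-h(X))^2)$ over arbitrary functions $h$. Your write-up improves on the paper's version of that decomposition in two respects: you explicitly kill the cross term $2E((Y-E(Y|X))(E(Y|X)-h(X)))$ by conditioning on $X$ and using $E(Y-E(Y|X)\mid X)=0$, whereas the paper passes from the expanded square to the two-term sum without comment, and you record that equality holds precisely when $h(X)=E(Y|X)$ almost surely, a uniqueness point the paper leaves implicit. (Incidentally, the closing inequality in the paper's alternative argument should bound $MSE(h(X))$ below by $E(Var(Y|X))$, the MSE of the conditional expectation, rather than by $E((E(Y|X)-h(X))^2)$; your version states the bound correctly.) For the orthogonality claim, your tower-property identity $E(XY)=E(E(XY|X))=E(XE(Y|X))$ is the same fact the theorem statement establishes by the explicit double integral against $f_X(x)f_{Y|X}(y|x)$, and your observation that $E(Y-E(Y|X))=E(Y)-E(E(Y|X))=0$ completes $Cov(X,Y-E(Y|X))=0$ exactly as in the paper.
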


\begin{proof}
For one observation of $X=x$, we have
\begin{align*}
MSE=E((Y-c)^2)& =E\left(\left(Y-E(Y|X=x)+E(Y|X=x)-c\right)^2\right) \\
              & =E((Y-E(Y|X=x))^2)+(E(Y|X=x)-c)^2.
\end{align*}
To find least mean square estimate, we have
\begin{align*}
\frac{d}{dc}(MSE)&=\frac{d}{dc}\left(\left(E(Y|X=x)-c\right)^2\right)=-2\left(E(Y|X=x)-c \right)=0 \\
\implies & c=E(Y|X=x)=\hat{g}(x).
\end{align*}
That is, we have a least mean square estimate of $Y$ as
\begin{align*}
\hat{Y}=\hat{y}=E(Y|X=x)=\hat{g}(x).
\end{align*}
Then, varies for every possible observation of $X$, we have the least mean square estimator of $Y$ as:
\begin{align*}
\hat{Y}=E(Y|X)=\hat{g}(X) \ \mbox{such that} \ E\left((Y-g(X))^2 \right)\geq E\left(Y-E(Y|X) \right) \ \mbox{for any} \ g.
\end{align*}
Alternatively, consider an arbitrary function g of $X$ and define mean squared error with respect to Y as
\begin{align*}
MSE(g(X))&=E((Y-g(X))^2)=E((Y-E(Y|X) + E(Y|X)-g(X))^2) \\
         &=E\left(\left(Y-E\left(Y|X\right)\right)^2\right) +E\left(\left(E\left(Y|X\right)-g(X)\right)^2\right) \\
         &=E\left(E\left(\left(Y-E\left(Y|X\right)\right)^2|X\right)\right) +E\left(\left(E\left(Y|X\right)-g(X)\right)^2\right) \\
         &=E\left(Var\left(Y|X\right)\right) + E\left(\left(E\left(Y|X\right)-g(X)\right)^2\right) \\
         &\geq E\left(\left(E\left(Y|X\right)-g(X)\right)^2\right) \ \mbox{for any other function g of} \ X.
\end{align*}
\end{proof}
\section{Linear models}
Consider $Y$ is statistically dependent on another random variable (or vector), as a variate $X$,  in a linear framework as 
\begin{align*}
E(Y|X)=\theta_0+\theta_1X.
\end{align*}

\begin{theorem}[Computation of parameters for a linear model]\label{thm:comp-par-01}
Consider $Y$ is statistically dependent on another random variable (or vector), as a variate $X$,  in a linear framework as 
\begin{align*}
E(Y|X)=\theta_0+\theta_1X.
\end{align*}
Then, the parameters can be computed as
\begin{align*}
\theta_1=\left(Var(X) \right)^{-1}Cov(X,Y), \ \theta_0=E(Y)- \left(Var(X) \right)^{-1}Cov(X,Y)E(X).
\end{align*}
So,
\begin{align*}
& \hat{Y}=E(Y|X)=E(Y)+\left(Var(X) \right)^{-1}Cov(X,Y)(X-E(X)) \equiv \\
& Y=E(Y|X)+\epsilon=E(Y)+\left(Var(X) \right)^{-1}Cov(X,Y)(X-E(X))+\epsilon.
\end{align*}
\end{theorem}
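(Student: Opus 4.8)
The plan is to exploit the two moment conditions that any linear conditional expectation must satisfy, and then solve the resulting $2\times 2$ linear system for $\theta_0$ and $\theta_1$. The starting point is the modelling assumption $E(Y|X)=\theta_0+\theta_1 X$ itself, together with two tools already established in the excerpt: the law of iterated expectation, and the identity $E(XE(Y|X))=E(XY)$ proved in the preceding theorem on conditional expectation as an LMS estimator. Throughout I would treat $E(X)$, $E(Y)$, $E(X^2)$, $E(XY)$ as known scalar moments and assume $Var(X)\neq 0$ so that $(Var(X))^{-1}$ makes sense.

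First I would take the expectation of both sides of $E(Y|X)=\theta_0+\theta_1 X$. By the law of iterated expectation the left-hand side is $E(E(Y|X))=E(Y)$, so the first equation reads $E(Y)=\theta_0+\theta_1 E(X)$. Next I would multiply the modelling equation by $X$ and take expectations; the left-hand side becomes $E(XE(Y|X))=E(XY)$ by the cited identity, while the right-hand side is $\theta_0 E(X)+\theta_1 E(X^2)$. This yields the second equation $E(XY)=\theta_0 E(X)+\theta_1 E(X^2)$.

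Second, I would solve this linear system. Eliminating $\theta_0$ by substituting $\theta_0=E(Y)-\theta_1 E(X)$ from the first equation into the second collapses everything into $E(XY)-E(X)E(Y)=\theta_1\left(E(X^2)-(E(X))^2\right)$. Recognising the left side as $Cov(X,Y)$ and the right-hand factor as $Var(X)$ gives $\theta_1=(Var(X))^{-1}Cov(X,Y)$, and back-substitution returns $\theta_0=E(Y)-(Var(X))^{-1}Cov(X,Y)E(X)$. Finally, inserting these into $E(Y|X)=\theta_0+\theta_1 X$ and grouping the $X$-terms produces the centred form $\hat{Y}=E(Y)+(Var(X))^{-1}Cov(X,Y)(X-E(X))$, with the stochastic-model version $Y=E(Y|X)+\epsilon$ obtained by adding the residual $\epsilon=Y-E(Y|X)$.

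The only genuine subtlety — and hence the step I would watch most carefully — is the justification of $E(XE(Y|X))=E(XY)$, which is precisely where the conditioning structure enters; fortunately this is exactly the orthogonality identity established in the previous theorem, so I can invoke it directly rather than re-derive it. Everything else is elementary algebra on scalar moments, and the invertibility of $Var(X)$ is what legitimises writing $(Var(X))^{-1}$; in the vector-valued case this would become the requirement that the covariance matrix of $X$ be nonsingular, but the scalar reasoning is otherwise unchanged.
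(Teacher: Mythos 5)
Your proposal is correct and follows essentially the same route as the paper's proof: both take the expectation of $E(Y|X)=\theta_0+\theta_1X$ to get $E(Y)=\theta_0+\theta_1E(X)$, multiply by $X$ and use $E(XE(Y|X))=E(XY)$ to get the second moment equation, and solve the resulting $2\times 2$ system to identify $\theta_1=(Var(X))^{-1}Cov(X,Y)$ and $\theta_0=E(Y)-\theta_1E(X)$. Your explicit invocation of the orthogonality identity from the preceding LMS theorem and the nondegeneracy condition $Var(X)\neq 0$ merely makes precise what the paper's computation uses implicitly.
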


\begin{proof}
We have
\begin{align*}
& \hat{Y} =E(Y|X)=aX+b \implies Y=aX+b+\epsilon \implies E(Y)=aE(X)+b \implies \\
& E\left(E(X)E(Y|X) \right) =a\left(E(X) \right)^2 +bE(X), \ E(XY)=aE(X^2)+bE(X) \implies \\
& a=\left(E(X^2)-(E(X))^2\right)^{-1}(E(XY)-E(X)E(Y)), \ b=E(Y)-aE(X) \\
& \implies \hat{Y}=E(Y|X)=E(Y)+\left(Var(X) \right)^{-1}Cov(X,Y)(X-E(X)).
\end{align*}
\end{proof}

\subsection{Simple univariate linear model (SULM)}
A linear model for regression assumes that the LMS estimate of $Y=f(X)$ as a conditional expectation/regression function $E(Y|X)$ is
linear in the inputs $X_1,\ldots,X_d$. Linear models were largely developed in the pre-computer age of statistics, but even in today's computer era there are still good reasons to study and use them. They are simple and often provide an adequate and interpretable description of how the inputs affect the output. For prediction purposes they can sometimes outperform fancier non-linear models, especially in situations with small numbers of training cases, low signal-to-noise ratio or sparse data. Finally, linear methods can be
applied to transformations of the inputs to higher dimensional embedded spaces using suitable basis-function approaches.

\subsubsection{Computation of parameters for SULM}
\begin{theorem}[Univariate linear model]
Consider $Y$ is statistically dependent on another random variable $X$ in linear framework as 
\begin{align*}
E(Y|X)=\theta_0+\theta_1X.
\end{align*}
Then, the parameters can be computed as
\begin{align*}
\theta_1=\frac{Cov(X,Y)}{Var(X)}, \ \theta_0=E(Y)- \frac{Cov(X,Y)}{Var(X)}E(X).
\end{align*}
So,
\begin{align*}
& \hat{Y}=E(Y|X)=E(Y)+\frac{Cov(X,Y)}{Var(X)}(X-E(X)) \equiv \\
& Y=E(Y|X)+\epsilon=E(Y)+\frac{Cov(X,Y)}{Var(X)}(X-E(X))+\epsilon.
\end{align*}
\end{theorem}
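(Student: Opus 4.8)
The plan is to recognize this univariate statement as the scalar specialization of Theorem \ref{thm:comp-par-01}, in which the (possibly matrix-valued) quantity $Var(X)$ collapses to an ordinary positive number and the inverse $\left(Var(X)\right)^{-1}$ becomes the reciprocal $1/Var(X)$. Rather than merely invoking that theorem, I would reprove it directly in the scalar setting, since the argument is short and ordinary division replaces the matrix inversion cleanly. The non-degeneracy condition $Var(X)>0$ is what is needed for that division to be legitimate, and I would flag it explicitly.

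First I would impose the modeling assumption $E(Y\mid X)=\theta_0+\theta_1 X$ and introduce the residual $\epsilon = Y - E(Y\mid X)$, which gives the representation $Y=\theta_0+\theta_1 X+\epsilon$. The two facts I need about $\epsilon$ are $E(\epsilon)=0$ and $E(X\epsilon)=0$: the error has mean zero and is uncorrelated with $X$. These are precisely the orthogonality properties of the conditional expectation established in the least-mean-square theorem above (where it is shown that $Cov(X, Y-E(Y\mid X))=0$), so I would cite that result rather than re-derive it.

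Next I would extract two scalar equations by taking expectations. Applying $E(\cdot)$ directly to $Y=\theta_0+\theta_1 X+\epsilon$ yields $E(Y)=\theta_0+\theta_1 E(X)$. Multiplying the same identity through by $X$ and then taking expectations, and using $E(X\epsilon)=0$, yields $E(XY)=\theta_0 E(X)+\theta_1 E(X^2)$. This is a $2\times 2$ linear system in the unknowns $(\theta_0,\theta_1)$. Eliminating $\theta_0$ from the first equation and substituting into the second gives $\theta_1\bigl(E(X^2)-(E(X))^2\bigr)=E(XY)-E(X)E(Y)$, that is, $\theta_1\, Var(X)=Cov(X,Y)$, whence $\theta_1=Cov(X,Y)/Var(X)$ and then $\theta_0=E(Y)-\theta_1 E(X)$. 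Back-substituting into $\hat{Y}=\theta_0+\theta_1 X$ and regrouping around $E(X)$ produces the centered form $\hat{Y}=E(Y)+\frac{Cov(X,Y)}{Var(X)}\bigl(X-E(X)\bigr)$.

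There is no substantial obstacle here, since the computation is entirely routine once the moment conditions are in hand; the \emph{content} of the proof lives in the single step $E(X\epsilon)=0$. I would therefore spend my care on justifying that orthogonality and on noting that the linearity hypothesis $E(Y\mid X)=\theta_0+\theta_1 X$ is what makes the least-squares solution coincide with the true conditional expectation, rather than being only a best linear approximation to it.
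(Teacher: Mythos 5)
Your proposal is correct and follows essentially the same route as the paper's own proof: both derive the two moment equations $E(Y)=\theta_0+\theta_1E(X)$ and $E(XY)=\theta_0E(X)+\theta_1E(X^2)$ and solve the resulting $2\times 2$ system, the paper obtaining the second equation implicitly from $E\bigl(XE(Y|X)\bigr)=E(XY)$ where you make the underlying orthogonality $E(X\epsilon)=0$ explicit by citing the earlier least-mean-square theorem. Your added care in flagging $Var(X)>0$ and the role of the linearity hypothesis is a welcome refinement but does not change the argument.
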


\begin{proof}
We have
\begin{align*}
& \hat{Y} =E(Y|X)=aX+b \implies Y=aX+b+\epsilon \implies E(Y)=aE(X)+b \implies \\
& E\left(E(X)E(Y|X) \right) =a\left(E(X) \right)^2 +bE(X), \ E(XY)=aE(X^2)+bE(X) \implies \\
& a=\frac{E(XY)-E(X)E(Y)}{E(X^2)-(E(X))^2}=\frac{Cov(X,Y)}{Var(X)}, \ b=E(Y)-\frac{Cov(X,Y)}{Var(X)}E(X) \\
& \implies \hat{Y}=E(Y|X)=E(Y)+\frac{Cov(X,Y)}{Var(X)}(X-E(X)).
\end{align*}
\end{proof}

\begin{example}[Signal detection]
Question: Consider a noisy observation $Y=S+E$ of a signal $S$, where the signal and noisy components are uncorrelated i.e. $Cov(S,E)=0$. Let $Var(S)=\sigma_S^2$ and $Var(E)=\sigma_E^2$. Then, find an LMS estimate of the signal.

\noindent Answer: Using the above derivation for linear model, we get the conditional expectation of the signal $S$ conditional upon the observation $Y$ as
\begin{align*}
E(S|Y)& =E(S)+\frac{Cov(S,Y)}{Var(Y)}(Y-E(Y)) \\
      & =E(S)+\frac{\sigma_S^2}{\sigma_S^2+\sigma_E^2}(Y-E(Y)).
\end{align*}
\end{example}

\begin{example}[\citep{BT08}]
Question: Let $\Theta \sim \mathcal{U}(4,10)$ and we observe $\Theta$ with some noise $W\sim \mathcal{U}(-1,1)$ independent of $\Theta$. That is we observe the experimental value of the random variable $Y=\Theta +W$. Find the least square estimate of $\Theta$.

\noindent Answer: As, $\Theta \sim \mathcal{U}(4,10)$, noise $W\sim \mathcal{U}(-1,1)$  are both are independent. So, conditioned on $\Theta =\theta$, X is the same as $\theta +W$, and is uniformly distributed over the interval $[\theta-1,\theta+1]$. Thus, joint PDF is given by
\begin{align*}
f_{\Theta,X}(\theta,x)=f_{\Theta}(\theta)f_{X|\Theta}(x|\theta)=\left\lbrace\begin{array}{cc}
\frac{1}{6}\cdot \frac{1}{2} & \ \theta \in [4,6], x\in [\theta-1,\theta+1] \\ 
0, & \ \mbox{otherwise} \ .
\end{array} \right.
\end{align*}
The slanted rectangle in the right-hand side of figure \eqref{fig:lmsud-10} is the set of ordered-pairs $(\theta,x)$ for which $f_{\Theta,X}(\theta,x)$ is non-zero. The optimal estimate $E(\Theta|X=x)$ is the midpoint of that section. It depends on the observed value x and hence (a piecewise linear) function of x.
\begin{center}
\begin{figure}[h]
    \centering
    \includegraphics[width=1.0\textwidth]{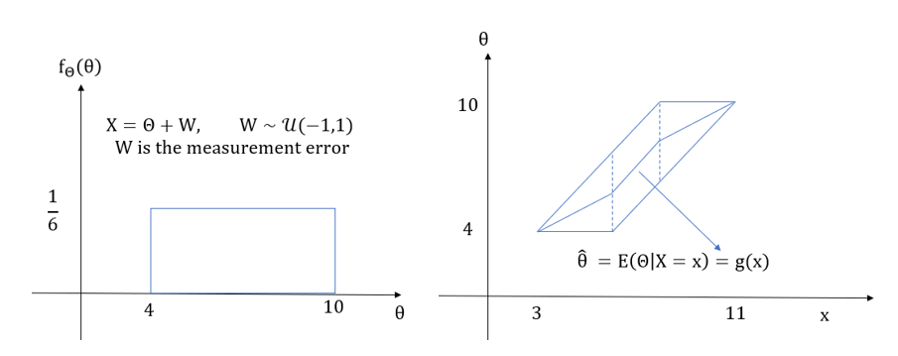}
    \caption{Uniform distribution in left plot and least mean squared estimate $E(\Theta|X=x)$ of $\Theta$ in the right plot.}
    \label{fig:lmsud-10}
\end{figure}
\end{center}
\end{example}

\subsubsection{Estimation of parameters for SULM}
\begin{theorem}[Least squared estimate of parameter of a SULM]
Backdrop: Consider a realization $\bold{z}=(z_1,\ldots, z_n)$ of an i.i.d. random sample of size n i.e. $(Z_1,\ldots,Z_n) \to$ n I.I.D. copies of a joint random variable $Z=(X,Y)$ with unknown linear model $\hat{Y}=E(Y|X)= \theta_0 +\theta_1 X \equiv Y=E(Y|X)+\epsilon=\theta_0 +\theta_1X$. Now, running the linear model over n training examples namely in the data-set $\mathcal{S}=\left\lbrace z_i=( x_i,y_i)\right\rbrace_{i=1}^n$ as:
\begin{align*}
& y_i=\theta_0+\theta_1x_i +\epsilon_i \ \mbox{for} \ i=1,2,\ldots,n \implies 
\left(\begin{array}{cccc}
y_1 \\
y_2 \\
\vdots \\
y_n
\end{array} \right)
=
\left(\begin{array}{cccc}
\theta_0 + \theta_1x_1 \\
\theta_0 + \theta_1x_2 \\
\vdots +  \vdots \\
\theta_0 + \theta_1x_n
\end{array} \right)
\\
& + \left(\begin{array}{ccc}
\epsilon_1 \\
\epsilon_2 \\
\vdots \\
\epsilon_n
\end{array} \right)
\implies 
\left(\begin{array}{cccc}
y_1 \\
y_2 \\
\vdots \\
y_n
\end{array} \right)
=
\left(\begin{array}{cccc}
1 & x_1 \\
1 & x_2 \\
\vdots &  \vdots \\
1 & x_n
\end{array} \right)
\left(\begin{array}{cc}
\theta_0 \\
\theta_1
\end{array} \right)
+ 
\left(\begin{array}{ccc}
\epsilon_1 \\
\epsilon_2 \\
\vdots \\
\epsilon_n
\end{array} \right)
\implies \\
& \bold{y}=\mathrm{X}\theta+\epsilon, \ \mbox{where} \ \bold{x}, \bold{y}, \epsilon, \bold{1} \in \mathbb{R}^n, \mathrm{X}=\left(\bold{1},\bold{x}\right) \in \mathbb{R}^{n\times 2}, \theta =\left(\begin{array}{cc}
\theta_0 \\
\theta_1
\end{array} \right) \in \mathbb{R}^2. 
\end{align*}

\noindent So, running the linear model over n training examples $\mathcal{S}=\left\lbrace z_i=( x_i,y_i)\right\rbrace_{i=1}^n$ transform to a linear inverse problem (of the linear model) as: $\bold{y}=\mathrm{X}\theta +\epsilon$ (a general form of the linear model), where
\begin{enumerate}
\item[a.] $\bold{y}$ is called the response such that $E(\bold{y})=\mathrm{X}\theta$.
\item[b.] $\bold{x}$ is called the explanatory variable, the regressor, or the covariates.
\item[c.] $\mathrm{X}$ is called the design matrix.
\item[d.] A least square estimate of $\theta$: 
\begin{align*}
\hat{\theta}=\mathrm{X}^{\dagger}\bold{y} \equiv \arg\min_{\theta}\left(\|\bold{y}-\mathrm{X}\theta\|^2 \right) = \left(\mathrm{X}^{\top}\mathrm{X}\right)^{-1}\mathrm{X}^{\top}\bold{y}= \left(\begin{array}{cc}
\hat{\theta}_0 \\
\hat{\theta}_1
\end{array} \right).
\end{align*}
\end{enumerate}
And, the MoM estimate of the parameters are given by
\begin{align*}
\hat{\theta}_1=\frac{\sum_{i=1}^n (x_i-\bar{x})(y_i-\bar{y})}{\sum_{i=1}^n(x_i-\bar{x})^2}, \ \hat{\theta}_0=\bar{y}-\hat{\theta}_1\bar{x}.
\end{align*}
Finally, we get the best linear predictor of Y as:
\begin{align*}
\hat{Y}=\hat{\theta}_0+\hat{\theta}_1X.
\end{align*}
\end{theorem}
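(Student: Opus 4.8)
The plan is to treat the theorem in two stages: first verify that the matrix reformulation $\mathbf{y} = \mathrm{X}\theta + \epsilon$ faithfully encodes the $n$ scalar equations $y_i = \theta_0 + \theta_1 x_i + \epsilon_i$ (pure bookkeeping, since stacking the rows and reading off $\mathrm{X} = (\mathbf{1}, \mathbf{x})$ is immediate), and then solve the optimization $\hat\theta = \arg\min_\theta \|\mathbf{y} - \mathrm{X}\theta\|^2$. For the optimization I would work directly with the scalar objective $J(\theta_0,\theta_1) = \sum_{i=1}^n (y_i - \theta_0 - \theta_1 x_i)^2$, which is the expanded form of $\|\mathbf{y} - \mathrm{X}\theta\|^2$. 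Setting the two partial derivatives $\partial J/\partial\theta_0$ and $\partial J/\partial\theta_1$ to zero yields the normal equations; in matrix form these read $\mathrm{X}^\top \mathrm{X}\,\theta = \mathrm{X}^\top \mathbf{y}$, so $\hat\theta = (\mathrm{X}^\top \mathrm{X})^{-1}\mathrm{X}^\top \mathbf{y} = \mathrm{X}^\dagger \mathbf{y}$ once invertibility is secured. This is exactly the claimed least-squares expression.

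Next I would carry out the $2\times 2$ computation explicitly. With $\mathrm{X} = (\mathbf{1}, \mathbf{x})$ one has
\begin{align*}
\mathrm{X}^\top \mathrm{X} = \begin{pmatrix} n & \sum_i x_i \\ \sum_i x_i & \sum_i x_i^2 \end{pmatrix}, \quad \mathrm{X}^\top \mathbf{y} = \begin{pmatrix} \sum_i y_i \\ \sum_i x_i y_i \end{pmatrix},
\end{align*}
with $\det(\mathrm{X}^\top \mathrm{X}) = n\sum_i x_i^2 - (\sum_i x_i)^2 = n\sum_i (x_i - \bar x)^2$. Inverting the $2\times 2$ matrix by the adjugate formula and multiplying by $\mathrm{X}^\top \mathbf{y}$, then introducing the centering identities $\sum_i x_i y_i - n\bar x \bar y = \sum_i (x_i - \bar x)(y_i - \bar y)$ and $\sum_i x_i^2 - n\bar x^2 = \sum_i (x_i - \bar x)^2$, collapses the $\hat\theta_1$ coordinate to the claimed ratio of centered moments and the $\hat\theta_0$ coordinate to $\bar y - \hat\theta_1 \bar x$. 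This simultaneously shows the least-squares estimate coincides with the stated method-of-moments estimate, because the first normal equation $\partial J/\partial\theta_0 = 0$ is precisely $\bar y = \theta_0 + \theta_1 \bar x$.

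The main obstacle — and the one genuine hypothesis that must be flagged — is the invertibility of $\mathrm{X}^\top \mathrm{X}$. Its determinant $n\sum_i (x_i - \bar x)^2$ vanishes precisely when all the $x_i$ are equal, in which case the covariate carries no variation and $\theta_1$ is unidentifiable; I would state the nondegeneracy assumption that the $x_i$ are not all identical, guaranteeing $\sum_i (x_i - \bar x)^2 > 0$. Beyond that, I would confirm that the critical point is the global minimizer rather than a saddle by noting that $J$ is a convex quadratic whose Hessian is $2\mathrm{X}^\top \mathrm{X}$, positive definite under the same nondegeneracy condition, so its unique stationary point is indeed the minimizer. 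The remaining manipulations are routine algebra, so the convexity argument together with the invertibility caveat are the only conceptual points requiring care.
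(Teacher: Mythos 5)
Your proposal is correct, and it is in fact more complete than what the paper offers: the paper never proves this theorem at all. Its ``proof'' content is entirely embedded in the statement itself --- the stacking of the $n$ scalar equations into $\mathbf{y}=\mathrm{X}\theta+\epsilon$ followed by a bare assertion that $\hat{\theta}=\mathrm{X}^{\dagger}\mathbf{y}=(\mathrm{X}^{\top}\mathrm{X})^{-1}\mathrm{X}^{\top}\mathbf{y}$ --- and the centered-moment formulas for $\hat{\theta}_0,\hat{\theta}_1$ are simply restated, again without proof, in a separate ``Method of moments'' theorem a few lines later. Your route supplies exactly the missing middle: the normal equations $\mathrm{X}^{\top}\mathrm{X}\theta=\mathrm{X}^{\top}\mathbf{y}$ from $\partial J/\partial\theta_0=\partial J/\partial\theta_1=0$, the explicit $2\times 2$ inversion with $\det(\mathrm{X}^{\top}\mathrm{X})=n\sum_i(x_i-\bar{x})^2$, and the centering identities that collapse the solution to $\hat{\theta}_1=\sum_i(x_i-\bar{x})(y_i-\bar{y})/\sum_i(x_i-\bar{x})^2$ and $\hat{\theta}_0=\bar{y}-\hat{\theta}_1\bar{x}$. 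This last step is a genuine contribution relative to the paper, since it \emph{proves} the coincidence of the least-squares and MoM estimates that the paper merely juxtaposes (the observation that the first normal equation is $\bar{y}=\theta_0+\theta_1\bar{x}$ is the clean way to see it). Your two caveats are also well placed and absent from the paper at this point: the nondegeneracy hypothesis that the $x_i$ are not all equal is what makes $(\mathrm{X}^{\top}\mathrm{X})^{-1}$ (and hence the identity $\mathrm{X}^{\dagger}=(\mathrm{X}^{\top}\mathrm{X})^{-1}\mathrm{X}^{\top}$) legitimate, and it corresponds to the full-column-rank assumption the paper only introduces later in its Gauss--Markov theorem; the Hessian $2\mathrm{X}^{\top}\mathrm{X}\succ 0$ argument is what upgrades the stationary point to the unique global minimizer, which the $\arg\min$ notation in the statement silently presumes.
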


\begin{example}
Backdrop: Consider a realization $\bold{z}=(z_1,\ldots, z_n)$ of an i.i.d. random sample of size n i.e. $(Z_1,\ldots,Z_n) \to$ n I.I.D. copies of a joint random variable $Z=(X,Y)$ with $Y=E(Y|X)+\epsilon = \theta_0 +\theta_1 X +\epsilon$.

\noindent Question: Students in a probability course claimed that doing the assignments had not helped prepare them for the end-term exam. The end-term exam score y and assignment score x (averaged up to the time of the end-term) for the $n=18$ students in the class are in the table below in table \eqref{tab:pesulm-01}:


\begin{table}[htb]
\centering 
\caption{The exam score y and the homework score x averaged upto the time of midterm for 18 students.}
\scalebox{0.80}{
\begin{tabular}{| c | c | c | c | c | c | c | c | c | c | c | c | c | c | c | c | c | c | c | }
\hline
x & 96 & 77 & 0 & 0 & 78 & 64 & 89 & 47 & 90 & 93 & 18 & 86 & 0 & 30 & 59 & 77 & 74 & 67 \\
\hline 
y & 95 & 80 & 0 & 0 & 79 & 77 & 72 & 66 & 98 & 90 & 0 & 95 & 35 & 50 & 72 & 55 & 75 & 66 \\
\hline
\end{tabular}\label{tab:pesulm-01}}
\end{table}

\noindent Answer: consider a linear model $\hat{Y}=E(Y|X)= \theta_0 +\theta_1 X \equiv Y=E(Y|X)+\epsilon=\theta_0 +\theta_1X$, and running over $n=18$ training examples namely in the data-set $\mathcal{S}=\left\lbrace x_i,y_i)\right\rbrace_{i=1}^{18}$ as
\begin{align*}
& y_i=\theta_0+\theta_1x_i +\epsilon_i \ \mbox{for} \ i=1,2,\ldots,18 \implies 
\left(\begin{array}{cccc}
y_1 \\
y_2 \\
\vdots \\
y_n
\end{array} \right)
=
\left(\begin{array}{cccc}
\theta_0 + \theta_1x_1 \\
\theta_0 + \theta_1x_2 \\
\vdots +  \vdots \\
\theta_0 + \theta_1x_n
\end{array} \right)
\\
& + \left(\begin{array}{ccc}
\epsilon_1 \\
\epsilon_2 \\
\vdots \\
\epsilon_n
\end{array} \right)
\implies 
\left(\begin{array}{cccc}
95 \\
80 \\
\vdots \\
66
\end{array} \right)
=
\left(\begin{array}{cccc}
1 & 96 \\
1 & 77 \\
\vdots &  \vdots \\
1 & 67
\end{array} \right)
\left(\begin{array}{cc}
\theta_0 \\
\theta_1
\end{array} \right)
+ 
\left(\begin{array}{ccc}
\epsilon_1 \\
\epsilon_2 \\
\vdots \\
\epsilon_n
\end{array} \right)
\implies \\
& \bold{y}=\mathrm{X}\theta+\epsilon, \ \mbox{where} \ \bold{x}, \bold{y}, \epsilon, \bold{1} \in \mathbb{R}^{18}, \mathrm{X}=\left(\bold{1},\bold{x}\right) \in \mathbb{R}^{18\times 2}, \theta =\left(\begin{array}{cc}
\theta_0 \\
\theta_1
\end{array} \right) \in \mathbb{R}^2. 
\end{align*}
Furthermore, we have
\begin{align*}
& \mathrm{X}^{\top}\mathrm{X}=\left(\begin{array}{cc}
18 & 1045 \\
1045 & 80199
\end{array} \right)
\implies
\left(\mathrm{X}^{\top}\mathrm{X} \right)^{-1}=
\left(\begin{array}{cc}
0.228 & -0.003 \\
-0.003 & 0.00005
\end{array} \right)
, \\
& \mathrm{X}^{\top}\bold{y}=\left(\begin{array}{cc}
1105 \\
8195
\end{array} \right)
\implies
\hat{\theta}=
\left(\begin{array}{cc}
\hat{\theta}_0 \\
\hat{\theta}_1
\end{array} \right)=
\left(\mathrm{X}^{\top}\mathrm{X}\right)^{-1}\mathrm{X}^{\top}\bold{y}=
\left(\begin{array}{cc}
10.75 \\
0.8726
\end{array} \right)
.
\end{align*}

\noindent So, $\bold{y}=\mathrm{X}\theta +\epsilon$ is the general form of a linear model, where
\begin{enumerate}
\item[a.] $\bold{y}$ is called the response such that $E(\bold{y})=\mathrm{X}\theta$.
\item[b.] $\bold{x}$ is called the explanatory variable, the regressor, or the covariates.
\item[c.] $\mathrm{X}$ is called the design matrix.
\item[d.] A least square estimate of $\theta$: 
\begin{align*}
\hat{\theta}=\left(\mathrm{X}^{\top}\mathrm{X}\right)^{-1}\mathrm{X}^{\top}\bold{y}=(\hat{\theta}_0,\hat{\theta}_1)^{\top}=(10.73, 0.8726)^{\top}.
\end{align*}
\end{enumerate}
Alternatively, using method of moments estimate of the parameters, we get
\begin{align*}
\hat{\theta}_1=\frac{\sum_{i=1}^n (x_i-\bar{x})(y_i-\bar{y})}{\sum_{i=1}^n(x_i-\bar{x})^2}\approx 0.8726, \ \hat{\theta}_0=\bar{y}-\hat{\theta}_1\bar{x}\approx 10.73.
\end{align*}
\begin{center}
\begin{figure}[h]
    \centering
    \includegraphics[width=1.0\textwidth]{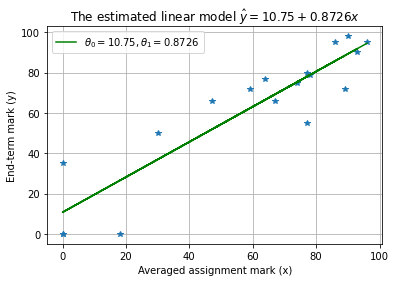}
    \caption{The estimated linear model $\hat{y}=10.73+0.8726x$ that can predict the end-term exam mark for a student with a given averaged assignment mark.}
    \label{fig:peulm-12}
\end{figure}
\end{center}
\end{example}

\begin{theorem}
Least squared estimate of parameters of a simple univariate linear models, under the Gaussian noise, is the maximum likelihood estimate of the parameters of the SULM.
\end{theorem}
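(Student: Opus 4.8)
The plan is to show that, under the assumption that the noise terms $\epsilon_i$ are independent and identically distributed Gaussian random variables with mean zero and common variance $\sigma^2$, maximizing the likelihood of the observed responses $\bold{y}$ reduces exactly to minimizing the residual sum of squares $\|\bold{y}-\mathrm{X}\theta\|^2$, whose minimizer is the least squares estimate $\hat{\theta}=(\mathrm{X}^{\top}\mathrm{X})^{-1}\mathrm{X}^{\top}\bold{y}$ established in the preceding theorem. First I would set up the probabilistic model: since $y_i=\theta_0+\theta_1 x_i+\epsilon_i$ with $\epsilon_i\sim N(0,\sigma^2)$, each response satisfies $y_i\mid x_i \sim N(\theta_0+\theta_1 x_i,\sigma^2)$, so the conditional density of the $i$-th observation is
\begin{align*}
f(y_i\mid x_i;\theta,\sigma^2)=\frac{1}{\sqrt{2\pi\sigma^2}}\exp\left(-\frac{(y_i-\theta_0-\theta_1 x_i)^2}{2\sigma^2}\right).
\end{align*}

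The next step is to form the likelihood. Because the $\epsilon_i$ (and hence the $y_i$ conditioned on the regressors) are independent, the joint likelihood factorizes as a product, and I would then pass to the log-likelihood, exactly as justified in the earlier remark on likelihood versus log-likelihood: since $\ln(\cdot)$ is monotone increasing it preserves the $\arg\max$. This gives
\begin{align*}
\ln L(\theta,\sigma^2\mid \bold{y})=-\frac{n}{2}\ln(2\pi\sigma^2)-\frac{1}{2\sigma^2}\sum_{i=1}^n\left(y_i-\theta_0-\theta_1 x_i\right)^2.
\end{align*}
The key observation is that the only term depending on $\theta=(\theta_0,\theta_1)^{\top}$ is the sum of squared residuals, entering with a negative coefficient $-\tfrac{1}{2\sigma^2}<0$. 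Hence, for any fixed $\sigma^2>0$, maximizing the log-likelihood over $\theta$ is equivalent to minimizing $\sum_{i=1}^n(y_i-\theta_0-\theta_1 x_i)^2=\|\bold{y}-\mathrm{X}\theta\|^2$.

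From here I would invoke the least squares theorem directly: the minimizer of $\|\bold{y}-\mathrm{X}\theta\|^2$ is $\hat{\theta}=(\mathrm{X}^{\top}\mathrm{X})^{-1}\mathrm{X}^{\top}\bold{y}$, so this same $\hat{\theta}$ maximizes the likelihood, giving $\hat{\theta}_{MLE}=\hat{\theta}_{LSE}$. To complete the argument cleanly I would note that the maximization over $\theta$ does not interact with $\sigma^2$: one profiles out $\theta$ first (its optimizer is independent of $\sigma^2$), and only afterward optimizes $\sigma^2$ by setting $\partial(\ln L)/\partial\sigma^2=0$, yielding $\hat{\sigma}^2=\tfrac{1}{n}\|\bold{y}-\mathrm{X}\hat{\theta}\|^2$; this confirms the joint maximizer genuinely has the claimed $\theta$-component.

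The main obstacle is not any hard calculation but a point of rigor: I must justify the separability of the optimization, i.e.\ that $\arg\max_{\theta}\ln L$ is the same for every admissible $\sigma^2$, so that the joint MLE over $(\theta,\sigma^2)$ restricts to the least squares solution in its $\theta$-coordinate. The cleanest way to handle this is to observe that $\sigma^2$ appears only as the positive multiplier $\tfrac{1}{2\sigma^2}$ on the residual term, and scaling an objective by a positive constant never changes its $\arg\min$; therefore the profiling argument is valid and the identification is exact rather than merely asymptotic. A secondary point worth stating, though routine, is that $\mathrm{X}^{\top}\mathrm{X}$ must be invertible (equivalently the regressor values $x_i$ are not all equal), which is the same non-degeneracy condition already implicit in the least squares theorem.
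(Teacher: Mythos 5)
Your proposal is correct and complete. The paper states this theorem with no proof at all, so there is nothing to compare against; your argument --- factorizing the i.i.d.\ Gaussian likelihood, passing to the log-likelihood $-\tfrac{n}{2}\ln(2\pi\sigma^2)-\tfrac{1}{2\sigma^2}\sum_{i=1}^n(y_i-\theta_0-\theta_1x_i)^2$, observing that $\theta$ enters only through the residual sum of squares with the negative multiplier $-\tfrac{1}{2\sigma^2}$, and then profiling out $\sigma^2$ --- is exactly the standard derivation, and your explicit handling of the separability of the joint optimization over $(\theta,\sigma^2)$ together with the invertibility condition on $\mathrm{X}^{\top}\mathrm{X}$ supplies the rigor the surrounding text leaves implicit.
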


\begin{theorem}
Method of moments estimate of parameters of a simple univariate linear model is given by
\begin{align*}
\hat{\theta}_1=\frac{\sum_{i=1}^n (x_i-\bar{x})(y_i-\bar{y})}{\sum_{i=1}^n(x_i-\bar{x})^2}, \ \hat{\theta}_0=\bar{y}-\hat{\theta}_1\bar{x}.
\end{align*}
\end{theorem}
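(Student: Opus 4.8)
The final theorem asks to derive the Method of Moments (MoM) estimates of the parameters of a simple univariate linear model, namely
\begin{align*}
\hat{\theta}_1=\frac{\sum_{i=1}^n (x_i-\bar{x})(y_i-\bar{y})}{\sum_{i=1}^n(x_i-\bar{x})^2}, \ \hat{\theta}_0=\bar{y}-\hat{\theta}_1\bar{x}.
\end{align*}
The plan is to mirror the population-level derivation already carried out in Theorem~\ref{thm:comp-par-01} (Computation of parameters for a linear model) and its SULM specialization, but now replacing population moments with their sample counterparts as prescribed by the method of moments (Definition~\ref{def-ksm-02}). The starting point is the model assumption $Y=\theta_0+\theta_1 X+\epsilon$ together with the standard moment conditions that identify the parameters, namely that $\epsilon$ has mean zero and is uncorrelated with $X$. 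These two conditions translate into two population equations: $E(Y)=\theta_0+\theta_1 E(X)$ and $E(XY)=\theta_0 E(X)+\theta_1 E(X^2)$, exactly the pair of equations used in the proof of the SULM parameter theorem.

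First I would write down the two defining moment equations and then apply the method-of-moments substitution: replace $E(X)$ by $\bar{x}$, $E(Y)$ by $\bar{y}$, $E(XY)$ by $\frac{1}{n}\sum_{i=1}^n x_i y_i$, and $E(X^2)$ by $\frac{1}{n}\sum_{i=1}^n x_i^2$, in accordance with the first and second sample moments. This yields the sample normal equations
\begin{align*}
\bar{y}=\hat{\theta}_0+\hat{\theta}_1\bar{x}, \ \frac{1}{n}\sum_{i=1}^n x_iy_i=\hat{\theta}_0\bar{x}+\hat{\theta}_1\frac{1}{n}\sum_{i=1}^n x_i^2.
\end{align*}
Next I would solve the first equation for $\hat{\theta}_0=\bar{y}-\hat{\theta}_1\bar{x}$, which is already the desired intercept formula, and substitute it into the second equation to isolate $\hat{\theta}_1$. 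After clearing the factor $1/n$ and collecting terms, the slope emerges as the ratio of the sample covariance to the sample variance,
\begin{align*}
\hat{\theta}_1=\frac{\frac{1}{n}\sum_{i=1}^n x_iy_i-\bar{x}\bar{y}}{\frac{1}{n}\sum_{i=1}^n x_i^2-\bar{x}^2}=\frac{\sum_{i=1}^n x_iy_i-n\bar{x}\bar{y}}{\sum_{i=1}^n x_i^2-n\bar{x}^2}.
\end{align*}

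The only genuinely non-routine step is the final algebraic identity that rewrites this ratio in the centered form stated in the theorem, i.e.\ showing that $\sum_{i=1}^n x_iy_i-n\bar{x}\bar{y}=\sum_{i=1}^n (x_i-\bar{x})(y_i-\bar{y})$ and likewise $\sum_{i=1}^n x_i^2-n\bar{x}^2=\sum_{i=1}^n (x_i-\bar{x})^2$. These are the standard centering identities for sample covariance and variance, proved by expanding the products and using $\sum_{i=1}^n x_i=n\bar{x}$ and $\sum_{i=1}^n y_i=n\bar{y}$; they are elementary but are the place where a reader would want to see the cross terms cancel. I do not expect any real obstacle here, since the derivation is purely algebraic and parallels the population proof verbatim; the main thing to take care of is the bookkeeping of the $1/n$ factors so that the covariance-over-variance structure is transparent. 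As a consistency check, I would note that this matches the least-squares estimate $\hat{\theta}=(\mathrm{X}^{\top}\mathrm{X})^{-1}\mathrm{X}^{\top}\bold{y}$ obtained earlier, confirming that for the SULM the MoM and least-squares estimators coincide.
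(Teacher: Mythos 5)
Your proof is correct: the two sample normal equations, the back-substitution $\hat{\theta}_0=\bar{y}-\hat{\theta}_1\bar{x}$, and the centering identities $\sum_{i=1}^n x_iy_i-n\bar{x}\bar{y}=\sum_{i=1}^n(x_i-\bar{x})(y_i-\bar{y})$ and $\sum_{i=1}^n x_i^2-n\bar{x}^2=\sum_{i=1}^n(x_i-\bar{x})^2$ all check out, yielding exactly the stated formulas. The paper actually states this theorem without any proof, and your argument is precisely the sample-moment analogue of the paper's own population-level proof of the univariate linear model theorem (where $E(Y)=aE(X)+b$ and $E(XY)=aE(X^2)+bE(X)$ are solved for the coefficients), so you have taken essentially the approach the paper intends.
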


\begin{theorem}
Max a posteriori estimate of parameters of a simple univariate linear model.
\end{theorem}

\subsubsection{Goodness of fit for estimated parameters of SULM}
\begin{definition}[Goodness of fit]
Consider a realization $\bold{z}=(z_1,\ldots, z_n)$ of an i.i.d. random sample of size n i.e. $(Z_1,\ldots,Z_n) \to$ n I.I.D. copies of a joint random variable $Z=(X,Y)$ with $Y=E(Y|X)+\epsilon = \theta_0 +\theta_1 X +\epsilon$. Then, goodness of fit, $R^2$ is defined as
\begin{align*}
R^2=1-\frac{\mbox{Current deviance}}{\mbox{Total deviance}}=1-\frac{\sum_{i=1}^n(y_i-\hat{y}_i)^2}{\sum_{i=1}^n(y_i-\bar{y})^2}=\frac{\sum_{i=1}^n(\hat{y}_i-\bar{y})^2}{\sum_{i=1}^n(y_i-\bar{y})^2}.
\end{align*}
\end{definition}

\begin{theorem}[Gauss-Markov Theorem]
Consider $\bold{y}$ be a random vector with
\begin{align*}
\bold{y}=\mathrm{X}\theta +\epsilon, E(\bold{y})=\mathrm{X}\theta, Var(\bold{y})=\sigma^2I, \mathrm{X} \in \mathbb{R}^{n\times (d+1)} \ \mbox{rank} \ (d+1).
\end{align*}
Then, $\bold{a}^{\top}\hat{\Theta}$ is the unique linear unbiased estimator of $\bold{a}^{\top}\theta$ with minimum variance i.e. BLUE estimator i.e. $\hat{\Theta}\sim \mathcal{N}\left(\theta,\sigma^2\left(\mathrm{X}^{\top}\mathrm{X}\right)^{-1}\right)$ and $\sigma^2$ is having an unbiased estimate as:
\begin{align*}
s^2=\frac{1}{n-2}(\bold{y}-\mathrm{X}\hat{\theta})^{\top}(\bold{y}-\mathrm{X}\hat{\theta})=\frac{1}{n-2}\sum_{i=1}^n(y_i-\hat{y})^2=\frac{SSE}{n-2}.
\end{align*}
Also,
\begin{enumerate}
\item[a.] $\hat{\Theta}_1 \sim \mathbb{N}\left(\theta_1,\frac{\sigma^2}{\sum_{i=1}^n(x_i-\bar{x})^2} \right)$.
\item[b.] $\frac{(n-2)S^2}{\sigma^2}\sim \Xi^2(n-2)$.
\item[c.] $\hat{\Theta_1}$ and $S^2$ are independent, and hence
\begin{align*}
T=\frac{\hat{\Theta}_1}{\frac{S}{\sqrt{\sum_{i=1}^n(X_i-\bar{X})^2}}}
\end{align*}
is a $t$ distribution $T(n-2,\delta)$ with a non-centrality parameter $\delta=\frac{\theta}{\frac{\sigma}{\sqrt{\sum_{i=1}^n(x_i-\bar{x})^2}}}$. A  $100(1-\alpha)\%$ confidence interval for $\theta_1$ is given by
\begin{align*}
\hat{\theta}_1 \pm t_{(\alpha/2, n-2)}\cdot \frac{s}{\sqrt{\sum_{i=1}^n(x_i-\bar{x})^2}}. 
\end{align*}
Here, with a realization $\hat{\theta}_1=0.8726$, and $s=13.8547$, we have
\begin{align*}
t=\frac{\hat{\theta}_1}{\frac{s}{\sqrt{\sum_{i=1}^n(x_i-\bar{x})^2}}}=8.8025> t_{(0.025,16)}=2.210.
\end{align*}
So, we reject $H_0: \theta_1=0$ at the a $\alpha=0.05$ level of significance. And hence $\theta_1 \in [0.8726-2.120\cdot 0.09914, 0.8726+2.210\cdot 0.09914]=[0.6624,1.0828]$.
\end{enumerate}
\end{theorem}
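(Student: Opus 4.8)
The plan is to separate the claim into two stages: the algebraic "BLUE" assertion, which uses only the moment hypotheses $E(\bold{y})=\mathrm{X}\theta$ and $Var(\bold{y})=\sigma^2 I$, and the distributional assertions (a)--(c), which additionally invoke Gaussian noise $\epsilon\sim\mathcal{N}(0,\sigma^2 I)$. Throughout I would use the least-squares estimator $\hat{\Theta}=(\mathrm{X}^{\top}\mathrm{X})^{-1}\mathrm{X}^{\top}\bold{y}$ supplied by the previous theorem, and introduce the projection (hat) matrix $H=\mathrm{X}(\mathrm{X}^{\top}\mathrm{X})^{-1}\mathrm{X}^{\top}$, which is symmetric and idempotent ($H^{\top}=H$, $H^2=H$) with $\mathrm{tr}(H)=d+1$. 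These two structural facts about $H$ drive nearly everything that follows.

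For the BLUE part, I would first verify unbiasedness by direct substitution, $E(\hat{\Theta})=(\mathrm{X}^{\top}\mathrm{X})^{-1}\mathrm{X}^{\top}\mathrm{X}\theta=\theta$, so that $E(\bold{a}^{\top}\hat{\Theta})=\bold{a}^{\top}\theta$, and compute $Var(\hat{\Theta})=\sigma^2(\mathrm{X}^{\top}\mathrm{X})^{-1}$. The core comparison step takes any competing linear unbiased estimator $\bold{c}^{\top}\bold{y}$ of $\bold{a}^{\top}\theta$; demanding $E(\bold{c}^{\top}\bold{y})=\bold{a}^{\top}\theta$ for all $\theta$ forces the constraint $\mathrm{X}^{\top}\bold{c}=\bold{a}$. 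Writing $\bold{c}=\bold{b}+\bold{d}$ with $\bold{b}=\mathrm{X}(\mathrm{X}^{\top}\mathrm{X})^{-1}\bold{a}$ (so that $\bold{b}^{\top}\bold{y}=\bold{a}^{\top}\hat{\Theta}$), the constraint yields $\mathrm{X}^{\top}\bold{d}=0$, which annihilates the cross term $\bold{b}^{\top}\bold{d}=\bold{a}^{\top}(\mathrm{X}^{\top}\mathrm{X})^{-1}\mathrm{X}^{\top}\bold{d}=0$. Hence $Var(\bold{c}^{\top}\bold{y})=\sigma^2\|\bold{b}\|^2+\sigma^2\|\bold{d}\|^2\geq Var(\bold{a}^{\top}\hat{\Theta})$, with equality precisely when $\bold{d}=0$, establishing both minimum variance and uniqueness.

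Under the Gaussian assumption, $\hat{\Theta}$ is an affine image of the Gaussian vector $\bold{y}$, so $\hat{\Theta}\sim\mathcal{N}(\theta,\sigma^2(\mathrm{X}^{\top}\mathrm{X})^{-1})$; claim (a) then follows by reading off the relevant diagonal entry, which for the SULM design collapses, after inverting the $2\times 2$ matrix $\mathrm{X}^{\top}\mathrm{X}$, to $\sigma^2/\sum_{i=1}^n(x_i-\bar{x})^2$. For the unbiased variance estimate and claim (b) I would express the residual as $\bold{y}-\mathrm{X}\hat{\Theta}=(I-H)\bold{y}=(I-H)\epsilon$ and use $E(\epsilon^{\top}(I-H)\epsilon)=\sigma^2\,\mathrm{tr}(I-H)=\sigma^2(n-d-1)$, which is $\sigma^2(n-2)$ in the SULM case, giving unbiasedness of $s^2=SSE/(n-2)$; then, since $I-H$ is symmetric idempotent of rank $n-2$, the quadratic form $\epsilon^{\top}(I-H)\epsilon/\sigma^2$ equals a sum of $n-2$ independent squared standard normals, i.e.\ $\chi^2(n-2)$.

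Independence (c) comes from orthogonality of the fitted and residual projections: $\hat{\Theta}$ depends on $\bold{y}$ only through $H\bold{y}$ while $S^2$ depends only on $(I-H)\bold{y}$, and $Cov(H\bold{y},(I-H)\bold{y})=\sigma^2 H(I-H)=0$, so joint normality upgrades zero covariance to full independence. Forming $T=\hat{\Theta}_1/\bigl(S/\sqrt{\sum_i(X_i-\bar{X})^2}\bigr)$ as a standard normal over the square root of an independent $\chi^2(n-2)/(n-2)$ then produces the stated (non-central) $t$-distribution, from which the confidence interval and the numerical comparison against $t_{(0.025,16)}$ are immediate. The main obstacle is precisely the distributional pair (b) and (c): rigorously proving that a quadratic form $\epsilon^{\top}(I-H)\epsilon$ in a Gaussian vector with idempotent kernel is exactly $\chi^2$ with degrees of freedom equal to the rank, and that the fitted and residual components are independent. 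Both rest on the spectral decomposition of $H$ (equivalently Cochran's theorem), whereas the BLUE algebra is routine once the $\bold{c}=\bold{b}+\bold{d}$ splitting and the constraint $\mathrm{X}^{\top}\bold{d}=0$ are in hand.
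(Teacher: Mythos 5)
The paper offers no proof of this theorem at all: the theorem environment is followed immediately by the next subsection heading, so there is no in-house argument to compare yours against. Judged on its own, your proposal is a correct and essentially complete proof along the classical lines. The BLUE part is handled by the standard decomposition: unbiasedness of a competitor $\mathbf{c}^{\top}\mathbf{y}$ forces $\mathrm{X}^{\top}\mathbf{c}=\mathbf{a}$, the split $\mathbf{c}=\mathbf{b}+\mathbf{d}$ with $\mathbf{b}=\mathrm{X}(\mathrm{X}^{\top}\mathrm{X})^{-1}\mathbf{a}$ gives $\mathrm{X}^{\top}\mathbf{d}=0$, the cross term dies, and the variance inequality with equality iff $\mathbf{d}=0$ delivers both optimality and uniqueness. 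The normal-theory claims (a)--(c) via the hat matrix $H$ (symmetric, idempotent, $\mathrm{tr}(H)=d+1$), the identity $\mathbf{y}-\mathrm{X}\hat{\Theta}=(I-H)\epsilon$, the rank argument for the $\chi^2$ law of the idempotent quadratic form, and independence from $H(I-H)=0$ under joint normality are all the right ingredients, and the passage to the (non-central) $t$ and the confidence interval is routine from there.

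Two points in your favor that are worth making explicit. First, the theorem as printed is internally inconsistent: it posits a general design $\mathrm{X}\in\mathbb{R}^{n\times(d+1)}$ of rank $d+1$ but then divides $SSE$ by $n-2$ and asserts a $\chi^2(n-2)$ law; your computation $\mathrm{tr}(I-H)=n-d-1$, collapsing to $n-2$ only when $d=1$, is exactly the correct repair, and it matches both the SULM context of this subsection and the paper's own SMLM variant of the theorem, which uses $n-d-1$. Second, you correctly distinguish the two $t$ facts the statement conflates: the statistic $T=\hat{\Theta}_1\big/\bigl(S/\sqrt{\sum_i(X_i-\bar{X})^2}\bigr)$ as defined (numerator not centered at $\theta_1$) is non-central $t$ with $\delta=\theta_1\sqrt{\sum_i(x_i-\bar{x})^2}/\sigma$ (the statement's $\delta$ writes $\theta$ where it should write $\theta_1$), whereas the confidence interval rests on the centered, central-$t$ pivot $(\hat{\Theta}_1-\theta_1)\big/\bigl(S/\sqrt{\sum_i(X_i-\bar{X})^2}\bigr)\sim t(n-2)$. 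If you write the argument out in full, the only step needing real care is the one you already flagged: the spectral (or Cochran-type) decomposition showing that a Gaussian quadratic form with idempotent kernel of rank $n-2$ is exactly $\chi^2(n-2)$, and that $H\mathbf{y}$ and $(I-H)\mathbf{y}$ are independent rather than merely uncorrelated.
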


\subsubsection{Sufficient statistics for the SULM}

\subsubsection{A linear model system: a strategy of data-driven linear modelling and application}
\begin{enumerate}[{Step}~1:]
\item Hypothesis class and model structure: propose a model for the data (i.e. a parametric formula linking the response variable with the input variables, recognising the stochastic nature of the response).
\item Estimation of model: fit the model (e.g. find the best set of parameters).
\item Goodness of fit: ask is the model adequate? (i.e. consistent with the data). Does it allow the main questions of the analysis to be answered?
\item Model evaluation and performance: fit other plausible models, compare them, and choose the best.
\item Deployment of the best linear model and decision making.
\end{enumerate}

\subsection{Simple multivariate linear model}
Motivation for simple multivariate linear model:
\subsubsection{Computation of parameters for SUMLM}
\begin{definition}[Multivariate linear model]
Consider Y is statistically dependence on a number of covariate random variables $X_1,X_2,\ldots,X_d$ (i.e. a random vector $\bold{X}=(X_1,\ldots, X_d)^{\top} \in \mathbb{R}^{d\times 1}$) in linear framework as: 
\begin{align*}
E(Y|X)=\theta_0+\theta_1X_1+\theta_2X_2+\ldots+\theta_dX_d=\theta_0+\theta^{\top}\bold{X}.
\end{align*}
Then, the parameters can be computed as
\begin{align*}
\theta &=(\theta_1,\ldots,\theta_d)^{\top}=Cov(\bold{X},Y)(Cov(\bold{X}))^{-1}, \\
\theta_0&=E(Y)- Cov(\bold{X},Y)(Var(\bold{X}))^{-1}E(\bold{X}).
\end{align*}
So,
\begin{align*}
& \hat{Y}=E(Y|\bold{X})=E(Y)+Cov(\bold{X},Y)(Var(\bold{X})^{-1}(\bold{X}-E(\bold{X})) \equiv \\
& Y=E(Y|\bold{X})+\epsilon=E(Y)+Cov(\bold{X},Y)(Var(\bold{X})^{-1}(\bold{X}-E(\bold{X}))+\epsilon.
\end{align*}
\end{definition}

\begin{example}
xxxxxxxxxxxxxxxxx
\end{example}

\subsubsection{Estimation of parameters for SMLM}
\begin{theorem}[Parameter estimation of a simple multivariate linear model]
Backdrop: Consider a realization $\bold{z}=(z_1,\ldots, z_n)$ of an i.i.d. random sample of size n i.e. $(Z_1,\ldots,Z_n) \to$ n I.I.D. copies of a joint random variable $Z=(\bold{X},Y)$ with unknown linear model $\hat{Y}=E(Y|\bold{X})= \theta_0 +\theta_1X_1+\ldots+\theta_dX_d \equiv Y=E(Y|\bold{X})+\epsilon=\theta_0 +\theta_1X_1+\ldots+\theta_dX_d$. Now, running the linear model over n training examples namely in the data-set $\mathcal{S}=\left\lbrace \bold{x}_i,y_i)\right\rbrace_{i=1}^n$ as
\begin{align*}
& y_i=\theta_0+\theta_1x_{i1} + \ldots + \theta_dx_{id}  +\epsilon_i \ \mbox{for} \ i=1,2,\ldots,n 
\implies \\
&
\left(\begin{array}{cccc}
y_1 \\
y_2 \\
\vdots \\
y_n
\end{array} \right)
=
\left(\begin{array}{cccc}
\theta_0 + \theta_1x_{11} + \ldots +\theta_dx_{1d} \\
\theta_0 + \theta_1x_{21} + \ldots + \theta_dx_{2d} \\
\vdots +  \vdots + \ldots +\vdots \\
\theta_0 + \theta_1x_{n1} + \ldots + \theta_dx_{nd}
\end{array} \right)
+ 
\left(\begin{array}{ccc}
\epsilon_1 \\
\epsilon_2 \\
\vdots \\
\epsilon_n
\end{array} \right)
\implies 
\\
&
\left(\begin{array}{cccc}
y_1 \\
y_2 \\
\vdots \\
y_n
\end{array} \right)
=
\left(\begin{array}{cccc}
1 & x_1 \\
1 & x_2 \\
\vdots &  \vdots \\
1 & x_n
\end{array} \right)
\left(\begin{array}{cccc}
\theta_0 \\
\theta_1 \\
\vdots \\
\theta_d
\end{array} \right)
+ 
\left(\begin{array}{ccc}
\epsilon_1 \\
\epsilon_2 \\
\vdots \\
\epsilon_n
\end{array} \right)
\implies 
\bold{y}=\mathrm{X}_{n\times (d+1)}\bold\theta+\epsilon, \\
& \ \mbox{where} \ \bold{x}_j=(x_{1j},\ldots x_{nj})^{\top}, j=1,2,\ldots,d, \bold{y}, \epsilon, \bold{1} \in \mathbb{R}^{n \times 1}, \mathrm{X}_{n\times (d+1)}= \\
&\left(\bold{1},\bold{x}_1,\ldots,\bold{x}_d\right) \in \mathbb{R}^{n\times (d+1)}, \theta =(\theta_0,\theta_1,\ldots,\theta_d)^{\top} \in \mathbb{R}^{(d+1)\times 1}. 
\end{align*}
So, $\bold{y}=\mathrm{X}\theta +\epsilon$ is the general form of a linear model, where
\begin{enumerate}
\item[a.] $\bold{y}$ is called the response such that $E(\bold{y})=\mathrm{X}\theta$.
\item[b.] $\bold{x}$ is called the explanatory variable, the regressor, or the covariates.
\item[c.] $\mathrm{X}$ is called the design matrix.
\item[d.] A least square estimate of $\omega$: 
\begin{align*}
\hat{\theta}=\left(\mathrm{X}^{\top}\mathrm{X}\right)^{-1}\mathrm{X}^{\top}\bold{y}=(\hat{\theta}_0,\hat{\theta}_1,\ldots,\hat{\theta}_d)^{\top}.
\end{align*}
\end{enumerate}
\end{theorem}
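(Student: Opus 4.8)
The plan is to establish that the stated formula $\hat{\theta}=\left(\mathrm{X}^{\top}\mathrm{X}\right)^{-1}\mathrm{X}^{\top}\bold{y}$ is precisely the least-squares minimizer of the residual sum of squares, exactly mirroring the one-shot derivation used earlier for the simple univariate case, but now carried out with vector-matrix calculus. First I would define the objective function $J(\theta)=\|\bold{y}-\mathrm{X}\theta\|^2=(\bold{y}-\mathrm{X}\theta)^{\top}(\bold{y}-\mathrm{X}\theta)$ and expand it as $J(\theta)=\bold{y}^{\top}\bold{y}-2\theta^{\top}\mathrm{X}^{\top}\bold{y}+\theta^{\top}\mathrm{X}^{\top}\mathrm{X}\theta$, being careful that the cross term is a scalar so that $\bold{y}^{\top}\mathrm{X}\theta=\theta^{\top}\mathrm{X}^{\top}\bold{y}$.

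Next I would take the gradient with respect to the parameter vector $\theta\in\mathbb{R}^{(d+1)\times 1}$ and set it to zero. Using the standard identities $\nabla_\theta(\theta^{\top}\bold{a})=\bold{a}$ and $\nabla_\theta(\theta^{\top}A\theta)=2A\theta$ for symmetric $A=\mathrm{X}^{\top}\mathrm{X}$, this yields the normal equations
\begin{align*}
\nabla_\theta J(\theta)=-2\mathrm{X}^{\top}\bold{y}+2\mathrm{X}^{\top}\mathrm{X}\theta=0 \implies \mathrm{X}^{\top}\mathrm{X}\hat{\theta}=\mathrm{X}^{\top}\bold{y}.
\end{align*}
Provided $\mathrm{X}^{\top}\mathrm{X}$ is invertible, left-multiplying by $\left(\mathrm{X}^{\top}\mathrm{X}\right)^{-1}$ delivers the claimed estimator. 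I would then confirm this critical point is a global minimum by noting the Hessian $\nabla_\theta^2 J(\theta)=2\mathrm{X}^{\top}\mathrm{X}$ is positive semidefinite, since $\bold{v}^{\top}\mathrm{X}^{\top}\mathrm{X}\bold{v}=\|\mathrm{X}\bold{v}\|^2\geq 0$ for every $\bold{v}$, so $J$ is convex and any stationary point is a minimizer.

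The main obstacle is the invertibility of $\mathrm{X}^{\top}\mathrm{X}$, which is not automatic: it holds if and only if the design matrix $\mathrm{X}=\left(\bold{1},\bold{x}_1,\ldots,\bold{x}_d\right)\in\mathbb{R}^{n\times(d+1)}$ has full column rank $d+1$, which in particular requires $n\geq d+1$ and that no covariate column is a linear combination of the others (no perfect multicollinearity, including the intercept column $\bold{1}$). I would therefore state this rank hypothesis explicitly as a standing assumption, since without it the inverse does not exist and one must pass to the pseudoinverse $\mathrm{X}^{\dagger}$ as already indicated in the theorem statement. As an optional closing step that ties back to the earlier material, I would verify consistency with the geometric interpretation: the fitted response $\hat{\bold{y}}=\mathrm{X}\hat{\theta}$ is the orthogonal projection of $\bold{y}$ onto the column space of $\mathrm{X}$, so that the residual $\bold{y}-\mathrm{X}\hat{\theta}$ is orthogonal to every column of $\mathrm{X}$, which is exactly the content of the normal equations $\mathrm{X}^{\top}(\bold{y}-\mathrm{X}\hat{\theta})=0$ and echoes the uncorrelatedness-of-error property established for the conditional-expectation estimator.
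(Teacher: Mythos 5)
Your derivation is correct and is essentially the argument the paper intends: the theorem itself characterizes $\hat{\theta}$ as $\arg\min_{\theta}\|\bold{y}-\mathrm{X}\theta\|^2$ without writing out the minimization, and the same expand--differentiate--set-the-gradient-to-zero computation leading to the normal equations $\mathrm{X}^{\top}\mathrm{X}\hat{\theta}=\mathrm{X}^{\top}\bold{y}$ is exactly the route the paper follows in its analogous kernel ridge regression proof. Your explicit full-column-rank hypothesis for the invertibility of $\mathrm{X}^{\top}\mathrm{X}$, the positive-semidefinite Hessian check for global optimality, and the projection reading of $\mathrm{X}^{\top}(\bold{y}-\mathrm{X}\hat{\theta})=0$ are sound additions that the paper leaves implicit (its Gauss--Markov theorem simply assumes $\mathrm{X}$ has rank $d+1$).
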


\begin{example}
Backdrop: Consider a realization $\bold{z}=(z_1,\ldots, z_n)$ of an i.i.d. random sample of size n i.e. $(Z_1,\ldots,Z_n) \to$ n I.I.D. copies of a joint random variable $Z=(\bold{X},Y)$ with $Y=E(Y|\bold{X})+\epsilon = \theta_0 +\theta_1 X_1 +\theta_2 X_2 +\epsilon$.

\noindent Question: Consider a response y with bivariate input $(x_1,x_2)$ and $n=12$ from \citep{FM79} in the table below:

\begin{center}
\begin{tabular}{| c | c | c | c | c | c | c | c | c | c | c | c | c | }
\hline
$x_1$ & 0 &2 & 2 & 2 & 4 & 4 & 4 & 6 & 6 & 6 & 8 & 8 \\
\hline 
$x_2$ & 2 & 6 & 7 & 5 & 9 & 8 & 7 & 10 & 11 & 9 & 15 & 13  \\
\hline
y & 2 & 3 & 2 & 7 & 6 & 8 & 10 & 7 & 8 & 12 & 11 & 14  \\
\hline
\end{tabular}
\end{center}

\noindent Answer: consider a linear model $\hat{Y}=E(Y|\bold{X})= \theta_0 +\theta_1 X_1 +\theta_2 X_2 \equiv Y=E(Y|\bold{X})+\epsilon=\theta_0 +\theta_1X_1 +\theta_2 X_2$, and running over $n$ training examples namely in the data-set $\mathcal{S}=\left\lbrace (x_{i1},x_{i2}),y_i)\right\rbrace_{i=1}^n$ as
\begin{align*}
& y_i=\theta_0+\theta_1x_{i1}+\theta_2x_{i2} +\epsilon_i \ \mbox{for} \ i=1,2,\ldots,n \implies \bold{y}=\mathrm{X}\theta+\epsilon, \\
& \ \mbox{where} \ \bold{x}, \bold{y}, \epsilon, \bold{1} \in \mathbb{R}^{n \times 1}, \mathrm{X}=\left(\bold{1},\bold{x}_1, \bold{x}_2\right) \in \mathbb{R}^{n\times 3}, \theta =(\theta_0,\theta_1,\theta_2)^{\top} \in \mathbb{R}^{3\times 1}. 
\end{align*}
So, $\bold{y}=\mathrm{X}\theta +\epsilon$ is the general form of a linear model, where
\begin{enumerate}
\item[a.] $\bold{y}$ is called the response such that $E(\bold{y})=\mathrm{X}\theta$.
\item[b.] $\bold{x}$ is called the explanatory variable, the regressor, or the covariates.
\item[c.] $\mathrm{X}$ is called the design matrix.
\item[d.] A least square estimate of $\theta$: 
\begin{align*}
\hat{\theta}=\left(\mathrm{X}^{\top}\mathrm{X}\right)^{-1}\mathrm{X}^{\top}\bold{y}=(\hat{\theta}_0,\hat{\theta}_1)^{\top}=(10.73, 0.8726)^{\top}.
\end{align*}
\end{enumerate}

\noindent Now, with respect to above data, we have the following response data vector, design matrix and the associated computations:
\begin{align*}
y=\left(\begin{array}{cccccccccccc}
2 \\
3 \\ 
2 \\
7 \\
6 \\
8 \\
10 \\
7 \\
8 \\
12 \\
11 \\
14 
\end{array} \right)
,
\begin{array}{cc}
& X=\left(\begin{array}{cccccccccccc}
1 & 1 & 1 & 1 & 1 & 1 & 1 & 1 & 1 & 1 & 1 & 1 \\
0 &2 & 2 & 2 & 4 & 4 & 4 & 6 & 6 & 6 & 8 & 8 \\
2 & 6 & 7 & 5 & 9 & 8 & 7 & 10 & 11 & 9 & 15 & 13
\end{array} \right)^{\top}
\\ 
& X^{\top}X=\left(\begin{array}{ccc}
12 & 52 & 102 \\
52 & 395 & 536  \\
102 & 536 & 1004 
\end{array} \right)
\\
& \left(X^{\top}X\right)^{-1}=\left(\begin{array}{ccc}
0.97476 & 0.2429 & -0.22871 \\
0.2429 & 0.16207 & -0.11120  \\
-0.22871 & -0.11120 & 0.08360 
\end{array} \right)
\\
& \hat{\theta}=\left(\mathrm{X}^{\top}\mathrm{X}\right)^{-1}\mathrm{X}^{\top}\bold{y}=\left(\begin{array}{ccc}
5.3754 \\
3.0118 \\
-1.2855
\end{array} \right).
\end{array}
\end{align*} 
So, the covariance matrix of $\hat{\theta}$ is given by
\begin{align*}
Cov(\hat{\theta})=\sigma^2\left(X^{\top}X\right)^{-1}
=\sigma^2\left(\begin{array}{ccc}
0.975 & 0.243 & -0.229 \\
0.243 & 0.162 & -0.111  \\
-0.229 & -0.111 & 0.084 
\end{array} \right)
\end{align*}
The negative value of $Cov(\hat{\theta}_1,\hat{\theta}_2)=-0.111$ indicates that in repeated sampling (using the same $12$ values of $x_1$ and $x_2$),$(\hat{\theta}_1$ and $\hat{\theta}_2)$ would tend to move in opposite directions; that is, an increase in one would be accompanied by a decrease in the other.
\end{example}

\subsubsection{Goodness of fit for estimated parameters of SMLM}
\begin{theorem}[Gauss-Markov Theorem]
Consider $\bold{y}$ be a random vector with
\begin{align*}
\bold{y}=\mathrm{X}\theta +\epsilon, E(\bold{y})=\mathrm{X}\theta, Var(\bold{y})=\sigma^2I, \mathrm{X} \in \mathbb{R}^{n\times (d+1)} \ \mbox{rank} \ (d+1).
\end{align*}
Then, $\bold{a}^{\top}\hat{\Theta}$ is the unique linear unbiased estimator of $\bold{a}^{\top}\theta$ with minimum variance i.e. BLUE estimator i.e. $\hat{\Theta}\sim \mathcal{N}\left(\theta,\sigma^2\left(\mathrm{X}^{\top}\mathrm{X}\right)^{-1}\right)$ and $\sigma^2$ is having an unbiased estimate as:
\begin{align*}
s^2=\frac{1}{n-d-1}(\bold{y}-\mathrm{X}\hat{\theta})^{\top}(\bold{y}-\mathrm{X}\hat{\theta})=\frac{1}{n-d-1}\sum_{i=1}^n(y_i-\hat{y})^2=\frac{SSE}{n-2}.
\end{align*}
Also,
\begin{enumerate}
\item[a.] $\hat{\Theta}\sim \mathcal{N}\left(\theta,\sigma^2\left(\mathrm{X}^{\top}\mathrm{X}\right)^{-1}\right)$.
\item[b.] $\frac{(n-d-1)S^2}{\sigma^2}\sim \Xi^2(n-d-1)$.
\item[c.] $\hat{\Theta_1}$ and $S^2$ are independent.
Here, with a realization $\hat{\theta}=(5.3754,3.0118,-1.2855)^{\top}$, we get $s^2=2.829$.
\end{enumerate}
\end{theorem}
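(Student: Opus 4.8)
The plan is to prove the five assertions in two groups: the BLUE (best linear unbiased estimator) claim, which needs only the first two moment assumptions $E(\mathbf{y})=\mathrm{X}\theta$ and $\mathrm{Var}(\mathbf{y})=\sigma^2 I$, and the distributional claims (a)--(c), which in addition require the Gaussian noise model $\epsilon\sim\mathcal{N}(0,\sigma^2 I)$. Throughout I write $\hat{\Theta}=(\mathrm{X}^{\top}\mathrm{X})^{-1}\mathrm{X}^{\top}\mathbf{y}$ for the least-squares estimator derived above; since $\mathrm{X}$ has full column rank $d+1$, the matrix $\mathrm{X}^{\top}\mathrm{X}$ is invertible and $\hat{\Theta}$ is well defined.

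First I would establish unbiasedness and the covariance of $\hat{\Theta}$ by direct linear computation,
\begin{align*}
E(\hat{\Theta})=(\mathrm{X}^{\top}\mathrm{X})^{-1}\mathrm{X}^{\top}E(\mathbf{y})=\theta,\qquad \mathrm{Var}(\hat{\Theta})=\sigma^2(\mathrm{X}^{\top}\mathrm{X})^{-1}.
\end{align*}
For the BLUE property I would take an arbitrary linear estimator $\tilde{\theta}=\mathrm{C}\mathbf{y}$ and write $\mathrm{C}=(\mathrm{X}^{\top}\mathrm{X})^{-1}\mathrm{X}^{\top}+\mathrm{D}$. Unbiasedness of $\tilde{\theta}$ for every $\theta$ forces $\mathrm{C}\mathrm{X}=I$, hence $\mathrm{D}\mathrm{X}=0$, whereupon the cross term in the variance vanishes and
\begin{align*}
\mathrm{Var}(\mathbf{a}^{\top}\tilde{\theta})=\sigma^2\mathbf{a}^{\top}(\mathrm{X}^{\top}\mathrm{X})^{-1}\mathbf{a}+\sigma^2\|\mathrm{D}^{\top}\mathbf{a}\|^2\ge\mathrm{Var}(\mathbf{a}^{\top}\hat{\Theta}),
\end{align*}
with equality if and only if $\mathrm{D}^{\top}\mathbf{a}=0$. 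This gives both minimality and uniqueness.

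Next, under the Gaussian model, claim (a) follows immediately: $\hat{\Theta}$ is a fixed linear image of the Gaussian vector $\mathbf{y}$, so it is Gaussian with the mean and covariance already computed, i.e. $\hat{\Theta}\sim\mathcal{N}(\theta,\sigma^2(\mathrm{X}^{\top}\mathrm{X})^{-1})$. For the remaining claims I would introduce the hat matrix $\mathrm{H}=\mathrm{X}(\mathrm{X}^{\top}\mathrm{X})^{-1}\mathrm{X}^{\top}$ and the residual vector $\mathbf{e}=\mathbf{y}-\mathrm{X}\hat{\Theta}=(I-\mathrm{H})\mathbf{y}$. Using $(I-\mathrm{H})\mathrm{X}=0$ one rewrites $SSE=\mathbf{e}^{\top}\mathbf{e}=\epsilon^{\top}(I-\mathrm{H})\epsilon$; since $\mathrm{H}$ is symmetric idempotent of rank $d+1$, $I-\mathrm{H}$ is symmetric idempotent of rank $n-d-1$, giving $E(SSE)=\sigma^2\,\mathrm{tr}(I-\mathrm{H})=\sigma^2(n-d-1)$ and hence the unbiasedness of $s^2$. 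Diagonalizing $I-\mathrm{H}$ and applying it to $\epsilon/\sigma\sim\mathcal{N}(0,I)$ turns $SSE/\sigma^2$ into a sum of $n-d-1$ independent squared standard normals, proving (b). For (c) I would note that $\hat{\Theta}-\theta=(\mathrm{X}^{\top}\mathrm{X})^{-1}\mathrm{X}^{\top}\epsilon$ and $\mathbf{e}=(I-\mathrm{H})\epsilon$ are jointly Gaussian with cross-covariance $\sigma^2(\mathrm{X}^{\top}\mathrm{X})^{-1}(\mathrm{X}^{\top}-\mathrm{X}^{\top}\mathrm{H})=0$; uncorrelated jointly Gaussian vectors are independent, and $S^2$ is a function of $\mathbf{e}$ alone, so $\hat{\Theta}$ and $S^2$ are independent.

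The main obstacle is the quadratic-form argument underlying (b): one must justify that $\epsilon^{\top}(I-\mathrm{H})\epsilon/\sigma^2$ is $\Xi^2(n-d-1)$, which relies on the spectral decomposition of the idempotent projector (its eigenvalues are exactly $n-d-1$ ones and $d+1$ zeros) together with the rotational invariance of the standard Gaussian. Verifying the rank and trace bookkeeping of $\mathrm{H}$ and $I-\mathrm{H}$, and the identity $\mathrm{X}^{\top}\mathrm{H}=\mathrm{X}^{\top}$ that makes the cross-covariance vanish, are exactly the places where the full-column-rank hypothesis on $\mathrm{X}$ is essential; everything else reduces to routine linear algebra once these projector facts are in hand.
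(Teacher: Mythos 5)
Your proposal is correct, and in fact the paper states this theorem without any proof at all (neither here nor for its univariate counterpart), so there is no in-text argument to compare against: what you have written is the standard projection-matrix proof and would serve as the missing proof. Your two-stage organisation is exactly right. The BLUE claim, via the decomposition $\mathrm{C}=(\mathrm{X}^{\top}\mathrm{X})^{-1}\mathrm{X}^{\top}+\mathrm{D}$ with unbiasedness forcing $\mathrm{D}\mathrm{X}=0$ and the cross term vanishing, needs only the moment hypotheses; the distributional claims (a)--(c) need joint Gaussianity of $\epsilon$, an assumption the theorem as printed omits (it assumes only $E(\mathbf{y})=\mathrm{X}\theta$ and $\mathrm{Var}(\mathbf{y})=\sigma^{2}I$, from which $\hat{\Theta}\sim\mathcal{N}(\theta,\sigma^{2}(\mathrm{X}^{\top}\mathrm{X})^{-1})$ cannot be derived), so you were right to add it explicitly. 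Your argument also silently repairs two further blemishes in the statement: the trailing equality $s^{2}=SSE/(n-2)$ is a typo inherited from the simple univariate case $d=1$ and should read $SSE/(n-d-1)$, exactly as your computation $E(SSE)=\sigma^{2}\,\mathrm{tr}(I-\mathrm{H})=\sigma^{2}(n-d-1)$ shows; and in (c) you prove independence of the entire vector $\hat{\Theta}$ from $S^{2}$ via the vanishing cross-covariance $\sigma^{2}(\mathrm{X}^{\top}\mathrm{X})^{-1}(\mathrm{X}^{\top}-\mathrm{X}^{\top}\mathrm{H})=0$, which is stronger than, and immediately implies, the stated independence of the single coordinate $\hat{\Theta}_{1}$. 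The one step I would ask you to write out rather than assert is the spectral argument in (b): diagonalize $I-\mathrm{H}=\mathrm{Q}\Lambda\mathrm{Q}^{\top}$ with $\Lambda$ having $n-d-1$ ones and $d+1$ zeros, observe that $\mathrm{Q}^{\top}\epsilon/\sigma\sim\mathcal{N}(0,I)$ by orthogonal invariance of the standard Gaussian, and conclude that $SSE/\sigma^{2}$ is a sum of $n-d-1$ independent squared standard normals, i.e. $\Xi^{2}(n-d-1)$ in the paper's notation; this is the entire content of claim (b), and it is precisely where the full-column-rank hypothesis on $\mathrm{X}$ (hence $\mathrm{rank}(\mathrm{H})=d+1$) enters, as you note.
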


\subsection{Embedded linear models (ELM)}
An embedded simple multivariate linear model is deep-down inside a simple multivariate linear model in the transformed higher dimensional embedded space, so computation and estimation of parameters for the ESMLM are similar to that of the SMLM.
\subsubsection{Computation of parameters for ELM}
\begin{definition}[Embedded simple multivariate linear model]
Consider $Y$ is statistically dependent on a number of covariate random variables $X_1,X_2,\ldots,X_d$ (i.e. a random vector $\bold{X}=(X_1,\ldots, X_d)^{\top} \in \mathbb{R}^{d\times 1}$) (that got transformed to $\bold{\phi(X))}=(\phi_1(\bold{X}),\ldots, \phi_p(\bold{X}))^{\top}$ in the p-dimensional embedded space using the non-linear basis functions $\phi_i$) in linear framework as: 
\begin{align*}
E(Y|\bold{X})&=\theta_0+\theta_1\phi_1(\bold{X})+\theta_2\phi_2(\bold{X})+\ldots+\theta_p\phi_p(\bold{X})\\
             &=\theta_0+\theta^{\top}\bold{\phi(X)}.
\end{align*}
Then, the parameters can be computed as
\begin{align*}
\theta &=(\theta_1,\ldots,\theta_p)^{\top} =Cov(\bold{\phi(X)},Y)(Cov(\bold{\phi(X)}))^{-1}, \\
\theta_0&=E(Y)- Cov(\bold{\phi(X)},Y)(Var(\bold{\phi(X))})^{-1}E(\bold{\phi(X)}).
\end{align*}
So,
\begin{align*}
& \hat{Y}=E(Y|\bold{X})=E(Y)+Cov(\bold{X},Y)(Var(\bold{X})^{-1}(\bold{X}-E(\bold{X})) \equiv \\
& Y=E(Y|\bold{X})+\epsilon=E(Y)+Cov(\bold{X},Y)(Var(\bold{X})^{-1}(\bold{X}-E(\bold{X}))+\epsilon.
\end{align*}
\end{definition}

\begin{theorem}[A Kernel embedded and ridge regularized linear model (KRLM)]
Let us now consider a reproducing Kernel Hilbert space (RKHS), $\mathcal{H}$, associated to a positive definite Kernel K on input space $\mathcal{X}$. Then, an estimate of KRLM is obtained by regularizing the mean squared error (MSE) criterion by the RKHS (a Hilbert space produced by a Kernel inspired feature map $\phi(x)=k(\cdot,x)$) norm:
\begin{align*}
\hat{f} =\arg\min_{f\in \mathcal{H}} \left(\frac{1}{n}\sum_{i=1}^n \left(y_i-f(x_i)\right)^2 + \lambda \|f\|_{\mathcal{H}}^2 \right).
\end{align*}
\end{theorem}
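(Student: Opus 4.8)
The statement packages the kernel ridge estimator as an optimization over the (possibly infinite-dimensional) space $\mathcal{H}$; the substantive content to establish is the Representer Theorem, namely that the minimizer lives in the finite-dimensional subspace spanned by the training kernels and therefore admits the closed form $\hat{f}(\cdot)=\sum_{i=1}^n \hat{\alpha}_i k(\cdot,x_i)$ with $\hat{\alpha}=(\mathrm{K}+n\lambda I)^{-1}\bold{y}$, where $\mathrm{K}_{ij}=k(x_i,x_j)$ is the Gram matrix. The plan is first to reduce the infinite-dimensional problem to a finite-dimensional one using orthogonality, and then to solve the resulting finite ridge problem by the same normal-equation calculus used for the linear models above.

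First I would set $\mathcal{H}_n=\mathrm{span}\{k(\cdot,x_1),\ldots,k(\cdot,x_n)\}$, a finite-dimensional (hence closed) subspace of $\mathcal{H}$, so that every $f\in\mathcal{H}$ admits the orthogonal decomposition $f=f_{\parallel}+f_{\perp}$ with $f_{\parallel}\in\mathcal{H}_n$ and $f_{\perp}\perp\mathcal{H}_n$. The key lever is the reproducing property $f(x_i)=\langle f,k(\cdot,x_i)\rangle_{\mathcal{H}}$: since each $k(\cdot,x_i)\in\mathcal{H}_n$, we get $f(x_i)=\langle f_{\parallel},k(\cdot,x_i)\rangle_{\mathcal{H}}=f_{\parallel}(x_i)$, so the data-fitting term $\frac{1}{n}\sum_{i=1}^n(y_i-f(x_i))^2$ depends on $f$ only through $f_{\parallel}$. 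By the Pythagorean identity in Hilbert space, $\|f\|_{\mathcal{H}}^2=\|f_{\parallel}\|_{\mathcal{H}}^2+\|f_{\perp}\|_{\mathcal{H}}^2\geq\|f_{\parallel}\|_{\mathcal{H}}^2$, with equality only when $f_{\perp}=0$. Hence, for $\lambda>0$, the objective is strictly decreased by discarding any nonzero orthogonal component, which forces the minimizer into $\mathcal{H}_n$.

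Next I would write $\hat{f}=\sum_{i=1}^n\alpha_i k(\cdot,x_i)$ and substitute, using the reproducing property once more to obtain $f(x_j)=(\mathrm{K}\alpha)_j$ and $\|f\|_{\mathcal{H}}^2=\alpha^{\top}\mathrm{K}\alpha$. The objective collapses to the finite-dimensional ridge criterion $\frac{1}{n}\|\bold{y}-\mathrm{K}\alpha\|^2+\lambda\,\alpha^{\top}\mathrm{K}\alpha$. Differentiating in $\alpha$ and setting the gradient to zero yields $\mathrm{K}(\mathrm{K}+n\lambda I)\alpha=\mathrm{K}\bold{y}$, from which $(\mathrm{K}+n\lambda I)\alpha=\bold{y}$ and $\hat{\alpha}=(\mathrm{K}+n\lambda I)^{-1}\bold{y}$, the inverse existing whenever $\lambda>0$ because $\mathrm{K}\succeq 0$. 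This exactly parallels the normal-equation derivation $\hat{\theta}=(\mathrm{X}^{\top}\mathrm{X})^{-1}\mathrm{X}^{\top}\bold{y}$ already established for the simple multivariate linear model.

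The main obstacle is the infinite-dimensional orthogonal-projection step: I must justify that $\mathcal{H}_n$ is closed (immediate from finite dimension, so the projection is well defined) and that the reproducing property genuinely annihilates $f_{\perp}$ inside the loss while the norm penalty continues to charge for it. This is precisely where the argument hinges on $k$ being positive definite and $\phi(x)=k(\cdot,x)$ being the canonical feature map. A secondary care point is uniqueness of the finite-dimensional minimizer when $\mathrm{K}$ is singular: strict convexity in the fitted values together with $\lambda>0$ guarantees a unique minimizing function $\hat{f}$, even though the coefficient vector $\alpha$ may fail to be unique off the range of $\mathrm{K}$.
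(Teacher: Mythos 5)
Your proposal is correct and follows the same overall route as the paper's proof: restrict to the span of the training kernels, substitute $\hat{f}(\cdot)=\sum_{i=1}^n\alpha_i k(\cdot,x_i)$ to collapse the objective into the finite ridge criterion $\frac{1}{n}(K\alpha-y)^{\top}(K\alpha-y)+\lambda\,\alpha^{\top}K\alpha$, and solve the resulting normal equations. The one genuine difference is that the paper invokes the representer theorem as a black box, whereas you supply its proof: the orthogonal decomposition $f=f_{\parallel}+f_{\perp}$ onto the closed subspace $\mathcal{H}_n$, the reproducing-property identity $f(x_i)=\langle f_{\parallel},k(\cdot,x_i)\rangle_{\mathcal{H}}=f_{\parallel}(x_i)$, and the Pythagorean inequality together make the reduction self-contained, which is added value rather than a deviation. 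On the degenerate case the two treatments agree in substance: your intermediate cancellation from $K(K+n\lambda I)\alpha=Ky$ to $(K+n\lambda I)\alpha=y$ is valid only modulo $\mathrm{Ker}(K)$ --- precisely the caveat you raise at the end --- and the paper records the same fact explicitly, writing $\alpha=(K+\lambda nI)^{-1}y+\epsilon$ with $\epsilon\in \mathrm{Ker}(K)$ and selecting $\hat{\alpha}=(K+\lambda nI)^{-1}y$; since $K\epsilon=0$ forces $\bigl\|\sum_{i=1}^n\epsilon_i k(\cdot,x_i)\bigr\|_{\mathcal{H}}^2=\epsilon^{\top}K\epsilon=0$, every such coefficient vector defines the same function $\hat{f}$, which confirms your closing uniqueness remark.
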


\begin{proof}
Now, the modified estimation problem can be posed as:
\begin{align*}
\hat{f} & =\arg\min_{f\in \mathcal{H}} \left(\frac{1}{n}\sum_{i=1}^n\ell(y_i,f(x_i)) + \lambda \|f\|_{\mathcal{H}}^2\right) \\
       & = \arg\min_{f\in \mathcal{H}} \left(\frac{1}{n}\sum_{i=1}^n \left(y_i-f(x_i)\right)^2  + \lambda \|f\|_{\mathcal{H}}^2\right).
\end{align*}
By the representation theorem, the solution of the unconstrained problem can be expanded as:
\begin{align*}
\hat{f}(x)=\sum_{i=1}^n \alpha_i k(x_i,x) =\hat{\omega}^{\top}\phi(x), \phi: \mathcal{X} \to \mathbb{R} \ \mbox{such that} \ \phi(x)=k(\cdot,x).
\end{align*}
To estimate $\omega$, we need to consider the following:
\begin{align*}
& y = (y_1,y_2,\ldots, y_n)^{\top}, K=\left(k(x_i,x_j)\right)_{n \times n}, k(x_i,x_j)=\phi(x_i)^{\top}\phi(x_j), \\
& \alpha = (\alpha_1,\alpha_2,\ldots, \alpha_n)^{\top}, \left(\hat{f}(x_1),\ldots, \hat{f}(x_n)\right)^{\top} \approx K\alpha.
\end{align*}
Then, the estimation problem can be reposed as 
\begin{align*}
\hat{\alpha}=\arg\min_{\alpha in \mathbb{R}^n} \left(\frac{1}{n}(K\alpha-y)^{\top}(K\alpha -y) +\lambda \alpha^{\top}K\alpha
\right)
\end{align*}
So, the estimate of $\alpha$ is computed as:
\begin{align*}
& \nabla J(\alpha)=\frac{2}{n}K(K\alpha -y)+2\lambda K\alpha = \frac{2}{n}K\left((K\alpha +\lambda nI)-y \right)=0 \implies \\
& \alpha - \left(K+\lambda nI \right)^{-1}y \in Ker(K) \implies \alpha = \left(K+\lambda nI \right)^{-1}y + \epsilon, \epsilon \in Ker(K) \implies \\
& \hat{\alpha} = \left(K+\lambda nI \right)^{-1}y \ \mbox{with} \ K\epsilon = 0.
\end{align*}
\end{proof}

\subsubsection{Estimation of parameters for ELM}
\begin{theorem}[Parameter estimation of an embedded simple multivariate linear model]
Backdrop: Consider a realization $\bold{z}=(z_1,\ldots, z_n)$ of an I.I.D. random sample of size $n$ i.e. $(Z_1,\ldots,Z_n) \to n$ I.I.D. copies of a joint random variable $Z=(\bold{X},Y) \to (\bold{\phi(X)},Y)$ with unknown linear model $\hat{Y}=E(Y|\bold{\phi(X)})= \theta_0 +\theta_1\phi_1(\bold{X})+\ldots+\theta_p\phi_p(\bold{X}) \equiv Y=E(Y|\bold{\phi(X)})+\epsilon=\theta_0 +\theta_1\phi_1(\bold{X})+\ldots+\theta_p\phi_p(\bold{X}) +\epsilon$. Now, running the linear model over n training examples namely in the data-set $\mathcal{S}=\left\lbrace \bold{x}_i,y_i)\right\rbrace_{i=1}^n$ as
\begin{align*}
& y_i=\theta_0+\theta^{\top}\bold{\phi{x}}_i +\epsilon_i \ \mbox{for} \ i=1,2,\ldots,n \implies \bold{y}=\mathrm{X}_{n\times (p+1)}\bold\theta+\epsilon, \\
& \ \mbox{where} \ \bold{\phi(x)}_j=(\phi_{1j}\bold{x},\ldots \phi_{nj}(\bold{x}))^{\top}, j=1,2,\ldots,p, \bold{y}, \epsilon, \bold{1} \in \mathbb{R}^{n \times 1}, \mathrm{X}_{n\times (p+1)}= \\
&\left(\bold{1},\bold{\phi(x)}_1,\ldots,\bold{\phi(x)}_p\right) \in \mathbb{R}^{n\times (p+1)}, \theta =(\theta_0,\theta_1,\ldots,\theta_p)^{\top} \in \mathbb{R}^{(p+1)\times 1}. 
\end{align*}
So, $\bold{y}=\mathrm{X}\theta +\epsilon$ is the general form of a linear model, where
\begin{enumerate}
\item[a.] $\bold{y}$ is called the response such that $E(\bold{y})=\mathrm{X}\theta$.
\item[b.] $\bold{x}$ is called the explanatory variable, the regressor, or the covariates.
\item[c.] $\mathrm{X}$ is called the design matrix.
\item[d.] A least square estimate of $\omega$: 
\begin{align*}
\hat{\theta}=\left(\mathrm{X}^{\top}\mathrm{X}\right)^{-1}\mathrm{X}^{\top}\bold{y}=(\hat{\theta}_0,\hat{\theta}_1,\ldots,\hat{\theta}_p)^{\top}.
\end{align*}
\end{enumerate}
\end{theorem}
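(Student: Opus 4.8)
The plan is to recognise that, once the feature map $\phi$ has been applied, the embedded model is structurally identical to the simple multivariate linear model whose least-squares estimate was established in the preceding theorem. The only genuinely new ingredient is that the columns of the design matrix $\mathrm{X}_{n\times(p+1)}$ are now the transformed features $\phi_j(\bold{x})$ rather than the raw covariates $x_j$; the algebraic derivation of the normal equations therefore carries over verbatim, treating $\mathrm{X}$ as a fixed real matrix of shape $n\times(p+1)$.

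First I would write down the least-squares objective
\begin{align*}
J(\theta) = \|\bold{y} - \mathrm{X}\theta\|^2 = (\bold{y} - \mathrm{X}\theta)^{\top}(\bold{y} - \mathrm{X}\theta),
\end{align*}
expand the quadratic form, and compute its gradient with respect to $\theta$. Setting $\nabla_\theta J(\theta) = -2\mathrm{X}^{\top}(\bold{y} - \mathrm{X}\theta) = 0$ yields the normal equations $\mathrm{X}^{\top}\mathrm{X}\,\theta = \mathrm{X}^{\top}\bold{y}$, and inverting $\mathrm{X}^{\top}\mathrm{X}$ gives the claimed estimate $\hat\theta = (\mathrm{X}^{\top}\mathrm{X})^{-1}\mathrm{X}^{\top}\bold{y}$. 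I would then confirm that this stationary point is the global minimiser by observing that the Hessian $\nabla^2_\theta J = 2\mathrm{X}^{\top}\mathrm{X}$ is positive semidefinite (indeed positive definite under full column rank), so $J$ is convex and its unique critical point is its minimum.

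The main obstacle, and the place where the embedded case truly differs from the raw multivariate case, is the invertibility of $\mathrm{X}^{\top}\mathrm{X}$. Because the embedding employs arbitrary basis functions $\phi_1,\ldots,\phi_p$, nothing prevents them from being linearly dependent over the observed sample; in that event the design matrix fails to attain full column rank $p+1$, the matrix $\mathrm{X}^{\top}\mathrm{X}$ becomes singular, and the closed-form estimate is undefined since the minimiser is no longer unique. I would therefore state explicitly the standing hypothesis that $\mathrm{X}$ has rank $p+1$ (equivalently, that the transformed feature columns are linearly independent across the $n\geq p+1$ training points), and verify that this is exactly the condition under which the gradient argument delivers a well-defined inverse.

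Finally, where that rank hypothesis fails, I would connect the result to the kernel-ridge formulation proved just above: replacing the unregularised criterion by $\tfrac{1}{n}\|\mathrm{X}\theta - \bold{y}\|^2 + \lambda\|\theta\|^2$ restores a unique solution $\hat\theta = (\mathrm{X}^{\top}\mathrm{X} + \lambda n I)^{-1}\mathrm{X}^{\top}\bold{y}$ and removes the singularity for any $\lambda>0$. This ties the embedded least-squares estimate to its regularised counterpart and shows the present theorem is the $\lambda\to 0$ specialisation valid precisely when $\mathrm{X}^{\top}\mathrm{X}$ is already invertible.
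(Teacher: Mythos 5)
Your derivation is correct, and it in fact supplies more than the paper does: the paper states this theorem with no proof at all, treating the embedded case as a verbatim instance of the simple multivariate linear model, where the least-squares estimate $\hat{\theta}=\left(\mathrm{X}^{\top}\mathrm{X}\right)^{-1}\mathrm{X}^{\top}\mathbf{y}$ is itself only asserted (defined as the $\arg\min$ of $\|\mathbf{y}-\mathrm{X}\theta\|^{2}$) rather than derived. Your normal-equations argument --- expand $J(\theta)=(\mathbf{y}-\mathrm{X}\theta)^{\top}(\mathbf{y}-\mathrm{X}\theta)$, set $\nabla_{\theta}J=-2\mathrm{X}^{\top}(\mathbf{y}-\mathrm{X}\theta)=0$, and certify the global minimum via convexity from $\nabla^{2}_{\theta}J=2\mathrm{X}^{\top}\mathrm{X}\succeq 0$ --- is exactly the computation the text leaves implicit, so the core is sound. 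Two of your additions genuinely improve on the paper. First, the explicit full-column-rank hypothesis ($\mathrm{rank}(\mathrm{X})=p+1$, hence $n\geq p+1$) is required for $(\mathrm{X}^{\top}\mathrm{X})^{-1}$ to exist and is nowhere stated in this theorem, even though the paper's own Gauss--Markov theorem for the raw multivariate model does assume rank $d+1$; your observation that basis functions $\phi_{1},\ldots,\phi_{p}$ can be linearly dependent over the observed sample is precisely the failure mode that distinguishes the embedded setting. Second, your bridge to the regularized case is correct but needs one translation if you cite the paper's KRLM theorem: your $\hat{\theta}=\left(\mathrm{X}^{\top}\mathrm{X}+\lambda nI\right)^{-1}\mathrm{X}^{\top}\mathbf{y}$ is primal ridge regression, whereas the paper works in the dual, obtaining $\hat{\alpha}=\left(K+\lambda nI\right)^{-1}\mathbf{y}$ with $K$ the kernel Gram matrix; the two agree through the push-through identity $\left(\mathrm{X}^{\top}\mathrm{X}+\lambda nI\right)^{-1}\mathrm{X}^{\top}=\mathrm{X}^{\top}\left(\mathrm{X}\mathrm{X}^{\top}+\lambda nI\right)^{-1}$, but they are not literally the same statement, so the $\lambda\to 0$ specialisation claim should be made in whichever parametrization you actually verify.
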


\begin{example}
Consider a synthetic data generated from the function:
\begin{align*}
f(x)=1 + cos(\pi x) + sin(\pi x) + cos(2\pi x) + sin(2*\pi x) + 0.5\epsilon, \epsilon \sim \mathcal{N}(0,1).
\end{align*}
Then, the estimate of the embedded linear model is given by
\begin{align*}
\hat{y}=1.07 + 1.05\cos(\pi x) + 1.16\sin(\pi x) + 1.15\cos(2\pi x) + 0.98\sin(2\pi x).
\end{align*}
\begin{center}
\begin{figure}[h]
    \centering
    \includegraphics[width=1.0\textwidth]{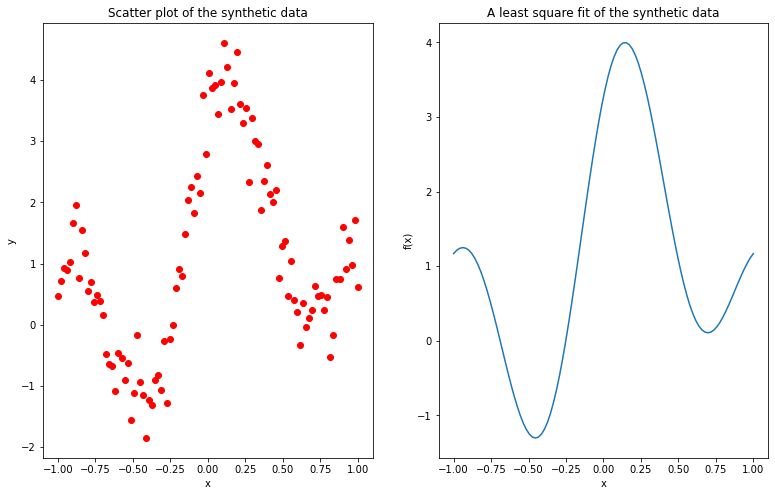}
    \caption{In left, there is a scatter plot of the synthetic data and in right, an embedded linear model with sinusoidal basis functions from the synthetic data.}
    \label{fig:peelm-01}
\end{figure}
\end{center}
\end{example}

\begin{example}
Backdrop: Consider a realization $\bold{z}=(z_1,\ldots, z_n)$ of an I.I.D. random sample of size $n$ i.e. $(Z_1,\ldots,Z_n) \to n$ I.I.D. copies of a joint random variable $Z=(X,Y)$ with $Y=E(Y|X)+\epsilon =\theta_0 +\theta_1 X + \theta_2 X^2 + \epsilon$ or $\theta_0 +\theta_1 X + \theta_2 X^2 + \theta_3X^3 + \theta_4 X^4 + \epsilon$.

\noindent Question: Employees in a company claimed that getting a higher position had not helped prepare them to get higher salary. The salary y and the position for the $n=10$ employees are given in table \eqref{tab:peeblm-01}:
\begin{table}[htb]
\centering 
\caption{The salary y and the position level x for 10 employees.}
\scalebox{0.70}{
\begin{tabular}{| c | c | c | c | c | c | c | c | c | c | c | c | c | c | c | c | c | c | c | }
\hline
x & 1 & 2 & 3 & 4 & 5 & 6 & 7 & 8 & 9 & 10 \\
\hline 
y & 45000 & 50000 & 60000 & 80000 & 110000 & 150000 & 200000 & 300000 & 500000 & 1000000 \\
\hline
\end{tabular}\label{tab:peeblm-01}}
\end{table}

\noindent Answer: consider a linear model $\hat{Y}=E(Y|\bold{X})= \theta_0 +\theta_1 X_1 +\theta_2 X_2 \equiv Y=E(Y|\bold{X})+\epsilon=\theta_0 +\theta_1X_1 +\theta_2 X_2$, and running over $n$ training examples namely in the data-set $\mathcal{S}=\left\lbrace (x_{i1},x_{i2}),y_i)\right\rbrace_{i=1}^n$ as
\begin{align*}
& y_i=\theta_0+\theta_1x_{i1}+\theta_2x_{i2} +\epsilon_i \ \mbox{for} \ i=1,2,\ldots,n \implies \bold{y}=\mathrm{X}\theta+\epsilon, \\
& \ \mbox{where} \ \bold{x}, \bold{y}, \epsilon, \bold{1} \in \mathbb{R}^{n \times 1}, \mathrm{X}=\left(\bold{1},\bold{x}_1, \bold{x}_2\right) \in \mathbb{R}^{n\times 3}, \theta =(\theta_0,\theta_1,\theta_2)^{\top} \in \mathbb{R}^{3\times 1}. 
\end{align*}
So, $\bold{y}=\mathrm{X}\theta +\epsilon$ is the general form of a linear model, where
\begin{enumerate}
\item[a.] $\bold{y}$ is called the response such that $E(\bold{y})=\mathrm{X}\theta$.
\item[b.] $\bold{x}$ is called the explanatory variable, the regressor, or the covariates.
\item[c.] $\mathrm{X}$ is called the design matrix.
\item[d.] A least square estimate of $\theta$: 
\begin{align*}
\hat{\theta}=\left(\mathrm{X}^{\top}\mathrm{X}\right)^{-1}\mathrm{X}^{\top}\bold{y}=(\hat{\theta}_0,\hat{\theta}_1)^{\top}=(10.73, 0.8726)^{\top}.
\end{align*}
\end{enumerate}
\begin{center}
\begin{figure}[h]
    \centering
    \includegraphics[width=1.0\textwidth]{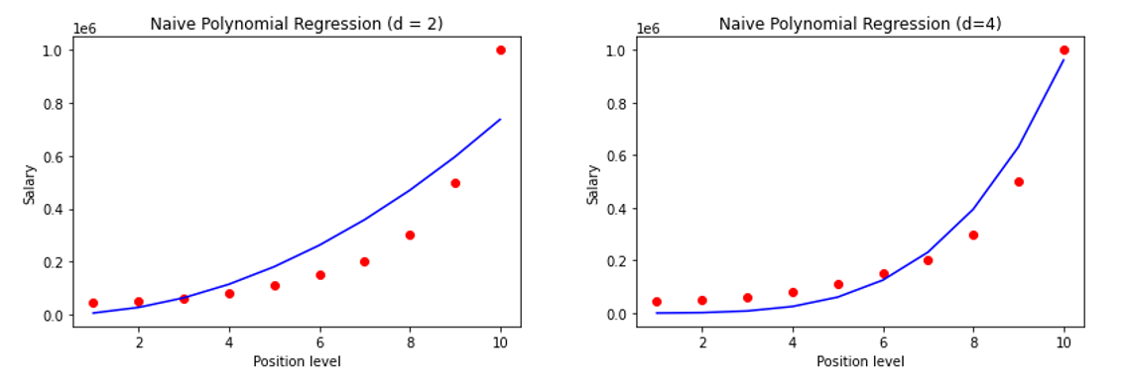}
    \caption{Estimated embedded linear models $\hat{y}=2.25\times 10^5-1.40\times 10^5 x+ 2.09 \times 10^4 x^2$ and $\hat{y}=1.35\times 10^5-1.43\times 10^5x+6.82\times 10^4 x^2 -1.16\times 10^4 x^3 + 7.09 \times 10^2 x^4$ that can predict the salary for an employee with given position number.}
    \label{fig:peeblm-02}
\end{figure}
\end{center}
\end{example}

\subsection{Multiple multivariate linear model (MMLM)}
Motivation and intuition of multiple multivariate linear model:
\begin{definition}[Kronecker product]
Consider two $2\times 2$ matrices as:
\begin{align*}
A=\left(\begin{array}{cc}
2 & 0\\
1 & 3
\end{array} \right)
, \ 
B=\left(\begin{array}{cc}
5 & -1\\
-1 & 4
\end{array} \right)
\end{align*}
Then, Kronecker product is defined as:
\begin{align*}
A\otimes B=\left(\begin{array}{cc}
2B & 0B \\
B & 3B
\end{array} \right)
=
\left(\begin{array}{cccc}
10 & -2 & 0 & 0\\
-2 & 8 & 0 & 0 \\
5 & -1 & 15 & -3 \\
-1 & 4 & -3 & 12
\end{array} \right).
\end{align*}
\end{definition}

\subsubsection{Computation of parameters for MMLM}
\begin{theorem}[Simplification of MMLM in column and row pictures]
Backdrop: Consider a realization $\bold{z}=(z_1,\ldots, z_n)$ of an I.I.D. random sample of size $n$ i.e. $(Z_1,\ldots,Z_n) \to n$ I.I.D. copies of a joint random variable $Z=(\bold{X},\bold{Y})$ with unknown linear model $\hat{Y}_i=E(Y_i|\bold{X})= \theta_{i0} +\theta_{i1}X_1+\ldots+\theta_{id}X_d, i=1,\ldots,p \equiv Y_i=E(Y_i|\bold{X})+\epsilon=\theta_{i0} +\theta_{i1}X_1+\ldots+\theta_{id}X_d, i=1,\ldots,p$. Now, running the linear model over n training examples namely in the data-set $\mathcal{S}=\left\lbrace \bold{x}_i,\bold{y}_i)\right\rbrace_{i=1}^n$ as a column picture:
\begin{align*}
\mathrm{Y} & =\mathrm{X}\mathrm{\Theta}+\mathrm{\epsilon} \implies vec(\mathrm{Y})=vec(\mathrm{X}\mathrm{\Theta})+vec(\epsilon) \implies \\
& = \left(\begin{array}{cc}
y_1\\
y_2 \\
\vdots\\
y_p
\end{array} \right)
=\left(\begin{array}{cc}
\mathrm{X}\theta_1\\
\mathrm{X}\theta_2
\vdots\\
\mathrm{X}\theta_p
\end{array} \right)
+
\left(\begin{array}{cc}
\epsilon_1\\
\epsilon_2 \\
\vdots\\
\epsilon_p
\end{array} \right)
=(I_p\otimes \mathrm{X})vec(\Theta)
+
\left(\begin{array}{cc}
\epsilon_1\\
\vdots\\
\epsilon_p
\end{array} \right) \\
& =  
\left(\begin{array}{cccccc}
\mathrm{X} & 0 & \ldots & 0 & 0\\
0 & \mathrm{X} & \ldots & 0 & 0\\
\vdots & 0 & \ldots & \vdots & \vdots \\
0 & 0 & \ldots & 0 & \mathrm{X}
\end{array} \right)
\left(\begin{array}{cc}
\theta_1\\
\theta_2 \\
\vdots\\
\theta_p
\end{array} \right)
+
\left(\begin{array}{cc}
\epsilon_1\\
\epsilon_2 \\
\vdots\\
\epsilon_p
\end{array} \right)  \implies \\
y_j & =\mathrm{X}\theta_j +\epsilon_j, j=1,\ldots,p.
\end{align*}
As a row picture:
\begin{align*}
\mathrm{Y} & =\mathrm{X}\mathrm{\Theta}+\mathrm{\epsilon} \implies vec(\mathrm{Y}^{\top})=vec((\mathrm{X}\mathrm{\Theta})^{\top})+vec(\epsilon^{\top}) \implies \\
& = \left(\begin{array}{cc}
Y_1^{\top}\\
Y_2^{\top} \\
\vdots\\
Y_n^{\top}
\end{array} \right)
=\left(\begin{array}{cc}
(\mathrm{X}_1\Theta)^{\top}\\
(\mathrm{X}_2\Theta)^{\top}
\vdots\\
(\mathrm{X}_n\Theta)^{\top}
\end{array} \right)
+
\left(\begin{array}{cc}
\epsilon_1^{\top}\\
\epsilon_2^{\top} \\
\vdots\\
\epsilon_n^{\top}
\end{array} \right)
=(\mathrm{X}\otimes I_p)vec(\Theta^{\top})
+
\left(\begin{array}{cc}
\epsilon_1^{\top}\\
\epsilon_2^{\top} \\
\vdots\\
\epsilon_n^{\top}
\end{array} \right) \\
& =  
\left(\begin{array}{ccccc}
x_{11}I_p & x_{12}I_p & \ldots & x_{1d}I_p\\
x_{21}I_p & x_{22}I_p & \ldots & x_{2d}I_p\\
\vdots & \vdots & \ldots & \vdots  \\
x_{n1}I_p & x_{n2}I_p & \ldots  & x_{nd}I_p
\end{array} \right)
\left(\begin{array}{cc}
\Theta_1\\
\Theta_2 \\
\vdots\\
\Theta_p
\end{array} \right)
+
\left(\begin{array}{cc}
\epsilon_1^{\top}\\
\epsilon_2^{\top} \\
\vdots\\
\epsilon_n^{\top}
\end{array} \right)  \implies \\
Y_i & =(\mathrm{X}_i \otimes \Theta) +\epsilon_i, 1=1,\ldots,n.
\end{align*}
\end{theorem}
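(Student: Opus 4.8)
The plan is to read both the ``column picture'' and the ``row picture'' as two instances of a single vectorization identity for a matrix product, applied to the stacked model. First I would fix the matrix form. Stacking the $n$ training equations for the $i$-th response gives $\mathbf{y}_i=\mathrm{X}\theta_i+\epsilon_i$, where $\mathrm{X}\in\R^{n\times(d+1)}$ is the common design matrix shared by every response and $\theta_i\in\R^{(d+1)\times 1}$; placing the $p$ response vectors side by side as columns yields $\mathrm{Y}=\mathrm{X}\mathrm{\Theta}+\epsilon$ with $\mathrm{Y},\epsilon\in\R^{n\times p}$ and $\mathrm{\Theta}=(\theta_1,\ldots,\theta_p)$. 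I would also fix the convention that $vec$ stacks the columns of its argument, since this choice is exactly what makes each picture come out in the stated form.

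The one algebraic fact doing all the work is the identity $vec(ABC)=(C^{\top}\otimes A)\,vec(B)$ for conformable $A,B,C$. I would state this as a lemma and prove it by checking its action on a single elementary matrix $B=e_k e_\ell^{\top}$, which collapses the claim to the defining block layout of the Kronecker product recalled above. Granting the lemma, the column picture follows by taking $vec$ of $\mathrm{Y}=\mathrm{X}\mathrm{\Theta}+\epsilon$, writing $\mathrm{X}\mathrm{\Theta}=\mathrm{X}\mathrm{\Theta}\,I_p$, and applying the lemma with $A=\mathrm{X},B=\mathrm{\Theta},C=I_p$ to obtain $vec(\mathrm{X}\mathrm{\Theta})=(I_p\otimes\mathrm{X})\,vec(\mathrm{\Theta})$; since $I_p\otimes\mathrm{X}$ is block diagonal with $\mathrm{X}$ repeated $p$ times, reading off block $j$ returns $\mathbf{y}_j=\mathrm{X}\theta_j+\epsilon_j$. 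The row picture follows symmetrically by taking $vec$ of $\mathrm{Y}^{\top}=\mathrm{\Theta}^{\top}\mathrm{X}^{\top}+\epsilon^{\top}$, writing $\mathrm{\Theta}^{\top}\mathrm{X}^{\top}=I_p\,\mathrm{\Theta}^{\top}\mathrm{X}^{\top}$, and applying the lemma with $A=I_p,B=\mathrm{\Theta}^{\top},C=\mathrm{X}^{\top}$ to get $vec((\mathrm{X}\mathrm{\Theta})^{\top})=(\mathrm{X}\otimes I_p)\,vec(\mathrm{\Theta}^{\top})$, whose blocks $x_{ij}I_p$ reproduce the per-observation form $Y_i=(\mathrm{X}_i\otimes\Theta)+\epsilon_i$.

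Honestly, this statement carries little genuine difficulty; the content is bookkeeping once the vectorization lemma is in hand. The one place that requires care, and which I expect to be the main source of error rather than of depth, is keeping the two vectorizations straight: the column-stacking convention must be matched to the correct side of the identity in each picture, since interchanging them swaps the block-diagonal structure $I_p\otimes\mathrm{X}$ for the interleaved structure $\mathrm{X}\otimes I_p$. I would therefore pin the convention down explicitly at the outset and verify on a small example (say $n=2,p=2,d=1$) that both displays reproduce the stated block matrices exactly before writing the general argument.
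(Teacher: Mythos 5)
Your proposal is correct and follows essentially the same route as the paper: the paper's derivation (embedded in the theorem statement itself) is exactly the direct block expansion of $vec(\mathrm{X}\mathrm{\Theta})=(I_p\otimes\mathrm{X})\,vec(\mathrm{\Theta})$ and $vec((\mathrm{X}\mathrm{\Theta})^{\top})=(\mathrm{X}\otimes I_p)\,vec(\mathrm{\Theta}^{\top})$, which you merely make explicit by isolating the identity $vec(ABC)=(C^{\top}\otimes A)\,vec(B)$ as a lemma. Your insistence on fixing the column-stacking convention is well placed, and your per-row reading (row $i$ gives $Y_i^{\top}=\mathrm{\Theta}^{\top}\mathrm{X}_i^{\top}+\epsilon_i^{\top}$) is in fact the correct rendering of the paper's somewhat loosely typeset final line $Y_i=(\mathrm{X}_i\otimes\Theta)+\epsilon_i$.
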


\begin{remark}
Computation and estimation of parameters for the multiple multivariate linear models can be carried with the help of simple multivariate linear models approach using a suitable decomposition technique either in row or column picture form. 
\end{remark}

\section{Generalized linear models}
First, we consider a model conditional probability of success for $Y\in \{0,1\}$ given $X, P(Y=1|X),$ as:
\begin{align*}
& logit(P(Y=1|X))=\log\left(\frac{P(Y=1|X)}{1-P(Y=1|X)} \right)=\theta^{\top} X \implies \\
& P(Y=1|X)=\frac{1}{1+\exp(-\theta^{\top}X)}=\frac{\exp(\theta^{\top}X)}{1+\exp(\theta^{\top}X)}=\sigma\left(\theta^{\top}X \right).
\end{align*}
So, in general a statistical modelling (i.e. a categorical modelling), we wish to estimate/predict a discrete class label (or a posteriori probability), with a linear argument and a known activation as:
\begin{align*}
logit\left(\hat{Y} \right)=\ln\left(\frac{\hat{Y}}{1-\hat{Y}} \right)=\theta^{\top} X \implies \hat{Y}=\frac{1}{1+\exp(-\theta^{\top}X)}=\sigma\left(\theta^{\top}X \right).
\end{align*} 

\subsection{Logistic regression: Bernoulli logit model}
Algorithmic approach of logistic regression:
\begin{enumerate}[{Step}~1:]
\item First transform the space of class probability to the space of logit of the probability using the transformation 
\begin{align*}
logit(p)=\log\left(\frac{p}{1-p}\right) \in (-\infty,\infty), \ p\in [0,1].
\end{align*}
\item Now, we perform linear regression as the linear model
\begin{align*}
logit(\hat{Y})=logit(p)=\theta^{\top}X
\end{align*} 
in order to maximize the likelihood of observing the data.
\item Finally, we remap the space of $logit(p)$ to class probability using the inverse of the logit as the logistic map as:
\begin{align*}
\hat{Y}=\hat{p}=logit^{-1}(\theta^{\top}X)=\frac{\exp(\theta^{\top}X)}{1+\exp(\theta^{\top}X)}=\sigma(\theta^{\top}X).
\end{align*}
\item Further, as $p \in [0,1]$, we have a decision boundary at:
\begin{align*}
p=0.5 \implies \theta^{\top}X=logit(0.5)=\log\left(\frac{0.5}{1-0.5}\right)=\log(1)=0.
\end{align*}
\end{enumerate}

\begin{example}
\begin{center}
\begin{figure}[htb]
    \centering
    \includegraphics[width=1.0\textwidth]{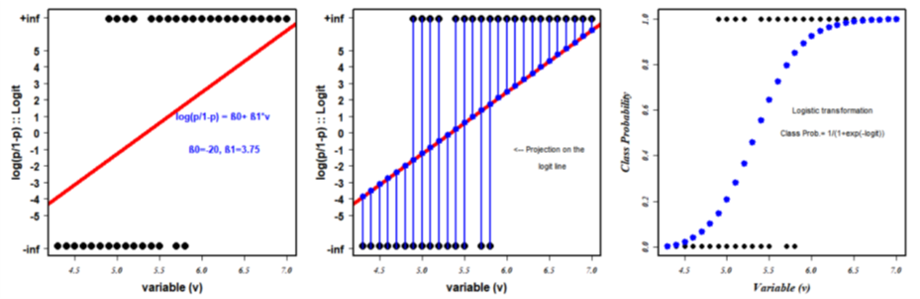}
    \caption{A visualization of logistic regression via linear model of logit(p) followed by inversion map of the logit function.}
    \label{fig:logitplm-01}
\end{figure}
\end{center}
\end{example}
Now, running over the given dataset $\mathcal{S}=\{(x_i,y_i)\}_{i=1}^n$, we have

\begin{theorem}
Consider $\mathcal{X}$ be a set of inputs. $Y\in \{-1,1\}$ binary outputs, $\mathcal{S}=(x_i,y_i )_{i=1}^n \in \left(\mathcal{X} \times \mathcal{Y} \right)^n$ a training set of $n$ pairs. Estimate a function $f:\mathcal{X} \to \mathbb{R}$ to predict y by $sign(f(x))$.
\end{theorem}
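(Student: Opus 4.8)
The statement as written fixes a binary classification setup rather than a closed identity, so the plan is to turn the instruction ``estimate $f:\mathcal{X}\to\R$ and predict $y$ by $\mathrm{sign}(f(x))$'' into a well-posed optimization problem and to exhibit the form of its minimizer, in direct parallel with the least-squares and kernel-regularized development already given. First I would attach to each candidate $f$ the misclassification risk and its empirical counterpart,
\begin{align*}
R(f)=\E\left(\mathds{1}\left[\mathrm{sign}(f(X))\neq Y\right]\right),\qquad \widehat{R}_n(f)=\frac{1}{n}\sum_{i=1}^n \mathds{1}\left[y_i f(x_i)\le 0\right],
\end{align*}
observing that the event $\{y_i f(x_i)\le 0\}$ records exactly a misclassified pair, so that the margin $y_i f(x_i)$ is the natural object to control. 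Because $\widehat{R}_n$ is piecewise constant and non-convex in $f$, the first substantive step is to dominate the indicator by a convex surrogate margin loss $\psi$ with $\mathds{1}[u\le 0]\le \psi(u)$, the two canonical choices being the logistic loss $\psi(u)=\log(1+e^{-u})$ and the hinge loss $\psi(u)=\max(0,1-u)$.

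Next I would pose regularized empirical risk minimization over the reproducing kernel Hilbert space $\mathcal{H}$ introduced in the kernel-embedded ridge model, namely
\begin{align*}
\widehat{f}=\argmin_{f\in\mathcal{H}}\left(\frac{1}{n}\sum_{i=1}^n \psi\left(y_i f(x_i)\right)+\lambda\|f\|_{\mathcal{H}}^2\right),
\end{align*}
and then invoke the representer theorem already used there to conclude that any minimizer admits the finite expansion $\widehat{f}(x)=\sum_{i=1}^n \alpha_i k(x_i,x)=\widehat{\theta}^{\top}\phi(x)$, reducing the infinite-dimensional problem to a finite-dimensional one in $\alpha\in\R^n$. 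Since $\psi$ is convex and the regularizer $\lambda\alpha^{\top}K\alpha$ is strictly convex for $\lambda>0$, the reduced objective is convex and coercive, so a minimizer exists and is characterized by the first-order stationarity condition; in the linear instance $\phi(x)=x$ this recovers the predictor $\widehat{Y}=\mathrm{sign}(\widehat{\theta}^{\top}X)$ with decision boundary $\widehat{\theta}^{\top}X=0$, exactly matching the logistic boundary $\theta^{\top}X=\mathrm{logit}(0.5)=0$ derived in the previous subsection.

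I would then specialize to make the link with the generalized linear models just developed explicit: the logistic surrogate reproduces the Bernoulli logit model, so that its minimizer coincides with the maximum-likelihood estimator under $P(Y=1\mid X)=\sigma(\theta^{\top}X)$, while the hinge surrogate yields the maximum-margin (support vector) classifier. In both cases the same factorization of the objective into a data-only term and a parameter term used for the kernel model confirms that prediction depends on the training sample only through the kernel Gram matrix.

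The hard part will not be the optimization, which is routine convex analysis once the surrogate is fixed, but the two statistical justifications. First I must argue that minimizing the surrogate risk actually controls the misclassification risk $R(f)$, i.e. establish calibration/Bayes consistency of $\psi$ so that driving $\E(\psi(Yf(X)))$ toward its infimum forces $R(f)$ toward the Bayes error; this is where the choice of $\psi$ is genuinely constrained and where the main technical obstruction lies. Second I would bound the gap between $\widehat{R}_n(f)$ and $R(f)$ uniformly over the hypothesis class using the concentration and large-sample tools from the large sample theory chapter (the Chernoff--Hoeffding inequality for bounded margins, together with the law of large numbers), so that the empirical minimizer is a consistent surrogate for the population-optimal classifier. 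Combining the calibration bound with the generalization bound closes the argument that $\mathrm{sign}(\widehat{f})$ is a well-justified predictor.
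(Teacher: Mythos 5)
Your plan shares the paper's skeleton---replace the intractable $0/1$ objective by a convex margin loss, regularize in the RKHS, and invoke the representer theorem to reduce to a finite-dimensional convex problem in $\alpha\in\mathbb{R}^n$---but the two arguments diverge in motivation and in where they stop. The paper arrives at the logistic loss not as a convex majorant but as the negative log-likelihood of a probabilistic model, $p(y|x)=\sigma(yf(x))$, after first noting that the $0/1$ objective is both non-smooth/NP-hard and invariant under rescaling of $f$ (so the penalty $\lambda\|f\|_{\mathcal{H}}^2$ has no effect and no minimizer exists for $\lambda>0$); and it then carries the optimization to completion: it computes $\nabla J(\alpha)$ and $\nabla^2 J(\alpha)$ of the kernelized objective, forms the weighted quadratic approximation around $z=K\alpha_0-W^{-1}Py$, and derives the IRLS/Newton-type closed form $\hat{\alpha}=\left(WK+\lambda nI\right)^{-1}Wz$ up to an element of $\mathrm{Ker}(K)$. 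You instead leave the optimization as ``routine convex analysis'' and spend your effort on two statistical questions the paper never touches---surrogate calibration (that minimizing the $\psi$-risk controls the misclassification risk) and uniform concentration of the empirical risk. Those additions are genuinely worthwhile, and your identification of them as the hard part is sound, but relative to the paper's own proof your proposal omits its actual deliverable, the explicit reweighted least-squares estimator; conversely the paper offers no calibration or generalization argument at all, so neither treatment subsumes the other.

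One concrete slip: with the natural logarithm, $\psi(u)=\log(1+e^{-u})$ does \emph{not} dominate $\mathds{1}[u\le 0]$ pointwise, since $\psi(0)=\ln 2<1$; you need the base-$2$ logistic loss, or a rescaling by $1/\ln 2$, for the majorization to hold (the hinge loss is fine as stated). This does not derail your program---what your first statistical step actually requires is classification calibration of the logistic loss, which holds---but the domination inequality as written is false and should not be the stated justification for the surrogate.
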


\begin{proof}
Consider a $0/1$ loss measures if a prediction is correct or not as:
\begin{align*}
\ell_{0/1}(y,f(x))=I\left(yf(x)<0 \right) = \left\lbrace\begin{array}{cc}
0 & \ \mbox{if} \ y=sign(f(x)) \\
1 & \ \mbox{otherwise}.
\end{array} \right.
\end{align*}
And, the estimate of $f$, we get by solving
\begin{align*}
\hat{f} =\arg\min_{f\in \mathcal{H}} \left(\frac{1}{n}\sum_{i=1}^n\ell_{0/1}(y_i,f(x_i)) + \lambda \|f\|_{\mathcal{H}}^2\right).
\end{align*}
The minimization problem of minimizing misclassification error with regularization is having various limitation as follows:
\begin{enumerate}
\item[a.] the problem is non-smooth, and typically NP-hard to solve, 
\item[b.] the regularization has no effect since the $0/1$ loss is invariant by scaling of $f$. In fact, no function achieves the minimum when $\lambda >0$.
\end{enumerate}
So, an alternative is to define a probabilistic model of $y \in \{-1,1\}$ parametrized by $f(x)$, as follows:
\begin{align*}
& y\omega^{\top}x=yf(x)=logit\left(E(Y=y|X=x) \right)=logit\left(P(Y=y|X=x)\right) \implies \\
& p(y|x)=logit^{-1}(yf(x))=\sigma(yf(x))=\frac{1}{1+\exp(-yf(x))} \implies  \\
& \ell_{logistic}(y,f(x))=-ln\left(p(y|x) \right)=ln(1+\exp(-yf(x))).
\end{align*}
A few properties of sigmoid-logistic function and logistic loss can be characterized as:
\begin{align*}
& \sigma(u)= \frac{1}{1+exp(-u)}, \sigma(-u)=1-\sigma(u), \sigma'(u)=\sigma(u)\sigma(-u), \\
& \ell_{logistic}(u)=\ln(1+\exp(-u)), \ell'_{logistic}(u)=-\sigma(-u), \ell''_{logistic}(u)=\sigma'(u).
\end{align*}
\begin{center}
\begin{figure}[h]
    \centering
    \includegraphics[width=1.0\textwidth]{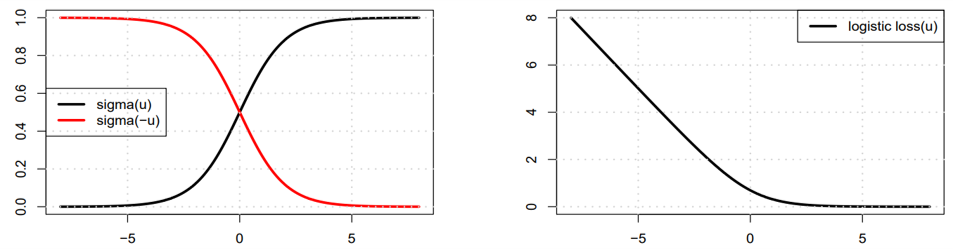}
    \caption{Plots of sigmoid-logistic function and logistic loss.}
    \label{fig:sigm-logit-loss-01}
\end{figure}
\end{center}
Now, the modified estimation problem can be posed as:
\begin{align*}
\hat{f} & =\arg\min_{f\in \mathcal{H}} \left(\frac{1}{n}\sum_{i=1}^n\ell_{logistic}(y_i,f(x_i)) + \lambda \|f\|_{\mathcal{H}}^2\right) \\
       & = \arg\min_{f\in \mathcal{H}} \left(\frac{1}{n}\sum_{i=1}^n \ln\left(1+\exp(-y_if(x_i))\right) + \lambda \|f\|_{\mathcal{H}}^2\right).
\end{align*}
By the representation theorem, the solution of the unconstrained problem can be expanded as:
\begin{align*}
\hat{f}(x)=\sum_{i=1}^n \alpha_i k(x_i,x) \implies \left(\hat{f}(x_1), \ldots ,\hat{f}(x_n) \right)^{\top} = K \alpha , \|f\|_{\mathcal{H}} =\alpha^{\top} K\alpha.
\end{align*}
To estimate $\alpha$, we need to solve the following:
\begin{align*}
\hat{\alpha} & = \arg\min_{\alpha \in \mathbb{R}} \left(\frac{1}{n}\sum_{i=1}^n \ln\left(1+\exp(-y_i(K\alpha)_i)\right) + \lambda \alpha^{\top} K\alpha \right) \\
             & \approx \arg\min_{\alpha \in \mathbb{R}} \left(\frac{1}{n} (K\alpha -z)^\top W (K\alpha -z) + \lambda \alpha^{\top} K\alpha \right).
\end{align*}
The above convex quadratic approximation (with respect to an affine transformation $z=K\alpha_0 -W^{-1}Py$) is as follows:
\begin{align*}
& \nabla J(\alpha) =\frac{1}{n}KP(\alpha)y + \lambda K\alpha, P(\alpha)=diag(P_1(\alpha), \ldots, P_n(\alpha)), \\
& P_i(\alpha)=\ell'_{logistic}(y_f(x_))=-\sigma(-y_if(x_i)) \\
& \nabla^2J(\alpha) = \frac{1}{n} KW(\alpha)K +\lambda K, W(\alpha) =diag(W_1(\alpha),\ldots,W_n(\alpha)), \\
& W_i(\alpha)=\ell"_{logistic}(y_if(x_i))=\sigma(y_if(x_i))\sigma(-y_if(x_i)).
\end{align*}
So, the estimate of $\alpha$ is computed as:
\begin{align*}
& \nabla J_{quadratic}(\alpha)=\frac{2}{n}KW(K\alpha -x)+2\lambda K\alpha = \frac{2}{n}K\left((WK+\lambda nI)-Wz \right)=0 \\
& \implies \alpha - \left(WK+\lambda nI \right)^{-1}Wz \in Ker(K) \implies \alpha = \left(WK+\lambda nI \right)^{-1}Wz + \epsilon, \\ 
& \epsilon \in Ker(K) \implies  \hat{\alpha} = \left(WK+\lambda nI \right)^{-1}Wz \ \mbox{with} \ K\epsilon = 0.
\end{align*}
\end{proof}

\subsection{SoftMax regression: a multinomial logit model}

\section{Future note: control, discovery models, and generation}
In order to capture discrete pattern in the response variable, we need to introduce some kind of non-linear function on it (e.g. logit, multinomial logit etc.), that leads to generalized linear models (i.e. a model having known $\&$ non-linear activation function with  linear argument e.g. logistic regression, multinomial logistic regression etc.). Furthermore, discovery of patterns within input space with a few or no labels (i.e. prediction of labels or response variable via representation learning) lead to discovery models. At the end, the linear models, the generalized linear models and the discovery models can be unified by universal approximation of neural networks.

\subsubsection{A machine learning system}
\begin{enumerate}[{Step}~1:]
\item Methods:-mathematical modelling with scientific meaning, estimation/optimization, optimization algorithm/ numerical methods, generalization/concentration inequalities,  inference/prediction, and numerical methods etc.
\item Techniques:-machine learning algorithms for decision making, control, discovery and generation (e.g. regression, classification, neural networks etc.).
\item Tools: Implementation in computing devices using some platform e.g. Scikit Learn (for machine learning algorithms other than neural networks), TenFlow, PyTorch (neural networks), Keras (deep learning).
\item Deployment in the real world system e.g. a self deriving car etc.
\end{enumerate}

\subsection{A non-linear inverse problem}
xxxxxxxxxxxxxxxxxxxxxxxxxxxxxx.
%
\begin{acknowledgements}
I dedicate this text to my wife, my son, my parents, my grand-parents, my in-law-parents, my teachers and all those who help and inspire me, because through you I become whole.

\noindent This text developed out from lectures given at Indian Institute of Information Technology Dharwad in 2022 and 2023 for engineering students specializing in computation mathematics, probabilistic modelling, statistical estimation \& inference.

\noindent I thank to my colleagues at IIIT Dharwad: Somen, Arun and Jagadish for their help, support and valuable discussions.

\noindent Last but not the least; I would like to convey my special thanks to my dear wife, my dear son, grandfather, parents, teachers, brothers and sisters who tolerated the onslaughts of time to bring me up to this position and for their blessings, continuous encouragement and moral support.
\end{acknowledgements}

\bibliographystyle{plainnat}
\bibliography{newbib}
\end{document}